\newcommand{\xp}{x^{\perp}}
\newcommand{\scaa}{L_{t,x}^\frac{5\alpha}{2}}
\newcommand{\isca}{L_{t}^\frac{5\alpha}{2}L_x^{\frac{30\alpha}{15\alpha-8}}}
\renewcommand{\epsilon}{{\varepsilon}}
\numberwithin{equation}{section}
\newtheorem{theorem}{Theorem}[section]
\newtheorem{Conjection}{Conjecture}[section]
\newtheorem{lemma}[theorem]{Lemma}
\newtheorem{remark}[theorem]{Remark}
\newtheorem{definition}[theorem]{Definition}
\newtheorem{proposition}[theorem]{Proposition}
\newtheorem{corollary}[theorem]{Corollary}
\newcommand{\R}{\mathbb R}
\newcommand{\N}{\mathbb N}
\newcommand{\norm}{\big\|}
\newcommand{\pn}{\phi_n}
\newcommand{\cn}{\chi_n}
\newcommand{\lamn}{\lambda_n}
\newcommand{\psie}{\psi_{\varepsilon}}
\newcommand{\Hsc}{\dot{H}^{s_c}}
\newcommand{\vn}{\tilde{v}_n}
\newcommand{\DeltaO}{\Delta_{\Omega}}
\newcommand{\DeltaOn}{\Delta_{\Omega_n}}
\newcommand{\RRT}{\R\times\R^3}
\newcommand{\RO}{\R\times\Omega}
\newcommand{\On}{\Omega_n}
\def\({\left(}
\def\){\right)}
\def\<{\left\langle}
\def\>{\right\rangle}
\DeclareMathOperator{\supp}{supp}
\newcommand{\qtq}[1]{\quad\text{#1}\quad}
\begin{document}

	\title[3d NLS  outside  a convex obstacle]
	{Scattering  theory for the defocusing 3d NLS in the exterior of a strictly convex obstacle }
	
	\author[X. Liu]{Xuan Liu}
	\address{School of Mathematics, Hangzhou Normal University, \ Hangzhou ,\ 311121, \ China} 
	\email{liuxuan95@hznu.edu.cn}

	\author{Yilin Song}
	\address{Yilin Song
		\newline \indent The Graduate School of China Academy of Engineering Physics,
		Beijing 100088,\ P. R. China}
	\email{songyilin21@gscaep.ac.cn}
	
	\author{Jiqiang Zheng}
	\address{Jiqiang Zheng
		\newline \indent Institute of Applied Physics and Computational Mathematics,
		Beijing, 100088, China.
		\newline\indent
		National Key Laboratory of Computational Physics, Beijing 100088, China}
	\email{zheng\_jiqiang@iapcm.ac.cn, zhengjiqiang@gmail.com}

	\begin{abstract}
		In this paper, we investigate the global well-posedness and scattering theory for the defocusing nonlinear Schr\"odinger equation $iu_t + \Delta_\Omega u = |u|^\alpha u$ in the exterior domain $\Omega$ of a smooth, compact and strictly convex obstacle in $\mathbb{R}^3$.
		It is conjectured that in Euclidean space, if the solution has a prior bound in the critical Sobolev space, that is, $u \in L_t^\infty(I; \dot{H}_x^{s_c}(\mathbb{R}^3))$ with $s_c := \frac{3}{2} - \frac{2}{\alpha} \in (0, \frac{3}{2})$, then $u$ is global and scatters.  In this paper, assuming that this conjecture holds, we prove that if $u$ is a solution to the  nonlinear Schr\"odinger equation in exterior domain  $\Omega$ with Dirichlet boundary condition and satisfies $u \in L_t^\infty(I; \dot{H}^{s_c}_D(\Omega))$ with $s_c \in \left[\frac{1}{2}, \frac{3}{2}\right)$, then $u$ is global and scatters.

		The proof of the main results relies on the concentration-compactness/rigidity argument of Kenig and Merle [Invent. Math. {\bf 166} (2006)].  The main difficulty is to construct minimal counterexamples when the  scaling and translation invariance breakdown on $\Omega$. 
		To achieve this, two key ingredients are required. 
		First, we adopt the approach of Killip, Visan, and Zhang [Amer. J. Math. {\bf 138} (2016)] to derive the linear profile decomposition for the linear propagator $e^{it\Delta_\Omega}$ in $\dot{H}^{s_c}(\Omega)$. 
		The second ingredient is the embedding of the nonlinear profiles. More precisely, we need to demonstrate that nonlinear solutions in the limiting geometries, which exhibit global spacetime bounds, can be embedded back into $\Omega$.
		Finally, to rule out the minimal counterexamples, we will establish long-time Strichartz estimates for the exterior domain NLS, along with spatially localized and frequency-localized Morawetz estimates.
		
		\vspace{0.3cm}
		
		\noindent \textbf{Keywords:}  Schr\"odinger equation, well-posedness, scattering, critical norm, exterior domain. 
	\end{abstract}

	\maketitle
	\tableofcontents
	\medskip
	
	\section{Introduction}
	We study the defocusing nonlinear Schr\"odinger equation in the exterior domain $\Omega$  of a smooth compact, strictly convex obstacle in $\mathbb{R}^3$ with Dirichlet boundary condition: 
	\begin{equation}
		\begin{cases}
			iu_t+\Delta_\Omega u=|u|^{\alpha }u,\\
			u(0,x)=u_0(x),\\
			u(t,x)|_{x\in \partial \Omega}=0,
		\end{cases}\label{NLS}
	\end{equation}
	where  $u$ is a complex-valued function defined in  $\mathbb{R} \times \Omega$ and $-\Delta_{\Omega}$ denotes the Dirichlet Laplacian on $\Omega$. The Dirichlet-Laplacian is the unique self-adjoint operator on $L^2(\Omega)$ corresponding to the following quadratic form
	\[
	Q : H_0^1(\Omega) \to [0,\infty) \quad \text{with} \quad Q(f) := \int_{\Omega} \overline{\nabla f(x)} \cdot \nabla f(x) \, dx.
	\]
	We take initial data  $u_0\in \dot H^{s}_D(\Omega)$, where for  $s\ge0$,   the homogeneous Sobolev space is defined by the functional calculus as  the completion of $C_c^{\infty}(\Omega)$ with respect to the norm
	\[
	\|f\|_{\dot{H}^{s}_D(\Omega)} := \|(-\Delta_\Omega)^{s/2} f \|_{L^2(\Omega)}.
	\]
	
	It is easy to find that the  solution  $u$ to equation (\ref{NLS}) with sufficient smooth conditions possesses the mass and energy conservation laws: 
	\[
	M_{\Omega}[u(t)] := \int_{\Omega} |u(t,x)|^2 dx = M_\Omega[u_0],
	\]
	\[
	E_{\Omega}[u(t)] := \frac{1}{2} \int_{\Omega} |\nabla u(t,x)|^2 dx + \frac{1}{\alpha +2} \int_{\Omega} |u(t,x)|^{\alpha +2} dx = E_\Omega[u_0].
	\]
	When posed on the whole   Euclidean space $\mathbb{R}^3$, the Cauchy problem \eqref{NLS} is scale-invariant. More precisely, the scaling transformation
	\[
	u(t,x) \longmapsto \lambda^{\frac{2}{\alpha }} u(\lambda x, \lambda^2 t) \quad \text{for} \quad \lambda > 0,
	\]
	leaves the class of solutions to NLS$_{\mathbb{R} ^3}$ invariant. 
	This transformation also identifies the critical space  $\dot H^{s_c}_x$,  
	where the critical regularity  $s_c$ is given by  $s_c:=\frac{3}{2}-\frac{2}{\alpha }$. 
	We call  \eqref{NLS} mass-critical if  $s_c=0$, energy-critical if   $s_c=1$, inter-critical if   $0<s_c<1$ and energy-supercritical if  $s_c>1$ respectively. 
	Although the obstacle in the domain alters certain aspects of the equation, it does not affect the problem's inherent dimensionality. Therefore,  (\ref{NLS}) maintains the same criticality and is classified as   $\dot H^{s_c}_D(\Omega)$ critical.  
	
	Throughout this paper, we restrict ourselves to the following notion of solution.
	\begin{definition}[Solution]\label{Defsolution}
		A function $ u : I \times \Omega \to \mathbb{C} $ on a non-empty interval $ I \ni 0 $ is called a \emph{solution} to (\ref{NLS}) if it satisfies $u \in C_t \dot{H}^{s_c}_D(K \times \Omega) \cap L^{\frac{5\alpha }{2}}_{t,x}(K \times \Omega)$ for every compact subset $K \subset I$  and obeys the Duhamel formula
		\[
		u(t) = e^{it \Delta_\Omega} u_0 - i \int_0^t e^{i(t-s) \Delta_\Omega} (|u|^\alpha  u)(s) \, ds
		\]
		for each $ t \in I $.  We refer to the interval  $I$ as the lifespan of  $u$. We say that $ u $ is a maximal-lifespan solution if the solution cannot be extended to any strictly larger interval. We say that  $u$ is a global solution if  $I=\mathbb{R} $.  
	\end{definition}
	The assumption that the solution lies in the space  $L_{t,x}^{\frac{5\alpha }{2}}(I\times \Omega)$ locally in time is natural 	since by the Strichartz estimate  (see Proposition \ref{PStrichartz} below), the linear flow always lies in this space. Also, if a solution  $u$ to (\ref{NLS}) is global, with 
	$  \|u\|_{L_{t,x}^{\frac{5\alpha }{2}}(I\times \Omega)} < \infty $, then it \emph{scatters}; that is, there exist unique $ u_\pm \in \dot{H}^{s_c}_D(\Omega) $ such that
	\[
	\lim_{t \to \pm \infty} \left\| u(t) - e^{it \Delta_\Omega} u_\pm \right\|_{\dot{H}^{s_c}_D(\Omega)} = 0.
	\]
	
	The study of NLS in exterior domains was initiated in \cite{BurqGerardTzvetkov2004}. The authors proved a local existence result for the 3d sub-cubic (i.e., $\alpha  < 3$) NLS$_{\Omega}$ equation, assuming that the obstacle is non-trapping. Subsequently,  Anton  \cite{Anton2008} extended these result to  the cubic nonlinearity, while   Planchon-Vega \cite{PlanchonVega2009} extended it to the energy-subcritical NLS$_{\Omega}$ equation in dimension $d=3$. 
	Later, Planchon and Ivanovici \cite{IvanoviciPlanchon2010} established the small data scattering theory for the energy-critical NLS$_\Omega$ equation in dimension $d = 3$.  For 
	NLS outside a smooth, compact, strictly convex obstacle $\Omega$ in  $\mathbb{R} ^3$,   Killip-Visan-Zhang \cite{KillipVisanZhang2016a} proved that for arbitrarily large initial data, the corresponding solutions to the defocusing energy-critical equation scatter in the energy space.
	For related results in the focusing case, see e.g.  \cite{DuyckaertsLandoulsiRoudenko2022JFA, KillipVisanZhang2016c, KYang, XuZhaoZheng}.

	In this paper, we investigate the $\dot H^{s_c}_D(\Omega)$ critical global well-posedness and scattering theory for the defocusing NLS  (\ref{NLS}) in the exterior domain $\Omega$ of a smooth, compact and strictly convex obstacle in $\mathbb{R}^3$.   
	To put the problem in context, let us first recall some earlier results for		the equivalent problem posed in the whole Euclidean space  $\mathbb{R}^d$.
	The study of global well-posedness and scattering theory for nonlinear Schr\"odinger equations 
	\begin{equation}
		iu_t + \Delta u = \pm |u|^{\alpha }u,\qquad (t,x) \in \mathbb{R} \times \mathbb{R}^d  \label{NLS0}
	\end{equation}
	in  $\dot H^{s_c} $ has seen significant advancements in recent years. Due to the presence of conserved quantities at the critical regularity, the mass- and energy-critical equations have been the most widely studied.  For the defocusing energy-critical NLS, it is now known that arbitrary data in  $\dot H^1_x$ lead to solutions that are global and scatter. This was proven first for radial initial data by  Bourgain \cite{Bourgain1999},
	Grillakis \cite{Grillakis2000}, and Tao \cite{Tao2005}
	and later for arbitrary data by Colliander- Keel-Staffilani-Takaoka-Tao,  \cite{Colliander2008}, Ryckman-Visan \cite{RyckmanVisan2007} and  Visan \cite{Visan2007,Visan2012} (For results in the  focusing case, see \cite{Dodson2019ASENS,KenigMerle2006,KillipVisan2010}).
	For the  mass-critical NLS, it has also been established that arbitrary data in  $L^2_x$ lead to solutions that are global and scatter. This was proven through the use of 
	minimal counterexamples, first for radial data in dimensions  $d\ge2$ (see \cite{TaoVisanZhang2007,KillipTaoVisan2009,KillipVisanZhang2008}),	  and later for arbitrary data in all dimensions by Dodson  \cite{Dodson2012,Dodson2015,Dodson2016a,Dodson2016b}.

	Killip-Visan \cite{KillipVisan2012} and Visan \cite{Visan2012} revisited the defocusing energy-critical problem in dimensions $d \in \{3,4\}$ from the perspective of minimal counterexamples, utilizing techniques developed by Dodson \cite{Dodson2012}. In particular, they established a "long-time Strichartz estimate" for almost periodic solutions, which serves to rule out the existence of frequency-cascade solutions. Additionally, they derived a frequency-localized interaction Morawetz inequality (which may in turn be used to preclude the existence of soliton-like solutions). 
	
	Unlike the energy- and mass-critical problems, for any other  $s_c\neq 0,1$, there are no conserved quantities that control the growth in time of the   $\dot H^{s_c}$ norm of the solutions.  It is conjectured that, assuming some \textit{a priori} control of a critical norm, global well-posedness and scattering hold for any $s_c > 0$ and in any spatial dimension:
	\begin{Conjection}\label{CNLS0}
		Let $d \geq 1$, $\alpha \geq \frac{4}{d}$, and $s_c = \frac{d}{2} - \frac{2}{\alpha }$. Assume $u: I \times \mathbb{R}^d \rightarrow \mathbb{C}$ is a maximal-lifespan solution to (\ref{NLS0}) such that 
		\begin{equation}
			u \in L_t^\infty \dot{H}_x^{s_c}(I \times \mathbb{R}^d), \notag
		\end{equation}
		then $u$ is global and scatters as $t \to \pm \infty$.
	\end{Conjection}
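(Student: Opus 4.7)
The plan is to employ the concentration-compactness/rigidity roadmap of Kenig and Merle, combined with the minimal counterexample technology later refined by Killip, Tao, Visan, Zhang, and Dodson for the mass- and energy-critical problems. Arguing by contradiction, I would first develop a robust local theory and stability/perturbation theorem for \eqref{NLS0} at the critical regularity $\dot{H}^{s_c}_x(\mathbb{R}^d)$, and then establish a linear profile decomposition for bounded sequences in $\dot{H}^{s_c}_x$ that is adapted to the scaling and translation symmetries of the equation. Applying this decomposition together with a Palais-Smale type compactness argument to a sequence of solutions whose scattering norms diverge would extract a minimal blow-up solution $u$ of \eqref{NLS0} whose orbit, after rescaling and spatial translation by suitable modulation parameters $N(t)$ and $x(t)$, is precompact in $\dot{H}^{s_c}_x$.

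A standard modulation and Arzel\`a-Ascoli reduction would then isolate three canonical enemies: a finite-time self-similar blow-up with $N(t)\sim |t|^{-1/2}$, a soliton-like solution with $N(t)\equiv 1$ and $I=\mathbb{R}$, and a double high-to-low frequency cascade. The soliton-like case is the natural target of a frequency-localized interaction Morawetz inequality in the spirit of Visan and Killip-Visan; the self-similar and cascade cases would be attacked by first bootstrapping additional regularity or decay from the compactness of the orbit and then combining this with a long-time Strichartz estimate in the style of Dodson. Throughout, the key technical outputs are a critical-scale local smoothing, $L^p$-decay for almost periodic orbits, and a careful tracking of the modulation parameters so that the Morawetz-type identities close at the correct scale.

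The binding difficulty, and the reason the statement is posed as a conjecture rather than a theorem in full generality, is the absence of any conservation law at the critical regularity $\dot{H}^{s_c}_x$ when $s_c\notin\{0,1\}$. Mass and energy sit at scaling-wrong regularities, so the classical Lin-Strauss and interaction Morawetz quantities are either too weak or scale-mismatched to close the rigidity argument directly. One must therefore either upgrade the a priori $\dot{H}^{s_c}_x$ bound to extra regularity by exploiting almost-periodicity of the minimal blow-up orbit, which is the Kenig-Merle/Killip-Visan route and already requires delicate work in the intercritical range $s_c\in(0,1)$, or discover a genuinely new monotonicity formula at the critical scale, a program which remains essentially open when $s_c>1$. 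Hence I expect the decisive obstacle to be the construction of a frequency-localized Morawetz or coercivity estimate that is truly uniform across $s_c$; without such an input the rigidity half of the argument cannot be completed, and the conjecture as stated remains out of reach in the energy-supercritical regime.
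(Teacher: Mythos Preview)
The statement you are asked to prove is labeled a \emph{Conjecture} in the paper, and the paper does not prove it: it is explicitly assumed as a hypothesis in Theorem~\ref{T1}, and the surrounding discussion (together with Tables~1 and~2) catalogues only partial results toward it. There is therefore no ``paper's own proof'' to compare against.

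Your outline is an accurate summary of the concentration-compactness/rigidity program and of precisely why the conjecture remains open in general. You correctly identify the central obstruction, namely the absence of a conserved quantity at regularity $\dot H^{s_c}$ when $s_c\notin\{0,1\}$, which prevents the rigidity step from closing uniformly in $s_c$. This is consistent with the state of the art as the paper describes it: the cited works of Kenig--Merle, Murphy, Dodson--Miao--Murphy--Zheng, Killip--Visan, and others each resolve the conjecture only for specific ranges of $(d,s_c)$ and often under a radial assumption, using exactly the frequency-localized Morawetz and long-time Strichartz technology you mention. Your closing assessment, that a genuinely new monotonicity input at critical scale would be needed to settle the conjecture in full, is the honest conclusion; treat your write-up as a research-program sketch rather than a proof.
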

	The first work dealing with Conjecture \ref{CNLS0} 
	is attributed to  Kenig and Merle \cite{KenigMerle2010} at the case $d = 3, s_c = \frac{1}{2}$ by using their concentration-compactness method developed in \cite{KenigMerle2006} and the scaling-critical Lin-Strauss Morawetz inequality. Subsequently, Murphy \cite{Murphy2014b} extended the methods of \cite{KenigMerle2010} to higher dimensions, resolving Conjecture \ref{CNLS0} for $d \geq 3$ and $s_c = \frac{1}{2}$.
	In the inter-critical case ($0 < s_c < 1$), Murphy \cite{Murphy2014, Murphy2015} developed a long-time Strichartz estimate in the spirit of \cite{Dodson2012} and proved Conjecture \ref{CNLS0} for  the general data  in the case
	\begin{equation}
		\begin{cases}
			\frac{1}{2}\le s_c\le \frac{3}{4},\qquad &d=3\\
			\frac{1}{2}\le s_c<1,&d=4\\
			\frac{1}{2}<s_c<1,&d=5;
		\end{cases}\notag
	\end{equation}
	and for the radial data in the case  $d=3,s_c\in  (0,\frac{1}{2})\cup (\frac{3}{4},1)$.  Later, Gao-Miao-Yang \cite{GaoMiaoYang2019} resolved Conjecture \ref{CNLS0} for radial initial data in the case $d \geq 4$, $0 < s_c < \frac{1}{2}$; Gao-Zhao \cite{GaoZhao2019} resolved Conjecture \ref{CNLS0} for general initial data in the case $d \geq 5$,  $\frac{1}{2} < s_c < 1$. See also \cite{XieFang2013} for earlier partial results regarding these cases. 
	Recently, Yu \cite{Yu2021}  resolved Conjecture \ref{CNLS0} in the case $d = 2, s_c = \frac{1}{2}$, by first developing a long-time Strichartz estimate in the spirit of   \cite{Dodson2016a} and then utilizing the interaction Morawetz estimate from Planchon-Vega \cite{PlanchonVega2009} to exclude the minimal counterexamples. See Table \ref{table1}. 
	
	In the energy-supercritical case ($s_c > 1$), Killip and Visan \cite{KillipVisan2010} were the first to resolve Conjecture \ref{CNLS0} for $d \ge 5$ under certain conditions on $s_c$. Subsequently, Murphy \cite{Murphy2015} addressed the conjecture for radial initial data in the case $d = 3$ and $s_c \in (1, \frac{3}{2})$.
	By developing long-time Strichartz estimates for the energy-supercritical regime, Miao-Murphy-Zheng \cite{MiaoMurphyZheng2014} and Dodson-Miao-Murphy-Zheng \cite{Dodson2017} resolved the Conjecture \ref{CNLS0} for general initial data when $d = 4$ and $1 < s_c \le \frac{3}{2}$. For the case $d = 4$ and $\frac{3}{2} < s_c < 2$ with radial initial data, see the work of Lu and Zheng \cite{LuZheng2017}.
	More recently, Zhao \cite{Zhao2017AMS} and Li-Li \cite{LiLi2022SIAM} resolved the Conjecture \ref{CNLS0} in the case $d \ge 5$ and $1 < s_c < \frac{d}{2}$. For $d \ge 8$, their results also required $\alpha$ to be an even number. See Table 2.
	
	\begin{table}[h]\label{table1}
		\centering
		\caption{Results for Conjecture \ref{CNLS0} in the sub-critical case: $0<s_c<1$}
		\begin{tabular}{|c|c|c|c|}
			\hline
			& $0 < s_c < \frac{1}{2}$ & $s_c=\frac{1}{2}$& $\frac{1}{2} < s_c < 1 $\\
			\hline
			$d = 1 $& \text{\textcolor{blue}{no results}} & \diagbox{}{} & \diagbox{}{}  \\
			\hline
			$d = 2 $& \text{\textcolor{blue}{no results}} & Yu \cite{Yu2021}& \text{\textcolor{blue}{no results}}  \\
			\hline
			$d=3$ & \textcolor{blue}{radial}, Murphy \cite{Murphy2015}&Kenig-Merle \cite{KenigMerle2010} & \thead{$\frac{1}{2}<s_c\le \frac{3}{4}$,Murphy\cite{Murphy2014} \\\textcolor{blue}{radial},  $\frac{3}{4}<s_c<1$, Murphy\cite{Murphy2015}} \\
			\hline 
			$d\ge4$ & \textcolor{blue}{radial}, Gao-Miao-Yang\cite{GaoMiaoYang2019}& Murphy\cite{Murphy2014b} &Gao-Zhao\cite{GaoZhao2019},Murphy\cite{Murphy2014},Xie-Fang\cite{XieFang2013}\\
			\hline 
		\end{tabular}
	\end{table}
	\begin{table}[h]\label{table2}
		\centering
		\caption{Results for Conjecture \ref{CNLS0} in the super-critical case: $1<s_c<\frac{d}{2}$}
		\begin{tabular}{|c|c|}
			\hline
			$d=3$ & $1<s_c<\frac{3}{2}$, \textcolor{blue}{radial}, Murphy \cite{Murphy2015}\\
			\hline 
			$d=4$ & \thead {  $1<s_c<\frac{3}{2}$, Miao-Murphy-Zheng\cite{MiaoMurphyZheng2014}; $s_c=\frac{3}{2}$, Dodson-Miao-Murphy-Zheng\cite{Dodson2017}; \\  $\frac{3}{2}<s_c<2$, \textcolor{blue}{radial},  Lu-Zheng\cite{LuZheng2017}}\\
			\hline 
			$d\ge5$  & \thead {$1<s_c<\frac{d}{2}$, and \textcolor{blue}{assume  $\alpha $ is even when  $d\ge8$}, \\
				Killip-Visan\cite{KillipVisan2010}, Zhao\cite{Zhao2017AMS}, Li-Li\cite{LiLi2022SIAM}}\\
			\hline
		\end{tabular}
	\end{table}

	Analogous to Conjecture \ref{CNLS0}, it is  conjectured that  for the  NLS  in the exterior domain $\Omega$ of a smooth, compact, strictly convex obstacle in $\mathbb{R}^3$: 
	\begin{Conjection}\label{CNLS}
		Let   $\alpha >\frac{4}{3}$ and $s_c = \frac{3}{2} - \frac{2}{\alpha }$. Assume $u: I \times \Omega \rightarrow \mathbb{C}$ is a maximal-lifespan solution to (\ref{NLS}) such that 
		\begin{equation}
			u \in L_t^\infty \dot{H}_D^{s_c}(I \times \Omega), \label{Ebound} 
		\end{equation}
		then $u$ is global and scatters as $t \to \pm \infty$.
	\end{Conjection}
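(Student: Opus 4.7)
The plan is to prove the result conditionally on Conjecture \ref{CNLS0} via a concentration-compactness/rigidity argument \`a la Kenig--Merle, adapted to the exterior domain following the Killip--Visan--Zhang blueprint. Arguing by contradiction, assume the conclusion fails. Then the threshold
\[
E_c := \inf\Bigl\{\,\sup_{t \in I}\|u(t)\|_{\dot H^{s_c}_D(\Omega)} : u \text{ is a non-scattering maximal-lifespan solution of \eqref{NLS}}\,\Bigr\}
\]
is finite, and the aim is to construct at level $E_c$ a minimal counterexample $u_c$ whose $\dot H^{s_c}_D(\Omega)$-orbit is precompact modulo the admissible symmetries of NLS$_\Omega$, then rule it out.

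The first main step is a linear profile decomposition for bounded sequences in $\dot H^{s_c}_D(\Omega)$. Because $\Omega$ has neither translation nor scaling symmetry, bubbles split, according to the relation between the bubble scale $\lambda_n$ and the distance $d_n := d(x_n,\partial\Omega)$, into distinct asymptotic geometries: when $d_n/\lambda_n \to \infty$ the profile lives in $\dot H^{s_c}(\R^3)$ (the obstacle escapes to infinity upon rescaling); when $\lambda_n \to 0$ and $d_n/\lambda_n$ stays bounded the profile lives in the Dirichlet Sobolev space on a half-space $\HH$ tangent to $\partial\Omega$; and when $\lambda_n \equiv 1$ with $x_n$ bounded the profile lives directly on $\Omega$. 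Executing this decomposition requires the Littlewood--Paley calculus and square-function estimates associated with $-\Delta_\Omega$, equivalence of $\dot H^{s_c}_D(\Omega)$ with natural Sobolev scales, and refined Strichartz inequalities on $\Omega$, all developed in the Killip--Visan--Zhang program.

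Each linear profile is then promoted to a global nonlinear solution of the appropriate limiting equation and transferred back to a nonlinear profile on $\R \times \Omega$ via a long-time approximation argument combined with the stability theory for NLS$_\Omega$. The Euclidean profiles are globalized by the assumed Conjecture \ref{CNLS0}; the half-space profiles are reduced to the Euclidean ones by odd reflection across $\partial \HH$, which again invokes Conjecture \ref{CNLS0}; the on-$\Omega$ profiles are handled by perturbation theory around the zero solution. A Palais--Smale-type compactness step at the critical level then forces exactly one nonzero profile, producing the minimal almost-periodic counterexample $u_c$. Since the obstacle is compact and fixed, spatial translation is suppressed and the only surviving symmetry is a single frequency-scale function $N(t)$, which must be bounded above by geometric constraints.

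The final step is to preclude $u_c$. A standard dichotomy reduces to either a frequency cascade ($\inf_t N(t) = 0$) or a quasi-soliton ($N(t) \sim 1$). A long-time Strichartz estimate for almost periodic solutions of NLS$_\Omega$, in the style of Dodson and Murphy, endows $u_c$ with additional regularity incompatible with the cascade; a frequency-localized interaction Morawetz inequality adapted to $\Omega$ rules out the quasi-soliton. The hardest parts, I expect, are (a) the linear profile decomposition together with the nonlinear embedding of the half-space profiles across the full range $s_c \in [\tfrac12,\tfrac32)$, where odd reflection must be shown to preserve the critical Sobolev regularity and to interact correctly with the Dirichlet spectral calculus on $\Omega$; and (b) the frequency-localized Morawetz estimate on $\Omega$, where strict convexity of $\partial\Omega$ must be exploited to extract favorable signs for the boundary terms arising from integration by parts against a defining function of the obstacle.
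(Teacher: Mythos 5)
Your overall blueprint matches the paper's: a concentration-compactness/rigidity argument in the spirit of Kenig--Merle and Killip--Visan--Zhang, with a linear profile decomposition in $\dot H^{s_c}_D(\Omega)$ split into limiting geometries ($\Omega$ itself, $\R^3$, the half-space), a nonlinear-embedding step that invokes Conjecture~\ref{CNLS0} to globalize the limiting profiles and then transports them back to $\Omega$ via stability, a Palais--Smale argument yielding an almost periodic minimal counterexample, and finally a rigidity step. The main structural divergence is in that last step, and it matters. You set up a cascade/soliton dichotomy based on $N(t)$, but in this problem the obstacle is fixed: precompactness of $\{u(t)\}$ in $\dot H^{s_c}_D(\Omega)$ without any modulated scaling already forces $N(t)\equiv 1$ and $I=\R$ (see Theorem~\ref{TReduction} and the remark following it). There is no cascade scenario to exclude, and the long-time Strichartz estimate (Proposition~\ref{PLT2}) is not used to upgrade regularity against a cascade; it is used purely to control the error terms introduced by truncating to high frequencies inside a Morawetz identity.

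The paper also does not use an interaction Morawetz estimate at all. It uses the \emph{Lin--Strauss} Morawetz quantity $\iint |u|^{\alpha+2}/|x|$, in three different guises depending on $s_c$: the global version for $s_c=\tfrac12$, a space-localized version \`a la Bourgain for $1<s_c<\tfrac32$ (Lemma~\ref{L1091}), and a frequency-localized version for $\tfrac12<s_c<1$ (Proposition~\ref{PMorawetz}) for which the long-time Strichartz estimate is essential. The Bourgain-style space cutoff is needed because one error term controls derivatives at the $\dot H^1_D$ level, which is unavailable for $s_c<1$; conversely the frequency-cutoff argument breaks when $s_c>1$. You do correctly identify that strict convexity provides the favorable sign of the boundary terms in the virial identity, but for the Lin--Strauss quantity rather than an interaction quantity. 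Two further technical points that your proposal underplays: (i) the decoupling of nonlinear profiles in a space with $s_c$ fractional derivatives cannot be run through Keraani's pointwise estimates, and needs the square-function/maximal-function machinery of Murphy (for $s_c<1$) or the $\mathcal{D}_s$ operator of Killip--Visan (for $s_c>1$), together with a local-smoothing input (Corollary~\ref{CLocalsmoothing}) to handle the remainder; and (ii) the half-space embedding is not merely odd reflection, which only globalizes the half-space nonlinear solution; transporting it back to $\Omega$ requires a boundary-straightening diffeomorphism $\Psi_n$ and the domain-convergence/propagator-convergence results of Section~\ref{S2}, which is where the curvature bounds on $\partial\Omega$ actually enter.
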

	Killip-Visan-Zhang \cite{KillipVisanZhang2016a} first resolved  Conjecture \ref{CNLS} in the case $d = 3$ and $s_c = 1$. Since this corresponds to the energy-critical setting, the energy conservation law eliminates the need for the assumption (\ref{Ebound}); it suffices to require the initial data to belong to $\dot H^{1}_D(\Omega)$. 
	In this paper, under the assumption that Conjecture \ref{CNLS0} holds in Euclidean space, we resolve Conjecture \ref{CNLS} in the case  $d = 3$ and $\frac{1}{2} \le s_c < \frac{3}{2}$. Our main result is as follows: 
	\begin{theorem}\label{T1}
		Let  $s_c\in [\frac{1}{2},\frac{3}{2})$.  Assume that Conjection \ref{CNLS0} holds. Then Conjection \ref{CNLS} holds. 
	\end{theorem}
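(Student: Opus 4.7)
The plan is to run a Kenig--Merle concentration-compactness/rigidity argument, adapted to the exterior-domain setting along the lines pioneered by Killip--Visan--Zhang \cite{KillipVisanZhang2016a} in the energy-critical case. Assuming Theorem \ref{T1} fails, I would define
\[
L_{\max} := \sup\bigl\{\, L \geq 0 \ :\ \text{every solution $u$ with } \|u\|_{L^\infty_t\dot H^{s_c}_D}^2 \leq L \text{ is global with finite scattering norm}\,\bigr\},
\]
which by hypothesis lies in $(0,\infty)$, and pick a sequence $u_n$ of solutions whose $\dot H^{s_c}_D$ norm tends to $L_{\max}$ and whose $L^{5\alpha/2}_{t,x}$ norm blows up. The goal is to distill this sequence into a minimal blowup solution $u_\star$ which is almost periodic modulo the available symmetries: there exist $N:\R\to(0,\infty)$ and $x(t)\in\Omega$ such that the orbit
\[
\bigl\{\, N(t)^{-2/\alpha}\, u_\star\bigl(t, N(t)^{-1}(\cdot-x(t))\bigr)\,\bigr\}_{t\in \R}
\]
is precompact in $\dot H^{s_c}_D(\Omega)$.

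This distillation requires a linear profile decomposition for bounded sequences in $\dot H^{s_c}_D(\Omega)$. Because $\Omega$ is neither scale- nor translation-invariant, profiles must be classified by the asymptotic behaviour of $(N_n, x_n)$, and in particular of $N_n d_n$ with $d_n := \operatorname{dist}(x_n,\partial\Omega)$. The three canonical regimes are: (i) $N_n d_n \to \infty$, where the profile feels only flat $\R^3$; (ii) $N_n d_n \to d_\infty \in [0,\infty)$ with $N_n\to\infty$, where the profile sees the half-space $\HH$ with Dirichlet boundary; and (iii) $N_n$ bounded above, where the profile essentially lives on $\Omega$ itself. For each regime I would construct a matching nonlinear profile with controlled scattering norm: case (i) is handled directly by the assumed Conjecture \ref{CNLS0}; case (ii) reduces to $\R^3$ via the odd reflection across $\partial\HH$ and is again controlled by Conjecture \ref{CNLS0}; case (iii) with $N_n\to 0$ falls into a low-frequency regime, where Bernstein and the equivalence of Sobolev norms on $\Omega$ embed the profile into $\dot H^1_D$ and permit the use of the energy-critical Killip--Visan--Zhang theory. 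Long-time stability for (\ref{NLS}) on $\Omega$ then assembles the profiles into the minimal solution $u_\star$.

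The final task is to preclude such an almost periodic solution. In the spirit of Dodson and Murphy, I would first establish long-time Strichartz estimates for $u_\star$ adapted to the exterior domain, which control the high-frequency piece in terms of $\int_I N(t)^{3-2s_c}\,dt$; this rules out frequency-cascade scenarios (where $N(t)\to\infty$ along some sequence) by forcing regularity strictly below $\dot H^{s_c}_D$, contradicting $u_\star(0)\neq 0$. The quasi-soliton scenario, in which $N(t)$ is bounded between two positive constants, is then eliminated by a frequency-localized interaction Morawetz identity: strict convexity of $\partial\Omega$ provides a favourable boundary sign, as exploited in \cite{KillipVisanZhang2016a}, and combined with compactness of the orbit this produces a quantitative contradiction between the upper and lower bounds for the Morawetz quantity.

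The main obstacle, in my view, is twofold. First, the profile decomposition must be promoted to \emph{nonlinear} profiles in a quantitative way: because rescaling a solution of (\ref{NLS}) produces the Schr\"odinger flow on a dilated domain $\Omega_n$ rather than on $\Omega$ itself, one must prove Strichartz-norm convergence of the rescaled flows to the limit models on $\R^3$, $\HH$, or $\Omega$, which demands precise spectral and heat-kernel estimates for $\Delta_{\Omega_n}$ that are uniform in $n$. Second, both the long-time Strichartz estimate and the frequency-localized Morawetz identity must be ported from $\R^3$ to $\Omega$ at arbitrary $s_c\in[\tfrac12,\tfrac32)$; this requires delicate commutator bounds between the nonlinearity and Littlewood--Paley projectors defined through the heat semigroup of $\Delta_\Omega$, whose behaviour near $\partial\Omega$ is substantially more subtle than in flat space, and which control the interaction between boundary effects and the frequency localizations on which the rigidity step depends.
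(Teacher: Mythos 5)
Your overall framework --- concentration-compactness plus rigidity à la Kenig--Merle/Killip--Visan--Zhang --- matches the paper's strategy, but there are two substantive gaps in the plan, and one smaller one.

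First, your minimal counterexample carries time-dependent modulation parameters $N(t)$ and $x(t)$, so that the precompact orbit is $\{\,N(t)^{-2/\alpha}u_\star(t,N(t)^{-1}(\cdot-x(t)))\,\}$. This is the Euclidean formulation and it does not transfer: since scaling and spatial translation are \emph{not} symmetries of (\ref{NLS}) on $\Omega$ (the rescaled function is supported on $N(t)(\Omega-x(t))$, a different domain for each $t$, so the proposed orbit is not even a subset of $\dot H^{s_c}_D(\Omega)$), the profile decomposition forces any surviving profile into the paper's Case~1, where $N_n\equiv N_\infty$ and $x_n\to x_\infty\in\Omega$. The minimal blowup solution $u_\star$ therefore has a precompact orbit in $\dot H^{s_c}_D(\Omega)$ \emph{without} modulation, i.e.\ $N(t)\equiv1$ (see Theorem \ref{TReduction} and the citation of \cite[Cor.\ 5.19]{KillipVisan2013}). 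Consequently, the frequency-cascade scenario you propose to exclude via long-time Strichartz never arises here. The long-time Strichartz estimate is still needed for $\tfrac12<s_c<1$, but its role is different: it controls the errors produced when the Lin--Strauss Morawetz inequality is truncated to high frequencies (Proposition \ref{PMorawetz}), not to rule out a separate cascade regime.

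Second, your treatment of the ``$N_n$ bounded above'' profiles is incorrect. When $N_n\to 0$ (the paper's Case~2), the low-frequency profile does \emph{not} embed usefully into $\dot H^1_D(\Omega)$: for $s_c<1$ Bernstein gives $\dot H^1$ norm $\sim N_n^{1-s_c}\to 0$ and for $s_c>1$ it grows, and in neither case does the energy-critical theory say anything about the $\dot H^{s_c}$-critical flow you actually need to bound. The correct mechanism is geometric: after rescaling to unit frequency, the obstacle in $\Omega_n=N_n(\Omega-\{x_n\})$ has diameter $O(N_n)\to0$, so $\Omega_n\to\R^3$ in the sense of Definition \ref{def-limit}. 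The nonlinear profile is a solution of the $\dot H^{s_c}$-critical NLS on $\R^3$, whose space-time bound is exactly what Conjecture \ref{CNLS0} supplies, and the Killip--Visan--Zhang domain-convergence machinery (Theorem \ref{convergence-flow}) plus stability (Theorem \ref{TStability}) transports it back to $\Omega$ (Theorem \ref{Tembbedding1}). Case~1 ($N_n\sim1$, $x_n\to x_\infty\in\Omega$) is the one case that lives on $\Omega$ itself and is handled by the local well-posedness theory directly. You have merged these two cases and assigned both the wrong limiting model.

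Two smaller points. Your proposal invokes a frequency-localized \emph{interaction} Morawetz; the paper instead runs a Lin--Strauss (single-weight) Morawetz inequality, spatially truncated for $1<s_c<\tfrac32$ and frequency-truncated for $\tfrac12<s_c<1$, with the favorable boundary sign from strict convexity appearing in $\int_{\partial\Omega}a_n|u_n|^2\,d\sigma\ge0$; either weight could conceivably work but the error structures differ. Finally, you do not address the main technical novelty in the nonlinear profile decoupling step: because $|\nabla|^{s_c}$ for $s_c\neq0,1$ does not admit the pointwise bounds used by Keraani, the paper has to work through the Littlewood--Paley square function (Murphy's approach for $s_c<1$) or the Strichartz difference operator $\mathcal D_s$ (Killip--Visan for $s_c>1$), plus local smoothing (Corollary \ref{CLocalsmoothing}) to absorb the $|\nabla|^{s_c}e^{it\Delta}w_n^J$ terms. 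This is precisely the place where an $s_c\notin\{0,1\}$ argument on a domain is genuinely harder than the energy-critical case you cite, and it needs to appear explicitly in a complete proof.
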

	\begin{remark}
		In Section \ref{S4}, we will embed the solutions in the limit geometries into $\Omega$ via the stability theorem \ref{TStability}. To achieve this, we need to assume that Conjecture \ref{CNLS0} holds true, so that the solutions in the limit geometries satisfy uniform spacetime bounds; then the solutions to NLS$_{\Omega}$ will inherit these spacetime bounds.   These solutions to NLS$_{\Omega}$ will appear again as nonlinear profiles in Proposition \ref{Pps}.
	\end{remark}
	\begin{remark}
		As mentioned earlier, Conjecture \ref{CNLS0} has been resolved for $s_c \in [\frac{1}{2}, \frac{3}{4}]$ and $s_c = 1$. Furthermore, for $s_c \in (\frac{3}{4}, 1) \cup (1, \frac{3}{2})$, Murphy \cite{Murphy2015} addressed Conjecture \ref{CNLS0} in the case of radial initial data. Hence, in Theorem \ref{T1}, we only need to assume that Conjecture \ref{CNLS0} holds for non-radial initial data when $s_c \in (\frac{3}{4}, 1) \cup (1, \frac{3}{2})$.
	\end{remark}
	\subsection{Outline of the proof of  Theorem \ref{T1}}
	We proceed by contradiction and assume that Theorem \ref{T1} is false. Observing that Theorem \ref{TLWP} guarantees the global existence and scattering for sufficiently small initial data. From that we deduce the existence of a critical threshold size. Below this threshold, the theorem holds, but above it, solutions with arbitrarily large scattering size can be found. By employing a limiting argument, we establish the existence of minimal counterexamples, which are blowup solutions precisely at the critical threshold. Due to their minimality, these solutions exhibit compactness properties that ultimately conflict with the dispersive nature of the equation. Consequently, we can exclude their existence   and conclude that Theorem \ref{T1} holds.
	
	A key characteristic of these minimal counterexamples is their almost periodicity modulo the symmetries of the equation. We briefly discuss this property and its immediate implications; for a detailed analysis, the reader is referred to \cite{KillipVisan2013}.
	\begin{definition}
		Let  $s_c>0$. A solution  $u:I\times \Omega\rightarrow \mathbb{C}$ to (\ref{NLS}) is called almost periodic if (\ref{Ebound}) holds and 
		there exist function  $C : \mathbb{R}^+ \to \mathbb{R}^+$ such that for all $t \in I$ and all $\eta > 0$,
		\begin{equation}
			\|(-\Delta _\Omega)^{\frac{s_c}{2}}u(t,x)\|_{L^2_x(\Omega\cap \{x:|x|>C(\eta)\})} + \|(-\Delta _\Omega)^{\frac{s_c}{2}}P^\Omega_{>C(\eta)}u(t,x)\|_{L^2_x(\Omega)}<\eta,\notag
		\end{equation}
		where  $P^{\Omega}_{>N} $ denotes the Littlewood-Paley projections adapted to the Dirichlet Laplacian on $\Omega$ (c.f. (\ref{E11121})).  We call   $C$ the \emph{compactness modulus function}.
	\end{definition}
	\begin{remark}
		Using the equivalence of norms in Lemma \ref{LSquare function estimate}, it is straightforward to deduce that when 
		$\{u(t):t\in I\}$
		is precompact in $\dot H^{s_c}_D(\Omega)$,  then   $u:I\times \Omega\rightarrow \mathbb{C}$  is almost periodic  and 
		there exist functions  $C, c : \mathbb{R}^+ \to \mathbb{R}^+$ such that for all $t \in I$ and all $\eta > 0$, 
		\begin{equation}
			\|(-\Delta _\Omega)^{\frac{s_c}{2}}P^\Omega_{<c(\eta)}u(t,x)\|_{L^2_x(\Omega)} + \|(-\Delta _\Omega)^{\frac{s_c}{2}}P^\Omega_{>C(\eta)}u(t,x)\|_{L^2_x(\Omega)}<\eta.\label{E10101}
		\end{equation}
	\end{remark}
	
	To proceed, we require the following result, which relates the interval length of an almost periodic solution to its Strichartz norms. This result can be established by adapting the proof of \cite[Lemma 5.21]{KillipVisan2013} (the only difference being that we need to use the chain rule (\ref{E12133}) instead of the chain rule in Euclidean space).
	\begin{lemma} \label{Lspace-time bound}
		Let  $s_c\in [\frac{1}{2},\frac{3}{2})$, and suppose $u : I \times \Omega \to \mathbb{C}$ is an almost periodic solution to (\ref{NLS}). Then 
		\[
		|I|\lesssim _u	\|(-\Delta _\Omega)^{\frac{s_c}{2}} u \|^2_{L^2_t L^6_x (I \times\Omega)} \lesssim_u 1 +  |I|.
		\]
	\end{lemma}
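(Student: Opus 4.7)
The plan is to follow the approach of \cite[Lemma 5.21]{KillipVisan2013}, replacing the Euclidean fractional chain rule by its domain analogue (\ref{E12133}) and the usual Littlewood--Paley operators by the spectrally-defined $P^{\Omega}_{N}$. Two one-sided bounds must be proved.

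\textbf{Lower bound $|I|\lesssim_u \|(-\Delta_\Omega)^{s_c/2} u\|^2_{L^2_tL^6_x}$.} It suffices to prove the pointwise-in-time bound
\[
\|(-\Delta_\Omega)^{s_c/2} u(t)\|_{L^6_x(\Omega)} \gtrsim_u 1 \qquad \text{for all } t \in I
\]
and then integrate in $t$. Since $u$ is a nontrivial almost-periodic solution, the orbit $\{u(t)\}$ is precompact and bounded away from zero in $\dot H^{s_c}_D(\Omega)$, hence $\|(-\Delta_\Omega)^{s_c/2}u(t)\|_{L^2_x} \gtrsim_u 1$ uniformly. Choosing $\eta$ small in (\ref{E10101}) and in the spatial-tail bound from the almost-periodicity definition, I may localize this $L^2$ mass to frequencies in a fixed annulus $[c_u,C_u]$ and spatially to $\{|x|\leq R_u\}$, up to an error $\ll 1$. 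On that bounded spatial region H\"older gives $\|f\|_{L^2}\lesssim R_u \|f\|_{L^6}$, which converts the uniform $L^2$ lower bound into the desired pointwise $L^6_x$ lower bound.

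\textbf{Upper bound $\|(-\Delta_\Omega)^{s_c/2} u\|^2_{L^2_tL^6_x}\lesssim_u 1+|I|$.} I partition $I$ into consecutive subintervals $J_1,\dots,J_K$ on each of which $\|u\|_{L^{5\alpha/2}_{t,x}(J_k\times\Omega)}=\eta$, with $\eta$ small depending on the compactness-modulus data of $u$. Since $\|u(t)\|_{\dot H^{s_c}_D}\leq M_u$ uniformly in $t$, the local well-posedness / stability theorem (Theorem \ref{TStability}) forces each $|J_k|\gtrsim_u 1$, and therefore $K\lesssim_u 1+|I|$. On each $J_k$, applying the Dirichlet-Strichartz estimate from Proposition \ref{PStrichartz} at the admissible pair $(2,6)$ to the Duhamel formula, controlling $(-\Delta_\Omega)^{s_c/2}(|u|^\alpha u)$ by the chain rule (\ref{E12133}) and absorbing the nonlinear contribution via the smallness of $\|u\|_{L^{5\alpha/2}_{t,x}(J_k)}$, yields
\[
\|(-\Delta_\Omega)^{s_c/2} u\|_{L^2_tL^6_x(J_k\times\Omega)} \lesssim_u 1.
\]
Squaring and summing in $k$ gives $\|(-\Delta_\Omega)^{s_c/2}u\|^2_{L^2_tL^6_x(I\times\Omega)}\lesssim_u K\lesssim_u 1+|I|$.

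\textbf{Main obstacle.} The delicate step is the nonlinear Strichartz estimate on each short subinterval: (\ref{E12133}) on the exterior domain is weaker than its Euclidean counterpart, so one must verify that, for $s_c\in[\tfrac12,\tfrac32)$, its range of exponents is compatible with the Strichartz pair $(2,6)$ on $\Omega$ and with a H\"older splitting of $|u|^\alpha u$ that places $u$ in $L^{5\alpha/2}_{t,x}$. The remaining step -- converting spatial-plus-frequency localization into an $L^6_x$ lower bound on $\Omega$ -- is comparatively soft once one checks that the projectors $P^{\Omega}_{N}$ and the underlying Gaussian heat-kernel bounds behave as expected in the exterior geometry.
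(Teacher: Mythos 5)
Your proposal follows the same route the paper indicates (adapt \cite[Lemma 5.21]{KillipVisan2013}, replacing the Euclidean chain rule by its Dirichlet analogue), and both halves are structured in the standard way: compactness plus frequency/space localization plus H\"older on a ball for the lower bound, tiling by scattering norm plus local Strichartz theory for the upper bound. So the approach matches, but there are two imprecisions worth flagging.

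For the upper bound you claim that the chain rule (\ref{E12133}) combined with the smallness of $\|u\|_{L^{5\alpha/2}_{t,x}(J_k)}$ lets you absorb the nonlinearity. As stated, (\ref{E12133}) bounds $\|(-\Delta_\Omega)^{s_c/2}(|u|^\alpha u)\|_{L^2_tL^{6/5}_x}$ by $\|(-\Delta_\Omega)^{s_c/2}u\|_{L^\infty_tL^2_x}^\alpha\,\|(-\Delta_\Omega)^{s_c/2}u\|_{L^2_tL^6_x}$, and the prefactor $\|u\|^{\alpha}_{L^\infty_t\dot H^{s_c}_D}\lesssim_u 1$ is \emph{not} small, so no absorption is possible from this estimate alone. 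What works is either to run Strichartz twice -- first the $L^{5\alpha/2}_{t,x}$-split estimate from Lemma \ref{Lnonlinearestimate} (with $v=0$), whose smallness in $\|u\|_{L^{5\alpha/2}_{t,x}(J_k)}^{\alpha-1}$ does allow absorption at the scattering-norm level and yields $\|(-\Delta_\Omega)^{s_c/2}u\|_{L^{5\alpha/2}_tL^{30\alpha/(15\alpha-8)}_x(J_k)}\lesssim_u 1$; then a second application of Proposition \ref{PStrichartz} at the pair $(2,6)$ -- or to use the intermediate inequality (\ref{E12131}) directly, which places the nonlinearity in $\|u\|^\alpha_{L^{2\alpha}_tL^{3\alpha}_x(J_k)}\,\|(-\Delta_\Omega)^{s_c/2}u\|_{L^\infty_tL^2_x}$ and needs no absorption once the auxiliary admissible norm $\|u\|_{L^{2\alpha}_tL^{3\alpha}_x(J_k)}$ has been controlled via Sobolev and the local theory. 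Similarly, Theorem \ref{TStability} by itself does not give $|J_k|\gtrsim_u 1$; that requires the compactness of the orbit $\{u(t)\}$ in $\dot H^{s_c}_D$ to give a \emph{uniform} $T_0>0$ with $\sup_{\phi\in K}\|e^{it\Delta_\Omega}\phi\|_{L^{5\alpha/2}_{t,x}([0,T_0]\times\Omega)}<\eta/2$, which one then feeds into the small-data theory (Theorem \ref{TLWP}). Your lower-bound argument is fine provided you supply the off-diagonal kernel decay of $P^\Omega_{\text{med}}$ (available from the heat-kernel bound Lemma \ref{Lheatkernel}) so that the spatial-tail bound on $(-\Delta_\Omega)^{s_c/2}u$ can be transferred to $(-\Delta_\Omega)^{s_c/2}P^\Omega_{\text{med}}u$, which you correctly identified as the remaining soft step.
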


	With these preliminaries established, we can now describe the first major step in the proof of Theorem  \ref{T1}.
	\begin{theorem}[Reduction to almost periodic solutions]\label{TReduction}
		Suppose that Theorem \ref{T1} fails for some  $s_c\in [\frac{1}{2},\frac{3}{2})$.  Then there exists a global  solution $u : \mathbb{R} \times\Omega \to \mathbb{C}$ to \eqref{NLS} such that $u \in L_t^{\infty} \dot{H}_D^{s_c}(\mathbb{R}  \times \Omega)$,   whose orbit  $\{u(t):t\in \mathbb{R} \}$ is precompact in  $\dot H^{s_c}_D(\Omega)$ and there exists  $R>0$ such that  
		\begin{equation}
			\int _{\Omega\cap \{|x|\le R\}}|u(t,x)|^{\frac{3\alpha }{2}}dx\gtrsim1 \quad\text{uniformly for }\quad t\in \mathbb{R} .\label{E}
		\end{equation}
	\end{theorem}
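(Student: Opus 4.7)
The plan is to run the Kenig--Merle concentration--compactness/rigidity scheme adapted to the exterior-domain setting, in the spirit of \cite{KillipVisanZhang2016a}. Assuming Theorem \ref{T1} fails, the small-data theory together with the hypothesis produces a critical threshold $L_{\max}\in (0,\infty)$ for the size $\|u\|_{L_t^\infty\dot H^{s_c}_D}$ above which the scattering norm can be arbitrarily large. Extracting a sequence of solutions $u_n$ whose $L_t^\infty\dot H^{s_c}_D$ norms tend to $L_{\max}$ and whose scattering norms diverge on both sides of $t=0$, I would apply the linear profile decomposition in $\dot H^{s_c}_D(\Omega)$ (constructed later in the paper) to the initial data $\{u_n(0)\}$, producing $u_n(0)=\sum_{j=1}^{J}\phi_n^j + w_n^J$ with the usual asymptotic Pythagorean decoupling in $\dot H^{s_c}_D$ and the remainder obeying a vanishing linear Strichartz norm.

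Each profile $\phi_n^j$ carries scale, spatial, and temporal parameters, and its nonlinear evolution is constructed either directly in $\Omega$ or, after a suitable rescaling, in one of the limit geometries $\R^3$ or $\HH$, depending on how its concentration scale compares to the distance from $\d\Omega$. For profiles embedded into the limit geometries, the assumed Conjecture \ref{CNLS0} (and its half-space analogue, reducible to the Euclidean one by an odd reflection across $\d\HH$) delivers global nonlinear profiles with finite scattering norm, which are then transplanted into approximate solutions of NLS$_\Omega$ via the stability theorem (Theorem \ref{TStability}). The minimality of $L_{\max}$ combined with the decoupling of the $L^{5\alpha/2}_{t,x}$ norm forces all but one profile to vanish, and the surviving profile cannot escape into a limit geometry---otherwise its nonlinear evolution would scatter globally, contradicting the blow-up of the scattering norm of $u_n$. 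This yields a minimal-in-norm solution $u$ to \eqref{NLS} whose maximal-lifespan orbit $\{u(t):t\in I\}$ is precompact in $\dot H^{s_c}_D(\Omega)$; globality in both time directions follows from the symmetric two-sided choice of the $u_n$, and Lemma \ref{Lspace-time bound} rules out any finite-lifespan blow-up.

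For the localization property \eqref{E}, I would combine this precompactness with the Sobolev embedding $\dot H^{s_c}_D(\Omega)\hookrightarrow L^{3\alpha/2}(\Omega)$, whose exponent matches exactly since $\tfrac{1}{3\alpha/2}=\tfrac12-\tfrac{s_c}{3}$. Precompactness of $\{u(t)\}$ in $L^{3\alpha/2}(\Omega)$ yields uniform tightness: for every $\eta>0$ there exists $R(\eta)>0$ with $\int_{\Omega\cap\{|x|>R(\eta)\}}|u(t,x)|^{3\alpha/2}\,dx<\eta$ uniformly in $t$. On the other hand, the $L^{3\alpha/2}$-norm of $u(t)$ admits a uniform positive lower bound, for if $u(t_n)\to 0$ along a subsequence (in $\dot H^{s_c}_D$, by precompactness), the small-data theory applied at $t=t_n$ would force the scattering norm to be finite, contradicting the defining property of the minimal counterexample. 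Choosing $\eta$ small relative to this lower bound and $R=R(\eta)$ then delivers \eqref{E}.

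The principal obstacle is the profile-embedding step: constructing the nonlinear profiles in the half-space geometry $\HH$ and faithfully embedding them back into $\Omega$ via the stability theorem, particularly for profiles concentrating near---but not collapsing onto---$\d\Omega$. One must control the discrepancy between the Schr\"odinger flow on $\HH$ and on the rescaled domain $\Omega_n$ in sufficiently strong spacetime norms at the critical regularity $s_c$, and also verify that the decomposition decouples nonlinear evolutions asymptotically even across distinct limit geometries. This is precisely where the assumption that Conjecture \ref{CNLS0} holds becomes indispensable, since only uniform global spacetime bounds on the Euclidean and half-space nonlinear profiles allow the stability argument to close.
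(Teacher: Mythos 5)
Your proposal follows essentially the same concentration--compactness/rigidity route as the paper: extract a Palais--Smale minimizing sequence, apply the $\dot H^{s_c}_D(\Omega)$ linear profile decomposition, build nonlinear profiles in the limit geometries under Conjecture~\ref{CNLS0}, transplant them back into $\Omega$ via the stability theorem, isolate the single surviving Case-1 profile, and then obtain the spatial lower bound~\eqref{E} from precompactness in $\dot H^{s_c}_D(\Omega)\hookrightarrow L^{3\alpha/2}(\Omega)$ together with a small-data lower bound. The only small imprecision is that globality of the lifespan follows from the frozen modulation parameter $N(t)\equiv 1$ for almost periodic solutions (cf.~\cite[Cor.~5.19]{KillipVisan2013}), not from Lemma~\ref{Lspace-time bound} directly, but this does not change the argument's substance.
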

	\begin{remark}
		Indeed, our proof shows that Theorem \ref{TReduction} is   valid for all  $s_c \in (0, \frac{3}{2})$. The restriction  $ s_c \geq \frac{1}{2}$ in Theorem \ref{T1} arises from the limitations imposed by the indices in Theorem \ref{TEquivalence}, which make it challenging to exclude almost periodic solutions when  $s_c\in (0,\frac{1}{2})$. See Remark \ref{R128} for more details. 
	\end{remark}
	
	The reduction to almost periodic solutions is now widely regarded as a standard technique in the study of dispersive equations at critical regularity. Keraani \cite{Keraani2006JFA} was the first to prove the existence of minimal blowup solutions, while Kenig-Merle \cite{KenigMerle2006} were the first to use them to establish a global well-posedness result. Since then, this technique has proven to be extremely useful; see \cite{KenigMerle2010,KillipTaoVisan2009,KillipVisan2010,KillipVisan2010AJM,KillipVisan2013,KillipVisan2012,KillipVisanZhang2008,MiaoMurphyZheng2014,Murphy2014,Murphy2014b,Murphy2015} for many more examples of this technique in action (and note that this is by no means an exhaustive list). For a good introduction to these methods, see \cite{KillipVisan2013}.
	
	The proof of Theorem \ref{TReduction} relies on three key components. 
	
	First, the linear and nonlinear profile decompositions are required. For the linear profile decomposition, the case $s_c = 1$ was established in \cite{KillipVisanZhang2016a}, and we will follow the methodology outlined in that work. The main tool used to derive the linear profile decomposition is the inverse Strichartz inequality. This inequality shows that a solution with non-trivial spacetime bounds must concentrate at least one bubble. By repeatedly applying the inverse Strichartz inequality, it can be demonstrated that the linear solution concentrates on multiple bubbles, with the remainder term vanishing after passing to a subsequence.
	
	After obtaining the linear profile decomposition, the next step is to construct the nonlinear profiles. These nonlinear profiles are solutions to NLS$_\Omega$ with initial data corresponding to the linear profiles. Due to the presence of the boundary, suitable scaling and spatial translations lead to the study of NLS in different geometries, which significantly distinguishes our setting from the Euclidean setting. The main challenge is that we cannot guarantee whether a profile with given initial data is entirely contained within the exterior domain. Additionally, the profile may exist at any scale and any possible location. To address this, we adopt the approach from \cite{KillipVisanZhang2016a}, which associates each profile with a specific limiting case. 
	
	Moreover, we consider three scenarios arising from the scaling and spatial translation of $\Omega$. The rescaled domain is denoted as $\Omega_n = \lambda_n^{-1}(\Omega - \{x_n\})$ for the first two cases and $\Omega_n = \lambda_n^{-1} R_n^{-1}(\Omega - \{x_n^*\})$ for the third case, where $x_n^* \in \partial \Omega$, $|x_n - x_n^*| = \operatorname{dist}(x_n, \Omega^c)$, and $R_n \in \operatorname{SO}(3)$ satisfies $R_n e_3 = \frac{x_n - x_n^*}{|x_n - x_n^*|}$. These scenarios are as follows:
	\begin{enumerate}
		\item When $\lambda_n \to \infty$, the rescaled domain $\Omega_n$ approximates $\mathbb{R}^3$.
		\item When $\frac{\operatorname{dist}(x_n, \Omega^c)}{\lambda_n} \to \infty$, the domain $\Omega_n^c$ retreats to infinity.
		\item When $\lambda_n \to 0$ and $\frac{\operatorname{dist}(x_n, \Omega^c)}{\lambda_n} = K > 0$, the domain $\Omega_n$ approximates a half-space.
	\end{enumerate}

	The second ingredient is a stability result for the nonlinear equation (see e.g. Theorem \ref{TStability} below). The third ingredient is a decoupling statement for nonlinear profiles.
	The last two ingredients are closely related, in the sense that the decoupling must hold in a space that is dictated by the stability theory. Most precisely, this means that the decoupling must hold in a space with $s_c$ derivatives. Keraani \cite{Keraani2001} showed how to prove such a decoupling statement in the context of the mass- and  energy-critical NLS; however, these arguments rely on pointwise estimates to bound the difference of nonlinearities and hence fail to be directly applicable in the presence of fractional derivatives. In \cite{KillipVisan2010}, Killip and Visan devised a strategy that is applicable in the energy-supercritical setting, while Murphy \cite{Murphy2014} developed a strategy tailored to the energy-subcritical setting. 
	In particular, by employing a Strichartz square function that provides estimates equivalent to those of $|\nabla|^{s_c}$, they can reduce the problem to a framework where Keraani's arguments can be directly applied.
	In this paper, we adopt the strategies presented in \cite{KillipVisan2010,Murphy2014}.  Specifically, by appropriately selecting the parameters and applying the equivalence theorem (Theorem \ref{TEquivalence}), we reduce the proof of the decoupling for nonlinear profiles to the cases addressed in \cite{KillipVisan2010,Murphy2014}. 
	
	With all the necessary tools in place, we can now apply the standard arguments in \cite{KillipVisan2013} to establish Theorem \ref{TReduction}. Therefore, to complete the proof of Theorem \ref{T1}, it is sufficient to rule out the existence of the solutions described in Theorem \ref{TReduction}. For this purpose, we will utilize versions of the Lin-Strauss Morawetz inequality:  
	\begin{equation}
		\int \int _{I\times \Omega}\frac{|u(t,x)|^{\alpha +2}}{|x|}dxdt\lesssim  \||\nabla |^{1/2}u\|_{L^\infty _tL_x^2(I\times \Omega)}^2, \label{E1242}
	\end{equation}
	which will be applied in Section \ref{S6} to exclude the existence of almost periodic solutions in Theorem \ref{TReduction} for the case $s_c = \frac{1}{2}$. However, when $s_c > \frac{1}{2}$, the estimate (\ref{E1242}) cannot be directly applied because the solutions considered only belong to $\dot H^{s_c}_D(\Omega)$, which means the right-hand side of (\ref{E1242}) might not be finite.
	
	For $s_c > \frac{1}{2}$, it is necessary to suppress the low-frequency components of the solutions to make use of the estimate (\ref{E1242}). In the context of the 3D radial energy-critical NLS, Bourgain \cite{Bourgain1999} achieved this by proving a space-localized version of (\ref{E1242})   (see also \cite{Grillakis2000,TaoVisanZhang2007}). In Section \ref{S6}, we adopt a similar approach to preclude the existence of almost periodic solutions in Theorem \ref{TReduction} for the range $1 < s_c < 3/2$. However, since one of the error terms arising from space localization requires controlling the solution at the $\dot{H}_D^1$ level, a different strategy is needed for the range $\frac{1}{2} < s_c < 1$. 
	To address this, in Section \ref{S1/2-1}, we develop a version of (\ref{E1242}) localized to high frequencies. This high-frequency localized version will   be employed to exclude the existence of almost periodic solutions in Theorem \ref{TReduction} when $\frac{1}{2} < s_c < 1$.
	
	The structure of the paper is as follows:
	
	In Section \ref{S2}, we introduce the necessary notation and foundational materials for the analysis. This includes the equivalence of Sobolev spaces and the product rule for the Dirichlet Laplacian; Littlewood-Paley theory and Bernstein inequalities; Strichartz estimates; local and stability theories for (\ref{NLS}); local smoothing; the convergence of functions related to the Dirichlet Laplacian as the underlying domains converge; and the behavior of the linear propagator in the context of domain convergence.
	
	Section \ref{S3} begins with the proof of the refined and inverse Strichartz inequalities (Proposition \ref{PRefined SZ} and Proposition \ref{inverse-strichartz}). These results establish that linear evolutions with  non-trivial spacetime norms must exhibit a bubble of concentration, which is then used to derive the linear profile decomposition for the propagator $e^{it\Delta_\Omega}$ in $\dot{H}^{s_c}_D(\Omega)$ (see Theorem \ref{linear-profile}).
	
	In Section \ref{S4}, we show that nonlinear solutions in the limiting geometries can be embedded into $\Omega$. Since nonlinear solutions in the limiting geometries admit global spacetime bounds (Here we need to assume that Conjecture \ref{CNLS0} holds true), we deduce that solutions to NLS$_{\Omega}$, whose characteristic length scale and location conform closely with one of these limiting cases, inherit these spacetime bounds. These solutions to NLS$_{\Omega}$ will  reappear as nonlinear profiles in Section \ref{S5}. 
	
	Section \ref{S5} is dedicated to proving the existence of almost periodic solutions (Theorem \ref{TReduction}). The key step involves establishing the Palais-Smale condition (Proposition \ref{Pps}). This is achieved using the profile decomposition developed in Section \ref{S4}, the stability theorem (Theorem \ref{TStability}) from Section \ref{S2}, and techniques from \cite{KillipVisan2010, Murphy2014} to ensure the decoupling of nonlinear profiles.
	
	In Section \ref{S6}, we rule out almost periodic solutions described in Theorem \ref{TReduction} for $1 < s_c < \frac{3}{2}$ and $s_c = \frac{1}{2}$. The proof relies on a space-localized Lin-Strauss Morawetz inequality, following the method of Bourgain \cite{Bourgain1999}.
	
	Finally, in Section \ref{S1/2-1}, we exclude solutions as in Theorem \ref{TReduction} for $\frac{1}{2} < s_c < 1$. The main tool is the long-time Strichartz estimate (Proposition \ref{PLT2}), originally developed by Dodson \cite{Dodson2012} for the mass-critical NLS. Additionally, we establish a frequency-localized Lin-Strauss Morawetz inequality (Proposition \ref{PMorawetz}) to eliminate almost periodic solutions. This approach involves truncating the solution to high frequencies and employing Proposition \ref{PLT2} to handle the error terms introduced by frequency projection.


	\section{Preliminaries}\label{S2}
	\subsection{Notation and useful lemmas}
	We express $ X \lesssim Y $ or $ Y \gtrsim X $ to denote that $ X \leq CY $ for some absolute constant $ C > 0 $, which might change from line to line. If the implicit constant relies on additional variables, this will be shown with subscripts. We employ $ O(Y) $ to represent any quantity $ X $ such that $ |X| \lesssim Y $. The notation $ X \sim Y $ implies that $ X \lesssim Y \lesssim X $. The term $ o(1) $ is used to describe a quantity that converges to zero. We will also use  $s+$ or  $s-$, which means that there exists a small positive number $ \varepsilon  $ such that it is equal to   $s+\varepsilon $ or  $s-\varepsilon $ respectively.

	Throughout this paper, we let $s_c = \frac{3}{2} - \frac{2}{\alpha} \in (0, \frac{3}{2})$. Further restrictions on the range of $s_c$ are imposed only in Section \ref{S6} and Section \ref{S1/2-1}. $ \Omega $ will stand for the exterior domain of a smooth, compact, strictly convex obstacle in $ \mathbb{R}^3 $. Without loss of generality, we assume 
	$0 \in \Omega^c$. The notation $\text{diam} := \text{diam}(\Omega^c)$ is used to denote the diameter of the obstacle, and $d(x) := \text{dist}(x, \Omega^c)$ denotes the distance from a point $x \in \mathbb{R}^3$ to the obstacle. 
	
	We first state the Hardy inequality on the exterior domain.
	\begin{lemma}[Hardy's inequality, \cite{KillipVisanZhang2016b}]
		Let $d\geq3$, $1<p<\infty$ and $0<s<\min\{1+\frac{1}{p},\frac{3}{p}\}$, then for any $f\in C_c^\infty(\Omega)$, we have
		\begin{align*}
			\Big\|\frac{f(x)}{d(x)}\big\|_{L^p(\Omega)}\lesssim\big\|(-\Delta_\Omega)^\frac{s}{2}f\big\|_{L^p(\Omega)},
		\end{align*} 
		where $d(x)=\operatorname{dist}(x,\Omega^c)$.
	\end{lemma}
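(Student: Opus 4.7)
\emph{Proof proposal.} My plan is to combine a pointwise Riesz-potential representation with refined Gaussian estimates for the Dirichlet heat semigroup on $\Omega$, using the vanishing boundary condition through the boundary-decay factors that distinguish the Dirichlet heat kernel from the Euclidean one.

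First, I would invoke the refined Gaussian upper bound
\begin{equation*}
	e^{t\Delta_\Omega}(x,y) \lesssim \Big(1\wedge \tfrac{d(x)}{\sqrt{t}}\Big)\Big(1\wedge \tfrac{d(y)}{\sqrt{t}}\Big)\, t^{-d/2}\, e^{-c|x-y|^2/t},
\end{equation*}
which is known for exterior domains of smooth compact strictly convex obstacles (Q.~S.~Zhang and related work). Writing $f = (-\Delta_\Omega)^{-s/2}\big((-\Delta_\Omega)^{s/2} f\big)$ and using the subordination formula
\begin{equation*}
	(-\Delta_\Omega)^{-s/2}(x,y) = \frac{1}{\Gamma(s/2)} \int_0^\infty t^{s/2-1}\, e^{t\Delta_\Omega}(x,y)\, dt,
\end{equation*}
the refined heat-kernel bound transfers to the Riesz potential kernel while retaining the two boundary-decay factors.

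The crucial pointwise step is the elementary identity
\begin{equation*}
	\frac{1}{d(x)}\Big(1\wedge \tfrac{d(x)}{\sqrt{t}}\Big) \;=\; \min\Big\{\tfrac{1}{d(x)},\, \tfrac{1}{\sqrt{t}}\Big\} \;\le\; \tfrac{1}{\sqrt{t}},
\end{equation*}
which lets me absorb the weight $1/d(x)$ into one of the boundary factors at the cost of an extra $t^{-1/2}$ inside the time integral. Carrying out the $t$-integral while keeping the $y$-side boundary factor, I would arrive at a kernel $K(x,y)$ bounded pointwise by a constant times
\begin{equation*}
	\frac{1}{|x-y|^{d-s+1}}\,\Big(1\wedge \tfrac{d(y)}{|x-y|}\Big)^{\sigma}
\end{equation*}
for some $\sigma>0$, where the surviving boundary factor in $y$ is exactly what compensates for the extra singularity $|x-y|^{-1}$ relative to the standard Riesz potential kernel $|x-y|^{-(d-s)}$.

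The remaining step is to prove that the integral operator with kernel $K$ is bounded on $L^p(\Omega)$, either by a weighted Schur test tailored to the boundary factor or by reducing, via the zero-extension of $f$, to the classical fractional Hardy inequality on $\R^d$. This last step is where both range restrictions enter: $s<3/p$ supplies the standard Hardy--Littlewood--Sobolev integrability at infinity (where $d(x)\sim|x|$), while $s<1+\tfrac{1}{p}$ is precisely what makes the boundary factor strong enough to render $|x-y|^{-(d-s+1)}$ integrable against $L^p$ functions in a neighborhood of $\partial\Omega$. I expect this closing $L^p$-boundedness to be the main obstacle, since one must verify that the decay furnished by the single remaining boundary factor \emph{exactly} balances the extra singularity introduced by absorbing $1/d(x)$; the balance is sharp at both endpoints of the admissible range of $s$.
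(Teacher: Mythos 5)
Your framework---Bochner subordination plus Zhang's boundary-refined heat kernel bound (Lemma~\ref{Lheatkernel})---is a natural one, but the pivotal absorption step discards exactly the information needed for $L^p$ boundedness, and the kernel bound it produces is not even locally integrable when $s<1$. The inequality $d(x)^{-1}\big(1\wedge\tfrac{d(x)}{\sqrt t}\big)=\min\{d(x)^{-1},t^{-1/2}\}\le t^{-1/2}$ is lossy precisely where it matters: the $t\to0$ part of the subordination integral is what produces the on-diagonal singularity of the Riesz kernel, and in that regime ($t<d(x)^2$) the minimum actually equals $d(x)^{-1}$, which is \emph{bounded} in $t$. Throwing this away manufactures the extra factor $|x-y|^{-1}$ in your bound $K(x,y)\lesssim|x-y|^{-(d-s+1)}\big(1\wedge\tfrac{d(y)}{|x-y|}\big)^{\sigma}$, and the surviving $y$-boundary factor cannot help near the diagonal: for any interior $x$ the set $\{y:|x-y|<d(y)\}$ contains the ball $B(x,d(x)/2)$, on which the boundary factor is comparable to $1$ while $d-s+1>d$ when $s<1$, so $K(x,\cdot)$ is not locally integrable and the integral operator it defines is not even finite on bounded compactly supported inputs. (The case $s<1$ is unavoidable---it is the entire admissible range whenever $p>3$.) To repair the computation you must retain both branches of the minimum: for $|x-y|\lesssim d(x)$ the kernel is $\sim d(x)^{-1}|x-y|^{-(d-s)}$ (Riesz-size), and only for $|x-y|\gtrsim d(x)$ does the extra $|x-y|^{-1}$ appear, which is also where the $d(y)$-factor becomes usable.

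There is also a more basic problem: the displayed inequality fails a scaling test near $\partial\Omega$ when $s\neq1$, which strongly suggests the intended weight is $d(x)^{-s}$ rather than $d(x)^{-1}$ (this is the form proved in \cite{KillipVisanZhang2016b}). For $f\in C_c^\infty(\Omega)$ a unit bump of normal width $\epsilon$ supported at boundary distance $\sim\epsilon$, one has $\|f/d\|_{L^p}\sim\epsilon^{1/p-1}$, while Theorem~\ref{TEquivalence} together with Euclidean fractional Sobolev scaling gives $\|(-\Delta_\Omega)^{s/2}f\|_{L^p}\sim\epsilon^{1/p-s}$; the ratio $\epsilon^{s-1}$ blows up as $\epsilon\to0$ when $s<1$, so no $\epsilon$-uniform constant exists. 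With weight $d(x)^{-s}$ both sides scale identically, and your absorption idea then takes its correct form $d(x)^{-s}\big(1\wedge\tfrac{d(x)}{\sqrt t}\big)\le t^{-s/2}$ (from $\min\{1,u\}\le u^s$, $0\le s\le1$), exactly cancelling the $t^{s/2}$ in the subordination weight; the resulting kernel is Calder\'on--Zygmund-size rather than Riesz-size, and the restriction $s<1+\tfrac1p$ is the sharp half-line fractional Hardy range for functions vanishing at the boundary, which is the one-dimensional structure the boundary factors in Zhang's bound encode. Your instinct that the boundary factor and the constraint $s<1+\tfrac1p$ are tied together is sound, but only after the weight is corrected and the minimum is retained.
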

	We will use the following refined version of Fatou's lemma due to Brezis and Lieb.
	
	\begin{lemma}[Refined Fatou, \cite{BrezisLieb1983}]\label{LRefinedFatou}
		Let $0 < p < \infty$ and assume that $\{f_n\} \subset L^p(\mathbb{R}^d)$ with 
		$\limsup_{n \to \infty} \|f_n\|_p < \infty$. If $f_n \to f$ almost everywhere, then 
		\[
		\int_{\mathbb{R}^d} \left| |f_n|^p - |f_n - f|^p - |f|^p \right| dx \to 0 \quad \text{as} \quad n \to \infty.
		\]
		In particular, $\|f_n\|_{L^p}^p - \|f_n - f\|_{L^p}^p \to \|f\|_{L^p}^p$.
	\end{lemma}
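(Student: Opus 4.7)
The plan is to use the classical Brezis--Lieb strategy based on a sharp pointwise inequality combined with the dominated convergence theorem. The statement is purely a measure-theoretic fact on $\mathbb{R}^d$ with no connection to the Dirichlet Laplacian or the domain $\Omega$, so I would not invoke any of the linear machinery developed later in the paper.

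First, I would establish the elementary pointwise estimate: for each $\varepsilon>0$ there exists $C_\varepsilon>0$ such that for all $a,b\in\mathbb{C}$,
\begin{equation*}
\bigl|\,|a+b|^p - |a|^p\,\bigr| \leq \varepsilon\,|a|^p + C_\varepsilon\,|b|^p,
\end{equation*}
so that, rewriting $|f_n|^p - |f_n-f|^p - |f|^p$ with $a=f_n-f$ and $b=f$, we obtain
\begin{equation*}
\bigl|\,|f_n|^p - |f_n-f|^p - |f|^p\,\bigr| \leq \varepsilon\,|f_n-f|^p + (C_\varepsilon+1)\,|f|^p.
\end{equation*}
The verification of this pointwise inequality splits into cases $|b|\leq\delta|a|$ (using the smoothness of $t\mapsto|t|^p$, or the mean value theorem when $p\geq 1$, and the sub-additivity $|a+b|^p\leq|a|^p+|b|^p$ when $0<p<1$) and $|b|>\delta|a|$ (trivial, absorbing everything into $C_\varepsilon|b|^p$).

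Next, I would define the truncated error
\begin{equation*}
W_{n,\varepsilon}(x) := \Bigl(\,\bigl|\,|f_n|^p-|f_n-f|^p-|f|^p\,\bigr| - \varepsilon\,|f_n-f|^p\,\Bigr)_+,
\end{equation*}
which by the pointwise bound satisfies $0\leq W_{n,\varepsilon}\leq (C_\varepsilon+1)|f|^p$. Since $f_n\to f$ a.e., Fatou's lemma applied to $|f|\leq\liminf|f_n|$ ensures $f\in L^p$, so the dominating function $(C_\varepsilon+1)|f|^p$ lies in $L^1$. Moreover $W_{n,\varepsilon}\to 0$ almost everywhere, so by the dominated convergence theorem $\int W_{n,\varepsilon}\,dx\to 0$ as $n\to\infty$.

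Finally, combining the definition of $W_{n,\varepsilon}$ with the uniform bound $\|f_n-f\|_p\lesssim \|f_n\|_p+\|f\|_p \leq M$ (which follows from the hypothesis $\limsup\|f_n\|_p<\infty$ together with $\|f\|_p\leq \liminf\|f_n\|_p$ from Fatou), we obtain
\begin{equation*}
\limsup_{n\to\infty}\int_{\mathbb{R}^d}\bigl|\,|f_n|^p-|f_n-f|^p-|f|^p\,\bigr|\,dx \leq \varepsilon\,M^p.
\end{equation*}
Letting $\varepsilon\to 0$ yields the main claim, and the "in particular" assertion then follows immediately from the triangle inequality applied to $L^1$-integration. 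The only slightly delicate point is the pointwise inequality in the regime $0<p<1$, where one cannot appeal to a uniform derivative bound; there one uses the quasi-sub-additivity of $t\mapsto t^p$ and the dichotomy described above. Beyond that, the argument is a textbook application of the dominated convergence theorem.
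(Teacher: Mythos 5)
Your proposal is a correct and complete rendition of the classical Brezis--Lieb argument: the sharp pointwise inequality $\bigl||a+b|^p-|a|^p\bigr|\le\varepsilon|a|^p+C_\varepsilon|b|^p$, the truncated error $W_{n,\varepsilon}$, dominated convergence, and the passage $\varepsilon\to 0$. The paper itself does not prove this lemma but merely cites it to Brezis and Lieb, so there is no divergence in approach to discuss; your proof is exactly the one in the cited source.
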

	
	The following fractional difference estimate will be used in the proof of Lemma \ref{Lnonlinearestimate}. 
	\begin{lemma}[Derivatives of differences, \cite{KillipVisan2010}]\label{LDerivatives of differences}
		Let $F(u) = |u|^p  u$ with $p > 0$ and let $0 < s < 1$. Then for $1 < q, q_1, q_2 < \infty$ such that $\frac{1}{q} = \frac{p}{q_1} + \frac{1 }{q_2}$, we have
		\[
		\|\nabla|^s [F(u+v) - F(u)] \|_{L^q(\mathbb{R} ^d)} \lesssim \|\nabla|^s u\|_{L^{q_1}(\mathbb{R} ^d)}^{p } \|v\|_{L^{q_2}(\mathbb{R} ^d)} + \|\nabla|^s v\|_{L^{q_1}(\mathbb{R} ^d)} ^{p }\|u+v\|_{L^{q_2}(\mathbb{R} ^d)}.
		\]
	\end{lemma}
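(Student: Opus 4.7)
The plan is to combine the fundamental theorem of calculus with the Kato--Ponce fractional product rule and a fractional chain rule for $|z|^p$. Since $F(z) = |z|^p z$ has Wirtinger derivatives $F_z, F_{\bar z}$ pointwise dominated by a constant times $|z|^p$, I would first use the FTC in the parameter $t \in [0,1]$ to write
\begin{equation*}
F(u+v) - F(u) = v\int_0^1 F_z(u+tv)\,dt + \bar v \int_0^1 F_{\bar z}(u+tv)\,dt,
\end{equation*}
so that the problem reduces to estimating $\||\nabla|^s(v\cdot G(u,v))\|_{L^q}$, where $G$ denotes either integral above and satisfies $|G| \lesssim |u+v|^p$ pointwise (after Minkowski in $t$).

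I would then distribute the derivative via the fractional product rule. The piece in which $|\nabla|^s$ lands on $v$ is controlled by $\||\nabla|^s v\|_{L^{q_1}}\|G\|_{L^{q_2/p}} \lesssim \||\nabla|^s v\|_{L^{q_1}}\|u+v\|_{L^{q_2}}^{p}$, which is precisely the second term on the right-hand side (after the natural rebalancing, in view of the apparent typo in the placement of the exponent $p$). For the piece where $|\nabla|^s$ lands on $G$, I would push the derivative under the $t$-integral, apply a fractional chain rule to $|u+tv|^p$, and use $u+tv = (1-t)u + t(u+v)$ together with Minkowski in $t$ to produce a factor of $\||\nabla|^s u\|_{L^{q_1}}$ or $\||\nabla|^s(u+v)\|_{L^{q_1}}$, the latter being bounded by $\||\nabla|^s u\|_{L^{q_1}} + \||\nabla|^s v\|_{L^{q_1}}$. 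Pairing this with the $\|v\|_{L^{q_2}}$ factor from the product rule (and absorbing $\|v\|_{L^{q_2}} \lesssim \|u\|_{L^{q_2}} + \|u+v\|_{L^{q_2}}$) yields the first term on the right-hand side.

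The main obstacle is the case $0 < p < 1$: since $|z|^p$ is only Hölder continuous of order $p$ at zero, the standard linear fractional chain rule $\||\nabla|^s|w|^p\|_{L^a} \lesssim \|w\|_{L^b}^{p-1}\||\nabla|^s w\|_{L^c}$ is unavailable. For this regime I would instead invoke the Hölder-type fractional chain rule of Visan \cite{Visan2007}, which controls $\||\nabla|^s|w|^p\|_{L^a}$ by a $p$-th power of a fractional norm of $w$; tuning the exponents and combining with Sobolev embedding produces the $p$-th power structure on the right-hand side consistent with the scaling relation $\frac{1}{q} = \frac{1}{q_1} + \frac{p}{q_2}$. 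For $p \geq 1$ the classical chain rule applies directly and the same bound follows more easily. The symmetric roles of $u$ and $u+v$ (via $u = (u+v) - v$) then allow one to keep either $\|u+v\|_{L^{q_2}}$ or $\|v\|_{L^{q_2}}$ on the right, matching the stated form of the inequality.
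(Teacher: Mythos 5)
The paper offers no proof of this lemma; it is imported from \cite{KillipVisan2010}, so there is no in-paper argument to compare your proposal against. Your overall plan --- fundamental theorem of calculus in the parameter $t$, fractional Leibniz rule on $v\cdot\int_0^1 F_z(u+tv)\,dt$, then a chain rule (with Visan's H\"older variant for $0<p<1$) --- is the standard route, and you were also right to flag the misplaced exponent $p$.

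The gap is in the step where you claim the piece in which $|\nabla|^s$ falls on $G:=\int_0^1 F_z(u+tv)\,dt$ ``yields the first term on the right-hand side.'' The chain rule applied to $|u+tv|^p$ produces a factor $\|u+tv\|_{L^{q_2}}^{p-1}\||\nabla|^s(u+tv)\|_{L^{q_1}}$, so after pairing with the $\|v\|_{L^{q_2}}$ factor from the Leibniz rule and applying Minkowski you are left with cross terms such as $\|u\|_{L^{q_2}}^{p-1}\|v\|_{L^{q_2}}\||\nabla|^s u\|_{L^{q_1}}$. These cannot be absorbed into the claimed two-term right-hand side. To see this, take $u=\phi e^{iNx}$ and $v=\epsilon\phi$ with $\phi$ a nonvanishing Schwartz function: the linearization of $F(u+v)-F(u)$ contains the piece $\tfrac p2|u|^{p-2}u^2\bar v = \tfrac p2\epsilon\phi^{p+1}e^{2iNx}$, which contributes $\sim\epsilon N^s$ to the left-hand side, while the two-term right-hand side is $\sim N^s\epsilon^p+\epsilon$; for $p>1$ and along $\epsilon=N^{-s/(p-1)}$, $N\to\infty$, the ratio grows like $N^s$. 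So the absorption fails, and the printed statement looks too strong for $p>1$. What your argument does establish, and what this paper actually needs in the proof of Lemma~\ref{Lnonlinearestimate} (see \eqref{E1162}), is the inequality with the cross terms retained, namely
\[
\big\||\nabla|^s[F(u+v)-F(u)]\big\|_{L^q}\lesssim\big(\|u\|_{L^{q_2}}+\|v\|_{L^{q_2}}\big)^{p-1}\Big(\|v\|_{L^{q_2}}\,\||\nabla|^s u\|_{L^{q_1}}+\big(\|u\|_{L^{q_2}}+\|v\|_{L^{q_2}}\big)\,\||\nabla|^s v\|_{L^{q_1}}\Big),
\]
and I would recommend proving and stating that version instead.
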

	
	We will  also use the following heat kernel estimate due to Q. S. Zhang \cite{Zhang2003}.
	\begin{lemma}[Heat kernel estimate \cite{Zhang2003}]\label{Lheatkernel}
		Let $\Omega$ denote the exterior of a smooth, compact, convex obstacle in $\mathbb{R}^d$ for $d \geq 3$. Then there exists $c > 0$ such that
		\[
		|e^{t\Delta_\Omega}(x,y)| \lesssim \left( \frac{d(x)}{\sqrt{t} \wedge \text{diam}} \wedge 1 \right) \left( \frac{d(y)}{\sqrt{t} \wedge \text{diam}} \wedge 1 \right) e^{-\frac{c|x - y|^2}{t}} t^{-\frac{d}{2}},
		\]
		uniformly for $x, y \in \Omega$ and  $t\ge0$; recall that   $A\wedge B=\min \{A,B\}$. Moreover, the reverse inequality holds after suitable modification of  $c$ and the implicit constant.
	\end{lemma}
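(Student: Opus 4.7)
The plan is to derive the stated two-sided bound by combining the Gaussian decay inherited from the free heat kernel with a boundary correction factor produced by the Dirichlet condition, where the convexity of $\Omega^c$ plays a role at both the upper and lower ends. I would separate the argument into an upper bound, a lower bound, and a short-time/long-time matching that produces the factor $\sqrt{t}\wedge \mathrm{diam}$.

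For the upper bound, the first step is to record the pointwise domination by the free heat kernel. Since $H_0^1(\Omega) \hookrightarrow H^1(\R^d)$ via extension by zero, Beurling-Deny / Ouhabaz semigroup domination gives
\[
0 \leq e^{t\Delta_\Omega}(x,y) \leq (4\pi t)^{-d/2} e^{-|x-y|^2/(4t)},
\]
supplying the Gaussian factor with a fraction of the exponent held in reserve. To insert the boundary vanishing factor, I would apply a parabolic boundary Harnack / Hopf-type estimate to the positive caloric function $u(t,\cdot):=e^{t\Delta_\Omega}(\cdot,y)$: since $u$ vanishes on $\partial\Omega$ and has interior Gaussian upper bounds, one has, for $d(x)\lesssim\sqrt{t}\wedge\mathrm{diam}$,
\[
u(t,x) \lesssim \frac{d(x)}{\sqrt{t}\wedge \mathrm{diam}}\, u(t,x')
\]
for a comparison point $x'$ with $d(x')\sim |x-x'|\sim \sqrt{t}\wedge \mathrm{diam}$; absorbing this into the free Gaussian bound (at the cost of a small loss in the Gaussian exponent, to accommodate the shift from $x$ to $x'$) yields the one-sided boundary factor in $x$, and symmetrizing in $y$ via the symmetry of the kernel produces the product factor.

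For the lower bound, convexity is used in the opposite direction. At any $x^*\in\partial\Omega$, the obstacle lies entirely on one side of its tangent plane, so the opposite open half-space $H_{x^*}$ is contained in $\Omega$. Monotonicity of Dirichlet heat kernels in the domain gives
\[
e^{t\Delta_\Omega}(x,y) \geq e^{t\Delta_{H_{x^*}}}(x,y) = (4\pi t)^{-d/2}\bigl[e^{-|x-y|^2/(4t)} - e^{-|x-\widetilde{y}|^2/(4t)}\bigr],
\]
where $\widetilde{y}$ is the reflection of $y$ across $\partial H_{x^*}$. The mean value theorem applied in the exponent produces a factor comparable to $(d(x)d(y)/t)\wedge 1$ whenever $x,y$ are close to the obstacle, and away from $\partial\Omega$ a parabolic Harnack chain of length controlled by $\mathrm{diam}/\sqrt{t}$ transfers the lower bound from the half-space to all of $\Omega$.

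Finally, the normalization $\sqrt{t}\wedge \mathrm{diam}$ encodes the short vs.\ long time dichotomy. For $\sqrt{t}\lesssim \mathrm{diam}$ the half-space comparisons above are already sharp. For $\sqrt{t}\gtrsim \mathrm{diam}$, I would iterate the semigroup via Chapman-Kolmogorov,
\[
e^{t\Delta_\Omega}(x,y) = \int_\Omega e^{s\Delta_\Omega}(x,z)\, e^{(t-s)\Delta_\Omega}(z,y)\, dz
\]
with $s\sim \mathrm{diam}^2$, so that the boundary factor saturates at scale $\mathrm{diam}$ while the Gaussian in $|x-y|^2/t$ survives via a standard Davies-type completion of the square. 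The main technical obstacle, and the heart of the matter, is to preserve a strictly positive Gaussian constant $c>0$ after both the boundary-Harnack step and this iteration; this is handled by reserving a small fixed fraction of the exponent at each stage so that the cumulative loss is controlled.
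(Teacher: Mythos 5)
This lemma is quoted from Zhang \cite{Zhang2003} without proof in the paper, so there is no in-paper argument to compare against; the most one can do is judge your sketch on its own terms. Your proposal assembles the standard analytic toolkit for two-sided Gaussian bounds with boundary vanishing factors, and it is a sound route to the result: free-kernel domination by extension by zero for the Gaussian envelope; a parabolic Carleson/boundary-Harnack step for the upper-bound factor $d(x)/(\sqrt{t}\wedge\text{diam})$, with the $x\mapsto x'$ shift absorbed into the Gaussian exponent; half-space comparison through the tangent plane (valid precisely because convexity of $\Omega^c$ forces $\Omega$ to contain a half-space at every $x^*\in\partial\Omega$), the method-of-images kernel yielding $(d(x)d(y)/t)\wedge 1$, and a parabolic Harnack chain to propagate the lower bound globally with the chain-length loss eaten by the Gaussian; and Chapman--Kolmogorov at scale $s\sim\text{diam}^2$ to saturate the boundary factor at long times. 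In spirit this tracks Zhang's argument, which is organized more around hitting-time and survival-probability estimates for Brownian motion and explicit comparison with balls/annuli, but both rest on the same geometric facts. Two places to tighten if you carried this out in full: the parabolic Carleson estimate compares $u(x,t)$ to a corkscrew value at a strictly \emph{earlier} time, so an interior Harnack step is needed to return to the slice $t$ before you can conclude $u(t,x)\lesssim \frac{d(x)}{r}\,u(t,x')$; and when $x$ and $y$ sit near different boundary points, a single tangent half-space at $x^*$ produces the boundary factor only in $x$, so you must run the half-space comparison and the Harnack chaining symmetrically from both endpoints (or split $t$ in two and use Chapman--Kolmogorov) to obtain the product factor. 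Neither gap is structural.
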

	
	There is a natural family of Sobolev spaces associated with powers of the Dirichlet Laplacian. Our notation for these is as follows.
	
	\begin{definition} 
		For $s \geq 0$ and $1 < p < \infty$, let $\dot{H}^{s,p}_D(\Omega)$ and $H^{s,p}_D(\Omega)$ denote the completions of $C_c^{\infty}(\Omega)$ under the norms
		\[
		\|f\|_{\dot{H}^{s,p}_D(\Omega)} := \|(-\Delta_{\Omega})^{s/2} f\|_{L^p} \quad \text{and} \quad \|f\|_{H^{s,p}_D(\Omega)} := \|(1 - \Delta_{\Omega})^{s/2} f\|_{L^p}.
		\]
		When $p = 2$ we write $\dot{H}^s_D(\Omega)$ and $H^s_D(\Omega)$ for $\dot{H}^{s,2}_D(\Omega)$ and $H^{s,2}_D(\Omega)$, respectively.
	\end{definition}
	
	The following result from \cite{KillipVisanZhang2016c}   establishes a connection between Sobolev spaces defined with respect to the Dirichlet Laplacian and those defined through conventional Fourier multipliers. The constraints on regularity $ s $ are important, as shown by counterexamples in \cite{KillipVisanZhang2016c}.

	\begin{theorem}[Equivalence of Sobolev spaces,\cite{KillipVisanZhang2016c}]\label{TEquivalence}
		Let $ d \geq 3 $ and let $ \Omega $ denote the complement of a compact convex body $ \Omega^c \subset \mathbb{R}^d $ with smooth boundary. Let $ 1 < p < \infty $. If $ 0 \leq s < \min \left\{ 1 + \frac{1}{p}, \frac{d}{p} \right\} $, then
		\[
		\|(-\Delta_{\mathbb{R}^d})^{s/2} f\|_{L^p} \sim_{d,p,s} \|(-\Delta_{\Omega})^{s/2} f\|_{L^p} \quad \text{for all } f \in C_c^\infty(\Omega).
		\]
	\end{theorem}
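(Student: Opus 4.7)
The plan is to establish both directions of the equivalence by comparing $(-\Delta_\Omega)^{s/2}$ and $(-\Delta_{\R^d})^{s/2}$ through the Balakrishnan subordination formula
\[
A^{s/2} f = \frac{1}{|\Gamma(-s/2)|} \int_0^\infty \bigl(f - e^{-tA} f\bigr) \, \frac{dt}{t^{1+s/2}}, \qquad 0 < s < 2,
\]
applied to $A = -\Delta_\Omega$ acting on $f \in C_c^\infty(\Omega)$ and to $A = -\Delta_{\R^d}$ acting on the zero-extension $\tilde f$ of $f$. Subtracting the two identities reduces matters to bounding the cross operator $T_s f(x) := \int_0^\infty [e^{t\Delta_\Omega} f(x) - e^{t\Delta_{\R^d}} \tilde f(x)]\, \frac{dt}{t^{1+s/2}}$ on $L^p(\Omega)$ by a smaller quantity---either by a weighted $L^p$ norm (which Hardy's inequality then absorbs into $\|(-\Delta_\Omega)^{s/2} f\|_{L^p}$), or directly by $\|(-\Delta_{\R^d})^{s/2}\tilde f\|_{L^p}$. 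Either of these bounds closes the equivalence.

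\textbf{Kernel analysis.} The technical core is a pointwise difference estimate
\[
\bigl|e^{t\Delta_{\R^d}}(x,y) - e^{t\Delta_\Omega}(x,y)\bigr| \lesssim \Bigl(\frac{d(x)}{\sqrt{t}\wedge \operatorname{diam}} \wedge 1\Bigr)\Bigl(\frac{d(y)}{\sqrt{t}\wedge\operatorname{diam}} \wedge 1\Bigr) t^{-d/2} e^{-c|x-y|^2/t},
\]
obtained by a maximum-principle/reflection argument refining Lemma \ref{Lheatkernel}. Integrating against $t^{-1-s/2}$, and organizing the computation according to whether $\sqrt{t}$ is large or small relative to $|x-y|$, $d(x)$, $d(y)$, produces a kernel dominated by $(d(x)\wedge|x-y|)^{a}(d(y)\wedge|x-y|)^{s-a}|x-y|^{-d}$ for any admissible $a\in[0,s]$. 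A standard Schur-test and fractional-integration argument then yields the $L^p$-boundedness of $T_s$, provided the exponents line up---which they do under the hypothesis $s < d/p$.

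\textbf{Where the range constraints bite.} The constraint $s < 1 + 1/p$ is precisely the one in the Hardy inequality recalled above: above this threshold the equivalence fails, because for $s > 1 + 1/p$ the operator $(-\Delta_{\R^d})^{s/2}$ applied to $\tilde f$ produces a singular layer on $\partial\Omega$ that the Dirichlet operator does not see, and explicit test functions show the two norms are incomparable. The constraint $s < d/p$ is required to have a genuine homogeneous Sobolev space on the $\R^d$ side (no zero-frequency pathology). The main obstacle I anticipate is the range $s \in [1, 1+1/p)$, for which the Balakrishnan formula alone is insufficient: one must first strip off a derivative via the Riesz-type transform $\nabla(-\Delta_\Omega)^{-1/2}$ (whose $L^p$-boundedness outside a convex obstacle is a nontrivial input in its own right, relying on the geometry of $\partial\Omega$), then reapply the argument at regularity $s-1\in[0,1/p)$, and finally complex-interpolate against the endpoint cases $s=0$ and $s=1$. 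Within the kernel analysis itself, verifying the boundary decay in the regime $\sqrt{t} \gtrsim d(x), d(y)$---where the cancellation between the Dirichlet and free heat kernels is the essential input---is the most delicate piece and is where convexity of $\Omega^c$ is used.
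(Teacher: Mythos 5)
This theorem is not proved in the paper at all: it is imported wholesale from \cite{KillipVisanZhang2016c}, so there is no ``paper's own proof'' to compare your sketch against. That said, your proposed route does contain a concrete gap worth flagging.

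The kernel-comparison step is wrong as written. You claim
\[
\bigl|e^{t\Delta_{\R^d}}(x,y) - e^{t\Delta_\Omega}(x,y)\bigr| \lesssim \Bigl(\tfrac{d(x)}{\sqrt{t}\wedge \operatorname{diam}} \wedge 1\Bigr)\Bigl(\tfrac{d(y)}{\sqrt{t}\wedge\operatorname{diam}} \wedge 1\Bigr) t^{-d/2} e^{-c|x-y|^2/t},
\]
which is literally the same form as Q.~S.~Zhang's bound on $e^{t\Delta_\Omega}(x,y)$ itself (Lemma~\ref{Lheatkernel}). But this expresses \emph{no} cancellation in the regime where $x$ and $y$ are both deep in the interior: if $d(x), d(y) \gtrsim \max\{\sqrt{t}, \operatorname{diam}\}$, both boundary factors collapse to $1$ and you are left with the full Gaussian $t^{-d/2}e^{-c|x-y|^2/t}$. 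Integrating this against $t^{-1-s/2}\,dt$ then yields a kernel of size $\sim |x-y|^{-d-s}$ for nearby interior points, which is exactly the size of the \emph{free} fractional Laplacian kernel. With that bound, the cross operator $T_s$ is not controlled better than $(-\Delta_{\R^d})^{s/2}$ itself, and there is no ``smaller quantity'' to absorb via Hardy --- the whole absorption step fails. What one actually needs is a genuine off-the-boundary decay in the difference, something of the form $t^{-d/2}\exp\bigl(-c(d(x)^2 + d(y)^2 + |x-y|^2)/t\bigr)$ (or the analogous estimate with the reflected point appearing in the exponent), which is an honest refinement using the maximum principle and the reflection trick and is \emph{not} implied by Zhang's bound as cited. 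Until this is in place, the Schur test you describe cannot close. The rest of the architecture you lay out --- Balakrishnan for $0<s<1$, stripping a derivative via the Riesz transform $\nabla(-\Delta_\Omega)^{-1/2}$ to reach $s\in[1,1+1/p)$, and interpolating, with the Hardy restriction $s<1+1/p$ and the Sobolev restriction $s<d/p$ governing the endpoint --- is a plausible outline consistent with the structure of the cited result, but it hinges entirely on the heat kernel cancellation that your stated estimate does not provide.
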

	
	This result allows us to transfer the $L^p$-product rule for fractional derivatives and the chain rule directly from the Euclidean setting, provided we respect the restrictions on $s$ and $p$.  
	\begin{lemma}\label{LFractional product rule}
		For all $f, g \in C_c^\infty(\Omega)$, we have
		\[
		\|(-\Delta_\Omega)^{s/2} (fg)\|_{L^p(\Omega)} \lesssim \|(-\Delta_\Omega)^{s/2} f\|_{L^{p_1}(\Omega)} \|g\|_{L^{p_2}(\Omega)} + \|f\|_{L^{q_1}(\Omega)} \|(-\Delta_\Omega)^{s/2} g\|_{L^{q_2}(\Omega)}
		\]
		with the exponents satisfying $1 < p, p_1, q_2 < \infty$, $1 < p_2, q_1 \leq \infty$,
		\[
		\frac{1}{p} = \frac{1}{p_1} + \frac{1}{p_2} = \frac{1}{q_1} + \frac{1}{q_2},\quad\text{and}\quad 0 < s < \min \left\{ 1 + \frac{1}{p_1}, 1 + \frac{1}{q_2}, \frac{3}{p_1}, \frac{3}{q_2} \right\}.
		\]
	\end{lemma}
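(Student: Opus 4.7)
The plan is to transfer the inequality to the Euclidean setting via Theorem \ref{TEquivalence}, apply the classical fractional Leibniz rule on $\mathbb{R}^3$, and then transfer back, with all hypotheses on $s$ and the exponents lined up so that each invocation of Theorem \ref{TEquivalence} is legitimate.

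First, since $f, g \in C_c^\infty(\Omega)$, the product $fg$ extended by zero lies in $C_c^\infty(\Omega)$. From the H\"older relation $\tfrac{1}{p} = \tfrac{1}{p_1} + \tfrac{1}{p_2}$ we get $p \le p_1$, and similarly $p \le q_2$, so
\[
\min\Bigl\{1+\tfrac{1}{p},\,\tfrac{3}{p}\Bigr\} \;\ge\; \min\Bigl\{1+\tfrac{1}{p_1},\,\tfrac{3}{p_1}\Bigr\} \;>\; s,
\]
and hence Theorem \ref{TEquivalence} applies to $fg$, giving
\[
\|(-\Delta_\Omega)^{s/2}(fg)\|_{L^p(\Omega)} \;\sim\; \|(-\Delta_{\mathbb{R}^3})^{s/2}(fg)\|_{L^p(\mathbb{R}^3)}.
\]

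Next, I would invoke the classical Euclidean fractional product rule of Christ--Weinstein / Kenig--Ponce--Vega, which asserts
\[
\|(-\Delta_{\mathbb{R}^3})^{s/2}(fg)\|_{L^p(\mathbb{R}^3)} \lesssim \|(-\Delta_{\mathbb{R}^3})^{s/2} f\|_{L^{p_1}} \|g\|_{L^{p_2}} + \|f\|_{L^{q_1}} \|(-\Delta_{\mathbb{R}^3})^{s/2} g\|_{L^{q_2}}
\]
for $s > 0$ and the given range of exponents (in particular allowing $p_2$ or $q_1$ equal to $\infty$). Finally, I would again invoke Theorem \ref{TEquivalence}, this time to the factors $f$ and $g$ separately: the hypotheses $0 < s < \min\{1+\tfrac{1}{p_1},\tfrac{3}{p_1}\}$ and $0 < s < \min\{1+\tfrac{1}{q_2},\tfrac{3}{q_2}\}$ are built into the statement of the lemma precisely so that
\[
\|(-\Delta_{\mathbb{R}^3})^{s/2} f\|_{L^{p_1}} \sim \|(-\Delta_\Omega)^{s/2} f\|_{L^{p_1}(\Omega)},\qquad \|(-\Delta_{\mathbb{R}^3})^{s/2} g\|_{L^{q_2}} \sim \|(-\Delta_\Omega)^{s/2} g\|_{L^{q_2}(\Omega)},
\]
and the proof is complete.

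The only non-routine point is the verification that the regularity constraints on $s$ suffice for \emph{every} use of Theorem \ref{TEquivalence}; the restriction on the left-hand side is automatically weaker than those demanded on the right by H\"older's inequality, so the conditions stated in the lemma are exactly what is needed. There is no real analytic obstacle beyond carefully lining up these indices; the content is entirely supplied by the equivalence theorem together with the known Euclidean Leibniz rule.
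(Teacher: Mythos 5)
Your proof is correct and is exactly the route the paper intends: the paper states this lemma without proof immediately after Theorem \ref{TEquivalence}, noting only that the Euclidean product rule ``transfers directly'' once the restrictions on $s$ and the exponents are respected. Your verification that the index condition for the left-hand side application of Theorem \ref{TEquivalence} at exponent $p$ is automatically implied by the stated conditions at $p_1$ (via $p\le p_1$ from H\"older) is the one small check the paper leaves implicit, and you handled it correctly.
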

	\begin{lemma}\label{LChainrule}
		Suppose  $G\in C^2(\mathbb{C})$ and  $1<p,p_1,p_2<\infty $ are such that  $\frac{1}{p}=\frac{1}{p_1}+\frac{1}{p_2}$. Then for all  $0<s<\min \left\{ 2,\frac{3}{p_2} \right\}$,
		\begin{equation}
			\|(-\Delta _\Omega)^{\frac{s}{2}}G(u)\|_{L^p(\Omega)}\lesssim   \|G'(u)\|_{L^{p_1}(\Omega)} \|(-\Delta _\Omega)^{\frac{s}{2}}u\|_{L^{p_2}(\Omega)}.\notag   
		\end{equation}
	\end{lemma}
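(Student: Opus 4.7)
The plan is to reduce the inequality to its Euclidean counterpart via the equivalence of Sobolev spaces (Theorem \ref{TEquivalence}) and then quote the known fractional chain rule on $\R^3$. Concretely, I would argue by density and assume $u\in C_c^\infty(\Omega)$; in the applications of interest (notably $G(u)=|u|^\alpha u$) we have $G(0)=0$, so $G(u)$ extends by zero to a function on $\R^3$ that is sufficiently regular and supported in $\Omega$.

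First I would invoke Theorem \ref{TEquivalence} on $G(u)$ to pass from the Dirichlet Laplacian to the Euclidean one:
\[
\|(-\Delta_\Omega)^{s/2} G(u)\|_{L^p(\Omega)} \sim \|(-\Delta_{\R^3})^{s/2} G(u)\|_{L^p(\R^3)},
\]
which is licensed provided $0\le s<\min\{1+\tfrac{1}{p},\tfrac{3}{p}\}$. Since $\tfrac{1}{p}=\tfrac{1}{p_1}+\tfrac{1}{p_2}\ge\tfrac{1}{p_2}$, we have $\tfrac{3}{p}\ge\tfrac{3}{p_2}$ and $1+\tfrac{1}{p}\ge 1+\tfrac{1}{p_2}$, so the hypothesis $s<\min\{2,\tfrac{3}{p_2}\}$ is more than enough for this step. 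Second, on $\R^3$ I would apply the classical fractional chain rule for $C^2$ nonlinearities, valid for all $0<s<2$ and any Hölder-type exponents $\frac{1}{p}=\frac{1}{p_1}+\frac{1}{p_2}$:
\[
\|(-\Delta_{\R^3})^{s/2} G(u)\|_{L^p(\R^3)} \lesssim \|G'(u)\|_{L^{p_1}(\R^3)} \,\|(-\Delta_{\R^3})^{s/2} u\|_{L^{p_2}(\R^3)}.
\]
Since $u$ is supported in $\Omega$, $\|G'(u)\|_{L^{p_1}(\R^3)}=\|G'(u)\|_{L^{p_1}(\Omega)}$ (with $G'(0)$ only contributing in a set of measure zero if we redefine $G'$ consistently).

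Finally, I would apply Theorem \ref{TEquivalence} once more, now to $u$, to replace $\|(-\Delta_{\R^3})^{s/2}u\|_{L^{p_2}}$ by $\|(-\Delta_\Omega)^{s/2}u\|_{L^{p_2}(\Omega)}$. This is the step that dictates the range of $s$: the equivalence requires $0\le s<\min\{1+\tfrac{1}{p_2},\tfrac{3}{p_2}\}$. When $p_2\le 2$ one has $1+\tfrac{1}{p_2}\ge \tfrac{3}{p_2}$, and when $p_2>2$ one has $\tfrac{3}{p_2}<1+\tfrac{1}{p_2}$, so in both regimes the constraint reduces to $s<\tfrac{3}{p_2}$, which is exactly the hypothesis. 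Combined with the Euclidean restriction $s<2$ coming from the second step, this yields the stated range $0<s<\min\{2,\tfrac{3}{p_2}\}$.

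The main obstacle is the second step: the Euclidean fractional chain rule for $G\in C^2$ and the full range $0<s<2$ (rather than merely $0<s<1$, which is an easy Coifman--Meyer-type consequence of the usual $s\in(0,1)$ chain rule). Once this is accepted as a black box from the Euclidean literature, the rest is a routine matter of checking that the parameters $s,p,p_1,p_2$ respect the equivalence conditions of Theorem \ref{TEquivalence} on both ends, which I verified above.
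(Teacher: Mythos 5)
Your strategy --- transfer to $\R^3$ via Theorem \ref{TEquivalence}, apply the Euclidean fractional chain rule for $C^2$ nonlinearities, then transfer back --- is exactly what the paper intends (it announces precisely this reduction just before stating Lemmas \ref{LFractional product rule} and \ref{LChainrule}, and gives no further proof). Your check of the constraints for the first transfer, on $G(u)$ at exponent $p$, is also correct: since $1/p = 1/p_1 + 1/p_2 > 1/p_2$, both $1+1/p > 1+1/p_2$ and $3/p > 3/p_2$, so that transfer imposes nothing beyond what is already needed at $p_2$.

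The difficulty lies in the parameter check for the second transfer, on $u$ at exponent $p_2$. You claim that $1+\tfrac{1}{p_2} \ge \tfrac{3}{p_2}$ when $p_2 \le 2$, but this inequality is equivalent to $p_2 \ge 2$; for $1 < p_2 < 2$ one has $1+\tfrac{1}{p_2} < \tfrac{3}{p_2}$ strictly. Consequently the requirement from Theorem \ref{TEquivalence} is $s < \min\{1+\tfrac{1}{p_2},\, \tfrac{3}{p_2}\}$, which does \emph{not} reduce to $s < \tfrac{3}{p_2}$ when $p_2 < 2$. Take for instance $p_2 = 3/2$ and $s = 1.9$: the lemma's stated hypothesis $s < \min\{2, 3/p_2\} = 2$ holds, yet $s \ge 1+\tfrac{1}{p_2} = \tfrac{5}{3}$, so Theorem \ref{TEquivalence} is not applicable to $u$ at exponent $p_2$. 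Hence your transfer argument only proves the lemma for $0 < s < \min\{1+\tfrac{1}{p_2},\,\tfrac{3}{p_2}\}$; this coincides with the stated range when $p_2 \ge 2$ but is strictly smaller for $1 < p_2 < 2$. Your miscalculation thus concealed a genuine mismatch between the transfer argument and the lemma's stated hypothesis, rather than confirming them. Note that the companion product rule, Lemma \ref{LFractional product rule}, \emph{is} stated with the full constraint $s < \min\{1+\tfrac{1}{p_1}, 1+\tfrac{1}{q_2}, \tfrac{3}{p_1}, \tfrac{3}{q_2}\}$ arising from the same transfer; the omission of the $1+\tfrac{1}{p_2}$ constraint in Lemma \ref{LChainrule} looks like an oversight in the paper. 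Since the chain rule is only used with $p_2 = 2$ (e.g.\ in deriving (\ref{E12133})), where the two ranges agree, this is harmless in context, but a correct proof along your lines should state the range as $s < \min\{1+\tfrac{1}{p_2},\, \tfrac{3}{p_2}\}$ or supply a separate argument for $p_2<2$.
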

	In particular, in Section \ref{S1/2-1}, we will use the following fractional chain rule:
	\begin{corollary}
		Given  $u\in L_t^{\infty }\dot H^{s_c}_D (I\times \Omega)\cap L_t^{2}\dot H^{s_c,6}_D(I\times \Omega)$,
		\begin{equation}
			\|(-\Delta _\Omega)^{\frac{s_c}{2}}(|u|^{\alpha }u)\|_{L_t^{2}L_x^{\frac{6}{5}}(I\times \Omega)}\lesssim   \|(-\Delta _\Omega)^{\frac{s_c}{2}}u\|_{L_t^{\infty }L_x^{2}}^{\alpha } \|(-\Delta _\Omega)^{\frac{s_c}{2}}u\|_{L_t^{2}L_x^{6}(I\times \Omega)}.\label{E12133}
		\end{equation}
	\end{corollary}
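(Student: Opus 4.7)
The plan is a standard fractional chain-rule argument for the Dirichlet Laplacian: I combine a pointwise-in-$t$ application of the chain rule in space, Sobolev embedding to control the low-derivative factor $|u|^\alpha$, and Hölder's inequality in the time variable.

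Pointwise in $t$, I would apply Lemma \ref{LChainrule} to $G(u) = |u|^\alpha u$ with the split $p = 6/5$, $p_1 = 3/2$, $p_2 = 6$ (so that $1/p_1 + 1/p_2 = 1/p$), obtaining
$$\|(-\Delta_\Omega)^{s_c/2}(|u|^\alpha u)\|_{L_x^{6/5}} \lesssim \|G'(u)\|_{L_x^{3/2}} \, \|(-\Delta_\Omega)^{s_c/2} u\|_{L_x^{6}}.$$
Since $|G'(u)|\lesssim |u|^\alpha$, one has $\|G'(u)\|_{L_x^{3/2}}\lesssim \|u\|_{L_x^{3\alpha/2}}^{\alpha}$. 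Because $s_c = \frac{3}{2}-\frac{2}{\alpha}$ is precisely the Sobolev index for the embedding $\dot{H}^{s_c}_D(\Omega)\hookrightarrow L^{3\alpha/2}(\Omega)$ (which transfers from the Euclidean embedding via Theorem \ref{TEquivalence}; the restrictions are satisfied in the whole range $s_c\in[1/2,3/2)$), we have $\|u\|_{L_x^{3\alpha/2}}\lesssim \|(-\Delta_\Omega)^{s_c/2}u\|_{L_x^2}$. Combining these two pointwise inequalities and then taking the $L_t^2$ norm with Hölder $\tfrac{1}{2}=\tfrac{1}{\infty}+\tfrac{1}{2}$ yields exactly the desired bound \eqref{E12133}.

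The delicate point is the admissibility of Lemma \ref{LChainrule} in the first step: the choice $p_2=6$ formally requires $s_c < 3/p_2 = 1/2$, whereas \eqref{E12133} is applied in Section \ref{S1/2-1} for $s_c\in(1/2,1)$. To cover this range I would pass through the Euclidean norm. On the left, Theorem \ref{TEquivalence} (applicable since $s_c<5/2$ at $p=6/5$) lets us replace $(-\Delta_\Omega)^{s_c/2}$ by $|\nabla|^{s_c}$ in $L^{6/5}$ after extending by zero, after which the unrestricted Euclidean fractional chain rule
$$\||\nabla|^{s_c}F(u)\|_{L^{6/5}(\R^3)}\lesssim \|F'(u)\|_{L^{3/2}(\R^3)}\, \||\nabla|^{s_c}u\|_{L^6(\R^3)}$$
applies for $0<s_c<1$ with no constraint on $p_2$.

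Returning to the Dirichlet norm on the $\||\nabla|^{s_c}u\|_{L^6}$ factor is the hardest ingredient, since the two-sided equivalence of Theorem \ref{TEquivalence} breaks down in $L^6$ once $s_c\geq 1/2$. However only the one-sided inequality $\||\nabla|^{s_c}u\|_{L^6(\R^3)}\lesssim \|(-\Delta_\Omega)^{s_c/2}u\|_{L^6(\Omega)}$ is needed, and this can be obtained by a Littlewood--Paley square-function comparison between the projectors $P^\Omega_N$ and the Euclidean projectors $P_N$---precisely the square-function philosophy invoked in the introduction and used systematically in Section \ref{S5}. Once this comparison is in hand, the three ingredients assemble to give \eqref{E12133}.
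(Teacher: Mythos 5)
Your Hölder split for the chain rule --- placing the fractional derivative on the $L^6$ factor and $|G'(u)|$ in $L^{3/2}$ --- is genuinely different from the paper's, and you correctly diagnose the obstruction: at $p_2=6$ Theorem~\ref{TEquivalence} needs $s<1/2$, which fails in the range where the corollary is used. The gap is in your proposed repair. The one-sided estimate $\||\nabla|^{s_c}u\|_{L^6(\R^3)} \lesssim \|(-\Delta_\Omega)^{s_c/2}u\|_{L^6(\Omega)}$ for $s_c\geq 1/2$ is not established anywhere in the paper and does not follow from Lemma~\ref{LSquare function estimate}, which only compares $\|f\|_{L^p(\Omega)}$ with the $L^p$ norm of its own $P^\Omega_N$-square function; it gives no comparison between $P^\Omega_N$ and the Euclidean $P_N$, and hence no route to bounding a Euclidean $|\nabla|^{s_c}$ norm by a Dirichlet one. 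Since the paper explicitly flags that the index restrictions in Theorem~\ref{TEquivalence} are sharp (with counterexamples in \cite{KillipVisanZhang2016c}), the one-sided inequality you invoke is at best unproven and should be expected to fail at $p=6$, $s_c>1/2$.

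The paper avoids the $L^6$ issue entirely by a different Hölder split. It puts $(-\Delta_\Omega)^{s_c/2}$ on the factor measured in $L_x^2$ (where the equivalence holds for all $s_c<3/2$) and places $|u|^\alpha$ in $L_x^3$, giving
\begin{equation*}
\|(-\Delta_\Omega)^{\frac{s_c}{2}}(|u|^{\alpha}u)\|_{L_t^{2}L_x^{6/5}} \lesssim \|u\|_{L_t^{2\alpha}L_x^{3\alpha}}^{\alpha}\,\|(-\Delta_\Omega)^{\frac{s_c}{2}}u\|_{L_t^{\infty}L_x^{2}}.
\end{equation*}
It then uses Sobolev embedding $\|u\|_{L_x^{3\alpha}}\lesssim\|(-\Delta_\Omega)^{s_c/2}u\|_{L_x^{6\alpha/(3\alpha-2)}}$, where the exponent $6\alpha/(3\alpha-2)$ does satisfy the constraints of Theorem~\ref{TEquivalence}, followed by space-time interpolation between $L_t^\infty L_x^2$ and $L_t^2 L_x^6$ applied directly to the Dirichlet quantity $(-\Delta_\Omega)^{s_c/2}u$. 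No transfer of Sobolev norms in $L^6$ is ever needed, which is precisely what makes the argument close. If you want to keep your structure, you must either supply a proof of the one-sided $L^6$ inequality or switch to a split like the paper's.
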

	\begin{proof}
		Using the equivalence theorem \ref{TEquivalence}, the chain rule in Euclidean space, and applying the equivalence theorem \ref{TEquivalence} again, we obtain  
		\begin{equation}
			\|(-\Delta_\Omega)^{\frac{s_c}{2}}(|u|^{\alpha}u)\|_{L_t^{2}L_x^{\frac{6}{5}}(I \times \Omega)} \lesssim \|u\|_{L_t^{2\alpha}L_x^{3\alpha}(I \times \Omega)}^{\alpha} \|(-\Delta_\Omega)^{\frac{s_c}{2}}u\|_{L_t^{\infty}L_x^{2}(I \times \Omega)}. \label{E12131}
		\end{equation}  
		Moreover, by Sobolev embedding and H\"older's inequality, we have  
		\begin{equation}
			\|u\|_{L_t^{2\alpha}L_x^{3\alpha}(I \times \Omega)}^{\alpha} \lesssim \|(-\Delta_\Omega)^{\frac{s_c}{2}}u\|_{L_t^{2\alpha}L_x^{\frac{6\alpha}{3\alpha - 2}}(I \times \Omega)}^{\alpha} \lesssim \|(-\Delta_\Omega)^{\frac{s_c}{2}}u\|_{L_t^{\infty}L_x^{2}(I\times \Omega)}^{\alpha-1} \|(-\Delta_\Omega)^{\frac{s_c}{2}}u\|_{L_t^{2}L_x^{6}(I \times \Omega)}. \label{E12132}
		\end{equation}  
		Substituting (\ref{E12132}) into  (\ref{E12131}), we obtain the desired inequality (\ref{E12133}).
	\end{proof}
	We will also use the local smoothing estimate. The particular version we need is \cite[Lemma 2.13]{KillipVisanZhang2016a}.
	\begin{lemma} \label{LLocalSmoothing}
		Let $u = e^{it\Delta_\Omega} u_0$. Then
		\[
		\int_{\mathbb{R}} \int_\Omega |\nabla u(t, x)|^2 \langle R^{-1} (x-z) \rangle^{-3} dx dt \lesssim R \| u_0 \|_{L^2(\Omega)} \|\nabla u_0 \|_{L^2(\Omega)},
		\]
		uniformly for $z \in \mathbb{R}^3$ and $R > 0$.
	\end{lemma}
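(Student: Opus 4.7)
The plan is to derive the estimate via a Morawetz-type virial identity. I would take the weight
$$a(x) := \sqrt{R^2 + |x-z|^2},$$
whose gradient satisfies $|\nabla a|\leq 1$, whose Hessian
$$a_{jk} = \frac{1}{\sqrt{R^2+|x-z|^2}}\Bigl(\delta_{jk} - \frac{(x-z)_j(x-z)_k}{R^2+|x-z|^2}\Bigr)$$
is positive semidefinite with minimum eigenvalue $\lambda_{\min}(x) = \frac{R^2}{(R^2+|x-z|^2)^{3/2}} = R^{-1}\langle R^{-1}(x-z)\rangle^{-3}$ in the radial direction, and whose bi-Laplacian (computed directly) is $\Delta^2 a = -15\,R^4(R^2+|x-z|^2)^{-7/2}\leq 0$, so the bulk $|u|^2$ contribution will enter with a favorable sign.

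For $u(t) = e^{it\Delta_\Omega}u_0$ I would study the Morawetz action $\mathcal{M}(t) := 2\,\mathrm{Im}\int_\Omega \bar u\,\nabla a\cdot \nabla u\, dx$. Differentiating in $t$, using $iu_t+\Delta_\Omega u=0$, and integrating by parts twice\,---\,exploiting that $\nabla u = (\partial_\nu u)\nu$ on $\partial\Omega$ by the Dirichlet condition\,---\,yields the standard identity
$$\frac{d}{dt}\mathcal{M}(t) = 4\int_\Omega a_{jk}\,\mathrm{Re}(\partial_j\bar u\,\partial_k u)\, dx - \int_\Omega |u|^2\,\Delta^2 a\, dx - 2\int_{\partial\Omega}|\partial_\nu u|^2\,\partial_\nu a\, d\sigma,$$
where $\nu$ is the outward unit normal to $\Omega$. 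The middle term is nonnegative by the sign of $\Delta^2 a$. For the boundary integral, strict convexity of $\Omega^c$ is the essential geometric input: when $z\in\overline{\Omega^c}$, convexity forces $(x-z)\cdot\nu\leq 0$ on $\partial\Omega$, hence $\partial_\nu a\leq 0$, so the boundary term is also nonnegative and can be discarded. For $z\in\Omega$ I would reduce to this case by projecting $z$ to its closest point $\tilde z\in\partial\Omega^c$ and, if $|z-\tilde z|>R$, replacing $R$ by $R+|z-\tilde z|$, checking that the weight $\langle R^{-1}(x-z)\rangle^{-3}$ remains pointwise comparable to the shifted one up to a universal multiplicative constant.

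Retaining only the radial eigenvalue of $a_{jk}$ produces the one-sided bound
$$\frac{4}{R}\int_\Omega \langle R^{-1}(x-z)\rangle^{-3}|\nabla u(t,x)|^2\, dx \leq \frac{d}{dt}\mathcal{M}(t).$$
Integrating in time over $\mathbb{R}$ and using the uniform estimate $|\mathcal{M}(t)|\leq 2\|u(t)\|_{L^2}\|\nabla u(t)\|_{L^2}$, which comes from Cauchy--Schwarz together with $|\nabla a|\leq 1$, and combining with the conservation laws $\|u(t)\|_{L^2}=\|u_0\|_{L^2}$ and $\|\nabla u(t)\|_{L^2}=\|\nabla u_0\|_{L^2}$ for the linear propagator (the latter being the quadratic-form definition of $\dot H^1_D(\Omega)$ together with commutation of $(-\Delta_\Omega)^{1/2}$ with $e^{it\Delta_\Omega}$), one obtains
$$\frac{1}{R}\int_{\mathbb{R}}\!\int_\Omega \langle R^{-1}(x-z)\rangle^{-3}|\nabla u|^2\, dx\, dt \lesssim \|u_0\|_{L^2}\|\nabla u_0\|_{L^2},$$
which is the desired inequality after multiplication by $R$. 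The principal obstacle is the boundary term: its favorable sign is transparent from convexity when $z$ lies in the obstacle, but for general $z\in\mathbb{R}^3$ the reduction to that case must be executed with careful bookkeeping of the weight's comparability to absorb all constants into the implicit one.
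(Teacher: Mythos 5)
Your virial/Morawetz strategy is the right method for this lemma, and the local computations are correct: the gradient bound $|\nabla a|\leq 1$, the radial Hessian eigenvalue $R^{-1}\langle R^{-1}(x-z)\rangle^{-3}$, the bi-Laplacian $\Delta^2a = -15R^4(R^2+|x-z|^2)^{-7/2}\leq 0$, the form of the boundary term, and the observation that convexity gives the favourable sign when $z\in\overline{\Omega^c}$ all check out.

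The gap is in your reduction for $z\in\Omega$ with $d:=\operatorname{dist}(z,\Omega^c)>R$. With $\tilde z$ the nearest boundary point and $R':=R+d$, the weights $\langle R^{-1}(x-z)\rangle^{-3}$ and $\langle(R')^{-1}(x-\tilde z)\rangle^{-3}$ are \emph{not} two-sidedly comparable up to a universal constant: for $|x-z|\gg d$ the ratio degenerates like $(R/(R+d))^3$. Only the one-sided bound $\langle R^{-1}(x-z)\rangle^{-3}\lesssim\langle(R')^{-1}(x-\tilde z)\rangle^{-3}$ holds, which is enough to transfer the integral, but the $\tilde z$-centered Morawetz inequality then yields
$$\int_{\mathbb{R}}\int_\Omega|\nabla u|^2\langle(R')^{-1}(x-\tilde z)\rangle^{-3}\,dx\,dt\lesssim R'\,\|u_0\|_{L^2}\|\nabla u_0\|_{L^2}=(R+d)\,\|u_0\|_{L^2}\|\nabla u_0\|_{L^2},$$
which is strictly weaker than $R\|u_0\|_{L^2}\|\nabla u_0\|_{L^2}$ when $d\gg R$; the factor $(R+d)/R$ cannot be absorbed into the implicit constant uniformly in $z$. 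A fix that keeps $z$ and $R$ unchanged is to correct the weight rather than move its center: fix $z_0\in\operatorname{int}(\Omega^c)$ and take $a(x)=\sqrt{R^2+|x-z|^2}+C|x-z_0|$. By convexity and compactness, $c_0:=\inf_{x\in\partial\Omega}\frac{(x-z_0)\cdot\vec n}{|x-z_0|}>0$ (with $\vec n$ the outward normal to $\Omega^c$), so choosing $C=1/c_0$ gives $\nabla a\cdot\vec n\geq -1+Cc_0=0$ on $\partial\Omega$ for every $z\in\mathbb{R}^3$. The correction has nonnegative Hessian, so $\operatorname{Hess}(a)\geq R^{-1}\langle R^{-1}(x-z)\rangle^{-3}\mathrm{Id}$ is preserved; $\Delta^2|x-z_0|\equiv 0$ on $\Omega$ since the distributional singularity sits at $z_0\in\Omega^c$; and $|\nabla a|\leq 1+C$ keeps $|\mathcal M(t)|\lesssim\|u_0\|_{L^2}\|\nabla u_0\|_{L^2}$ with a constant independent of $z$ and $R$, closing the argument.
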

	A direct consequence of the local smoothing estimate is the following result, which will be used   to prove Lemma  \ref{LDecoupling of nonlinear profiles}.  
	\begin{corollary}\label{CLocalsmoothing}
		Given $w_0 \in \dot{H}^{s_c}_D(\Omega)$,
		\[
		\|(-\Delta _\Omega)^{\frac{s_c}{2}} e^{it\Delta_\Omega} w_0 \|_{ L_{t,x}^{2}([\tau-T, \tau+T] \times \{|x-z| \leq R\})} \lesssim T^{\frac{2(5\alpha -4)}{10\alpha (s_c+2)}} R^{\frac{15\alpha -4}{10\alpha (s_c+2)}}  \| e^{it\Delta_\Omega} w_0 \|^{\frac{1}{2(s_c+2)}}_{L_{t,x}^{\frac{5\alpha }{2}}(\mathbb{R} \times \Omega)} \| w_0 \|_{\dot{H}^{s_c}_D(\Omega)}^{1-\frac{1}{2(s_c+2)}},
		\]
		uniformly in $w_0$ and the parameters $R, T > 0, \tau \in \mathbb{R}$, and $z \in \mathbb{R}^3$.
	\end{corollary}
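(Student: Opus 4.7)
Set $u(t):=e^{it\Delta_\Omega}w_0$ and $A:=[\tau-T,\tau+T]\times\{|x-z|\le R\}$. My strategy is to establish two bounds on $\|(-\Delta_\Omega)^{s_c/2}u\|_{L^2_{t,x}(A)}$ sharing the \emph{same} $T,R$ exponents, the first with $\|w_0\|_{\dot H^{s_c}_D}$ and the second with $\|u\|_{L^{5\alpha/2}_{t,x}}$ on the right, and then interpolate.

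For the first bound, I would apply H\"older's inequality in time on $[\tau-T,\tau+T]$ and in space on $\{|x-z|\le R\}$ to move $L^2_{t,x}(A)$ into a mixed-norm Strichartz space:
\[
\|(-\Delta_\Omega)^{s_c/2}u\|_{L^2_{t,x}(A)}\le (2T)^{\frac12-\frac1{q_0}}|B_R|^{\frac12-\frac1{r_0}}\|(-\Delta_\Omega)^{s_c/2}u\|_{L^{q_0}_tL^{r_0}_x},
\]
then invoke Strichartz for $e^{it\Delta_\Omega}(-\Delta_\Omega)^{s_c/2}w_0$ along an $L^2$-admissible pair $(q_0,r_0)$ with $\frac{2}{q_0}+\frac{3}{r_0}=\frac32$. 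Solving $\frac1{q_0}=\frac{s_c+6}{10(s_c+2)}$ (and the corresponding $r_0$) and verifying admissibility for $s_c\in[\tfrac12,\tfrac32)$ yields
\[
\|(-\Delta_\Omega)^{s_c/2}u\|_{L^2_{t,x}(A)}\lesssim T^{\frac{2(s_c+1)}{5(s_c+2)}}R^{\frac{s_c+6}{5(s_c+2)}}\|w_0\|_{\dot H^{s_c}_D(\Omega)}.
\]
A direct rewriting using $s_c=\tfrac32-\tfrac2\alpha$ shows these powers coincide with $T^{\frac{2(5\alpha-4)}{10\alpha(s_c+2)}}R^{\frac{15\alpha-4}{10\alpha(s_c+2)}}$ from the statement.

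For the second bound I would invoke Lemma \ref{LLocalSmoothing} dyadically. A Littlewood--Paley decomposition $u=P_{\le M}u+P_{>M}u$ gives a high-frequency part controlled by local smoothing on each annulus $\{|\xi|\sim N>M\}$ (using the Bernstein-type identities $\|P_Nw_0\|_{L^2}\sim N^{-s_c}\|P_Nw_0\|_{\dot H^{s_c}}$ and $\|P_Nw_0\|_{\dot H^1}\sim N^{1-s_c}\|P_Nw_0\|_{\dot H^{s_c}}$ on a single dyadic block, which avoids the ill-behaved total low-frequency mass) and a low-frequency part bounded by H\"older in $(t,x)$ on $A$ together with $\|u\|_{L^{5\alpha/2}_{t,x}(\R\times\Omega)}$. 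Optimizing the cutoff $M$ should reproduce the same powers of $T,R$ as in the first bound but with $\|u\|_{L^{5\alpha/2}_{t,x}}$ replacing $\|w_0\|_{\dot H^{s_c}_D}$.

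Finally, I interpolate via $X=X^{1-\beta}X^\beta$ with $\beta=\frac{1}{2(s_c+2)}$:
\[
\|(-\Delta_\Omega)^{s_c/2}u\|_{L^2_{t,x}(A)}\lesssim\bigl(T^{a}R^{b}\|w_0\|_{\dot H^{s_c}_D}\bigr)^{1-\beta}\bigl(T^{a}R^{b}\|u\|_{L^{5\alpha/2}_{t,x}}\bigr)^{\beta},
\]
which is exactly the target. The main obstacle is the second bound: one must choose Bernstein and local-smoothing exponents so that the $M$-optimization exactly reproduces the first bound's $T,R$ powers, which may require the sharp $\|v_0\|_{\dot H^{1/2}}^2$ form of local smoothing (via Gagliardo--Nirenberg from Lemma \ref{LLocalSmoothing}) and separate bookkeeping for $s_c<1$ versus $s_c\ge 1$.
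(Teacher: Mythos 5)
Your ``first bound'' is in fact valid: choosing the $L^2$-admissible pair $(q_0,r_0)$ with $1/q_0=\tfrac{s_c+6}{10(s_c+2)}$ (which one checks satisfies $q_0,r_0>2$ for $s_c\in[\tfrac12,\tfrac32)$), H\"older on $[\tau-T,\tau+T]\times B_R$ followed by Strichartz does give $T^{\frac{2(5\alpha-4)}{10\alpha(s_c+2)}}R^{\frac{15\alpha-4}{10\alpha(s_c+2)}}\|w_0\|_{\dot H^{s_c}_D}$, and this matches the claimed powers upon substituting $s_c=\tfrac32-\tfrac2\alpha$. However, note that this bound is a strict \emph{weakening} of the target: since $\|e^{it\Delta_\Omega}w_0\|_{L^{5\alpha/2}_{t,x}}\lesssim\|w_0\|_{\dot H^{s_c}_D}$ by Strichartz and Sobolev embedding, one passes from the stated corollary to your first bound by one more Strichartz application. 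It therefore cannot serve as an independent ingredient for interpolation.

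The real problem is the ``second bound.'' You want a bound of the form $T^aR^b\|u\|_{L^{5\alpha/2}_{t,x}}$ with the \emph{same} exponents $a,b$ as the first bound. This cannot be achieved by the mechanism you describe. The high-frequency piece controlled by local smoothing is unavoidably of the form $R^{1/2}M^{-1/2}\|w_0\|_{\dot H^{s_c}_D}$ (each dyadic block paying Bernstein on $w_0$, not on $u$), so it carries $\|w_0\|$, not $\|u\|_{L^{5\alpha/2}}$. The low-frequency piece via H\"older and Bernstein gives $T^{(5\alpha-4)/(10\alpha)}R^{3(5\alpha-4)/(10\alpha)}M^{s_c}\|u\|_{L^{5\alpha/2}}$. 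Optimizing over $M$ produces a genuine product $\|u\|_{L^{5\alpha/2}}^{\theta}\|w_0\|_{\dot H^{s_c}_D}^{1-\theta}$ with $\theta=\tfrac{1}{2s_c+1}$, and with $T$-power $\tfrac{5\alpha-4}{10\alpha(2s_c+1)}$, which differs from $\tfrac{2(5\alpha-4)}{10\alpha(s_c+2)}$ for every $s_c>0$. Hence the $T,R$ exponents of the two bounds do not match, and the final interpolation step does not close: the weight $\gamma=\tfrac{2s_c+1}{2(s_c+2)}$ needed to get the correct $\|u\|_{L^{5\alpha/2}}$ exponent produces the wrong $T$-power.

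What the paper does differently is to carry out a \emph{single} frequency split $w_0=P^\Omega_{<N}w_0+P^\Omega_{\ge N}w_0$, and to build the fractional power of $\|u\|_{L^{5\alpha/2}}$ into the low-frequency estimate itself before any optimization. Concretely, after the same H\"older step it does not go directly to Strichartz but lowers the order of differentiation from $s_c$ to $\tfrac34 s_c$ via Bernstein (paying $N^{s_c/4}$), and then interpolates the intermediate norm $\| |\nabla|^{3s_c/4}e^{it\Delta_\Omega}P^\Omega_{<N}w_0\|_{L^{5\alpha/2}_tL^{40\alpha/(15\alpha-4)}_x}$ between the no-derivative $L^{5\alpha/2}_{t,x}$ norm (weight $\tfrac14$) and the $s_c$-derivative Strichartz norm $L^{5\alpha/2}_tL^{30\alpha/(15\alpha-8)}_x$ (weight $\tfrac34$). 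This yields a low-frequency bound $\lesssim T^{a_1}R^{b_1}N^{s_c/4}\|u\|_{L^{5\alpha/2}}^{1/4}\|w_0\|_{\dot H^{s_c}_D}^{3/4}$; combined with the high-frequency bound $R^{1/2}N^{-1/2}\|w_0\|_{\dot H^{s_c}_D}$ and optimized over $N$, this produces precisely the stated exponents in one pass. The missing idea in your proposal is this Strichartz-pair interpolation inside the low-frequency term; without it, the cutoff optimization is misbalanced and the subsequent interpolation with the first bound cannot recover the correct $T,R$ scaling.
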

	\begin{proof}
		Replacing  $w_0$ by   $e^{i\tau \Delta _\Omega}w_0$, we see that it suffices to treat the case   $\tau=0$. 
		
		Given $N > 0$, using the H\"older, Bernstein, and Strichartz inequalities, as well as the equivalence of Sobolev spaces, we have
		\begin{align*}
			\||\nabla |^{s_c}&e^{it\Delta_\Omega} P^{\Omega}_{<N} w_0\|_{L^2_{t,x}([-T,T] \times \{|x-z| \leq R\})} \notag\\
			&\lesssim T^{\frac{5\alpha -4}{10\alpha }}R^{\frac{3(5\alpha +4)}{40\alpha }} \||\nabla|^{s_c} e^{it\Delta_\Omega} P^{\Omega}_{<N} w_0\|_{L_t^{\frac{5\alpha }{2}}L_x^{\frac{40\alpha }{15\alpha -4}}} \\
			&\lesssim T^{\frac{5\alpha -4}{10\alpha }}R^{\frac{3(5\alpha +4)}{40\alpha }} N^{\frac{s_c}{4}}\||\nabla|^{\frac{3}{4}s_c} e^{it\Delta_\Omega} P^{\Omega}_{<N} w_0\|_{L_t^{\frac{5\alpha }{2}}L_x^{\frac{40\alpha }{15\alpha -4}}}\\
			&\lesssim T^{\frac{5\alpha -4}{10\alpha }}R^{\frac{3(5\alpha +4)}{40\alpha }} N^{\frac{s_c}{4}}  \|e^{it\Delta _\Omega}P^{\Omega}_{<N}w_0\|_{L_{t,x}^{\frac{5\alpha }{2}}}^{\frac{1}{4}}   \||\nabla |^{s_c}e^{it\Delta _\Omega}P^\Omega_{\le N}w_0\|_{L_t^{\frac{5\alpha }{2}}L_x^{\frac{30\alpha }{15\alpha -8}}}^{\frac{3}{4}}\\
			&\lesssim T^{\frac{5\alpha -4}{10\alpha }}R^{\frac{3(5\alpha +4)}{40\alpha }} N^{\frac{s_c}{4}}  \|e^{it\Delta _\Omega}P^{\Omega}_{<N}w_0\|_{L_{t,x}^{\frac{5\alpha }{2}}}^{\frac{1}{4}} \|w_0\|_{\dot H^{s_c}_D(\Omega)}^{\frac{3}{4}} .
		\end{align*}
		
		We estimate the high frequencies using Lemma \ref{LLocalSmoothing} and the Bernstein inequality:
		\begin{align*}
			\||\nabla|^{s_c} &e^{it\Delta_\Omega} P^{\Omega}_{\geq N} w_0\|_{L^2_{t,x}([-T,T] \times \{|x-z| \leq R\})}^2 \notag\\
			&\lesssim R \|P^{\Omega}_{\geq N} |\nabla |^{s_c-1}w_0\|_{L_x^2} \||\nabla|^{s_c} P^{\Omega}_{\geq N} w_0\|_{L_x^2} \lesssim R N^{-1} \|w_0\|_{\dot{H}_D^{s_c}(\Omega)}^2.
		\end{align*}
		
		The desired estimate  in Corollary \ref{CLocalsmoothing} now follows by optimizing in the choice of $N$. 
	\end{proof}
	\subsection{Littlewood-Paley theory on exterior domains}
	Let $ \phi : [0, \infty) \to [0, 1]$ be a smooth, non-negative function satisfying  
	\[
	\phi(\lambda) = 1 \quad \text{for } 0 \leq \lambda \leq 1, \quad \text{and} \quad \phi(\lambda) = 0 \quad \text{for } \lambda \geq 2.
	\]
	For each dyadic number $N \in 2^\mathbb{Z}$, define  
	\[
	\phi_N(\lambda) := \phi(\lambda/N), \quad \psi_N(\lambda) := \phi_N(\lambda) - \phi_{N/2}(\lambda).
	\]
	Observe that the collection $\{\psi_N(\lambda)\}_{N \in 2^\mathbb{Z}}$ forms a partition of unity on $(0, \infty)$.
	
	Using these functions, we define the Littlewood-Paley projections adapted to the Dirichlet Laplacian on $\Omega$ through the functional calculus for self-adjoint operators:  
	\begin{equation}
		P_{\leq N}^\Omega := \phi_N(\sqrt{-\Delta_\Omega}), \quad P_N^\Omega := \psi_N(\sqrt{-\Delta_\Omega}), \quad P_{> N}^\Omega := I - P_{\leq N}^\Omega. \label{E11121}
	\end{equation}
	
	For simplicity, we will frequently denote $f_N := P_N^\Omega f$ and similarly for other projections.
	
	We will also use $P_N^{\mathbb{R}^3}$ and similar notation to refer to the corresponding operators for the standard Laplacian on $\mathbb{R}^3$. Additionally, we will require analogous operators on the half-space $\mathbb{H} = \{x \in \mathbb{R}^3 : x \cdot e_3 > 0\}$, where $e_3 = (0, 0, 1)$. These operators are denoted by $P_N^\mathbb{H}$, and so on.

	Just like their Euclidean counterparts,  the following two basic estimates are well-known.
	
	\begin{lemma}[Bernstein estimates,\cite{KillipVisanZhang2016c}]\label{LBernstein estimates}
		For any $f \in C_c^\infty(\Omega)$, we have
		\[
		\|P_{\leq N}^\Omega f\|_{L^p(\Omega)} + \|P_N^\Omega f\|_{L^p(\Omega)} \lesssim \|f\|_{L^p(\Omega)} \quad \text{for } 1 < p < \infty,
		\]
		\[
		\|P_{\leq N}^\Omega f\|_{L^q(\Omega)} + \|P_N^\Omega f\|_{L^q(\Omega)} \lesssim N^{3\left(\frac{1}{p} - \frac{1}{q}\right)} \|f\|_{L^p(\Omega)} \quad \text{for } 1 \leq p < q \leq \infty,
		\]
		\[
		N^s \|P_N^\Omega f\|_{L^p(\Omega)} \sim \|(-\Delta_\Omega)^{s/2} P_N^\Omega f\|_{L^p(\Omega)} \quad \text{for } 1 < p < \infty \text{ and } s \in \mathbb{R}.
		\]
		Here, the implicit constants depend only on $p$, $q$, and $s$.
	\end{lemma}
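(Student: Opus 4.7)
The plan is to exploit the Gaussian upper bound on the Dirichlet heat kernel (Lemma \ref{Lheatkernel}) to obtain pointwise kernel bounds on the Littlewood--Paley projectors, and then read off the three inequalities by standard Young / Schur arguments. By rescaling (replacing $\psi$ with $\psi(\cdot/N)$), it is enough to analyse $\psi_1(\sqrt{-\Delta_\Omega})$ and $\phi_1(\sqrt{-\Delta_\Omega})$.

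First, I would express the spectral multiplier as a superposition of heat semigroups via the subordination identity
\[
F(\sqrt{-\Delta_\Omega}) \;=\; \int_0^\infty G_F(t)\, e^{t\Delta_\Omega}\, dt,
\]
valid for $F=\psi_N$ or $F=\phi_N$, where $G_F$ is a rapidly decaying function in $t$ inherited from the smoothness and compact support of $F$ (concretely, $G_F$ arises from the inverse Laplace transform of $\lambda \mapsto F(\sqrt{\lambda})$, which is Schwartz because $F$ is). Combined with the Gaussian bound $|e^{t\Delta_\Omega}(x,y)| \lesssim t^{-3/2} e^{-c|x-y|^2/t}$ from Lemma \ref{Lheatkernel}, this yields, after a rescaling by $N$, the kernel estimate
\[
|K_N^\Omega(x,y)| \;\lesssim_M\; N^3\bigl(1+N|x-y|\bigr)^{-M} \qquad \text{for every } M\ge 0,
\]
and likewise for $P_{\le N}^\Omega$. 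From this, the first inequality follows by Young's inequality once one notes $\|K_N^\Omega(x,\cdot)\|_{L^1_y}\lesssim 1$, and the second (Bernstein) inequality follows from $\|K_N^\Omega(x,\cdot)\|_{L^r_y}\lesssim N^{3(1-1/r)}$ with $\tfrac{1}{r}=1-\tfrac{1}{p}+\tfrac{1}{q}$. The endpoint case $p=1$ or $q=\infty$ uses the sharp form of Young.

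For the third assertion, I would write $(-\Delta_\Omega)^{s/2} P_N^\Omega = N^s\, \widetilde{\psi}(\sqrt{-\Delta_\Omega}/N)$, where $\widetilde\psi(\lambda):=\lambda^s\psi(\lambda)$ is again smooth and compactly supported away from the origin; repeating the kernel analysis above for the multiplier $\widetilde\psi$ (instead of $\psi$) gives one direction. For the reverse inequality, I would factor $\psi = \widetilde\psi\cdot \eta$ where $\eta(\lambda):=\lambda^{-s}\chi(\lambda)$ for a fattened bump $\chi$ equal to one on the support of $\psi$; then $P_N^\Omega = N^{-s}\eta(\sqrt{-\Delta_\Omega}/N)(-\Delta_\Omega)^{s/2}P_N^\Omega$, and $\eta(\sqrt{-\Delta_\Omega}/N)$ is again $L^p$-bounded by the subordination argument.

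The main technical obstacle is establishing the pointwise kernel bound: the subordination representation produces a singular integrand near $t=0$, and one must verify that the smoothness of $\psi_N,\phi_N$ is transferred to sufficient decay of $G_F(t)$ at $0$ and $\infty$ to integrate the Gaussian heat kernel bounds against it. Everything else (Young, Schur, and the algebraic manipulation for the derivative equivalence) is routine once the kernel bound is in hand.
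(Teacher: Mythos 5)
The paper itself does not prove this lemma --- it is imported from \cite{KillipVisanZhang2016c}, so there is no in-paper argument to compare against. Your overall plan (obtain a pointwise kernel bound $|K_N^\Omega(x,y)|\lesssim_M N^3(1+N|x-y|)^{-M}$ from the Gaussian heat-kernel estimate and then apply Young) is the right strategy and is in the spirit of what Killip--Visan--Zhang do; the scaling reduction, the Young step, and the $\widetilde\psi(\lambda)=\lambda^s\psi(\lambda)$ trick for the third estimate are all fine.

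There is, however, a genuine gap in how you produce the kernel bound. The representation $F(\sqrt{-\Delta_\Omega})=\int_0^\infty G_F(t)\,e^{t\Delta_\Omega}\,dt$, i.e.\ $F(\sqrt\lambda)=\int_0^\infty G_F(t)e^{-t\lambda}\,dt$, cannot hold with a rapidly decaying $G_F$: any Laplace transform of a rapidly decaying (even merely exponentially decaying) function is real-analytic on $[0,\infty)$, while $F(\sqrt\cdot)$ is smooth and compactly supported, hence not analytic. So the ``inverse Laplace transform is Schwartz because $F$ is'' step fails, and the singularity you worry about at $t=0$ is a symptom of this obstruction, not merely a technical nuisance. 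The standard repair is to replace the real Laplace transform by a Fourier (complex-time) representation: write $F(\sqrt\lambda)=\bigl(F(\sqrt\lambda)e^{\lambda}\bigr)e^{-\lambda}$, note that $\widetilde F(\lambda):=F(\sqrt\lambda)e^{\lambda}$ is smooth and compactly supported so $\widehat{\widetilde F}$ is Schwartz, and obtain
\begin{equation*}
F(\sqrt{-\Delta_\Omega})=\frac{1}{2\pi}\int_{\mathbb{R}}\widehat{\widetilde F}(\xi)\,e^{(1-i\xi)\Delta_\Omega}\,d\xi .
\end{equation*}
One then needs a bound on the complex-time heat kernel $e^{z\Delta_\Omega}(x,y)$ for $\operatorname{Re}z>0$; this does not follow by simply substituting into Lemma~\ref{Lheatkernel} but requires a Phragm\'en--Lindel\"of/Davies-type extension of the real-time Gaussian bound to the right half-plane. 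Once that is in place, integrating against the Schwartz weight $\widehat{\widetilde F}$ gives the asserted kernel decay, and the rest of your argument goes through unchanged.
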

	\begin{lemma}[Square function estimate,\cite{KillipVisanZhang2016c}]\label{LSquare function estimate}
		Fix $1 < p < \infty$. For all $f \in C_c^\infty(\Omega)$, 
		\[
		\|f\|_{L^p(\Omega)} \sim \left\|\left( \sum_{N \in 2^\mathbb{Z}} |P_N^\Omega f|^2 \right)^{\frac{1}{2}} \right\|_{L^p(\Omega)}.
		\]
	\end{lemma}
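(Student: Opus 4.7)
The plan is to prove the equivalence in the standard Littlewood--Paley fashion, using the Gaussian heat kernel bound of Lemma \ref{Lheatkernel} as the key analytic input that replaces the role of the Fourier transform in the Euclidean setting. Concretely, I would first invoke Khintchine's inequality to rewrite
\[
\Bigl\|\bigl(\sum_{N \in 2^{\Z}} |P_N^\Omega f|^2\bigr)^{\frac12}\Bigr\|_{L^p(\Omega)}
\sim \mathbb{E}_\epsilon\, \Bigl\|\sum_{N \in 2^{\Z}} \epsilon_N P_N^\Omega f\Bigr\|_{L^p(\Omega)},
\]
where $\{\epsilon_N\}$ is an i.i.d. sequence of Rademacher random variables. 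In this way the claimed equivalence reduces to showing that the randomized multipliers
\[
m_\epsilon(\lambda):=\sum_{N\in 2^{\Z}} \epsilon_N \psi_N(\lambda)
\]
define operators $m_\epsilon(\sqrt{-\Delta_\Omega})$ that are bounded on $L^p(\Omega)$ for $1<p<\infty$, uniformly in the choice of signs $\epsilon_N \in \{-1,1\}$.

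The next step is to verify that $m_\epsilon$ satisfies a Mikhlin--H\"ormander symbol condition uniformly in $\epsilon$: since the $\psi_N$ are uniformly smooth bumps on disjoint dyadic annuli, one checks $|\lambda^k m_\epsilon^{(k)}(\lambda)|\lesssim 1$ for all $k$ up to any prescribed order, with bounds independent of $\epsilon$. I would then invoke a spectral multiplier theorem for self-adjoint operators with Gaussian heat kernel bounds (in the style of Duong--Ouhabaz--Sikora, or equivalently the arguments of Stein's Littlewood--Paley monograph adapted to semigroup generators). The hypothesis of Gaussian upper bounds on $e^{t\Delta_\Omega}(x,y)$ with the full Euclidean on-diagonal decay $t^{-d/2}$ is furnished by Lemma \ref{Lheatkernel} (discarding the factors $d(x)/(\sqrt t\wedge\operatorname{diam}) \wedge 1 \le 1$). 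This gives the uniform $L^p(\Omega)$ boundedness $\|m_\epsilon(\sqrt{-\Delta_\Omega})\|_{L^p\to L^p}\lesssim 1$ for $1<p<\infty$.

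Combining the two reductions yields the upper bound
\[
\Bigl\|\bigl(\sum_{N} |P_N^\Omega f|^2\bigr)^{\frac12}\Bigr\|_{L^p(\Omega)}
\lesssim \|f\|_{L^p(\Omega)}.
\]
For the reverse inequality, I would apply a duality/polarization argument: if $g\in L^{p'}$ with $\|g\|_{L^{p'}}\le 1$, then using the partition of unity property $\sum_N \psi_N(\lambda)^2$ is, up to finite overlap, comparable to $1$, one writes
\[
\langle f,g\rangle \sim \sum_N \langle P_N^\Omega f, \widetilde{P}_N^\Omega g\rangle
\]
for a fattened projector $\widetilde P_N^\Omega$, and concludes via Cauchy--Schwarz in $N$ together with the already established upper bound applied to $g$ in $L^{p'}(\Omega)$.

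The main technical obstacle is justifying the spectral multiplier theorem in the exterior-domain setting: one must confirm that the Gaussian heat kernel estimate of Lemma \ref{Lheatkernel} is strong enough to run the standard Calder\'on--Zygmund/finite-propagation-speed machinery on $\Omega$, despite the absence of translation invariance and the presence of the boundary. Once this is in place, the rest of the argument is formally identical to the classical Euclidean proof.
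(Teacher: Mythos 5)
The paper cites this lemma from \cite{KillipVisanZhang2016c} without reproducing the proof, so there is no in-text argument to compare against. Your outline is the standard route and, as far as I can tell, essentially the one used in the cited reference: Khintchine's inequality reduces the square-function bound to uniform $L^p$-boundedness of the randomized multipliers $m_\epsilon(\sqrt{-\Delta_\Omega})$; these satisfy a Mikhlin--H\"ormander condition with constants independent of $\epsilon$ since the $\psi_N$ form a smooth dyadic partition of unity with bounded overlap; a Duong--Ouhabaz--Sikora-type spectral multiplier theorem then delivers the $L^p$ bound, its Gaussian heat-kernel hypothesis being supplied by Lemma~\ref{Lheatkernel} after discarding the boundary-decay factors, which are at most one; and the reverse inequality follows by dualizing $f=\sum_N \widetilde{P}_N^\Omega P_N^\Omega f$ (with $\widetilde{P}_N^\Omega$ a fattened projector satisfying $\widetilde{P}_N^\Omega P_N^\Omega=P_N^\Omega$, so the pairing identity is exact rather than merely $\sim$) against $g\in L^{p'}$ and invoking the already-proved bound for the fattened square function. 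One hypothesis you should make explicit is that the spectral multiplier machinery needs $(\Omega,dx)$ to be a space of homogeneous type; this holds here because $\Omega^c$ is convex, so for any $x\in\Omega$ and $r>0$ the supporting hyperplane at the boundary point of $\Omega^c$ nearest to $x$ separates $x$ from the obstacle, giving $|B(x,r)\cap\Omega|\geq\tfrac12|B(x,r)|$ and hence uniform doubling. With that observation in place, your proposal is correct.
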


	\subsection{Strichartz estimates, local well-posedness, and the stability result}
	Strichartz estimates for domains exterior to a compact, smooth, strictly convex obstacle were proved by Ivanovici \cite{Ivanovici2010a} with the exception of the endpoint  $L^2_tL^6_x$, see also \cite{BlairSmithSogge2012}.  
	Subsequently, Ivanovici and Lebeau \cite{IvanoviciLebeau2017} proved the dispersive estimate for $d = 3 $.
	\begin{lemma}[Dispersive estimate, \cite{IvanoviciLebeau2017}]\label{LDispersive}
		\begin{equation}
			\| e^{it\Delta_{\Omega}} f \|_{L_x^{\infty}(\Omega)} \lesssim |t|^{-\frac{3}{2}} \|f\|_{L_x^1(\Omega)}.\label{E11122}
		\end{equation}
	\end{lemma}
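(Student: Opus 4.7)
The dispersive estimate stated is a deep result of Ivanovici--Lebeau, and a genuine proof requires substantial microlocal machinery. The plan for my sketch is to split the analysis into the regime where the Schr\"odinger kernel behaves essentially like the free one and the regime where boundary effects dominate. First I would fix a cutoff separating points whose distance to $\partial\Omega$ is large compared to $\sqrt{|t|}$ from those close to the boundary. In the former, strict convexity combined with finite speed considerations for the approximating wave equation, together with the Gaussian heat-kernel bound in Lemma \ref{Lheatkernel} (after analytic continuation or via Hadamard parametrix), lets one reduce matters to the classical free dispersive inequality on $\mathbb{R}^3$, yielding the bound $|t|^{-3/2}\|f\|_{L^1}$ for that contribution.

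Next I would treat the contribution from a neighborhood of $\partial\Omega$, which is the genuinely hard part. Here I plan to construct a parametrix for $e^{it\Delta_\Omega}$ using the Melrose--Taylor theory of glancing Fourier integral operators, so that the Schwartz kernel $K_t(x,y)$ is written as a sum of Lagrangian distributions associated with the broken billiard flow. Each individual Lagrangian piece can be expressed as an oscillatory integral whose phase parametrizes the travel time from $y$ to $x$ along transverse or glancing rays, with amplitude of principal symbol order zero. A stationary-phase analysis of the non-glancing branches reproduces the standard $|t|^{-3/2}$ decay, since those phases have non-degenerate Hessians.

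The principal obstacle, and the main content of \cite{IvanoviciLebeau2017}, is the glancing regime. For $d=3$ near a strictly convex obstacle, the relevant caustic that forms along gliding trajectories is a swallowtail, and its cusp points are of Airy--Pearcey type rather than the simpler fold caustics found in two dimensions. Each passage through such a cusp naively costs a factor $|t|^{1/4}$ in the stationary-phase analysis; the key combinatorial observation is that gliding rays returning to heights $\sim\sqrt{|t|}$ within time $t$ have a controlled number of such encounters, and a careful uniform normal form for the Airy--Pearcey integral, together with sharp estimates for the Airy function and its derivatives, recovers the optimal $|t|^{-3/2}$ decay without any loss. This is where I expect the argument to require the most delicate work: obtaining a uniform-in-parameters bound on the model oscillatory integral $\int e^{i(\tau s^3 + \sigma s^2 + \eta s)}\,a(s)\,ds$ that does \emph{not} degenerate as $\sigma,\eta\to 0$.

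Finally I would glue the interior and boundary estimates using a smooth partition of unity adapted to the distance function $d(x)$, checking that commutators of $e^{it\Delta_\Omega}$ with such cutoffs are controlled by the local smoothing estimate of Lemma \ref{LLocalSmoothing}, so that no loss is incurred at the matching region. Strict convexity of $\Omega^c$ is used throughout: it prevents trapped geodesics, ensures that every gliding ray escapes to infinity in finite time, and gives the non-positive second fundamental form condition that makes the Airy--Pearcey normal form available. Since Lemma \ref{LDispersive} is quoted verbatim from \cite{IvanoviciLebeau2017}, the present paper does not need to reprove any of this; the role of the lemma here is only as an input into Strichartz estimates stated in the preceding paragraph.
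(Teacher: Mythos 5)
This lemma is stated in the paper without proof; it is quoted verbatim from Ivanovici--Lebeau, which you correctly note in your final sentence. So there is no proof in the paper to compare against, and the only question is whether your sketch is a sensible account of the cited argument, which the paper itself never reproduces.

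Your outline is broadly in the right spirit: a Melrose--Taylor type parametrix near the boundary, Airy normal forms for the glancing regime, a count of caustic encounters, and a uniform bound on the resulting degenerate oscillatory integral are indeed the central ingredients of the Ivanovici--Lebeau argument, and the genuinely hard point is exactly the one you flag, namely that the stationary-phase loss at each caustic must not accumulate. Two of your intermediate devices, however, would not hold up as written. First, you cannot pass from the Gaussian heat-kernel bound of Lemma \ref{Lheatkernel} to a Schr\"odinger-kernel bound by ``analytic continuation'': rotating $t\mapsto it$ turns the Gaussian decay in $|x-y|^2/t$ into pure oscillation, and there is no direct way to convert a heat-kernel size estimate into a propagator size estimate without much finer phase information. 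The heat-kernel bound is useful elsewhere in the paper (for frequency localization in the inverse Strichartz argument), but it is not a route to the dispersive estimate. Second, invoking the local smoothing estimate of Lemma \ref{LLocalSmoothing} to control commutators in the gluing step is at best an inversion of the logical order; in this exterior-domain setting local smoothing is typically established independently by resolvent or positive-commutator methods and is not the natural tool for matching a near-boundary parametrix to the free flow. These points do not affect the paper, which uses the lemma strictly as a black box feeding the Strichartz estimates of Proposition \ref{PStrichartz}; they only bear on whether your reconstruction of the cited proof would survive scrutiny.
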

	For $d \geq 4$, Ivanovici and Lebeau \cite{IvanoviciLebeau2017}  also demonstrated through the construction of explicit counterexamples that the dispersive estimate no longer holds, even for the exterior of the unit ball. 
	However, for $d=5,7$, Li-Xu-Zhang \cite{LiXuZhang2014} established the dispersive estimates for solutions with radially symmetric initial data outside the unit ball.

	Combining the dispersive estimate (\ref{E11122}) with the Theorem of  Keel-Tao\cite{KeelTao1998AJM}, we obtain  the following Strichartz estimates:
	\begin{proposition}[Strichartz estimates \cite{Ivanovici2010a,BlairSmithSogge2012,IvanoviciLebeau2017}]\label{PStrichartz}
		Let $q, \tilde{q} \geq 2$, and $2 \leq r, \tilde{r} \leq \infty$ satisfying 
		\[
		\frac{2}{q} + \frac{3}{r} = \frac{2}{\tilde{q}} + \frac{3}{\tilde{r}}=	\frac{3}{2} .
		\]
		Then, the solution $u$ to $(i\partial_t + \Delta_\Omega)u = F$ on an interval $I \ni 0$ satisfies
		\[
		\|u\|_{L_t^q L_x^r(I \times \Omega)} \lesssim \|u_0\|_{L_x^2(\Omega)} + \|F\|_{L_t^{\tilde{q}'} L_x^{\tilde{r}'}(I \times \Omega)}.
		\tag{2.3}
		\]
	\end{proposition}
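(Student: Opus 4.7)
The plan is to deduce the Strichartz estimates from the abstract machinery of Keel-Tao, using as inputs the unitarity of $e^{it\Delta_\Omega}$ on $L^2(\Omega)$ and the dispersive decay in Lemma \ref{LDispersive}. Because $-\Delta_\Omega$ is self-adjoint on $L^2(\Omega)$, the spectral theorem immediately yields the energy identity $\|e^{it\Delta_\Omega}f\|_{L^2(\Omega)} = \|f\|_{L^2(\Omega)}$ and the group law $e^{i(t+s)\Delta_\Omega} = e^{it\Delta_\Omega}e^{is\Delta_\Omega}$. Together with the $L^1(\Omega)\to L^\infty(\Omega)$ bound $\|e^{it\Delta_\Omega}f\|_{L^\infty} \lesssim |t|^{-3/2}\|f\|_{L^1}$ from Lemma \ref{LDispersive}, these are exactly the two hypotheses required by the Keel-Tao abstract theorem with decay exponent $\sigma = 3/2$.

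First I would check that pairs $(q,r)$ with $q,r\geq 2$ and $\tfrac{2}{q}+\tfrac{3}{r}=\tfrac{3}{2}$ are Strichartz admissible in the Keel-Tao sense; the only excluded case in their framework is $(q,r,d)=(2,\infty,2)$, which is irrelevant here since $d=3$. Their theorem then produces the homogeneous estimate $\|e^{it\Delta_\Omega}u_0\|_{L_t^q L_x^r(\mathbb{R}\times\Omega)} \lesssim \|u_0\|_{L^2(\Omega)}$ for every such admissible pair, including the double endpoint $(q,r)=(2,6)$ that lies at the boundary of the Keel-Tao range in dimension three.

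For the inhomogeneous part I would apply the $TT^\ast$ argument to the Duhamel formula
\begin{equation*}
u(t) = e^{it\Delta_\Omega}u_0 - i\int_0^t e^{i(t-s)\Delta_\Omega} F(s)\,ds.
\end{equation*}
The untruncated bilinear form $\int e^{-is\Delta_\Omega}F(s)\,ds$ paired against a dual admissible test function is estimated directly from the two hypotheses via Keel-Tao's bilinear interpolation. To recover the retarded (time-ordered) integral and thus the stated mixed-pair bound for any pairs $(q,r),(\tilde q,\tilde r)$ away from the double endpoint, one then invokes the Christ-Kiselev lemma, which applies because $q>\tilde q'$ or $r>\tilde r'$ strictly in those cases. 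Restricting from $\mathbb{R}$ to an arbitrary interval $I\ni 0$ is standard: extend $F$ by zero outside $I$ and restrict the output in time.

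The main delicate point, as in the Euclidean case, is the double endpoint $q=\tilde q = 2$, $r=\tilde r = 6$, where the Christ-Kiselev reduction breaks down and one must instead rely on Keel-Tao's sharp bilinear endpoint estimate together with a dyadic decomposition of the kernel $(t-s)^{-3/2}$. Once this case is in hand, combining it with the homogeneous bound above gives the full statement asserted in Proposition \ref{PStrichartz}; this is precisely the route followed by Ivanovici \cite{Ivanovici2010a}, Blair-Smith-Sogge \cite{BlairSmithSogge2012}, and Ivanovici-Lebeau \cite{IvanoviciLebeau2017}, so no new ingredient beyond Lemma \ref{LDispersive} and the Keel-Tao framework is required.
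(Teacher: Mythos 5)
Your proposal takes the same route the paper indicates: feed the $L^2$ unitarity of $e^{it\Delta_\Omega}$ (from self-adjointness) and the $|t|^{-3/2}$ dispersive bound of Lemma~\ref{LDispersive} into the abstract Keel--Tao theorem with $\sigma=3/2$, which the paper does by citation. One small simplification: Keel--Tao's main theorem already delivers the \emph{retarded} inhomogeneous estimate for all sharp admissible pairs, including the double endpoint $q=\tilde q=2$, so the detour through the untruncated bilinear form followed by Christ--Kiselev is unnecessary here; you can quote Keel--Tao directly for both the homogeneous and the Duhamel terms.
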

	By the Strichartz estimate and the standard contraction mapping principle, we can establish the following local well-posedness result.
	\begin{theorem} \label{TLWP}
		Let $\Omega \subset \mathbb{R}^3$ be the exterior of a smooth compact strictly convex obstacle. There exists $\eta > 0$ such that if $u_0 \in \dot H_D^{s_c}(\Omega)$ obeys
		\begin{equation}
			\|(-\Delta _\Omega)^{\frac{s_c}{2}} e^{it \Delta_\Omega} u_0 \|_{L_t^{\frac{5\alpha }{2}} L_x^{\frac{30\alpha }{15\alpha -8}}(I \times \Omega)} \leq \eta  \label{E10201}
		\end{equation}
		for some time interval $I \ni 0$, then there is a unique strong solution to (\ref{NLS}) on the time interval $I$; moreover,
		\[
		\|(-\Delta _\Omega)^{\frac{s_c}{2}}u\|_{L_t^{\frac{5\alpha }{2}} L_x^{\frac{30\alpha }{15\alpha -8}}(I \times \Omega)} \lesssim \eta. 
		\]
	\end{theorem}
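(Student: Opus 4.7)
\textbf{Proof proposal for Theorem \ref{TLWP}.} The plan is a standard Banach fixed-point argument applied to the Duhamel map
\[
\Phi(u)(t) := e^{it\Delta_\Omega} u_0 - i \int_0^t e^{i(t-s)\Delta_\Omega}\bigl(|u|^\alpha u\bigr)(s)\, ds,
\]
set on the complete metric space
\[
B := \Bigl\{\, u \in C_t \dot H^{s_c}_D(I\times\Omega) : \|(-\Delta_\Omega)^{s_c/2} u\|_{\isca(I\times\Omega)} \le 2\eta \,\Bigr\},
\]
endowed with the distance $d(u,v) = \|u-v\|_{L_t^{5\alpha/2} L_x^{30\alpha/(15\alpha-8)}(I\times\Omega)}$. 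The admissible pair $(5\alpha/2,\, 30\alpha/(15\alpha-8))$ satisfies the Strichartz condition $\tfrac{2}{q}+\tfrac{3}{r}=\tfrac{3}{2}$ of Proposition \ref{PStrichartz}, and the hypothesis (\ref{E10201}) provides the linear piece with size $\eta$.

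The core nonlinear estimate is the following: applying the chain rule (Lemma \ref{LChainrule}) with $G(z)=|z|^\alpha z$ (so $|G'(u)| \lesssim |u|^\alpha$), followed by Hölder in time, yields
\[
\bigl\|(-\Delta_\Omega)^{s_c/2}\bigl(|u|^\alpha u\bigr)\bigr\|_{L_t^{q_0'} L_x^{r_0'}(I\times\Omega)}
\lesssim \|u\|_{L_{t,x}^{5\alpha/2}(I\times\Omega)}^{\alpha}\, \bigl\|(-\Delta_\Omega)^{s_c/2} u\bigr\|_{\isca(I\times\Omega)},
\]
where $(q_0,r_0) = \bigl(\tfrac{5\alpha}{3\alpha-2},\, \tfrac{30\alpha}{3\alpha+8}\bigr)$ is the admissible pair obtained by pairing $|u|^\alpha \in L_{t,x}^{5/2}$ with the Strichartz norm; a short check gives $\tfrac{2}{q_0}+\tfrac{3}{r_0}=\tfrac{3}{2}$. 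By the equivalence of Sobolev spaces (Theorem \ref{TEquivalence}) and the standard Euclidean Sobolev embedding, the lossless inclusion $\dot H^{s_c, 30\alpha/(15\alpha-8)}_D(\Omega) \hookrightarrow L^{5\alpha/2}(\Omega)$ holds (since $\tfrac{15\alpha-8}{30\alpha} - \tfrac{s_c}{3} = \tfrac{2}{5\alpha}$), so
\[
\|u\|_{L_{t,x}^{5\alpha/2}(I\times\Omega)} \lesssim \bigl\|(-\Delta_\Omega)^{s_c/2} u\bigr\|_{\isca(I\times\Omega)}.
\]
Combining these with the inhomogeneous Strichartz estimate of Proposition \ref{PStrichartz} applied to the pair $(q_0', r_0')$ on the left and $(5\alpha/2,\, 30\alpha/(15\alpha-8))$ on the right produces the master bound
\[
\bigl\|(-\Delta_\Omega)^{s_c/2}\bigl(\Phi(u) - e^{it\Delta_\Omega}u_0\bigr)\bigr\|_{\isca(I\times\Omega)} \le C\, \bigl\|(-\Delta_\Omega)^{s_c/2} u\bigr\|_{\isca(I\times\Omega)}^{\alpha+1}.
\]

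With this bound in hand, $\Phi$ maps $B$ to itself for $\eta$ sufficiently small, since $\eta + C(2\eta)^{\alpha+1} \le 2\eta$. For the contraction property I would repeat the same Hölder/Strichartz chain, now estimating the difference $|u|^\alpha u - |v|^\alpha v$ pointwise by $\bigl(|u|^\alpha + |v|^\alpha\bigr)|u-v|$, to obtain $d(\Phi(u),\Phi(v)) \le \tfrac{1}{2} d(u,v)$. The Banach fixed-point theorem then gives the unique solution $u\in B$, which automatically satisfies the stated spacetime bound. Persistence of regularity at the $\dot H^{s_c}_D$ level (i.e.\ membership in $C_t \dot H^{s_c}_D$) follows by applying Strichartz in $L_t^\infty L_x^2$-type norms to $(-\Delta_\Omega)^{s_c/2} u$ and reusing the nonlinear estimate.

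\textbf{Main obstacle.} The only nontrivial point is the admissible range for $s_c$ in the chain rule (Lemma \ref{LChainrule}) and in the equivalence theorem (Theorem \ref{TEquivalence}): one must verify $s_c < \min\{1+\tfrac{1}{p_i}, \tfrac{3}{p_i}\}$ for the various Lebesgue exponents $p_i = 30\alpha/(15\alpha-8)$, $5\alpha/2$, etc.\ that appear. Since $s_c = \tfrac{3\alpha-4}{2\alpha} < \tfrac{3}{2}$ and all the exponents above are bounded away from $1$ and $\infty$ for $\alpha>\tfrac{4}{3}$, these conditions are indeed satisfied throughout the range considered, but the book-keeping should be done carefully.
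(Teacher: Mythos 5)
Your proposal is correct and follows essentially the same contraction-mapping argument as the paper: you use the same admissible Strichartz pair $\left(\tfrac{5\alpha}{2}, \tfrac{30\alpha}{15\alpha-8}\right)$, the same dual pair for the Duhamel term, the same H\"older pairing placing $|u|^\alpha$ in $L_{t,x}^{5/2}$, and the same Sobolev embedding to control $\|u\|_{L_{t,x}^{5\alpha/2}}$; your index checks for Theorem \ref{TEquivalence} and Lemma \ref{LChainrule} are accurate. The only cosmetic differences are that you invoke the fractional chain rule (Lemma \ref{LChainrule}) where the paper cites the product rule (Lemma \ref{LFractional product rule}) — for $F(u)=|u|^\alpha u$ these lead to the same estimate — and that the paper's ball $B$ explicitly tracks the $L_t^\infty \dot H^{s_c}_D$ and $L_{t,x}^{5\alpha/2}$ norms alongside the derivative Strichartz norm, whereas you derive those bounds a posteriori, which is equally valid.
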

	\begin{remark}
		\ 
		\begin{enumerate}
			\item If $u_0$ has small $\dot{H}^{s_c}_D(\Omega)$ norm, then Proposition \ref{PStrichartz} guarantees that  (\ref{E10201}) holds with $I = \mathbb{R}$. Thus global well-posedness for small data is a corollary of this theorem.
			
			\item For large initial data $u_0$, the existence of some small open interval $I \ni 0$ for which (\ref{E10201}) holds follows from combining the monotone convergence theorem with Proposition \ref{PStrichartz}. In this way, we obtain local well-posedness for all data in $\dot H^{s_c}_D(\Omega)$.
			
			\item The argument below holds equally well for initial data prescribed as $t \to \pm \infty$, thus proving the existence of wave operators.
		\end{enumerate}
	\end{remark}
	\begin{proof}
		Throughout the proof, all space-time norms will be on $I \times \Omega$.  Consider the map  
		\begin{equation}
			\Phi: u \mapsto e^{it\Delta _\Omega}u_0-i\int_0^te^{i(t-s)\Delta _\Omega}(|u|^{\alpha }u)(s)ds.\notag
		\end{equation}
		We will show this is a contraction on the ball
		\[
		B := \left\{ u \in L_t^{\infty} \dot H_D^{s_c} \cap L_t^{ \frac{5\alpha }{2}} \dot  H_D^{s_c, \frac{30\alpha }{15\alpha -8}}  : \| (-\Delta _\Omega)^{\frac{s_c}{2}}u \|_{L_t^{\frac{5\alpha }{2}} L_x^{\frac{30\alpha }{15\alpha -8}}} \leq 2\eta, \right.
		\]
		\[
		\text{and }\left. \| u \|_{L_t^{\infty} \dot H_D^{s_c}} \leq 2 \| u_0 \|_{\dot H_D^{s_c}}, \| u \|_{L_t^{\frac{5\alpha }{2}} L_x^{\frac{5\alpha }{2}}}\leq 2C \eta  \right\}
		\]
		under the metric given by
		\[
		d(u,v) := \| u - v \|_{L_t^{\frac{5\alpha }{2}} L_x^{\frac{30\alpha }{15\alpha -8}}}.
		\]

		To see that $\Phi$ maps the ball $B$ to itself, we use the Strichartz inequality followed by Lemma \ref{LFractional product rule},  (\ref{E10201}), Sobolev embedding, and then   Theorem \ref{TEquivalence}:
		\begin{align}
			&\| (-\Delta _\Omega)^{\frac{s_c}{2}} \Phi(u) \|_{L_t^{\frac{5\alpha }{2}} L_x^{\frac{30\alpha }{15\alpha -8}}}
			\notag\\
			&\leq \| (-\Delta _\Omega)^{\frac{s_c}{2}} e^{it\Delta_{\Omega}} u_0 \|_{L_t^{\frac{5\alpha }{2}} L_x^{\frac{30\alpha }{15\alpha -8}}}  + C \left\| (-\Delta _\Omega)^{\frac{s_c}{2}} \left( |u|^{\alpha } u \right) \right\|_{L_t^{ \frac{5\alpha }{2(\alpha +1)}} L_x^{\frac{30\alpha }{27\alpha -8}}} \notag\\
			&\leq \eta + C \| u \|_{L_t^{\frac{5\alpha }{2}} L_x^{\frac{5\alpha }{2}}}  ^{\alpha }\| (-\Delta _\Omega)^{\frac{s_c}{2}} u \|_{L_t^{\frac{5\alpha }{2}} L_x^{\frac{30\alpha }{15\alpha -8}}}\leq \eta + C \| (-\Delta _\Omega)^{\frac{s_c}{2}} u \|_{L_t^{\frac{5\alpha }{2}} L_x^{\frac{30\alpha }{15\alpha -8}}}^{\alpha +1}\notag\\
			&\le \eta +C(2\eta )^{\alpha +1}\le 2\eta,\notag
		\end{align}
		provided $\eta$ is chosen sufficiently small.
		
		Similar estimates give
		\[
		\|\Phi(u)\|_{L_t^{\frac{5\alpha }{2}} L_x^{\frac{5\alpha }{2}}}
		\leq C\| (-\Delta _\Omega)^{\frac{s_c}{2}} \Phi(u) \|_{L_t^{\frac{5\alpha }{2}} L_x^{\frac{30\alpha }{15\alpha -8}}}\le 2C\eta,
		\]
		and 
		\begin{align}
			\|\Phi(u)\|_{L^\infty _t\dot H^{s_c}_D(\Omega)}&\le  \|u_0\|_{\dot H^{s_c}_D(\Omega)}+C \|(-\Delta _\Omega)^{\frac{s_c}{2}}(|u|^{\alpha }u)\|_{L_t^{\frac{5\alpha }{2(\alpha +1)}}L_x^{\frac{30\alpha }{27\alpha -8}}}\notag\\
			&\le    \|u_0\|_{\dot H^{s_c}_D(\Omega)}+C \|u\|^{\alpha }_{L_t^\frac{5\alpha }{2}L_x^{\frac{5\alpha }{2}}} \|(-\Delta _\Omega)^{\frac{s_c}{2}}u\|  _{L_t^\frac{5\alpha }{2}L_x^{\frac{30\alpha }{15\alpha -8}}}\notag\\
			&\le  \|u_0\|_{\dot H^{s_c}_D(\Omega)} +C(2\eta)^{\alpha +1}\le 2 \|u_0\|_{\dot H^{s_c}_D(\Omega)}, \notag
		\end{align}
		provided   $\eta$ is chosen small enough. 
		This shows that   $\Phi$ maps the ball   $B$ to itself.
		
		Finally, to prove that  $\Phi$ is a contraction, we argue as above:
		\begin{align}
			d(\Phi(u),\Phi(v)) &\leq C  \||u|^{\alpha }u-|v|^{\alpha }v\| _{L_t^{ \frac{5\alpha }{2(\alpha +1)}} L_x^{\frac{30\alpha }{27\alpha -8}}}\notag\\
			&\le Cd(u,v) \left( \| (-\Delta _\Omega)^{\frac{s_c}{2}}u \|_{L_t^\frac{5\alpha }{2}L_x^{\frac{30\alpha }{15\alpha -8}}}^{\alpha }+ \|(-\Delta _\Omega)^{\frac{s_c}{2}}v \|_{L_t^\frac{5\alpha }{2}L_x^{\frac{30\alpha }{15\alpha -8}}}^{\alpha } \right)\notag\\
			&\le 2Cd(u,v)(2\eta )^{\alpha }\le \frac{1}{2}d(u,v),\notag
		\end{align}
		provided   $\eta$ is chosen small enough. 
	\end{proof}
	Below, we present the stability theorem for the Schr\"odinger equation in the exterior domain. Its proof relies on the following nonlinear estimate.
	\begin{lemma}\label{Lnonlinearestimate}
		For any $u, v \in L_t^{\frac{5\alpha }{2}}\dot H^{s_c,\frac{30\alpha }{15\alpha -8}}_D(I\times \Omega)$, the following inequality holds: 
		\begin{align}
			& \|(-\Delta _\Omega)^{\frac{s_c}{2}}\left(|u+v|^{\alpha }(u+v)-|u|^{\alpha }u\right)\| _{L_t^{\frac{5\alpha }{2(\alpha +1)}}L_x^{\frac{30\alpha }{27\alpha -8}}} \notag\\
			&\lesssim \left( \|u\|_{L_t^{\frac{5\alpha }{2}}L_x^{\frac{5\alpha }{2}}} ^{\alpha -1}+ \|v\|_{L_t^{\frac{5\alpha }{2}}L_x^{\frac{5\alpha }{2}}} ^{\alpha -1}  \right)( \| (-\Delta _\Omega)^{\frac{s_c}{2}}u\|_{L_t^{\frac{5\alpha }{2}}L_x^{\frac{30\alpha }{15\alpha -8}}} + \| (-\Delta _\Omega)^{\frac{s_c}{2}}v\|_{L_t^{\frac{5\alpha }{2}}L_x^{\frac{30\alpha }{15\alpha -8}}}   )^2,\label{E1162}      
		\end{align}
	\end{lemma}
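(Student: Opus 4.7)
The plan is to combine the Euclidean fractional-derivative-of-differences estimate (Lemma \ref{LDerivatives of differences}) with the equivalence of Sobolev spaces (Theorem \ref{TEquivalence}) to move between $(-\Delta_\Omega)^{s_c/2}$ and $|\nabla|^{s_c}$, apply Hölder in time, and finally upgrade one factor of the $L^{5\alpha/2}_x$ norm to a $(-\Delta_\Omega)^{s_c/2}$ norm via Sobolev embedding (equivalent to Lemma \ref{LSquare function estimate}/Bernstein after equivalence).

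Concretely, I would argue as follows. Fix $t \in I$ and set $F(w) = |w|^\alpha w$, $p=\alpha$, $s=s_c$. Using Theorem \ref{TEquivalence} with Lebesgue exponent $\frac{30\alpha}{27\alpha-8}$, the left-hand side is comparable, pointwise in $t$, to $\||\nabla|^{s_c}(F(u+v)-F(u))\|_{L^{30\alpha/(27\alpha-8)}_x}$. I then apply Lemma \ref{LDerivatives of differences} with the spatial triple
\[
q=\tfrac{30\alpha}{27\alpha-8},\qquad q_1=\tfrac{30\alpha}{15\alpha-8},\qquad q_2=\tfrac{5\alpha}{2},
\]
which satisfies the required scaling relation $\tfrac{1}{q}=\tfrac{1}{q_1}+\tfrac{\alpha}{q_2}$, obtaining a bound by
\[
\||\nabla|^{s_c}u\|_{L^{q_1}_x}\|v\|_{L^{q_2}_x}^\alpha + \||\nabla|^{s_c}v\|_{L^{q_1}_x}\|u+v\|_{L^{q_2}_x}^\alpha.
\]
Converting back via Theorem \ref{TEquivalence} and integrating in time via Hölder with $\tfrac{2(\alpha+1)}{5\alpha}=\tfrac{2}{5\alpha}+\tfrac{2\alpha}{5\alpha}$ (i.e.\ $q_1=q_2=\tfrac{5\alpha}{2}$ in time), I arrive at
\[
\text{LHS}\lesssim \|(-\Delta_\Omega)^{s_c/2}u\|_{L^{5\alpha/2}_t L^{30\alpha/(15\alpha-8)}_x}\|v\|_{\scaa}^\alpha + \|(-\Delta_\Omega)^{s_c/2}v\|_{L^{5\alpha/2}_t L^{30\alpha/(15\alpha-8)}_x}\|u+v\|_{\scaa}^\alpha.
\]

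To match the stated right-hand side, I write $\|v\|_{\scaa}^\alpha=\|v\|_{\scaa}^{\alpha-1}\|v\|_{\scaa}$ and use Sobolev embedding combined with Theorem \ref{TEquivalence} to estimate $\|v\|_{\scaa}\lesssim \|(-\Delta_\Omega)^{s_c/2}v\|_{L^{5\alpha/2}_t L^{30\alpha/(15\alpha-8)}_x}$; the scaling identity $\tfrac{2}{5\alpha}=\tfrac{15\alpha-8}{30\alpha}-\tfrac{s_c}{3}$ (which is just $s_c=\tfrac{3}{2}-\tfrac{2}{\alpha}$) makes this embedding sharp. The same is done on the $\|u+v\|_{\scaa}^\alpha$ factor after a triangle inequality. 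Collecting terms and bounding each summand by the symmetric quantity $(\|u\|_{\scaa}^{\alpha-1}+\|v\|_{\scaa}^{\alpha-1})(\|(-\Delta_\Omega)^{s_c/2}u\|+\|(-\Delta_\Omega)^{s_c/2}v\|)^2$ yields \eqref{E1162}.

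The only genuinely non-routine point is verifying that the hypotheses of Theorem \ref{TEquivalence} are satisfied at the three exponents $p\in\{\tfrac{30\alpha}{27\alpha-8},\,\tfrac{30\alpha}{15\alpha-8},\,\tfrac{5\alpha}{2}\}$ with the regularity $s_c\in[\tfrac12,\tfrac32)$, i.e.\ the condition $0\le s_c<\min\{1+\tfrac{1}{p},\,\tfrac{3}{p}\}$. Each of these is a short algebraic check using $s_c=\tfrac32-\tfrac{2}{\alpha}$; the tightest constraint is $s_c<\tfrac{3}{p}$ with $p=\tfrac{30\alpha}{15\alpha-8}$, which reduces to $\tfrac{2}{\alpha}>\tfrac{4}{5\alpha}$ and so holds throughout our range of $\alpha$. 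By density of $C_c^\infty(\Omega)$ in the relevant Sobolev spaces, all of the above estimates extend to the hypothesized $u,v$, completing the proof.
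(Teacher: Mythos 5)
Your argument coincides with the paper's proof in the regime $s_c<1$, but it has a genuine gap for the remaining range $s_c\in(1,\tfrac32)$, which is part of the hypothesized range of the lemma. The issue is that Lemma~\ref{LDerivatives of differences} is stated only for $0<s<1$; you apply it with $s=s_c$, which can be $\ge1$. Fractional ``derivative of difference'' estimates of this type genuinely break down at $s=1$ because the difference $F(u+v)-F(u)$ has only $C^{1,\alpha-1}$ smoothness when $\alpha<1$ is possible, and more to the point the proof of that lemma (paraproduct/commutator style) relies on $s<1$. So the step reducing the problem to a single invocation of Lemma~\ref{LDerivatives of differences} at regularity $s_c$ is not valid uniformly over $s_c\in[\tfrac12,\tfrac32)$.

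The paper handles this by case-splitting on $s_c$. For $s_c<1$ the argument is exactly yours: Lemma~\ref{LDerivatives of differences} at level $s_c$, plus Theorem~\ref{TEquivalence}, plus Sobolev. For $s_c>1$ the paper first peels off one full derivative, writing
\[
|\nabla|^{s_c}\bigl(F(u+v)-F(u)\bigr)
=|\nabla|^{s_c-1}\bigl[(F'(u+v)-F'(u))\nabla u\bigr]
+|\nabla|^{s_c-1}\bigl[F'(u+v)\nabla v\bigr],
\]
and then applies the fractional product rule and Lemma~\ref{LDerivatives of differences} at the lowered regularity $s_c-1\in(0,\tfrac12)$, which is squarely inside the admissible range $0<s<1$. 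This produces extra intermediate exponents and Sobolev embeddings (compare the chain $L_t^{\frac{5\alpha}{2}}L_x^{\frac{30\alpha}{5\alpha-8}}$, $L_t^{\frac{5\alpha}{2}}L_x^{\frac{30\alpha}{17\alpha-20}}$, etc.\ in the paper), but is essentially forced by the $s<1$ hypothesis. You would need to add this second branch (or replace the difference lemma by a version valid for $1\le s<2$, e.g.\ the Killip--Visan $\mathcal{D}_s$ machinery) to close the proof on the full range.

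A minor secondary point: you describe the tightest constraint for Theorem~\ref{TEquivalence} as occurring at $p=\tfrac{30\alpha}{15\alpha-8}$; in fact at that exponent $s_c<3/p$ reduces to $-20<-8$, which is never close to tight. The constraints at all exponents actually used are comfortably satisfied, so this does not affect the validity, but the phrasing is misleading. The real constraint in this lemma is the $s<1$ hypothesis of Lemma~\ref{LDerivatives of differences}, which your write-up does not mention at all.
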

	where all the space-time integrals   are  over  $I\times \Omega$. Note that since  $s_c > 0$, we have $\alpha > \frac{4}{3}$. 
	\begin{proof}
		We first consider the case  $s_c<1$. Applying Lemma \ref{LDerivatives of differences} and the equivalence   theorem \ref{TEquivalence}, we obtain 
		\begin{align}
			& \|(-\Delta _\Omega)^{\frac{s_c}{2}}\left(|u+v|^{\alpha }(u+v)-|u|^{\alpha }u\right)\| _{L_t^{\frac{5\alpha }{2(\alpha +1)}}L_x^{\frac{30\alpha }{27\alpha -8}}} \notag\\
			&\lesssim \|v\|^\alpha _{L_t^{\frac{5\alpha }{2}} L_x^{\frac{5\alpha }{2}}} \|(-\Delta _\Omega)^{\frac{s_c}{2}}u\| _{L_t^{\frac{5\alpha }{2}} L_x^{\frac{30\alpha }{15\alpha -8}} } + \|u+v\|_{L_t^{\frac{5\alpha }{2}} L_x^{\frac{5\alpha }{2}} }^\alpha \|(-\Delta _\Omega)^{\frac{s_c}{2}}v\|_{L_t^{\frac{5\alpha }{2}} L_x^{\frac{30\alpha }{15\alpha -8}} }.\notag
		\end{align}
		Further using  Sobolev embedding yields (\ref{E1162}). 
		
		Next, we turn to the case  $s_c>1$. 
		Writing $F(u) = |u|^{\alpha} u$, we have
		\begin{equation}
			|\nabla|^{s_c} \left(|u+v|^{\alpha }(u+v)-|u|^{\alpha }u\right) = |\nabla |^{s_c-1}[F'(u+v)-F'(u)]\nabla u + |\nabla |^{s_c-1}[F'(u+v)\nabla v].\notag
		\end{equation}
		Using the fractional differentiation rule and Sobolev embedding, we obtain
		\begin{align}
			& \||\nabla |^{s_c-1}[F'(u+v)\nabla v]\| _{L_t^{\frac{5\alpha }{2(\alpha +1)}}L_x^{\frac{30\alpha }{27\alpha -8}}} \notag\\
			&\lesssim \||\nabla |^{s_c-1} F'(u+v)\|_{L_t^\frac{5}{2}L_x^{\frac{5\alpha }{2(\alpha -1)}}} \|\nabla v\|_{L_t^{\frac{5\alpha }{2}}L_x^{\frac{15\alpha }{5\alpha +6}}}  + \|u+v\|^\alpha _{L_t^{\frac{5\alpha }{2}}L_x^{\frac{5\alpha }{2}}} \||\nabla |^{s_c}v\|_{L_t^{\frac{5\alpha }{2}}L_x^{\frac{30\alpha }{15\alpha -8}}} \notag\\
			&\lesssim  \|u+v\|_{L_t^{\frac{5\alpha }{2}}L_x^{\frac{5\alpha }{2}}} ^{\alpha -1} \||\nabla |^{s_c}(u+v)\|_{L_t^{\frac{5\alpha }{2}}L_x^{\frac{30\alpha }{15\alpha -8}}} \||\nabla |^{s_c}v\|_{L_t^{\frac{5\alpha }{2}}L_x^{\frac{30\alpha }{15\alpha -8}}}.\label{E1163} 
		\end{align}
		Similarly, using the fractional differentiation rule, Sobolev embedding, and Lemma \ref{LDerivatives of differences}, we have
		\begin{align}
			&\||\nabla |^{s_c-1}[\left(F'(u+v)-F'(u)\right)\nabla u]\|_{L_t^{\frac{5\alpha }{2(\alpha +1)}}L_x^{\frac{30\alpha }{27\alpha -8}}}\notag\\
			&\lesssim \||\nabla |^{s_c-1}\left(F'(u+v)-F'(u)\right) \|_{L_t^{\frac{5\alpha }{2}}L_x^{\frac{30\alpha }{17\alpha -20}}}  \|\nabla u\|_{L_t^{\frac{5\alpha }{2}  }L_x^{\frac{15\alpha }{5\alpha +6}}}\notag\\
			&\qquad + \|F'(u+v)-F'(u)\|_{L_t^{\frac{5}{2}}L_x^{\frac{5}{2}}} \|\nabla |^{s_c}u|\|_{L_t^{\frac{5\alpha }{2}}L_x^{\frac{30\alpha }{15\alpha -8}}}   \notag\\
			&\lesssim  \left(\||\nabla |^{s_c-1}u\|_{L_t^{\frac{5\alpha }{2}}L_x^{\frac{30\alpha }{5\alpha -8}}} \|v\|_{L_t^{\frac{5\alpha }{2}}L_x^{\frac{5\alpha }{2}}} ^{\alpha -1}+ \||\nabla |^{s_c-1}v\|_{L_t^{\frac{5\alpha }{2}}L_x^{\frac{30\alpha }{5\alpha -8}}} \|u+v\|^{\alpha -1}_{L_t^{\frac{5\alpha }{2}}L_x^{\frac{5\alpha }{2}}}    \right) \||\nabla |^{s_c}u\|_{L_t^{\frac{5\alpha }{2}}L_x^{\frac{30\alpha }{15\alpha -8}}}\notag\\
			&\qquad + \left(\|u+v\|_{L_t^{\frac{5\alpha }{2}}L_x^{\frac{5\alpha }{2}}} ^{\alpha -1} + \|v\|_{L_t^{\frac{5\alpha }{2}}L_x^{\frac{5\alpha }{2}}} ^{\alpha -1} \right)  \|v\|_{L_t^{\frac{5\alpha }{2}}L_x^{\frac{5\alpha }{2}}}  \||\nabla ^{s_c}u|\|_{L_t^{\frac{5\alpha }{2}}L_x^{\frac{30\alpha }{15\alpha -8}}}\notag\\
			&\lesssim     \left( \|u\|_{L_t^{\frac{5\alpha }{2}}L_x^{\frac{5\alpha }{2}}} ^{\alpha -1}+ \|v\|_{L_t^{\frac{5\alpha }{2}}L_x^{\frac{5\alpha }{2}}} ^{\alpha -1}  \right)( \||\nabla |^{s_c}u\|_{L_t^{\frac{5\alpha }{2}}L_x^{\frac{30\alpha }{15\alpha -8}}} + \||\nabla |^{s_c}v\|_{L_t^{\frac{5\alpha }{2}}L_x^{\frac{30\alpha }{15\alpha -8}}}   )^2.  \label{E1164}
		\end{align}
		Combining (\ref{E1163}) and (\ref{E1164}), and using the equivalence  theorem \ref{TEquivalence}, we obtain (\ref{E1162}).
	\end{proof}
	Now, we are in position to give the stability result  for the Schr\"odinger equation (\ref{NLS}).  
	\begin{theorem}[Stability result]\label{TStability}
		Let $\Omega$ be the exterior of a smooth compact strictly convex obstacle in $\mathbb{R}^3$. Let $I$ a compact time interval and let $\tilde{u}$ be an approximate solution to (\ref{NLS}) on $I \times \Omega$ in the sense that 
		\begin{equation}
			i\tilde{u}_t = -\Delta_\Omega \tilde{u}  + |\tilde{u}|^{\alpha } \tilde{u} + e\label{E118w3}
		\end{equation}
		for some function $e$. Assume that
		\[
		\|\tilde{u}\|_{L_t^\infty \dot{H}_D^{s_c}(I \times \Omega)} \leq E \quad \text{and} \quad \|\tilde{u}\|_{L_t^{\frac{5\alpha }{2}} L_x^{\frac{5\alpha }{2}} (I \times \Omega)} \leq L
		\]
		for some positive constants $E$ and $L$.  Assume also the smallness conditions
		\[
		\|(-\Delta _\Omega)^{\frac{s_c}{2}}e^{i(t-t_0)\Delta_\Omega} (u_0 - \tilde{u}(t_0))\|_{L_t^{\frac{5\alpha }{2}} L_x^{\frac{30\alpha }{15\alpha -8}} (I\times \Omega)}   \leq \epsilon,
		\]
		\begin{equation}
			\|e\|_{\dot N^{s_c}((I\times \Omega))}:=\inf \left\{ \|(-\Delta _\Omega)^{\frac{s_c}{2}}e\|_{L_t^{q'}L_x^{r'}(I\times \Omega)}: \ \frac{2}{q}+\frac{3}{r}=\frac{3}{2}  \right\} \le \varepsilon .\label{E1241}
		\end{equation}
		for some $0 < \epsilon < \epsilon_1 = \epsilon_1(E, L)$. Then, there exists a unique strong solution $u : I \times \Omega \to \mathbb{C}$ to (\ref{NLS}) with initial data $u_0$ at time $t=t_0$ satisfying
		\[
		\|(-\Delta _\Omega)^{\frac{s_c}{2}}(u - \tilde{u})\|_{L_t^{\frac{5\alpha }{2}} L_x^{\frac{30\alpha }{15\alpha -8}} (I\times \Omega)}  \leq C(E,  L)  \varepsilon,
		\]
		\[
		\|(-\Delta _\Omega)^{\frac{s_c}{2}}u\|_{L_t^{\frac{5\alpha }{2}} L_x^{\frac{30\alpha }{15\alpha -8}}(I\times \Omega) }  \leq C(E, L).
		\]	
	\end{theorem}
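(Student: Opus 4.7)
My plan is to set $w := u - \tilde u$, which formally satisfies
\begin{equation*}
i w_t + \Delta_\Omega w = \bigl[F(\tilde u + w) - F(\tilde u)\bigr] - e, \qquad w(t_0) = u_0 - \tilde u(t_0),
\end{equation*}
with $F(z) = |z|^\alpha z$, and to obtain smallness of $X(J) := \|(-\Delta_\Omega)^{s_c/2} w\|_{L_t^{5\alpha/2} L_x^{30\alpha/(15\alpha-8)}(J\times\Omega)}$. Since only a total Strichartz-type bound $\|\tilde u\|_{L_{t,x}^{5\alpha/2}(I\times\Omega)} \le L$ is given (with no intrinsic smallness), the standard device is to partition $I = \bigcup_{j=0}^{J-1} I_j$ into $J = J(E,L,\eta)$ consecutive closed subintervals $I_j = [t_j,t_{j+1}]$ on each of which $\|\tilde u\|_{L_{t,x}^{5\alpha/2}(I_j\times\Omega)} \le \eta$, for a parameter $\eta = \eta(E,L)$ to be fixed below.

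On a single subinterval $I_j$, I will run a continuity-in-time bootstrap. Applying Proposition \ref{PStrichartz} to the Duhamel formulation for $w$ on $I_j$, combined with the nonlinear difference estimate of Lemma \ref{Lnonlinearestimate} (used with $u \leftarrow \tilde u$ and $v \leftarrow w$), the Sobolev embedding $\dot H_D^{s_c,\,30\alpha/(15\alpha-8)} \hookrightarrow L^{5\alpha/2}$ justified by Theorem \ref{TEquivalence}, and \eqref{E1241} to absorb $e$, produces a schematic inequality
\begin{equation*}
X(I_j) \le \delta_j + C\varepsilon + C \bigl(\eta^{\alpha-1} + X(I_j)^{\alpha-1}\bigr)\bigl(X(I_j)+M_j\bigr)^{2},
\end{equation*}
where $\delta_j := \|(-\Delta_\Omega)^{s_c/2} e^{i(t-t_j)\Delta_\Omega} w(t_j)\|_{L_t^{5\alpha/2} L_x^{30\alpha/(15\alpha-8)}(I_j\times\Omega)}$ captures the free evolution of the discrepancy and $M_j := \|(-\Delta_\Omega)^{s_c/2}\tilde u\|_{L_t^{5\alpha/2}L_x^{30\alpha/(15\alpha-8)}(I_j\times\Omega)}$ is finite by a separate application of Strichartz to \eqref{E118w3}, bounded in terms of $E$ and $\eta$ only (so uniformly in $j$ by some $M(E,L)$). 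Fixing $\eta = \eta(E,L)$ so small that $C\eta^{\alpha-1} M(E,L)^2 \le 1/2$ and then requiring $\delta_j+\varepsilon$ small, a standard continuity argument (opened at $t_j$ via Theorem \ref{TLWP} applied to the perturbed equation) yields $X(I_j) \le 2(\delta_j + C\varepsilon)$.

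To close the induction I must propagate the free-evolution control from $I_j$ to $I_{j+1}$. Writing $e^{i(t-t_{j+1})\Delta_\Omega}w(t_{j+1})$ via Duhamel on $[t_j,t_{j+1}]$ and invoking Strichartz together with Lemma \ref{Lnonlinearestimate} once more gives a recursion of the form $\delta_{j+1} \le 2\delta_j + C'\varepsilon$, starting from $\delta_0 \le \varepsilon$. Iterating yields $\delta_j \lesssim 2^{j}\varepsilon$, so all per-interval smallness hypotheses are valid provided $\varepsilon \le \varepsilon_1(E,L)$ with $\varepsilon_1$ of order $c(E,L)\cdot 2^{-J(E,L)}$. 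Summing the per-interval bounds gives the stated estimate on $w$, and the bound on $\|(-\Delta_\Omega)^{s_c/2} u\|_{L_t^{5\alpha/2} L_x^{30\alpha/(15\alpha-8)}(I\times\Omega)}$ then follows by the triangle inequality together with the $M_j$-bounds for $\tilde u$. I expect the principal obstacle to be managing the exponential accumulation of constants through the $J$-fold iteration while keeping $\eta$ dependent only on $E,L$; a secondary delicate point is making the per-interval bootstrap genuinely rigorous, which requires combining Theorem \ref{TLWP} with a continuity-in-time argument on the Strichartz norm to rule out finite-time breakdown of $w$ inside $I_j$ before the estimate closes.
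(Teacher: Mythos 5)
Your proposal is correct and follows the same route as the paper's (intentionally sketchy) proof: form the difference equation for $w = u - \tilde u$, partition $I$ into subintervals on which $\|\tilde u\|_{L^{5\alpha/2}_{t,x}}$ is small, close a continuity bootstrap on each piece via Strichartz plus Lemma \ref{Lnonlinearestimate} and the smallness of $e$, and iterate, accepting exponential accumulation of constants. Your write-up is in fact slightly more careful than the paper's wording on one point: you correctly treat $M_j = \|(-\Delta_\Omega)^{s_c/2}\tilde u\|_{L_t^{5\alpha/2}L_x^{30\alpha/(15\alpha-8)}(I_j)}$ as merely \emph{bounded} (by $O(E)$ via Strichartz applied to \eqref{E118w3}), and rely on the smallness of $\eta$ multiplying $M_j^2$ in the bootstrap inequality, whereas the paper's phrase ``we know that (\ref{E118w1}) holds on each $I_j$'' suggests $M_j$ itself is small, which does not follow from $\|\tilde u(t_j)\|_{\dot H^{s_c}_D}\le E$.
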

	\begin{proof}
		We provide only a brief outline of the proof; the standard proof can be found in \cite{Colliander2008, RyckmanVisan2007, TaoVisan2005}.
		
		Define $w = u - \widetilde{u}$ so that $(i\partial_{t} + \Delta_\Omega) w= |u|^{\alpha} u - |\widetilde{u}|^{\alpha} \widetilde{u} - e$. It then follows from  Lemma \ref{Lnonlinearestimate}, Strichartz estimate, and (\ref{E1241}) that
		\begin{align}
			\|(-\Delta _\Omega)^{\frac{s_c}{2}}w\|_{L_t^{\frac{5\alpha}{2}} L_x^{\frac{30\alpha}{15\alpha - 8}}(I \times \Omega)} &\lesssim \varepsilon + \left( \|\widetilde{u}\|^{\alpha -1}_{L_t^{\frac{5\alpha}{2}} L_x^{\frac{5\alpha}{2}}(I \times \Omega)} + \|w\|_{L_t^{\frac{5\alpha}{2}} L_x^{\frac{5\alpha}{2}}(I \times \Omega)}^{\alpha - 1} \right) \notag\\
			&\qquad \times \left( \|(-\Delta_\Omega)^{\frac{s_c}{2}} \widetilde{u}\|_{L_t^{\frac{5\alpha}{2}} L_x^{\frac{30\alpha}{15\alpha - 8}}(I \times \Omega)} + \|(-\Delta_\Omega)^{\frac{s_c}{2}} w\|_{L_t^{\frac{5\alpha}{2}} L_x^{\frac{30\alpha}{15\alpha - 8}}(I \times \Omega)} \right)^2. \notag
		\end{align}
		
		We first note that the above inequality implies  that there exists $\delta > 0$ such that,  under the additional assumption
		\begin{equation}
			\|(-\Delta_\Omega)^{\frac{s_c}{2}} \widetilde{u}\|_{L_t^{\frac{5\alpha}{2}} L_x^{\frac{30\alpha}{15\alpha - 8}} (I \times \Omega)} \le \delta, \label{E118w1}
		\end{equation}
		we can use the continuity method to obtain
		\begin{equation}
			\|(-\Delta_\Omega)^{\frac{s_c}{2}} w\|_{L_t^{\frac{5\alpha}{2}} L_x^{\frac{30\alpha}{15\alpha - 8}} (I \times \Omega)} \lesssim \varepsilon. \label{E118w2}
		\end{equation}
		This is the so-called "short-time perturbation" (see \cite[Lemma 3.13]{KillipVisan2013}).
		
		For the general case, we   divide the interval $I$ into a finite number of smaller intervals $I_j$, $1 \le j \le n$, such that on each subinterval $I_j$, the $L_t^{\frac{5\alpha}{2}} L_x^{\frac{5\alpha}{2}}$ norm of $\widetilde{u}$ is sufficiently small. Then using equation (\ref{E118w3}), the Strichartz estimate, and the continuity method on each subinterval $I_j$, we know  that (\ref{E118w1}) holds on each $I_j$, thus obtaining that (\ref{E118w2}) holds on each $I_j$. Summing the estimates over all $I_j$, we obtain the desired estimate in Theorem \ref{TStability}.
	\end{proof}
	
	\subsection{Convergence results}
	The region $\Omega$ is not preserved under scaling or translation. In fact, depending on the choice of such operations, the obstacle may shrink to a point, move off to infinity, or even expand to fill an entire half-space. In this subsection, we summarize some results from \cite{KillipVisanZhang2016a} regarding the behavior of functions associated with the Dirichlet Laplacian under these transformations, as well as the convergence of propagators in Strichartz spaces. These results are crucial for the proof of the linear profile decomposition (Proposition \ref{linear-profile}).
	
	Throughout this subsection, we denote the Green's function of the Dirichlet Laplacian in a general open set $\mathcal{O}$ by  
	\begin{align*}
		G_{\mathcal{O}}(x, y; \lambda) := \left( - \Delta_{\mathcal{O}} - \lambda \right)^{-1}(x, y).
	\end{align*}

	\begin{definition}[\cite{KillipVisanZhang2016a}]\label{def-limit}
		Given a sequence $\{\mathcal{O}_n\}_n$ of open subsets of $\mathbb{R}^3$, we define
		\begin{align*}
			\widetilde{\lim} \,  \mathcal{O}_n :  =  \left\{ x \in \mathbb{R}^3 : \liminf\limits_{n \to \infty } \operatorname{dist}  \left(x, \mathcal{O}_n^c  \right) > 0  \right\}.
		\end{align*}
		Writing $\tilde{O} = \widetilde{\lim} \, \mathcal{O}_n$, we say $\mathcal{O}_n \to \mathcal{O}$ if the following two conditions hold: the symmetric difference $\mathcal{O} \triangle \tilde{O}$ is a finite set and
		\begin{align}\label{eq3.1v65}
			G_{\mathcal{O}_n}(x,y;  \lambda ) \to G_{\mathcal{O}} (x,y ;  \lambda )
		\end{align}
		for all $ \lambda  \in (-2 , - 1)$, all $x \in \mathcal{O}$, and uniformly for $y$ in compact subsets of $\mathcal{O} \setminus \{x \}$.
	\end{definition}
	\begin{remark}
		We restrict $\lambda$ to the interval $(-2, -1)$ in (\ref{eq3.1v65}) for simplicity and because it allows us to invoke the maximum principle when verifying this hypothesis. Indeed, Killip-Visan-Zhang \cite[Lemma 3.4]{KillipVisanZhang2016a} proved that this convergence actually holds for all $\lambda \in \mathbb{C} \setminus [0, \infty)$.
	\end{remark}
	
	Given sequences of scaling and translation parameters $N_n \in 2^{\mathbb{Z}}$ and $x_n \in \Omega$, we would like to consider the domains $\Omega_n:=N_n \left( \Omega -  \left\{x_n \right\} \right)$. When $\Omega_n\rightarrow\Omega_\infty$ in the sense of Definition \ref{def-limit}, Killip, Visan and Zhang\cite{KillipVisanZhang2016a} used the maximum principle to prove the convergence of the corresponding Green's functions. Then, by applying the Helffer-Sj\"ostrand formula and using the convergence of the Green's functions, they obtain the following two convergence results: 
	\begin{proposition}\label{convergence-domain}
		Assume $\Omega_n \to \Omega_\infty$   in the sense of Definition \ref{def-limit} and let $\Theta \in C_0^\infty ((0, \infty))$.
		Then, 
		\begin{align}\label{eq3.11v65}
			\left\|  \left( \Theta  \left( - \Delta_{\Omega_n}  \right) - \Theta  \left( - \Delta_{\Omega_\infty}  \right) \right) \delta_y  \right\|_{\dot{H}^{-s_c} ( \mathbb{R}^3 )} \to 0 \qtq{ when} n\to \infty,
		\end{align}
		uniformly for $y$ in compact subsets of $\widetilde{\lim}\, \Omega_n$. Moreover, for any fixed $t\in\R$ and $h\in C_0^\infty(\widetilde{\lim}\,\Omega_n)$, we have
		\begin{align*}
			\lim_{n\to\infty}\big\|e^{it\Delta_{\Omega_n}}h-e^{it\Delta_{\Omega_{\infty}}}h\big\|_{\dot{H}^{-s_c}(\R^3)}=0.
		\end{align*}
	\end{proposition}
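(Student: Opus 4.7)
My plan is to use the Helffer--Sj\"ostrand functional calculus to reduce each claim to a dominated-convergence statement about Green's functions, exploiting the fact (noted in the remark after Definition~\ref{def-limit}) that the convergence in (\ref{eq3.1v65}) extends to all $z\in\C\setminus[0,\infty)$.

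\textbf{Step 1 (Helffer--Sj\"ostrand reduction).} Fix an almost-analytic extension $\tilde\Theta\in C_c^\infty(\C)$ of $\Theta\in C_c^\infty((0,\infty))$ with $\bar\partial\tilde\Theta$ vanishing to infinite order on $\R$. The functional-calculus identity
$$\Theta(-\Delta_\Omega)=\tfrac{1}{\pi}\int_\C\bar\partial\tilde\Theta(z)\,(-\Delta_\Omega-z)^{-1}\,dA(z),$$
evaluated on $\delta_y$ and then differenced, yields
$$\bigl[\Theta(-\Delta_{\Omega_n})-\Theta(-\Delta_{\Omega_\infty})\bigr]\delta_y(x)=\tfrac{1}{\pi}\int_\C\bar\partial\tilde\Theta(z)\,\bigl[G_{\Omega_n}(x,y;z)-G_{\Omega_\infty}(x,y;z)\bigr]dA(z).$$
The extended Green's-function convergence gives the integrand $\to 0$ pointwise, while the uniform resolvent bound $|G_\Omega(x,y;z)|\lesssim|x-y|^{-1}e^{-c\sqrt{-\RE z}\,|x-y|}$---obtained by integrating Lemma~\ref{Lheatkernel} in $t$ against $e^{tz}$---supplies an integrable majorant in $x$ for every $p\in(1,3)$. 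Dominated convergence thus yields $L^p$-convergence in $x$; choosing $p=\tfrac{6}{3+2s_c}$ (valid because $s_c<3/2$) and invoking the dual Sobolev embedding $L^{6/(3+2s_c)}(\R^3)\hookrightarrow\dot H^{-s_c}(\R^3)$ proves (\ref{eq3.11v65}). Uniformity for $y$ in compacts is inherited from the uniformity of the Green's-function convergence on compacts, combined with the symmetry of the heat-kernel majorant in $(x,y)$.

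\textbf{Step 2 (Propagator).} The exponential $\lambda\mapsto e^{-it\lambda}$ is not compactly supported, so I introduce a spectral cutoff: for $M\in\N$ pick $\chi_M\in C_c^\infty((0,\infty))$ with $\chi_M\equiv 1$ on $[M^{-1},M]$, and set $\psi_M(\lambda):=\chi_M(\lambda)e^{-it\lambda}\in C_c^\infty((0,\infty))$. Write
$$e^{it\Delta_{\Omega_n}}h-e^{it\Delta_{\Omega_\infty}}h=\bigl[\psi_M(-\Delta_{\Omega_n})-\psi_M(-\Delta_{\Omega_\infty})\bigr]h+\bigl[(1-\chi_M)e^{-it\cdot}\bigr](-\Delta_{\Omega_n})h-\bigl[(1-\chi_M)e^{-it\cdot}\bigr](-\Delta_{\Omega_\infty})h.$$
Step 1, applied against $h$ by integrating the kernel convergence, controls the first bracket for each fixed $M$. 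For the tails one uses Theorem~\ref{TEquivalence} to identify the $\dot H^{-s_c}(\R^3)$-norm with $\|(-\Delta_\Omega)^{-s_c/2}\cdot\|_{L^2(\Omega)}$ on functions supported in $\Omega$; the spectral theorem then yields $\|(1-\chi_M)(-\Delta_\Omega)h\|_{\dot H^{-s_c}}\to 0$ as $M\to\infty$. A standard $\varepsilon/2$ argument (choose $M$ large, then $n$ large) yields the claimed convergence.

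\textbf{Main obstacle.} The chief subtlety is the \emph{uniformity in $n$} of the spectral-tail estimate $\|(1-\chi_M)(-\Delta_{\Omega_n})h\|_{\dot H^{-s_c}(\R^3)}\to 0$ as $M\to\infty$. I expect this to reduce, via a further Helffer--Sj\"ostrand application to the symbol $(1-\chi_M)(\lambda)\lambda^{-s_c/2}$ (after another spectral cutoff near zero absorbed in the estimate), back to the Green's-function convergence of Step~1, so the core content of the proof remains the single dominated-convergence argument there. A secondary technical point is verifying the resolvent-kernel bound with constants independent of $\Omega_n$, which ultimately rests on the uniform convexity and bounded diameter of the obstacles entering Lemma~\ref{Lheatkernel}.
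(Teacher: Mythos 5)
Your proof takes a genuinely different route from the paper's. The paper does not re-prove this proposition at all: the remark following Proposition~\ref{P1} states that Killip--Visan--Zhang established both conclusions for $s_c=1$, and the general case $s_c\in(0,\tfrac32)$ is then ``easily'' obtained by interpolation. You instead run the Helffer--Sj\"ostrand argument directly at regularity $-s_c$, substituting the embedding $L^{6/(3+2s_c)}(\R^3)\hookrightarrow\dot H^{-s_c}(\R^3)$ for the interpolation. This is self-contained and arguably more careful than the paper's phrasing, since the interpolation route implicitly requires producing a second, \emph{uniformly bounded} endpoint — for $1<s_c<\tfrac32$ this sits in $\dot H^{-\sigma}$ with $\sigma$ close to $\tfrac32$, right at the boundary of validity of Theorem~\ref{TEquivalence}, and the paper leaves this unaddressed.

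Two points to tighten. First, the bound $|G_\Omega(x,y;z)|\lesssim|x-y|^{-1}e^{-c\sqrt{-\RE z}\,|x-y|}$ obtained by integrating Lemma~\ref{Lheatkernel} against $e^{tz}$ is valid only for $\RE z<0$. On the support of $\bar\partial\tilde\Theta$ (which lies over $(0,\infty)$) the decay rate is governed instead by $\IM\sqrt z$, which degenerates as $\IM z\to0$; consequently $\|G_\Omega(\cdot,y;z)\|_{L^p_x}$ grows like a power of $|\IM z|^{-1}$ rather than staying bounded. Your majorant therefore must be $|\bar\partial\tilde\Theta(z)|\cdot\|G_\Omega(\cdot,y;z)\|_{L^p_x}$ itself, and its integrability rests essentially on the rapid vanishing $|\bar\partial\tilde\Theta(z)|=O(|\IM z|^\infty)$ — the same mechanism KVZ exploit. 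The argument works, but the displayed bound should be corrected and this pairing made explicit. Second, regarding the ``main obstacle'' you flag: you are right that the $n$-uniformity of the spectral-tail estimate is the genuine subtlety in Step~2, because the $\dot H^{-s_c}(\R^3)$-norm of a function supported in $\Omega_n$ is not controlled by the $\Omega_n$-adapted Littlewood--Paley pieces in any immediate way (negative-order equivalence of Sobolev norms is not among the stated consequences of Theorem~\ref{TEquivalence}). Reducing it, as you suggest, to another Helffer--Sj\"ostrand computation with the symbol $(1-\chi_M)(\lambda)\lambda^{-s_c/2}$ is one viable repair; alternatively one can note that $h\in C_0^\infty(\widetilde{\lim}\Omega_n)$ forces $(-\Delta_{\Omega_n})^kh=(-\Delta)^kh$ for $n$ large and work from the uniform bounds this provides, but either way the detail deserves to be carried out rather than deferred.
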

	\begin{proposition}\label{P1} Let $\Omega_n\to\Omega_{\infty}$ in the sense of Definition \ref{def-limit}. Then we have
		\begin{align*}
			\big\|(-\Delta_{\Omega_n})^\frac{s_c}{2}f-(-\Delta_{\Omega_\infty})^\frac{s_c}2f\big\|_{L^2(\R^3)}\to0
		\end{align*}
		for all $f\in C_0^\infty(\widetilde{\lim}\,\Omega_n)$.
	\end{proposition}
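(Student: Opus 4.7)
My plan is to reduce the claim to the convergence of heat semigroups via the subordination formula for fractional powers, and then apply the dominated convergence theorem. Fix $f\in C_0^\infty(\widetilde{\lim}\,\Omega_n)$ with compact support $K\subset\widetilde{\lim}\,\Omega_n$. The definition of $\widetilde{\lim}\,\Omega_n$ combined with compactness of $K$ ensures $\operatorname{dist}(K,\Omega_n^c)\ge\delta>0$ uniformly for all large $n$, and likewise for $\Omega_\infty$. Consequently, $f$ lies in the interior of both domains, so $\Delta_{\Omega_n}^k f=\Delta^k f=\Delta_{\Omega_\infty}^k f$ for every integer $k\ge 1$, with $\Delta^k f\in C_c^\infty$ and $L^p$ norms independent of $n$.

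First, applying the spectral theorem to the subordination identity $\lambda^{s_c/2}=c_{s_c}\int_0^\infty t^{-s_c/2-1}(1-e^{-t\lambda})\,dt$, valid for $s_c/2\in(0,1)$ with $c_{s_c}=(s_c/2)/\Gamma(1-s_c/2)$, I obtain
\begin{equation*}
(-\Delta_\Omega)^{s_c/2}f=c_{s_c}\int_0^\infty t^{-s_c/2-1}\bigl(f-e^{t\Delta_\Omega}f\bigr)\,dt.
\end{equation*}
Taking differences between this formula applied to $\Omega_n$ and $\Omega_\infty$ and using Minkowski,
\begin{equation*}
\bigl\|(-\Delta_{\Omega_n})^{s_c/2}f-(-\Delta_{\Omega_\infty})^{s_c/2}f\bigr\|_{L^2(\R^3)}\le c_{s_c}\int_0^\infty t^{-s_c/2-1}\delta_n(t)\,dt,
\end{equation*}
where $\delta_n(t):=\|e^{t\Delta_{\Omega_n}}f-e^{t\Delta_{\Omega_\infty}}f\|_{L^2(\R^3)}$. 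It therefore suffices to show that this integral tends to $0$ as $n\to\infty$, which I will accomplish by dominated convergence.

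For the uniform domination, Duhamel's formula together with contractivity of the heat semigroup yields $\|e^{t\Delta_\Omega}f-f\|_{L^2}\le t\|\Delta_\Omega f\|_{L^2}=t\|\Delta f\|_{L^2}$, so $\delta_n(t)\le 2t\|\Delta f\|_{L^2}$; combined with the trivial bound $\delta_n(t)\le 2\|f\|_{L^2}$, this gives $\delta_n(t)\lesssim_f\min(t,1)$ uniformly in $n$. Because $s_c\in(0,3/2)$, the function $t^{-s_c/2-1}\min(t,1)$ is integrable on $(0,\infty)$.

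The remaining task, which is the crux, is to establish pointwise-in-$t$ convergence $\delta_n(t)\to 0$ for each fixed $t>0$. Here I would use the uniform Gaussian upper bound on the heat kernels from Lemma \ref{Lheatkernel} together with the pointwise convergence of resolvent kernels in Definition \ref{def-limit} (extended to all $\lambda\in\C\setminus[0,\infty)$ by the remark). These resolvent convergences can be transferred to pointwise convergence of the heat kernels $e^{t\Delta_{\Omega_n}}(x,y)\to e^{t\Delta_{\Omega_\infty}}(x,y)$ via the contour representation $e^{t\Delta_\Omega}=\frac{1}{2\pi i}\int_\Gamma e^{tz}(z+\Delta_\Omega)^{-1}\,dz$ along a suitable contour $\Gamma$ enclosing $(-\infty,0]$; alternatively, one can approximate $e^{-t\lambda}$ on compact spectral windows by elements of $C_c^\infty((0,\infty))$ and invoke Proposition \ref{convergence-domain}, controlling spectral neighborhoods of $0$ and $\infty$ uniformly via the contractivity of the semigroup and the uniform regularity of $f$. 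The Gaussian bound then permits dominated convergence in the spatial variables, upgrading kernel convergence to $L^2$ convergence of $e^{t\Delta_{\Omega_n}}f\to e^{t\Delta_{\Omega_\infty}}f$. The main obstacle in the plan is precisely this step: because $e^{-t\lambda}$ fails to vanish at $\lambda=0$, Proposition \ref{convergence-domain} does not apply directly, and some care is needed to control the low-spectral tail uniformly in $n$. Once pointwise $L^2$ convergence in $t$ is in hand, plugging it into the outer integral and applying dominated convergence finishes the proof.
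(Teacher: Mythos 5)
Your plan is structurally sound and takes a genuinely different route from the paper, which leaves this to a remark: it cites Killip--Visan--Zhang for the case $s_c=1$ and asserts the general case follows ``by interpolation,'' without specifying what interpolation argument is meant. You instead give a direct, self-contained argument: Balakrishnan's subordination formula converts the fractional power into an integral against the heat semigroup; the uniform bound $\delta_n(t)\lesssim_f\min(t,1)$ (contractivity plus $\Delta_{\Omega_n}f=\Delta f$ for compactly supported $f$ in the interior) makes $t^{-s_c/2-1}\delta_n(t)$ uniformly dominated and integrable, since $s_c/2<3/4<1$; and dominated convergence reduces everything to fixed-$t$ heat-semigroup convergence. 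This covers the whole range $s_c\in(0,3/2)$ in one stroke without invoking the $s_c=1$ case as a black box.

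The unexecuted step --- $\delta_n(t)\to0$ for each fixed $t>0$ --- you correctly flag as the crux, and it does close. The cleaner of your two suggested routes is the contour representation: with $\Gamma$ a contour encircling $[0,\infty)$ that escapes to infinity at angles $\pm\theta_0$ with $\theta_0\in(0,\pi/2)$, one has $e^{t\Delta_{\Omega}}(x,y)=-\frac{1}{2\pi i}\int_\Gamma e^{-t\lambda}G_{\Omega}(x,y;\lambda)\,d\lambda$. By the remark following Definition~\ref{def-limit} the Green's functions $G_{\Omega_n}(x,y;\lambda)$ converge to $G_{\Omega_\infty}(x,y;\lambda)$ for each fixed $\lambda\in\C\setminus[0,\infty)$, they are dominated uniformly in $n$ and in $\lambda\in\Gamma$ by the free Green's function $\lesssim|x-y|^{-1}$ (maximum principle), and $e^{-t\lambda}$ is absolutely integrable on $\Gamma$, so dominated convergence in $\lambda$ gives pointwise kernel convergence off the diagonal. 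Note that the Gaussian domination $0\le e^{t\Delta_{\Omega_n}}(x,y)\le e^{t\Delta_{\R^3}}(x,y)$ you then need is also just the maximum principle and holds for \emph{any} open $\Omega_n$; you should not invoke Lemma~\ref{Lheatkernel}, which is stated only for a fixed exterior-of-convex-obstacle domain, whereas $\Omega_n$ may tend to $\R^3$ or the halfspace $\mathbb{H}$. Finally, the low-spectral concern you raise for the alternative $C_c^\infty((0,\infty))$-approximation route is also resolvable: the free-kernel domination gives $\|e^{s\Delta_{\Omega_n}}f\|_{L^2}\lesssim s^{-3/4}\|f\|_{L^1}$ uniformly in $n$, whence $\|E^{\Omega_n}_{[0,\varepsilon]}f\|_{L^2}\lesssim\varepsilon^{3/4}\|f\|_{L^1}$ by the spectral theorem upon taking $s=1/\varepsilon$, which controls the low-frequency tail uniformly in $n$. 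Either route can thus be completed; you should commit to one and carry it out in full rather than leaving the crux conditional.
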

	\begin{remark}
		Killip, Visan and Zhang \cite{KillipVisanZhang2016a} proved Proposition \ref{convergence-domain} and Proposition \ref{P1} for the case when $s_c=1$. Using their results and interpolation, we can easily extend this to the general case where $s_c\in  (0,\frac{3}{2})$.
	\end{remark}
	
	Next, we state the convergence of the Schr\"odinger propagators within the Strichartz norms. We rescale and translate the domain $\Omega$ to $\Omega_n=N_n*(\{\Omega\}-x_n)$ which depends on the parameters $N_n\in2^\Bbb{Z}$ and $x_n\in\Omega$ conforming to one of the following three scenarios (recall that $d(x_n):=\operatorname{dist}(x_n,\Omega^c)$):
	\begin{align*}
		\begin{cases}
			\text{(i) }N_n\to0\qtq{and}-N_nx_n\to x_\infty\in\R^3,\\
			\text{(ii) }N_nd(x_n)\to\infty,\\
			\text{(iii) }		N_n\to\infty\qtq{and} N_nd(x_n)\to d_\infty>0.
		\end{cases}
	\end{align*} 
	Indeed, in the linear profile decomposition, there are four cases needed to be discussed (see Theorem \ref{linear-profile} below). The first case will not be included in these three scenarios since there is no change of geometry in that case. In Case (i) and (ii), $\Omega_n\to\R^3$ while in Case (iii), $\Omega_n\to\mathbb{H}$. 
	
	After these preparation, we can state the convergence of linear Schr\"odinger propagators. See Theorem 4.1 and Corollary 4.2 in Killip-Visan-Zhang \cite{KillipVisanZhang2016a}.  
	\begin{theorem}\label{convergence-flow}
		Let $\Omega_n$ be as above and let $\Omega_\infty$ be  such that  $\Omega_n\rightarrow\Omega_\infty $.  Then, for all $\phi\in C_0^\infty(\widetilde{\lim}\,\Omega_n)$,
		\begin{align*}
			\lim_{n\to\infty}\big\|e^{it\Delta_{\Omega_n}}\phi-e^{it\Delta_{\Omega_{\infty}}}\phi\big\|_{L_{t,x}^{\frac{5\alpha }{2}}(\R\times\R^3)}=0.
		\end{align*}
	\end{theorem}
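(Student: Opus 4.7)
The plan is to split the norm $\|e^{it\Delta_{\Omega_n}}\phi - e^{it\Delta_{\Omega_\infty}}\phi\|_{L^{5\alpha/2}_{t,x}(\R\times\R^3)}$ at $|t|=T$: control the tail $|t|>T$ by a uniform dispersive-decay argument, and on the compact window $|t|\le T$ promote the fixed-time weak convergence supplied by Proposition \ref{convergence-domain} to pointwise strong $L^{5\alpha/2}_x$ convergence, then conclude via dominated convergence in $t$.

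For the uniform framework, the Strichartz estimate (Proposition \ref{PStrichartz}) applied to $(-\Delta_{\Omega_n})^{s_c/2}\phi$, combined with the Sobolev embedding $\dot H^{s_c,\,30\alpha/(15\alpha-8)}(\R^3)\hookrightarrow L^{5\alpha/2}(\R^3)$ and Proposition \ref{P1}, yields the uniform-in-$n$ bound $\|e^{it\Delta_{\Omega_n}}\phi\|_{L^{5\alpha/2}_{t,x}}\lesssim \|\phi\|_{\dot H^{s_c}_D(\Omega_n)}\le C$. Next, interpolating Lemma \ref{LDispersive}---which is scale- and translation-invariant and so holds on every $\Omega_n$ with the same constant---against mass conservation gives a pointwise-in-$t$ estimate $\|e^{it\Delta_{\Omega_n}}\phi\|_{L^{5\alpha/2}_x}\lesssim_\phi |t|^{-3(5\alpha-4)/(10\alpha)}$. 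The exponent check $\tfrac{5\alpha}{2}\cdot\tfrac{3(5\alpha-4)}{10\alpha}=\tfrac{3(5\alpha-4)}{4}>1$ reduces to $\alpha>16/15$, which is implied by $\alpha>4/3$ (equivalent to $s_c>0$); hence $\|e^{it\Delta_{\Omega_n}}\phi\|_{L^{5\alpha/2}_{t,x}(|t|>T)}\to 0$ as $T\to\infty$ uniformly in $n$, and the same treatment applies to the $\Omega_\infty$-piece.

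On the compact window I would pick an auxiliary regularity $s\in\bigl(\tfrac{3}{2}-\tfrac{6}{5\alpha},\tfrac{3}{2}\bigr)$, an interval that is nonempty precisely because $s_c>0$. Theorem \ref{TEquivalence} then gives $\|\cdot\|_{\dot H^s(\R^3)}\sim\|\cdot\|_{\dot H^s_D(\Omega_n)}$ on $C_c^\infty(\Omega_n)$ with constants uniform in $n$, while Proposition \ref{P1} applied at level $s$ yields a uniform bound on $\|\phi\|_{\dot H^s_D(\Omega_n)}$; combined with unitarity of the propagator, this makes $\|e^{it\Delta_{\Omega_n}}\phi\|_{\dot H^s(\R^3)}$ bounded uniformly in $(n,t)$. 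Interpolating the $\dot H^{-s_c}$-convergence from Proposition \ref{convergence-domain} against this $\dot H^s$-boundedness and invoking the Sobolev embedding $\dot H^{3/2-6/(5\alpha)}(\R^3)\hookrightarrow L^{5\alpha/2}(\R^3)$ yields $\|e^{it\Delta_{\Omega_n}}\phi-e^{it\Delta_{\Omega_\infty}}\phi\|_{L^{5\alpha/2}_x}\to 0$ pointwise in $t$, together with a $t$-uniform majorant; dominated convergence on $[-T,T]$ combined with the tail estimate closes the proof. The main obstacle is that the auxiliary exponent $s$ must sit simultaneously above the Sobolev threshold $\tfrac{3}{2}-\tfrac{6}{5\alpha}$ needed to embed into $L^{5\alpha/2}$ and below the $\tfrac{3}{2}$-ceiling of Theorem \ref{TEquivalence} required to transfer between $\R^3$-based and Dirichlet Sobolev norms uniformly in $n$; this two-sided constraint---open exactly when $s_c>0$---is what allows the weak fixed-time $\dot H^{-s_c}$ convergence to be promoted to the strong Strichartz-space convergence claimed.
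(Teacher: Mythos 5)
The paper does not supply its own proof of this statement; it simply refers to Theorem~4.1 and Corollary~4.2 of Killip--Visan--Zhang \cite{KillipVisanZhang2016a} (which treat $s_c=1$, i.e.\ the $L^{10}_{t,x}$ norm), and your argument is a faithful rendering of that strategy adapted to the general exponent: uniform dispersive decay handles the tail $|t|>T$, and the fixed-time $\dot H^{-s_c}(\R^3)$ convergence of Proposition~\ref{convergence-domain}, interpolated against a uniform $\dot H^{s}(\R^3)$ bound (obtained from Theorem~\ref{TEquivalence}, scale-invariance, and unitarity) together with the Sobolev embedding, handles the compact window. The one small inaccuracy is the claim that the window $s\in\bigl(\tfrac32-\tfrac{6}{5\alpha},\tfrac32\bigr)$ is nonempty ``precisely because $s_c>0$'' --- it is in fact nonempty for every $\alpha>0$, and the constraint $s_c>0$ (i.e.\ $\alpha>4/3$) enters only in the tail-integrability check --- but this does not affect the validity of the argument.
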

	\section{Linear profile decomposition}\label{S3}
	In this section, we prove  a linear profile decomposition for the Schr\"odinger propagator $e^{it\Delta_\Omega}$ for initial data $u_0\in\dot{H}_D^{s_c}(\Omega)$ with $s_c\in(0,\frac{3}{2})$.   
	The case $s_c = 1$ has been established by Killip-Visan-Zhang \cite{KillipVisanZhang2016a}.
	In this section, we use the linear  profile decomposition for $e^{it\Delta_{\R^d}}$ in  $\dot H^{s_c}(\mathbb{R} ^d)$ as a black-box (see e.g. \cite{Shao2009EJDE}), and  extend the result of Killip-Visan-Zhang \cite{KillipVisanZhang2016a} to the general $\dot H^{s_c}_D(\Omega)$ setting.

	Throughout this section, we denote $\Theta:\R^3\to[0,1]$ the smooth function by
	\begin{align*}
		\Theta(x)=\begin{cases}
			0, & |x|\leqslant\frac{1}{4}, \\
			1, & |x|\geqslant\frac{1}{2}.
		\end{cases}
	\end{align*}

	We start with a refined Strichartz estimates.
	\begin{proposition}[Refined Strichartz estimate]\label{PRefined SZ}Let $s_c\in(0,\frac{3}{2})$ and $f\in\dot{H}_D^{s_c}(\Omega)$. Then we have
		\begin{align}\label{refined-strichartz}
			\big\|e^{it\Delta_\Omega}f\big\|_{L_{t,x}^{q_0}(\R\times\Omega)}\lesssim\|f\|_{\dot{H}_D^{s_c}}^{\frac{2}{q_0}}\sup_{N\in2^\Bbb{Z}}\|e^{it\Delta_\Omega}P_N^\Omega f \|_{L_{t,x}^{q_0}(\R\times\Omega)}^{1-\frac{2}{q_0}},
		\end{align}
		where $q_0:=\frac{10}{3-2s_c}=\frac{5\alpha }{2}$. 
	\end{proposition}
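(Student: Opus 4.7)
The plan is to adapt the Bourgain--Shao refined Strichartz strategy (see \cite{Shao2009EJDE} for the Euclidean analogue) to the exterior domain, combining the Littlewood--Paley square function estimate on $\Omega$ (Lemma \ref{LSquare function estimate}) with the boundary-compatible Strichartz estimates (Proposition \ref{PStrichartz}) and Bernstein inequalities (Lemma \ref{LBernstein estimates}).

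Write $u := e^{it\Delta_\Omega} f$ and $u_N := e^{it\Delta_\Omega} P_N^\Omega f$, and set $M := \sup_N \|u_N\|_{L^{q_0}_{t,x}(\R\times\Omega)}$. First I would apply the square function estimate to obtain
\[
\|u\|_{L^{q_0}_{t,x}}^{q_0} \lesssim \iint_{\R\times\Omega} \Bigl(\sum_N |u_N(t,x)|^2\Bigr)^{q_0/2} dx\,dt.
\]
Next I would verify the dyadic $\ell^2$ control
\[
\sum_N \|u_N\|_{L^{q_0}_{t,x}}^2 \lesssim \|f\|_{\dot H^{s_c}_D(\Omega)}^2,
\]
which follows by combining Proposition \ref{PStrichartz} at the admissible pair $(q_0, r_0)$ with $r_0 := 6q_0/(3q_0-4)$ --- giving $\|u_N\|_{L^{q_0}_t L^{r_0}_x} \lesssim \|P_N^\Omega f\|_{L^2}$ --- with the Bernstein step $\|u_N(t,\cdot)\|_{L^{q_0}_x} \lesssim N^{s_c} \|u_N(t,\cdot)\|_{L^{r_0}_x}$. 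The Bernstein exponent $3(1/r_0 - 1/q_0) = 3/2 - 5/q_0 = s_c$ matches the critical regularity exactly, and the summation then uses the $L^2$-orthogonality of the $P_N^\Omega$ together with the square-function characterization of $\dot H^{s_c}_D(\Omega)$.

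The crux of the proof is then the interpolation-type inequality
\[
\iint_{\R\times\Omega} \Bigl(\sum_N |u_N|^2\Bigr)^{q_0/2} dx\,dt \lesssim M^{q_0-2} \sum_N \|u_N\|_{L^{q_0}_{t,x}}^2,
\]
which, combined with the previous two steps, gives $\|u\|_{L^{q_0}_{t,x}}^{q_0} \lesssim M^{q_0-2} \|f\|_{\dot H^{s_c}_D(\Omega)}^2$, i.e.\ \eqref{refined-strichartz} after taking $q_0$-th roots.

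The main obstacle is establishing this last display. A naive pointwise bound of the form $\bigl(\sum_N a_N^2\bigr)^{q_0/2 - 1} \lesssim (\sup_N a_N)^{q_0-2}$ is false, since it would require $\bigl(\sum_N a_N^2\bigr)^{1/2} \lesssim \sup_N a_N$. Instead one expands the power $q_0/2$ bilinearly over pairs of dyadic frequencies $(N_1, N_2)$ with $N_1 \leq N_2$, applies H\"older so that the high-frequency factor carries one power of the $L^{q_0}_{t,x}$ norm while the remaining factors are absorbed by $M$, and sums geometrically in the ratio $N_1/N_2$ using the dyadic decay from the Strichartz--Bernstein bound of the previous step. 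The equivalence of Sobolev spaces (Theorem \ref{TEquivalence}) is what allows this Euclidean bilinear machinery to be transferred to the Dirichlet Laplacian on $\Omega$ throughout the range $s_c \in (0, 3/2)$.
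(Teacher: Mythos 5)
Your scaffold --- square function, Strichartz plus Bernstein at the admissible pair, bilinear expansion over ordered frequency pairs $(N_1,N_2)$, Schur summation --- matches the paper's, and your step~2 is correct. But the ``crux'' inequality you isolate,
\[
\iint_{\R\times\Omega} \Bigl(\sum_N |u_N|^2\Bigr)^{q_0/2} dx\,dt \ \lesssim\ M^{q_0-2} \sum_N \|u_N\|_{L^{q_0}_{t,x}}^2,
\]
is not what the bilinear argument yields and is false as a statement about an arbitrary collection $\{u_N\}$: with $u_1=\cdots=u_K=g$ the two sides scale like $K^{q_0/2}$ and $K$ respectively. The Schur sum produced by the bilinear expansion closes against $\sum_N\|f_N\|_{\dot H^{s_c}_D}^2\sim\|f\|_{\dot H^{s_c}_D}^2$ (an $L^2$ almost-orthogonality fact), \emph{not} against $\sum_N\|u_N\|_{L^{q_0}_{t,x}}^2$; Bernstein only gives step~2 in the direction $\|u_N\|_{L^{q_0}_{t,x}}\lesssim\|f_N\|_{\dot H^{s_c}_D}$ and cannot be reversed to pass from $\|f_N\|$ back to $\|u_N\|$. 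The correct intermediate estimate is $\iint(\sum|u_N|^2)^{q_0/2}\lesssim M^{q_0-2}\|f\|^2_{\dot H^{s_c}_D}$, which combines with step~1 directly and makes step~2 redundant as a separate logical input.

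The deeper gap is the source of the off-diagonal decay. You attribute the geometric factor $(N_1/N_2)^{0+}$ to ``the Strichartz--Bernstein bound of the previous step,'' but that bound, $\|u_N\|_{L^{q_0}_{t,x}}\lesssim\|f_N\|_{\dot H^{s_c}_D}$, is diagonal and carries no cross-frequency gain; $\sum_{N_1\le N_2}\|f_{N_1}\|_{\dot H^{s_c}_D}\,\|f_{N_2}\|_{\dot H^{s_c}_D}$ can already diverge. The paper obtains the decay by a separate device: H\"older is applied so that the low-frequency factor falls into $L^{q_0}_t L^{q_0+}_x$ and the high-frequency one into $L^{q_0}_t L^{q_0-}_x$, and Bernstein then shifts each back to the admissible pair $(q_0,r_0)$ at the cost of $N_1^{0+}$ and $N_2^{-0+}$. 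It is this perturbed-exponent maneuver --- not anything coming from your step~2 --- that supplies $(N_1/N_2)^{0+}$ and makes the Schur sum converge.

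Two smaller omissions: the expansion underlying your bilinearization, $(\sum a_N^2)^{q_0/2}\le(\sum a_N^{q_0/2})^2$, requires $q_0\le 4$, i.e.\ $s_c\le\tfrac14$. For $q_0>4$ the paper must instead peel off $(\sum|u_N|^2)^{q_0/2-2}$, bound it by $\|u\|_{L^{q_0}_{t,x}}^{q_0-4}$ via Lemma~\ref{LSquare function estimate}, and then run the bilinear step on the remaining two powers; your sketch does not cover this regime. Finally, Theorem~\ref{TEquivalence} is not really the transfer mechanism here: the argument runs directly on the $\Delta_\Omega$-adapted square function, Bernstein, and Strichartz estimates (Lemma~\ref{LSquare function estimate}, Lemma~\ref{LBernstein estimates}, Proposition~\ref{PStrichartz}), all of which are already stated for the Dirichlet Laplacian.
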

	\begin{proof}
		Throughout the proof, all space-time norms are taken over $\R\times\Omega$ and 
		we set $u(t) = e^{it\Delta_\Omega}f$.
		
		We divide the proof of Proposition \ref{PRefined SZ} into two cases. 
		
		\textbf{Case One}.  First suppose $s_c>\frac{1}{4}$, so that $q_0=\frac{10}{3-2s_c}>4$.   By the square function estimate (Lemma~\ref{LSquare function estimate}), Bernstein inequality and Strichartz estimates, we have
		\begin{align*}
			\|u\|_{L_{t,x}^{q_0}}^{q_0} &\lesssim \iint\biggl[\sum_N |u_N|^2\biggr]^{\frac{q_0}{2}}\,dx\,dt   \lesssim \sum_{N_1\leq N_2} \iint\biggl[\sum_N |u_N|^2\biggr]^{\frac{q_0}{2}-2} |u_{N_1}|^2|u_{N_2}|^2\,dx\,dt \\
			& \lesssim \|u\|_{L_{t,x}^{q_0}}^{q_0-4}\sum_{N_1\leq N_2} \|u_{N_1}\|_{L_t^{q_0}L_x^{q_0+}}\|u_{N_2}\|_{L_t^{q_0}L_x^{q_0-}}\prod_{j=1}^2 \|u_{N_j}\|_{L_{t,x}^{q_0}} \\
			& \lesssim \|f\|_{\dot H_D^{s_c}}^{q_0-4} \sup_N \|u_N\|_{L_{t,x}^{q_0}}^2 \sum_{N_1\leq N_2} \bigl(\tfrac{N_1}{N_2}\bigr)^{0+}\prod_{j=1}^2 \|u_{N_j}\|_{L_t^{q_0}\dot H_x^{s_c,r_0}} \\
			& \lesssim \|f\|_{\dot H_D^{s_c}}^{q_0-4}\sup_N\|u_N\|_{L_{t,x}^{q_0}}^2 \sum_{N_1\leq N_2}\bigl(\tfrac{N_1}{N_2}\bigr)^{0+}\|f_{N_1}\|_{\dot H_x^{s_c}}\|f_{N_2}\|_{\dot H_x^{s_c}} \\
			& \lesssim \|f\|_{\dot H_D^{s_c}}^{q_0-2}\sup_N\|e^{it\Delta_\Omega}f_N\|_{L_{t,x}^{q_0}}^2,
		\end{align*}
		where  $r_0=\frac{9+4s_c}{10}$ such that  $(q_0,r_0)$ is admissible pair. Therefore, we complete the proof of the first case.

		\textbf{Case Two}.  Suppose $\frac{1}{4}\leqslant s_c<\frac{3}{2}$, so that $2<q_0\leq4$. Arguing similar to the first case, we observe that
		\begin{align*}
			\|u\|_{L_{t,x}^{q_0}}^{q_0} &\lesssim \iint \biggl[\sum_N |u_N|^2\biggr]^{\frac{q_0}{2}}\,dx\,dt   \lesssim \iint \biggl[\sum_N |u_N|^{\frac{q_0}{2}}\biggr]^2\,dx\,dt \\
			& \lesssim\sum_{N_1\leq N_2} \iint |u_{N_1}|^{\frac{q_0}{2}}|u_{N_2}|^{\frac{q_0}{2}} \,dx \,dt \\
			& \lesssim\sum_{N_1\leq N_2} \|u_{N_1}\|_{L_t^{q_0}L_x^{q_0+}}\|u_{N_2}\|_{L_t^{q_0}L_x^{q_0-}} \prod_{j=1}^2 \|u_{N_j}\|_{L_{t,x}^{q_0}}^{\frac{q_0}{2}-1} \\
			& \lesssim \sup_N \|e^{it\Delta_\Omega}f_N\|_{L_{t,x}^{q_0}}^{q_0-2}\|f\|_{\dot H_D^{s_c}}^2,
		\end{align*}
		giving the desired result in this case.
	\end{proof}
	
	The refined Strichartz estimates above indicate that a linear solution with nontrivial spacetime norms must concentrate in an annular region. The following inverse Strichartz inequality further demonstrates that the linear solution contains at least one bubble near a specific spacetime point.
	
	\begin{proposition}[Inverse Strichartz estimate]\label{inverse-strichartz}
		Let $\{f_n\} \in \dot{H}_D^{s_c}(\Omega)$. Assume that  
		\begin{align}\label{inverse-con}
			\lim_{n\to\infty}\|f_n\|_{\dot{H}_D^{s_c}(\Omega)}=A<\infty,\quad\text{and}\quad \lim_{n\to\infty}\big\|e^{it\Delta_{\Omega}}f_n\big\|_{L_{t,x}^{q_0}(\R\times\Omega)}=\varepsilon>0.
		\end{align}
		Then, there exists a subsequence $\{f_n\}$, along with $\{\phi_n\} \in \dot{H}_D^{s_c}(\Omega)$, $\{N_n\} \subset 2^{\mathbb{Z}}$, and $\{(t_n, x_n)\} \subset \mathbb{R} \times \Omega$, satisfying one of the four scenarios below, such that:  
		\begin{gather}
			\liminf_{n\to\infty}\|\phi_n\|_{\dot{H}_D^{s_c}(\Omega)} \gtrsim \varepsilon^\frac{15}{s_c(4s_c+4)}A^{\frac{4s_c^2+4s_c-15}{2s_c(2s_c+2)}} ,\label{inverse-1}\\
			\liminf_{n\to\infty}\big\{\|f_n\|_{\dot{H}_D^{s_c}(\Omega)}^2-\|f_n-\phi_n\|_{\dot{H}_D^{s_c}(\Omega)}^2-\|\phi_n\|_{\dot{H}_D^{s_c}(\Omega)}^2\big\} \gtrsim \varepsilon^\frac{15}{s_c(2s_c+2)}A^{\frac{4s_c^2+4s_c-15}{s_c(2s_c+2)}} ,\label{inverse-2}\\
			\liminf_{n\to\infty}\left\{\big\|e^{it\Delta_\Omega}f_n\big\|_{L_{t,x}^{q_0}(\R\times\Omega)}^{q_0}-\big\|e^{it\Delta_{\Omega}}(f_n-\phi_n)\big\|_{L_{t,x}^{q_0}(\R\times\Omega)}^{q_0}\right\} \gtrsim \varepsilon^\frac{75}{2s_c(s_c+1)}A^{\frac{20s_c^2+20s_c-75}{2s_c(s_c+1)}} .\label{inverse-3}
		\end{gather}
		
		The four cases are as follows:
		\begin{itemize}
			\item \textbf{Case 1:} $N_n \equiv N_\infty \in 2^{\mathbb{Z}}$ and $x_n \to x_\infty \in \Omega$. Here, we select $\phi \in \dot{H}_D^{s_c}(\Omega)$ and a subsequence such that $e^{it_n\Delta_\Omega}f_n \rightharpoonup \phi$ weakly in $\dot{H}_D^{s_c}(\Omega)$, and define $\phi_n = e^{-it_n\Delta_\Omega}\phi$.
		\end{itemize}
		\begin{itemize}
			\item \textbf{Case 2:} $N_n \to 0$ and $-N_nx_n \to x_\infty \in \mathbb{R}^3$. In this case, we find $\tilde{\phi} \in \dot{H}^{s_c}(\mathbb{R}^3)$ and a subsequence such that  
			\[
			g_n(x) = N_n^{s_c-\frac{3}{2}}(e^{it_n\Delta_\Omega}f_n)(N_n^{-1}x+x_n) \rightharpoonup \tilde{\phi}(x)
			\]
			weakly in $\dot{H}^{s_c}(\mathbb{R}^3)$. We define  
			\[
			\phi_n(x) := N_n^{\frac{3}{2}-s_c}e^{-it_n\Delta_\Omega}[(\chi_n\tilde{\phi})(N_n(x-x_n))],
			\]
			where $\chi_n(x) = \chi(N_n^{-1}x+x_n)$ and $\chi(x) = \Theta\big(\frac{d(x)}{\operatorname{diam}(\Omega^c)}\big)$.
		\end{itemize}
		\begin{itemize}
			\item \textbf{Case 3:} $N_nd(x_n) \to \infty$. In this situation, we take $\tilde{\phi} \in \dot{H}^{s_c}(\mathbb{R}^3)$ and a subsequence such that  
			\[
			g_n(x) = N_n^{s_c-\frac{3}{2}}(e^{it_n\Delta_\Omega}f_n)(N_n^{-1}x+x_n) \rightharpoonup \tilde{\phi}(x)
			\]
			weakly in $\dot{H}^{s_c}(\mathbb{R}^3)$. We then define  
			\[
			\phi_n(x) := N_n^{\frac{3}{2}-s_c}e^{-it_n\Delta_\Omega}[(\chi_n\tilde{\phi})(N_n(x-x_n))],
			\]
			where $\chi_n(x) = 1-\Theta\big(\frac{|x|}{N_nd(x_n)}\big)$.
		\end{itemize}
		\begin{itemize}
			\item \textbf{Case 4:} $N_n \to \infty$ and $N_nd(x_n) \to d_\infty > 0$. Here, we find $\tilde{\phi} \in \dot{H}_D^{s_c}(\mathbb{H})$ and a subsequence such that  
			\[
			g_n(x) = N_n^{s_c-\frac{3}{2}}(e^{it_n\Delta_\Omega}f_n)(N_n^{-1}R_nx+x_n^*) \rightharpoonup \tilde{\phi}(x)
			\]
			weakly in $\dot{H}^{s_c}(\mathbb{R}^3)$. We define  
			\[
			\phi_n(x) = N_n^{\frac{3}{2}-s_c}e^{-it_n\Delta_\Omega}[\tilde{\phi}(N_nR_n^{-1}(\cdot-x_n^*))],
			\]
			where $R_n \in SO(3)$ satisfies $R_ne_3 = \frac{x_n-x_n^*}{|x_n-x_n^*|}$ and $x_n^* \in \partial\Omega$ such that $d(x_n) = |x_n-x_n^*|$.
		\end{itemize}
	\end{proposition}
	
	\begin{proof}
		Using the refined Strichartz estimate \eqref{refined-strichartz} and \eqref{inverse-con}, we see that for each $n$, there exists $N_n$ such that
		\begin{align*}
			\big\|e^{it\Delta_\Omega}P_{N_n}^\Omega f_n\big\|_{L_{t,x}^{q_0}(\R\times\Omega)}&\gtrsim\big\|e^{it\Delta_\Omega}f_n\big\|_{L_{t,x}^{q_0}(\R\times\Omega)}^{\frac{q_0}{q_0-2}}\|f_n\|_{\dot{H}_D^{s_c}(\Omega)}^{-\frac{2}{q_0-2}} \gtrsim\varepsilon^{\frac{q_0}{q_0-2}}A^{-\frac{2}{q_0-2}}.
		\end{align*} 
		By Strichartz, Bernstein and (\ref{inverse-strichartz}),  we obtain
		\begin{align*}
			\big\|e^{it\Delta_\Omega}P_{N_n}^\Omega f_n\big\|_{L_{t,x}^ {q_0}(\R\times\Omega)}\lesssim N_n^{-s_c}A.
		\end{align*} 
		Combining the above two estimates and using H\"older's inequality, we obtain
		\begin{align*}
			\varepsilon^{\frac{q_0}{q_0-2}}A^{-\frac{2}{q_0-2}}\lesssim\big\|e^{it\Delta_\Omega}P_{N_n}^\Omega f_n\big\|_{L_{t.x}^{q_0}(\R\times\Omega)}
			&\lesssim\big\|e^{it\Delta_\Omega}P_{N_n}^\Omega f_n\big\|_{L_{t,x}^\frac{10}{3}(\R\times\Omega)}^{1-\frac{2s_c}{3}}\big\|e^{it\Delta_\Omega}P_{N_n}^\Omega f_n\big\|_{L_{t,x}^\infty(\R\times\Omega)}^{\frac{2s_c}{3}}\\
			&\lesssim N_n^{-s_c(1-\frac{2}{3}s_c)}A^{1-\frac{2s_c}{3}}\big\|e^{it\Delta_\Omega}P_{N_n}^\Omega f_n\big\|_{L_{t,x}^\infty(\R\times\Omega)}^{\frac{2s_c}{3}},
		\end{align*}
		which implies 
		\begin{align}
			\big\|e^{it\Delta_{\Omega}}P_{N_n}^\Omega f_n\big\|_{L_{t,x}^\infty(\R\times\Omega)}\gtrsim N_n^{\frac{3}{2}-s_c}\varepsilon^\frac{15}{s_c(4s_c+4)}A^{\frac{4s_c^2+4s_c-15}{2s_c(2s_c+2)}}.\notag
		\end{align}
		Thus there exist $x_n\in\R$ and $t_n\in\R$ such that
		\begin{align}\label{A}
			\big|(e^{it_n\Delta_\Omega}P_{N_n}^\Omega f_n)(x_n)\big|\gtrsim N_n^{\frac{3}{2}-s_c}\varepsilon^\frac{15}{s_c(4s_c+4)}A^{\frac{4s_c^2+4s_c-15}{2s_c(2s_c+2)}}.
		\end{align}
		Note that the four cases in Proposition \ref{inverse-strichartz} are completely determined by the behavior of $x_n$ and $N_n$. We first claim that
		\begin{align}\label{claim}
			N_nd(x_n)\gtrsim \varepsilon^\frac{15}{s_c(4s_c+4)}A^{-\frac{15}{2s_c(2s_c+2)}}.
		\end{align}
		Indeed, using the heat kernel bound (Lemma \ref{Lheatkernel}),  we have
		\begin{align*}
			\int_{\Omega}|e^{t\Delta_\Omega/N_n^2}(x_n,y)|^2dy&\lesssim N_n^6\int_{\Omega}\big|(N_nd(x_n))(N_n(d(x_n)+N_n|x_n-y|))e^{-cN_n^2|x_n-y|^2}\big|^2dy\\
			&\lesssim(N_nd(x_n))^2(N_n(d(x_n)+1))^2N_n^3.
		\end{align*} 
		Writting
		\begin{align*}
			(e^{it_n\Delta_\Omega}P_{N_n}^\Omega f_n)(x_n)=\int_{\Omega}[e^{\Delta_\Omega/N_n^2}(x_n,y)P^{\Omega}_{\leq 2N_n}e^{-\Delta_{\Omega}/N_n^2}e^{it_n\Delta_\Omega}P_{N_n}^\Omega f_n](y)dy,
		\end{align*}
		using \eqref{A},  and Cauchy-Schwartz gives 
		\begin{align*}
			N_n^{\frac{3}{2}-s_c}\varepsilon^\frac{15}{s_c(4s_c+4)}A^{\frac{4s_c^2+4s_c-15}{2s_c(2s_c+2)}}&\lesssim(N_nd(x_n))(N_nd(x_n)+1)N_n^\frac{3}{2}\|P_{\leq 2N_n}^\Omega e^{-\Delta_\Omega/N_n^2}e^{it_n\Delta_\Omega}P_{N_n}^\Omega f_n\|_{L^2(\Omega)}\\
			&\lesssim (N_nd(x_n))(N_nd(x_n)+1)N_n^{\frac{3}{2}-s_c}A.
		\end{align*}
		Then claim \eqref{claim} follows.
		
		Due to \eqref{claim} and  passing the subsequence, we only need to consider the following four cases:
		\begin{enumerate}
			\item[Case 1.] $N_n\sim 1$ and $N_nd(x_n)\sim1$,
			\item[Case 2.] $N_n\to0$ and $N_nd(x_n)\lesssim1$,
			\item[Case 3.] $N_nd(x_n)\to\infty$ as $n\to\infty$,
			\item[Case 4.] $N_n\to\infty$ and $N_nd(x_n)\sim1$.
		\end{enumerate}
		We will treat these cases in order.
		
		\textbf{Case 1}. After passing through the subsequence, we may assume that 
		\begin{align*}
			N_n\equiv N_\infty\in2^{\Bbb{Z}}\mbox{ and }x_n\to x_\infty\in\Omega.
		\end{align*}
		Let 
		\begin{align*}
			g_n (x ): = N_n^{s_c-\frac{3}{2}} (e^{it_n\Delta _\Omega}f_n)  \left(N_n^{-1} x + x_n \right).
		\end{align*}
		Since $f_n$ is supported in $\Omega$, $g_n$ is supported in $\Omega_n : = N_n ( \Omega - \{x_n\})$. Moreover, we have
		\begin{align*}
			\|g_n \|_{\dot{H}_D^{s_c}( \Omega_n)} = \|f_n \|_{\dot{H}_D^{s_c}(  \Omega)} \lesssim A.
		\end{align*}
		Passing to a further subsequence, we  find a $\tilde{\phi}$ such that $g_n \rightharpoonup \tilde{\phi}$ in $\dot{H}^{s_c}( \R^3 )$ as $n \to \infty$.
		Rescaling this weak convergence, we have 
		\begin{align}\label{B}
			e^{it_n\Delta _\Omega}f_n(x) \rightharpoonup \phi(x) : = N_\infty^{\frac{3}{2}-s_c} \tilde{\phi} (N_\infty (x-x_\infty)) \text{ in } \dot{H}_D^{s_c}(\Omega).
		\end{align}
		Since $\dot{H}_D^{s_c}( \Omega)$ is a weakly closed subset of $\dot{H}^{s_c}(\R^3)$, $\phi \in \dot{H}_D^{s_c}(\Omega)$.
		
		We now proceed to prove that $\phi$ is non-trivial. To this end, let $h := P_{N_\infty}^\Omega \delta_{x_\infty}$. By the Bernstein inequality, we have  
		\begin{align}\label{eq5.7v65}
			\left\| \left(- \Delta_\Omega \right)^{-\frac{s_c}{2}} h \right\|_{L^2(\Omega)} = \left\| \left(- \Delta_\Omega \right)^{-\frac{s_c}{2}} P_{N_\infty}^\Omega \delta_{x_\infty} \right\|_{L^2(\Omega)} 
			\lesssim N_\infty^{\frac{3}{2}-s_c},
		\end{align}  
		which shows that $h \in \dot{H}_D^{-s_c} (\Omega)$. On the other hand, we observe that  
		\begin{align}\label{eq5.8v65}
			\langle \phi, h \rangle &= \lim\limits_{n \to \infty} \langle e^{it_n\Delta_\Omega}f_n, h \rangle 
			= \lim\limits_{n \to \infty} \left\langle e^{it_n\Delta_\Omega}f_n, P_{N_\infty}^\Omega \delta_{x_\infty} \right\rangle \nonumber \\
			&= \lim\limits_{n \to \infty} \left(e^{it_n\Delta_\Omega}P_{N_n}^\Omega f_n \right)(x_n) + \lim\limits_{n \to \infty} \left\langle e^{it_n\Delta_\Omega}f_n, P_{N_\infty}^\Omega \left( \delta_{x_\infty} - \delta_{x_n} \right) \right\rangle.
		\end{align}  
		We first claim that the second term in \eqref{eq5.8v65} vanishes. Indeed, since $d(x_n) \sim 1$, the Bernstein inequality implies  
		\begin{align*}
			\left\| P_{N_\infty}^\Omega e^{it_n\Delta_\Omega}f_n \right\|_{L_x^\infty} \lesssim N_\infty^{\frac{3}{2}-s_c} A, \quad
			\text{and} \quad \left\|\Delta P_{N_\infty}^\Omega e^{it_n\Delta_\Omega}f_n \right\|_{L_x^\infty} \lesssim N_\infty^{\frac{3}{2}+s_c} A.
		\end{align*}	
		Using the fundamental theorem of calculus and the basic elliptic estimate  
		\begin{align}\label{eq5.9v65}
			\| \nabla v \|_{L^\infty(|x| \leq R)} \lesssim R^{-1} \|v\|_{L^\infty(|x| \leq 2R)} + R \|\Delta v\|_{L^\infty(|x| \leq 2R)},
		\end{align}  
		it follows for sufficiently large $n$ that  
		\begin{align}\label{eq5.10v65}
			\left| \left\langle e^{it_n\Delta_\Omega}f_n, P_{N_\infty}^\Omega \left( \delta_{x_\infty} - \delta_{x_n} \right) \right\rangle \right| 
			&\lesssim |x_\infty - x_n| \left\|\nabla P_{N_\infty}^\Omega e^{it_n\Delta_\Omega} f_n \right\|_{L^\infty(|x| \leq R)} \notag\\
			&\lesssim \Big( \frac{N_\infty^{\frac{3}{2}-s_c}}{d(x_n)} + N_\infty^{\frac{3}{2}+s_c} d(x_n) \Big) A |x_\infty - x_n|,
		\end{align}  
		which converges to zero as $n \to \infty$.

		Therefore, it follows from \eqref{A}, \eqref{eq5.7v65}, \eqref{eq5.8v65}, and \eqref{eq5.10v65} that
		\begin{align}\label{eq5.11v65}
			N_\infty^{\frac{3}{2}-s_c} \varepsilon^\frac{15}{s_c(4s_c+4)}A^{\frac{4s_c^2+4s_c-15}{2s_c(2s_c+2)}}  \lesssim |\langle \phi, h \rangle |
			\lesssim \|\phi \|_{\dot{H}_D^{s_c}( \Omega)} \|h \|_{\dot{H}_D^{-s_c} ( \Omega)}
			\lesssim N_\infty^{\frac{3}2-s_c} \|\phi \|_{\dot{H}_D^{s_c}( \Omega)},
		\end{align}
		which gives \eqref{inverse-1}.
		
		Next, since $\dot{H}_D^{s_c}(\Omega)$ is a Hilbert space, \eqref{inverse-2} follows directly from \eqref{inverse-1} and \eqref{B}.
		
		It remains to establish the decoupling for the $L_x^{q_0}$ norm in \eqref{inverse-3}. Note that  
		\begin{align*}
			(i\partial_t)^\frac{s_c}{2}e^{it\Delta_\Omega} = (-\Delta_\Omega)^\frac{s_c}{2}e^{it\Delta_\Omega}.
		\end{align*}  
		Applying H\"older's inequality on a compact domain $K \subset \mathbb{R} \times \mathbb{R}^3$, we obtain  
		\begin{align*}
			\big\|e^{it\Delta_\Omega}e^{it_n\Delta_{\Omega}}f_n\big\|_{H_{t,x}^{\frac{s_c}{2}}(K)} 
			\lesssim \|\langle-\Delta_\Omega\rangle^{\frac{s_c}{2}}e^{i(t+t_n)\Delta_\Omega}f_n\|_{L_{t,x}^2(K)} \lesssim_K A.
		\end{align*}  
		By the Rellich-Kondrachov compactness theorem and a diagonal argument, passing to a subsequence yields  
		\begin{align*}
			e^{it\Delta_\Omega}e^{it_n\Delta_\Omega}f_n \to e^{it\Delta_\Omega}\phi \quad \text{strongly in } L^2_{t,x}(K),
		\end{align*}  
		and  
		\begin{align*}
			e^{it\Delta_\Omega}e^{it_n\Delta_\Omega}f_n \to e^{it\Delta_\Omega}\phi(x) \quad \text{a.e. in } \mathbb{R} \times \mathbb{R}^3.
		\end{align*}
		By the refined Fatou lemma (Lemma \ref{LRefinedFatou}) and a change of variables, we have  
		\begin{align*}
			\lim\limits_{n \to \infty} \left( \|e^{it\Delta_\Omega}f_n \|_{L_{t,x}^{q_0}(\mathbb{R} \times \Omega)}^{q_0} - \|e^{it\Delta_\Omega}(f_n - \phi_n) \|_{L_{t,x}^{q_0}(\mathbb{R} \times \Omega)}^{q_0} \right) = \|e^{it\Delta_\Omega}\phi \|_{L_{t,x}^{q_0}(\mathbb{R} \times \Omega)}^{q_0},
		\end{align*}  
		from which \eqref{inverse-3} will follow once we show that  
		\begin{align}\label{eq5.12v65}
			\|e^{it\Delta_\Omega}\phi \|_{L_{t,x}^{q_0}(\mathbb{R} \times \Omega)} \gtrsim \varepsilon^\frac{15}{s_c(4s_c+4)}A^{\frac{4s_c^2+4s_c-15}{2s_c(2s_c+2)}}.
		\end{align}
		
		To prove \eqref{eq5.12v65}, the Mikhlin multiplier theorem provides the uniform estimate for $|t| \leq N_\infty^{-2}$:  
		\begin{align*}
			\left\|e^{it\Delta_\Omega}P_{\leq 2 N_\infty}^\Omega \right\|_{L_x^{q_0^\prime} \to L_x^{q_0^\prime}} \lesssim 1, \quad \text{with} \quad q_0^\prime = \frac{10}{2s_c+7}.
		\end{align*}  
		Combining this with the Bernstein inequality, we get  
		\begin{align*}
			\|e^{it\Delta_\Omega}h \|_{L_x^{q_0^\prime}} \lesssim \left\|e^{it\Delta_\Omega}P_{\leq 2 N_\infty}^\Omega \right\|_{L_x^{q_0^\prime} \to L_x^{q_0^\prime}} \left\|P_{N_\infty}^\Omega \delta_\infty \right\|_{L_x^{q_0^\prime}} \lesssim N_\infty^{\frac{9-6s_c}{10}}.
		\end{align*}  
		This, together with \eqref{eq5.11v65}, implies  
		\begin{align*}
			N_\infty^{\frac{3}{2}-s_c} \varepsilon^\frac{15}{s_c(4s_c+4)}A^{\frac{4s_c^2+4s_c-15}{2s_c(2s_c+2)}} 
			\lesssim |\langle\phi, h\rangle| = |\langle e^{it\Delta_\Omega}\phi, e^{it\Delta_\Omega}h \rangle| 
			\lesssim N_\infty^{\frac{9-6s_c}{10}} \|e^{it\Delta_\Omega}\phi \|_{L_x^{q_0}(\mathbb{R} \times \Omega)},
		\end{align*}  
		uniformly for $|t| \leq N_\infty^{-2}$. Integrating over $t$ then establishes \eqref{eq5.12v65}.

		\textbf{Case 2}. As $N_n \to 0$, the condition $N_n d(x_n) \lesssim 1$ ensures that the sequence $\{N_n x_n\}_{n \geq 1}$ is bounded. Hence, up to a subsequence, we assume $-N_n x_n \to x_\infty \in \mathbb{R}^3$ as $n \to \infty$. 
		Similar to Case 1, we define $\Omega_n := N_n (\Omega - \{x_n\})$. Since $N_n \to 0$, the rescaled obstacles $\Omega_n^c$ shrink to $x_\infty$ as $n \to \infty$.
		Because $f_n$ is bounded in $\dot{H}_D^{s_c}(\Omega)$, the sequence $g_n$ remains bounded in $\dot{H}_D^{s_c}(\Omega_n) \subset \dot{H}^{s_c}(\mathbb{R}^3)$. Passing to a subsequence, we find $\tilde{\phi}$ such that $g_n \rightharpoonup \tilde{\phi}$ in $\dot{H}^{s_c}(\mathbb{R}^3)$ as $n \to \infty$.
		
		Next, we claim that  
		\begin{align}\label{eq5.13v65}
			\chi_n \tilde{\phi} \to \tilde{\phi}, \quad \text{or equivalently,} \quad \left(1 - \chi\left(N_n^{-1}x + x_n\right)\right)\tilde{\phi}(x) \to 0 \text{ in } \dot{H}^{s_c}(\mathbb{R}^3).
		\end{align}
		To show this, let  
		\begin{align*}
			B_n := \left\{x \in \mathbb{R}^3 : \operatorname{dist}(x, \Omega_n^c) \leq \operatorname{diam}(\Omega_n^c) \right\}.
		\end{align*}
		The set $B_n$ contains $\supp(1 - \chi_n)$ and $\supp(\nabla \chi_n)$. Since $N_n \to 0$, the measure of $B_n$ tends to zero as $n \to \infty$. Thus, \eqref{eq5.13v65} follows from H\"older's inequality, Sobolev embedding, the fractional chain rule, and the monotone convergence theorem.
		
		With \eqref{eq5.13v65} established, the proofs of \eqref{inverse-1} and \eqref{inverse-2} proceed analogously to their counterparts in Case 1. First, we prove \eqref{inverse-1}. Define $h := P_1^{\mathbb{R}^3}\delta_0$. Then,  
		\begin{align*}
			\left\langle \tilde{\phi}, h \right\rangle = \lim\limits_{n \to \infty} \langle g_n, h \rangle 
			= \lim\limits_{n \to \infty} \left\langle g_n, P_1^{\Omega_n}\delta_0 \right\rangle + \lim\limits_{n \to \infty} \left\langle g_n, \left(P_1^{\mathbb{R}^3} - P_1^{\Omega_n}\right)\delta_0 \right\rangle.
		\end{align*}
		By Proposition \ref{convergence-domain} and the uniform boundedness of $\|g_n\|_{\dot{H}^{s_c}(\mathbb{R}^3)}$, the second term vanishes. Hence, using the definition of $g_n$ and a change of variables, we find  
		\begin{align}\label{estimate-pair}
			\left|\left\langle \tilde{\phi}, h \right\rangle\right| 
			&= \left|\lim\limits_{n \to \infty} \left\langle g_n, P_1^{\Omega_n}\delta_0 \right\rangle\right| 
			= \left|\lim\limits_{n \to \infty} \left\langle f_n, N_n^{s_c+\frac{3}{2}}\left(P_1^{\Omega_n}\delta_0\right)(N_n(x-x_n)) \right\rangle\right| \notag \\
			&= \left|\lim\limits_{n \to \infty} \left\langle f_n, N_n^{s_c-\frac{3}{2}}P_{N_n}^\Omega\delta_{x_n} \right\rangle\right| 
			\gtrsim \varepsilon^\frac{15}{s_c(4s_c+4)}A^{\frac{4s_c^2+4s_c-15}{2s_c(2s_c+2)}},
		\end{align}
		where the final inequality follows from \eqref{A}. Thus, arguing as in \eqref{eq5.11v65}, we obtain  
		\begin{align*}
			\|\tilde{\phi}\|_{\dot{H}^{s_c}(\mathbb{R}^3)} \gtrsim \varepsilon^\frac{15}{s_c(4s_c+4)}A^{\frac{4s_c^2+4s_c-15}{2s_c(2s_c+2)}},
		\end{align*}
		which, combined with \eqref{eq5.13v65}, establishes \eqref{inverse-1}.

		To establish the decoupling estimate in $\dot{H}_D^{s_c}(\Omega)$, we write  
		\begin{align*}
			&\quad \|f_n\|_{\dot{H}_D^{s_c}(\Omega)}^2 - \|f_n - \phi_n\|_{\dot{H}_D^{s_c}(\Omega)}^2  = 2\langle f_n, \phi_n \rangle_{\dot{H}_D^{s_c}(\Omega)} - \|\phi_n\|_{\dot{H}_D^{s_c}(\Omega)}^2 \\
			&= 2\left\langle N_n^{s_c-\frac{3}{2}} f_n (N_n^{-1} x + x_n), \tilde{\phi}(x) \chi(x) \right\rangle_{\dot{H}_D^{s_c}(\Omega_n)} - \|\chi_n \tilde{\phi}\|_{\dot{H}_D^{s_c}(\Omega_n)}^2 \\
			&= 2\left\langle g_n, \tilde{\phi} \right\rangle_{\dot{H}^{s_c}(\mathbb{R}^3)} - 2\left\langle g_n, \tilde{\phi}(1 - \chi_n) \right\rangle_{\dot{H}^{s_c}(\mathbb{R}^3)} - \|\chi_n \tilde{\phi}\|_{\dot{H}_D^{s_c}(\Omega_n)}^2.
		\end{align*}
		Using the weak convergence of $g_n$ to $\tilde{\phi}$, \eqref{eq5.13v65}, and \eqref{inverse-1}, we deduce  
		\begin{align*}
			\lim\limits_{n \to \infty} \left(\|f_n\|_{\dot{H}_D^{s_c}(\Omega)}^2 - \|f_n - \phi_n\|_{\dot{H}_D^{s_c}(\Omega)}^2\right) 
			= \|\tilde{\phi}\|_{\dot{H}^{s_c}(\mathbb{R}^3)}^2 \gtrsim \varepsilon^\frac{15}{s_c(2s_c+2)} A^{\frac{4s_c^2+4s_c-15}{s_c(2s_c+2)}}.
		\end{align*}
		This verifies \eqref{inverse-2}.
		
		Next, we establish the decoupling for the $L_{t,x}^{q_0}(\mathbb{R} \times \Omega)$ norm by proving  
		\begin{align}\label{eq5.15v65}
			\liminf\limits_{n \to \infty} \left(\|e^{it\Delta_\Omega}f_n\|_{L_{t,x}^{q_0}}^{q_0} - \|e^{it\Delta_\Omega}(f_n - \phi_n)\|_{L_{t,x}^{q_0}}^{q_0}\right) 
			= \|e^{it\Delta_\Omega}\tilde{\phi}\|_{L_{t,x}^{q_0}}^{q_0}.
		\end{align}
		From this, \eqref{inverse-3} follows by establishing the lower bound  
		\begin{align}\label{eq5.16v65}
			\|e^{it\Delta_\Omega}\tilde{\phi}\|_{L_x^{q_0}}^{q_0} \gtrsim \left(\varepsilon^\frac{15}{s_c(4s_c+4)} A^{\frac{4s_c^2+4s_c-15}{2s_c(2s_c+2)}}\right)^{q_0}.
		\end{align}
		The proof of \eqref{eq5.16v65} is similar to that in Case 1 and is omitted here.
		
		It remains to verify \eqref{eq5.15v65}. Two key observations are required:  
		\begin{align}\label{eq5.17v65}
			e^{it\Delta_{\Omega_n}}(g_n - \chi_n \tilde{\phi}) \to 0 \quad \text{a.e. in } \mathbb{R} \times \mathbb{R}^3,
		\end{align}
		and  
		\begin{align}\label{eq5.18v65}
			\|e^{it\Delta_{\Omega_n}}\chi_n \tilde{\phi} - e^{it\Delta_{\mathbb{R}^3}}\tilde{\phi}\|_{L_{t,x}^{q_0}(\mathbb{R} \times \mathbb{R}^3)} \to 0.
		\end{align}
		For \eqref{eq5.17v65}, combining the definition of $\tilde{\phi}$ with \eqref{eq5.13v65}, we find  
		\begin{align*}
			g_n - \chi_n \tilde{\phi} \rightharpoonup 0 \quad \text{weakly in } \dot{H}^{s_c}(\mathbb{R}^3).
		\end{align*}
		Using Lemma \ref{L:compact} and the fact that $(i\partial_t)^{s_c/2}e^{it\Delta_{\Omega_n}} = (-\Delta_\Omega)^{s_c/2}e^{it\Delta_{\Omega_n}}$, we conclude \eqref{eq5.17v65} by passing to a subsequence. 
		For \eqref{eq5.18v65}, we apply \eqref{eq5.13v65}, the Strichartz inequality, and Theorem \ref{convergence-flow} to deduce the result.
		
		Combining \eqref{eq5.17v65} and \eqref{eq5.18v65}, and passing to a subsequence if necessary, we obtain  
		\begin{align*}
			e^{it\Delta_{\Omega_n}}g_n - e^{it\Delta_{\mathbb{R}^3}}\tilde{\phi} \to 0 \quad \text{a.e. in } \mathbb{R} \times \mathbb{R}^3.
		\end{align*}
		By the refined Fatou lemma (Lemma \ref{LRefinedFatou}), we have  
		\begin{align*}
			\liminf\limits_{n \to \infty} \left(\|e^{it\Delta_{\Omega_n}}g_n\|_{L_{t,x}^{q_0}}^{q_0} - \|e^{it\Delta_{\Omega_n}}g_n - e^{it\Delta_{\mathbb{R}^3}}\tilde{\phi}\|_{L_{t,x}^{q_0}}^{q_0}\right) 
			= \|e^{it\Delta_{\mathbb{R}^3}}\tilde{\phi}\|_{L_{t,x}^{q_0}}^{q_0}.
		\end{align*}
		Combining this with \eqref{eq5.18v65}, \eqref{eq5.13v65}, and a rescaling argument, we conclude \eqref{eq5.15v65}.
		
		\textbf{Case 3}. The proof of this case closely follows the argument in \textit{Case 2}. The main difference lies in the geometry of the two cases, which affects the application of Proposition \ref{convergence-domain} and the analogue of \eqref{eq5.13v65}. Since these key results have already been established for all cases, it suffices to show  
		\begin{align}\label{eq5.19v65}
			\chi_n \tilde{\phi} \to \tilde{\phi}, \quad \text{or equivalently,} \quad \Theta\left(\frac{|x|}{\operatorname{dist}(0, \Omega_n^c)}\right)\tilde{\phi}(x) \to 0 \text{ in } \dot{H}^{s_c}(\mathbb{R}^3).
		\end{align}
		To prove this, define  
		\begin{align*}
			B_n := \left\{x \in \mathbb{R}^3 : |x| \geq \frac{1}{4} \operatorname{dist}(0, \Omega_n^c) \right\}.
		\end{align*}
		Using H\"older's inequality and Sobolev embedding, we estimate  
		\begin{align*}
			\left\|\Theta\left(\frac{|x|}{\operatorname{dist}(0, \Omega_n^c)}\right)\tilde{\phi}(x)\right\|_{\dot{H}^{s_c}(\mathbb{R}^3)} 
			\lesssim \left\|(-\Delta)^\frac{s_c}{2}\tilde{\phi}\right\|_{L^2(B_n)} + \left\|\tilde{\phi}\right\|_{L^\frac{6}{3-2s_c}(B_n)}.
		\end{align*}
		As the measure of $B_n$ shrinks to zero, the right-hand side converges to $0$ by the monotone convergence theorem.
		
		\medskip
		
		\textbf{Case 4}. By passing to a subsequence, we assume $N_n d(x_n) \to d_\infty > 0$. By the weak sequential compactness of bounded sequences in $\dot{H}^{s_c}(\mathbb{R}^3)$, there exists a subsequence and $\tilde{\phi} \in \dot{H}^{s_c}(\mathbb{R}^3)$ such that $g_n \rightharpoonup \tilde{\phi}$ in $\dot{H}^{s_c}(\mathbb{R}^3)$.
		
		Using the characterization of Sobolev spaces,  
		\begin{align*}
			\dot{H}_D^{s_c}(\mathbb{H}) = \left\{g \in \dot{H}^{s_c}(\mathbb{R}^3) : \int_{\mathbb{R}^3} g(x) \psi(x) dx = 0 \text{ for all } \psi \in C_c^\infty(-\mathbb{H}) \right\},
		\end{align*}
		we conclude that $\tilde{\phi} \in \dot{H}_D^{s_c}(\mathbb{H})$ because for any compact set $K$ in the half-space, $K \subset \Omega_n^c$ for sufficiently large $n$, where  
		\begin{align*}
			\Omega_n := N_n R_n^{-1}(\Omega - \{x_n^*\}) \supset \supp(g_n).
		\end{align*}
		As $\tilde{\phi} \in \dot{H}_D^{s_c}(\mathbb{H})$, it follows that  
		\begin{align*}
			x \in \mathbb{H} \Longleftrightarrow N_n^{-1}R_nx + x_n^* \in \mathbb{H}_n := \left\{y : \left(x_n - x_n^*\right) \cdot \left(y - x_n^*\right) > 0\right\} \subset \Omega,
		\end{align*}
		where $\partial \mathbb{H}_n$ represents the tangent plane to $\partial \Omega$ at $x_n^*$. This inclusion yields  
		\begin{align}\label{eq5.20v65}
			\|\tilde{\phi}\|_{\dot{H}_D^{s_c}(\mathbb{H})} = \|\phi_n\|_{\dot{H}_D^{s_c}(\mathbb{H}_n)} = \|\phi_n\|_{\dot{H}_D^{s_c}(\Omega)}.
		\end{align}
		
		To establish \eqref{inverse-1}, we need a lower bound for $\|\tilde{\phi}\|_{\dot{H}_D^{s_c}(\mathbb{H})}$. Let $h := P_1^{\mathbb{H}}\delta_{d_\infty e_3}$. Using the Bernstein inequality, we have  
		\begin{align}\label{eq5.21v65}
			\left\| \left(-\Delta_{\mathbb{H}}\right)^{-\frac{s_c}{2}} h \right\|_{L^2(\Omega)} \lesssim 1,
		\end{align}
		which implies $h \in \dot{H}_D^{-s_c}(\mathbb{H})$. Now, define $\tilde{x}_n := N_nR_n^{-1}(x_n - x_n^*)$. By assumption, $\tilde{x}_n \to d_\infty e_3$. Using Proposition \ref{convergence-domain}, we compute  
		\begin{align*}
			\langle \tilde{\phi}, h \rangle &= \lim\limits_{n \to \infty} \Big(\langle g_n, P_1^{\Omega_n} \delta_{\tilde{x}_n} \rangle + \langle g_n, (P_1^{\mathbb{H}} - P_1^{\Omega_n})\delta_{d_\infty e_3} \rangle + \langle g_n, P_1^{\Omega_n}(\delta_{d_\infty e_3} - \delta_{\tilde{x}_n}) \rangle\Big) \\
			&= \lim\limits_{n \to \infty} \Big(N_n^{s_c - \frac{3}{2}}(e^{it_n\Delta_\Omega}P_{N_n}^\Omega f_n)(x_n) + \langle g_n, P_1^{\Omega_n}(\delta_{d_\infty e_3} - \delta_{\tilde{x}_n}) \rangle\Big).
		\end{align*}
		Following the argument in \eqref{eq5.10v65} and applying \eqref{eq5.9v65} to $v(x) = \left(P_1^{\Omega_n}g_n\right)(x + \tilde{x}_n)$ with $R = \frac{1}{2}N_n d(x_n)$, we obtain  
		\begin{align*}
			\left| \left\langle g_n, P_1^{\Omega_n} \left(\delta_{d_\infty e_3} - \delta_{\tilde{x}_n}\right) \right\rangle \right| \lesssim A\left(d_\infty^{-1} + d_\infty\right)\left|d_\infty e_3 - \tilde{x}_n\right| \to 0 \quad \text{as } n \to \infty.
		\end{align*}
		Thus, we conclude  
		\begin{align*}
			\left|\left\langle \tilde{\phi}, h \right\rangle\right| \gtrsim \varepsilon^\frac{15}{s_c(2s_c+2)}A^{\frac{4s_c^2+4s_c-15}{s_c(2s_c+2)}},
		\end{align*}
		which, together with \eqref{eq5.20v65} and \eqref{eq5.21v65}, proves \eqref{inverse-1}.
		
		Finally, following the same reasoning as in Case 2, we establish \eqref{inverse-2}. This completes the proof of Proposition \ref{inverse-strichartz}.

		To establish the linear profile decomposition for the Schr\"odinger flow $e^{it\Delta_\Omega}$, we require the following two weak convergence results.  
		
		\begin{lemma}[Weak convergence]\label{weak-convergence}
			Assume that $\Omega_n \equiv \Omega$ or $\{\Omega_n\}$ conforms to  one of the last three cases in Proposition \ref{inverse-strichartz}. Let $f \in C_0^\infty(\widetilde{\lim}\,\Omega_n)$ and $\{(t_n, x_n)\}_{n \geq 1} \subset \mathbb{R} \times \mathbb{R}^3$. Assuming either $|t_n| \to \infty$ or $|x_n| \to \infty$, then 
			\begin{align}\label{weak}
				e^{it_n\Delta_{\Omega_n}}f(x + x_n) \rightharpoonup 0
			\end{align}
			weakly in $\dot{H}^{s_c}(\mathbb{R}^3)$ as $n \to \infty$.
		\end{lemma}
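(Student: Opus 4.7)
The plan is to test weakly against $\varphi \in C_c^\infty(\R^3)$, which is dense in $\dot H^{-s_c}(\R^3)$, so that it suffices to show
\[
I_n := \int_{\R^3} \bigl(e^{it_n\Delta_{\Omega_n}} f\bigr)(y)\, \overline{\varphi(y - x_n)}\, dy \longrightarrow 0.
\]
After passing to a subsequence I may assume $t_n \to t_\infty \in [-\infty,\infty]$. If $|t_n| \to \infty$, then because each $\Omega_n$ is obtained from $\Omega$ by scaling (and possibly rotation and translation), a direct change of variables transfers the dispersive estimate of Lemma~\ref{LDispersive} to $\Omega_n$ with the same constant, and H\"older's inequality gives $|I_n| \leq \|e^{it_n\Delta_{\Omega_n}} f\|_{L^\infty}\|\varphi\|_{L^1} \lesssim |t_n|^{-3/2}\|f\|_{L^1}\|\varphi\|_{L^1} \to 0$.

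If instead $t_n \to t_\infty \in \R$, then by hypothesis $|x_n| \to \infty$. Let $\Omega_\infty$ denote the limit geometry ($\Omega$, $\R^3$, or $\mathbb H$ according to the scenario). I would decompose $I_n = I_n^{(1)} + I_n^{(2)} + I_n^{(3)}$, where
\begin{align*}
I_n^{(1)} &:= \textstyle \int \bigl[e^{it_n\Delta_{\Omega_n}} - e^{it_\infty\Delta_{\Omega_n}}\bigr] f \cdot \overline{\varphi(\cdot - x_n)}\, dy, \\
I_n^{(2)} &:= \textstyle \int \bigl[e^{it_\infty\Delta_{\Omega_n}} - e^{it_\infty\Delta_{\Omega_\infty}}\bigr] f \cdot \overline{\varphi(\cdot - x_n)}\, dy, \\
I_n^{(3)} &:= \textstyle \int e^{it_\infty\Delta_{\Omega_\infty}} f \cdot \overline{\varphi(\cdot - x_n)}\, dy.
\end{align*}
For large $n$, $\supp f \subset \Omega_n$ so that $\Delta_{\Omega_n} f = \Delta f$; the fundamental theorem of calculus together with $L^2$-unitarity then gives $\|(e^{it_n\Delta_{\Omega_n}} - e^{it_\infty\Delta_{\Omega_n}})f\|_{L^2} \leq |t_n - t_\infty|\|\Delta f\|_{L^2}$, whence Cauchy-Schwarz yields $|I_n^{(1)}| \to 0$. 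For $I_n^{(2)}$, Proposition~\ref{convergence-domain} supplies $\|(e^{it_\infty\Delta_{\Omega_n}} - e^{it_\infty\Delta_{\Omega_\infty}})f\|_{\dot H^{-s_c}(\R^3)} \to 0$, while $\|\varphi(\cdot - x_n)\|_{\dot H^{s_c}}$ is translation-invariant, so the $\dot H^{-s_c}\times \dot H^{s_c}$ duality forces $|I_n^{(2)}| \to 0$. For $I_n^{(3)}$, the function $e^{it_\infty\Delta_{\Omega_\infty}} f$ lies in $L^2(\R^3)$ and $\supp\varphi(\cdot - x_n)$, contained in a ball $B_R(x_n)$, recedes to infinity; dominated convergence then gives $\|e^{it_\infty\Delta_{\Omega_\infty}} f\|_{L^2(B_R(x_n))} \to 0$, and Cauchy-Schwarz closes the estimate.

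The main obstacle will be organising the second case, where the correction from $\Omega_n$ to $\Omega_\infty$ must be paired with the translated test function $\varphi(\cdot - x_n)$ in topologies where both factors are uniformly controlled. Splitting the correction into a time-increment piece, handled in $L^2$ by Duhamel and unitarity, and a geometry-change piece, handled in $\dot H^{-s_c}$ via Proposition~\ref{convergence-domain}, is the key organisational idea; the main term is then dispatched purely by $L^2$ tail decay.
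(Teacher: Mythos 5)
Your proposal is correct, but it takes a genuinely different route from the paper's own proof. The paper disposes of this lemma in two lines: it cites Killip--Visan--Zhang's Lemma 5.4 (the $\dot H^1$ version of exactly this statement) and then lifts the result to $\dot H^{s_c}$ by observing that the sequence is also bounded in $\dot H^{s_c}(\R^3)$, so that every weak subsequential limit in $\dot H^{s_c}$ must coincide with the distributional limit, which is zero. You instead reprove the statement from scratch: dispersive decay for $|t_n|\to\infty$ (transferred to $\Omega_n$ by scaling, with uniform constant), and for $t_n\to t_\infty\in\R$, $|x_n|\to\infty$, the three-term split into a Duhamel-in-time piece (controlled in $L^2$ by unitarity and $\Delta_{\Omega_n}f=\Delta f$), a geometry-change piece (controlled in $\dot H^{-s_c}$ by Proposition~\ref{convergence-domain}), and a tail piece (vanishing because the $L^2$ mass of a fixed function escapes any ball receding to infinity). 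This is in effect the KVZ argument transplanted directly to $\dot H^{s_c}$, and it is sound. What your approach buys is self-containment and transparency about which structural ingredients (dispersive estimate, convergence of propagators, $L^2$ tail decay) are really in play; what the paper's approach buys is brevity, at the cost of outsourcing the substance to a citation. Two small points worth making explicit in a cleaned-up version: the density reduction to $\varphi\in C_c^\infty$ uses the uniform bound $\|e^{it_n\Delta_{\Omega_n}}f(\cdot+x_n)\|_{\dot H^{s_c}(\R^3)}\lesssim \|f\|_{\dot H^{s_c}}$, which follows from unitarity of $e^{it\Delta_{\Omega_n}}$ and the uniformity (in $\Omega_n$) of the norm equivalence in Theorem~\ref{TEquivalence}; and in the first scenario $\Omega_n\equiv\Omega$ the term $I_n^{(2)}$ is identically zero, so Proposition~\ref{convergence-domain} is only invoked in the genuine limit-geometry cases.
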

		
		\begin{proof}
			Killip-Visan-Zhang \cite[Lemma 5.4]{KillipVisanZhang2016a} demonstrated that $\{e^{it_n\Delta_{\Omega_n}}f(x + x_n)\}_{n=1}^\infty$ converges weakly to zero in $\dot{H}^{1}(\mathbb{R}^3)$ as $n \to \infty$. Noting that $\{e^{it_n\Delta_{\Omega_n}}f(x + x_n)\}_{n=1}^\infty$ is also bounded in $\dot{H}^{s_c}(\mathbb{R}^3)$, we deduce it converges to zero in $\dot{H}^{s_c}(\mathbb{R}^3)$ as well.
		\end{proof}
	\end{proof}
	
	\begin{lemma}[Weak convergence]\label{L:compact} Assume $\Omega_n\equiv\Omega$ or $\{\Omega_n\}$ conforms to one of the last three scenarios considered in  Proposition~\ref{inverse-strichartz}. Let $f_n\in \dot H_D^{s_c}(\Omega_n)$ be such that $f_n\rightharpoonup 0$ weakly in $\dot H^{s_c}(\R^3)$ and let $t_n\to t_\infty\in \R$. Then  
		\begin{align*}
			e^{it_n\Delta_{\Omega_n}} f_n\rightharpoonup 0 \quad\text{weakly in}\quad  \dot{H}^{s_c}(\R^3).
		\end{align*}
	\end{lemma}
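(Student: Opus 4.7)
The plan is to establish the weak convergence by testing against smooth compactly supported functions in the limit geometry, exploiting self-adjointness of the Dirichlet propagator together with the domain convergence in Proposition \ref{convergence-domain}.

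First, I would verify that $\{e^{it_n\Delta_{\Omega_n}}f_n\}$ is uniformly bounded in $\dot H^{s_c}(\R^3)$: by Theorem \ref{TEquivalence} and the unitarity of $e^{it_n\Delta_{\Omega_n}}$ on $\dot H^{s_c}_D(\Omega_n)$,
\[
\|e^{it_n\Delta_{\Omega_n}}f_n\|_{\dot H^{s_c}(\R^3)} \sim \|f_n\|_{\dot H^{s_c}_D(\Omega_n)} \sim \|f_n\|_{\dot H^{s_c}(\R^3)},
\]
and the last quantity is bounded because weakly convergent sequences are bounded. Hence, by density, it suffices to show $\langle e^{it_n\Delta_{\Omega_n}}f_n,\phi\rangle \to 0$ for every $\phi \in C_c^\infty(\widetilde{\lim}\,\Omega_n)$; the extension to general $\phi\in\dot H^{-s_c}(\R^3)$ then follows from the support property $e^{it_n\Delta_{\Omega_n}}f_n \equiv 0$ on $\Omega_n^c$ together with the fact that any $\phi$ supported away from $\overline{\widetilde{\lim}\,\Omega_n}$ pairs to zero for $n$ large.

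For such a test function $\phi$, its support is compactly contained in $\Omega_n$ for all $n$ sufficiently large, so self-adjointness of $e^{it_n\Delta_{\Omega_n}}$ on $L^2(\Omega_n)$ yields
\[
\int_{\R^3} e^{it_n\Delta_{\Omega_n}}f_n \cdot \bar\phi\,dx = \int_{\Omega_n} f_n \cdot \overline{e^{-it_n\Delta_{\Omega_n}}\phi}\,dx.
\]
The key step is to establish the strong convergence
\[
e^{-it_n\Delta_{\Omega_n}}\phi \longrightarrow e^{-it_\infty\Delta_{\Omega_\infty}}\phi \quad \text{in } \dot H^{-s_c}(\R^3).
\]
I would decompose this as $(e^{-it_n\Delta_{\Omega_n}} - e^{-it_\infty\Delta_{\Omega_n}})\phi + (e^{-it_\infty\Delta_{\Omega_n}} - e^{-it_\infty\Delta_{\Omega_\infty}})\phi$. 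The second piece tends to zero in $\dot H^{-s_c}(\R^3)$ by Proposition \ref{convergence-domain} at the fixed time $t_\infty$. For the first piece, I would use the dual of Theorem \ref{TEquivalence} together with commutation of $(-\Delta_{\Omega_n})^{-s_c/2}$ and the propagator to reduce to an $L^2$ estimate, then invoke the spectral bound $|1-e^{-i\tau\lambda}| \lesssim |\tau\lambda|^{s_c/2}$ (valid for $s_c/2 \in (0,1]$) applied to $\psi := (-\Delta_{\Omega_n})^{-s_c/2}\phi$ so that $(-\Delta_{\Omega_n})^{s_c/2}\psi = \phi$ has uniformly bounded $L^2$ norm, yielding
\[
\|(e^{-it_n\Delta_{\Omega_n}} - e^{-it_\infty\Delta_{\Omega_n}})\phi\|_{\dot H^{-s_c}(\R^3)} \lesssim |t_n-t_\infty|^{s_c/2}\,\|\phi\|_{L^2}.
\]

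Combining $f_n \rightharpoonup 0$ in $\dot H^{s_c}(\R^3)$ with the established strong $\dot H^{-s_c}(\R^3)$ convergence of $e^{-it_n\Delta_{\Omega_n}}\phi$ produces $\langle f_n, e^{-it_n\Delta_{\Omega_n}}\phi\rangle \to 0$, completing the verification on the dense test class. The main obstacle is implementing the $\dot H^{-s_c}(\R^3)$ convergence of the propagator on test functions: it requires carefully marshalling the domain-convergence input of Proposition \ref{convergence-domain} with the time-continuity of the unitary group uniformly in $n$, and the dual version of Theorem \ref{TEquivalence} (obtained by standard $p=2$ duality) is what permits the reduction to an $L^2$ spectral estimate. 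Case 4 — where $\widetilde{\lim}\,\Omega_n = \mathbb{H}$ is a proper open subset of $\R^3$ — presents no new difficulty, since the support properties described above route around any behavior outside of $\overline{\mathbb{H}}$.
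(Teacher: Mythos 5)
Your proof is correct and follows essentially the same route as the paper: separate the time-continuity ($t_n\to t_\infty$) from the domain convergence ($\Omega_n\to\Omega_\infty$), handle the former with the spectral bound $|e^{i\tau\lambda}-1|\lesssim|\tau\lambda|^{s_c/2}$ and the latter with Proposition~\ref{convergence-domain}, after reducing to test functions $\phi\in C_c^\infty(\widetilde{\lim}\,\Omega_n)$. The only difference is organizational: you move the propagator difference onto $\phi$ and estimate it in $\dot H^{-s_c}(\R^3)$ (which requires the dual form of Theorem~\ref{TEquivalence}), whereas the paper applies the spectral bound directly to $f_n$ and absorbs the cost into $\|\phi\|_{\dot H^{2s_c}}$, which is harmless for $\phi\in C_c^\infty$.
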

	
	\begin{proof} Given any $\phi\in C_c^{\infty}(\R^3)$,
		\begin{align*}
			\big|\langle \big(e^{it_n\Delta_{\Omega_n}}-e^{it_\infty\Delta_{\Omega_n}}\big)f_n, \phi\rangle_{\dot H^{s_c}(\R^3)}\big|
			\lesssim |t_n-t_\infty|^{\frac{s_c}2} \|(-\Delta_{\Omega_n})^{\frac{s_c}2}f_n\|_{L^2} \|\phi\|_{\dot{H}^{2s_c}},
		\end{align*}
		which converges to zero as $n\to \infty$.  To obtain the last inequality above, we have used the spectral theorem together with the elementary inequality $|e^{it_n\lambda}-e^{it_\infty\lambda}|\lesssim |t_n-t_\infty|^{s_c/2}\lambda^{s_c/2}$ for $\lambda\geq 0$.  Thus, we are left to prove
		\begin{align*}
			\int_{\R^3} |\nabla|^{s_c} \bigl[e^{it_\infty\Delta_{\Omega_n}} f_n\bigr](x) |\nabla|^{s_c} \bar\phi(x)dx
			= \int_{\R^3}e^{it_\infty\Delta_{\Omega_n}}f_n(x) (-\Delta)^{s_c}\bar\phi(x)dx\to0\quad\text{as}\quad n\rightarrow\infty 
		\end{align*}
		for all $\phi\in C_0^\infty(\R^3)$. 
		As  $\{e^{it_\infty\Delta_{\Omega_n}} f_n\}_{n=1}^{\infty }$ is uniformly bounded in  $\dot H^{s_c}(\mathbb{R} ^3)$, 
		it suffices to show (using the fact that the measure of $\Omega_n\triangle(\widetilde{\lim}\,\Omega_n)$ converges to zero)
		\begin{align}\label{9:38am}
			\int_{\R^3} e^{it_\infty\Delta_{\Omega_n}} f_n (x) \bar\phi(x)\, dx \to 0 \qtq{as} n\to \infty
		\end{align}
		for all $\phi\in C_c^\infty(\widetilde{\lim} \Omega_n)$.  To prove (\ref{9:38am}), we write
		\begin{align*}
			\langle e^{it_\infty\Delta_{\Omega_n}} f_n, \phi \rangle =\langle f_n, [e^{-it_\infty\Delta_{\Omega_n}} -e^{-it_\infty\Delta_{\Omega_\infty}}]\phi \rangle
			+ \langle f_n,e^{-it_\infty\Delta_{\Omega_\infty}}\phi \rangle,
		\end{align*}
		where $\Omega_\infty$ denotes the limit of $\Omega_n$.  The first term converges to zero by Proposition~\ref{convergence-domain}.  As $f_n\rightharpoonup 0$ in $\dot H^{s_c}(\R^3)$, to see that the second term converges to zero, we merely need to prove that $e^{-it_\infty\Delta_{\Omega_\infty}}\phi\in \dot H^{-s_c}(\R^3)$ for all $\phi\in C_0^\infty(\widetilde{\lim}\,\Omega_n)$.  This in fact follows from the Mikhlin multiplier theorem and Bernstein's inequality:
		\begin{align*}
			\|e^{-it_\infty\Delta_{\Omega_\infty}}\phi\|_{\dot H^{-s_c}(\R^3)}
			&\lesssim\|e^{-it_\infty\Delta_{\Omega_\infty}}P_{\leq 1}^{\Omega_\infty} \phi\|_{L^{\frac6{2s_c+3}}(\R^3)}+\sum_{N\geq 1}\|e^{-it_\infty\Delta_{\Omega_\infty}}P_N^{\Omega_\infty}\phi\|_{L^{\frac6{2s_c+3}}(\R^3)}\\
			&\lesssim \|\phi\|_{L^{\frac6{2s_c+3}}(\mathbb{R} ^3)} + \|(-\Delta_{\Omega_\infty})^2\phi\|_{L^{\frac6{2s_c+3}}(\mathbb{R} ^3)}\lesssim_\phi 1.
		\end{align*}
		This   completes the proof of the lemma.
	\end{proof}
	Now, we are in position to give the linear profile decomposition for the Schr\"odinger propagator $e^{it\Delta_\Omega}$ in $\dot{H}_D^{s_c}(\Omega)$. Indeed, this follows from   the application of Proposition \ref{refined-strichartz} and \ref{inverse-strichartz}.

	\begin{theorem}[$\dot{H}_D^{s_c}(\Omega)$ linear profile decomposition]\label{linear-profile}
		Let $\{f_n\}_{n\geq1}$ be a bounded sequence in $\dot{H}_D^{s_c}(\Omega)$. Passing to a subsequence, there exist $J^*\in\{0,1,\cdots,\infty\}$, $\{\phi_n^j\}_{j=1}^{J^*}\subset\dot{H}_D^{s_c}(\Omega)$, $\{\lambda_n^j\}_{j=1}^{J^*}\subset(0,\infty)$, and $\{(t_n^j, x_n^j)\}_{j=1}^{J^*}\subset\mathbb{R}\times\Omega$, such that for each $j$, one of the following cases holds:
		\begin{itemize}
			\item  \textbf{Case 1.} $\lambda_n^j\equiv\lambda_\infty^j$, $x_n^j=x_\infty^j$ and there exists a $\phi^j\in\dot{H}_D^{s_c}(\Omega)$ such that
			\begin{align*}
				\phi_n^j=e^{it_n^j(\lambda_n^j)^2\Delta_{\Omega}}\phi^j.
			\end{align*}
			We define $[G_n^jf](x):=(\lambda_n^j)^{s_c-\frac{3}{2}}f\big(\frac{x-x_n^j}{\lambda_n^j}\big)$		and
			$\Omega_n^j=(\lambda_n^j)^{-1}(\Omega-\{x_n^j\})$.
		\end{itemize}
		\begin{itemize}
			\item \textbf{Case 2. } $\lambda_n^j\to\infty$, $-\frac{x_n^j}{\lambda_n^j}\to x_\infty^j\in\R^3$. There exists a $\phi^j\in\dot{H}^{s_c}(\R^3)$ such that
			\begin{align*}
				\phi_n^j(x)=G_n^j\big(e^{it_n^j\Delta_{\Omega_{n}^j}}(\chi_n^j\phi^j)\big)(x)\qtq{with}[G_n^jf](x):=(\lambda_n^j)^{s_c-\frac{3}{2}}f\big(\frac{x-x_n^j}{\lambda_n^j}\big),
			\end{align*}
			\begin{equation}
				\Omega_n^j=(\lambda_n^j)^{-1}(\Omega-\{x_n^j\}),\qquad	\chi_n^j(x)=\chi(\lambda_n^jx+x_n^j)\qtq{and}\chi(x)=\Theta\big(\frac{d(x)}{\operatorname{diam}(\Omega^c)}\big).\notag
			\end{equation}
		\end{itemize}
		\begin{itemize}
			\item \textbf{Case 3.}  $\frac{d(x_n^j)}{\lambda_n^j}\to\infty$ and there exists a $\phi^j\in\dot{H}^{s_c}(\R^3)$ such that
			\begin{align*}
				\phi_n^j(x):=G_n^j\big(e^{it_n^j\Delta_{\Omega_{n}^j}}(\chi_n^j\phi^j)\big)(x)\qtq{with}[G_n^jf](x):=(\lambda_n^j)^{s_c-\frac{3}{2}}f\big(\frac{x-x_n^j}{\lambda_n^j}\big),
			\end{align*}
			where
			\begin{equation}
				\Omega_n^j=(\lambda_n^j)^{-1}(\Omega-\{x_n^j\}),\quad\text{and}\quad   	\chi_n^j(x):=1-\Theta\big(\frac{\lambda_n^j|x|}{d(x_n^j)}\big).\notag
			\end{equation}
		\end{itemize}
		\begin{itemize}
			\item  \textbf{Case 4.}   $\lambda_n^j\to0$, $\frac{d(x_n^j)}{\lambda_n^j}\to\infty$ and there exists a $\phi^j\in\dot{H}^{s_c}(\mathbb{H})$ such that
			\begin{align*}
				\phi_n^j(x):=G_n^j\big(e^{it_n^j\Delta_{\Omega_n^j}}\phi^j\big)(x)\quad\text{with}\quad  [G_n^jf](x)=(\lambda_n^j)^{s_c-\frac{3}{2}}f\big(\frac{(R_n^j)^{-1}(x-(x_n^j)^*)}{\lambda_n^j}\big),
			\end{align*}
			$\Omega_n^j=(\lambda_n^j)^{-1}(R_n^j)^{}(\Omega-\{(x_n^j)^*\})$, $(x_n^j)^*\in\partial\Omega$ is chosen by $d(x_n)=|x_n^j-(x_n^j)^*|$ and $R_n^j\in \operatorname{SO}(3)$ satisfies $R_n^je_3=\frac{x_n^j-(x_n^j)^*}{|x_n^j-(x_n^j)^*|}.$
		\end{itemize}
		
		Moreover, for any finite $0 \leq J \leq J^*$, we have the profile decomposition  
		\begin{align*}
			f_n = \sum_{j=1}^J \phi_n^j + W_n^J,
		\end{align*}
		where:
		\begin{itemize}
			\item For all $n$ and $J \geq 1$, $W_n^J \in \dot{H}_D^{s_c}(\Omega)$, and
			\begin{align}\label{profile-1}
				\lim_{J \to J^*} \limsup_{n \to \infty} \|e^{it\Delta_\Omega}W_n^J\|_{L_{t,x}^{q_0}(\mathbb{R} \times \Omega)} = 0.
			\end{align}
			\item For any $J \geq 1$, we have the decoupling property:
			\begin{align}\label{profile-2}
				\lim_{n \to \infty} \left(\|f_n\|_{\dot{H}_D^{s_c}(\Omega)}^2 - \sum_{j=1}^J \|\phi_n^j\|_{\dot{H}_D^{s_c}(\Omega)}^2 - \|W_n^J\|_{\dot{H}_D^{s_c}(\Omega)}^2\right) = 0.
			\end{align}
			\item For any $1 \leq J \leq J^*$,
			\begin{align}\label{profile-3}
				e^{it_n^J\Delta_{\Omega_n^J}}(G_n^J)^{-1}W_n^J \rightharpoonup 0 \quad \text{weakly in } \dot{H}_D^{s_c}(\mathbb{R}^3).
			\end{align}
			\item For all $j \neq k$, we have asymptotic orthogonality:
			\begin{align}\label{profile-4}
				\lim_{n \to \infty} \left(\frac{\lambda_n^j}{\lambda_n^k} + \frac{\lambda_n^k}{\lambda_n^j} + \frac{|x_n^j - x_n^k|^2}{\lambda_n^j\lambda_n^k} + \frac{|t_n^j(\lambda_n^j)^2 - t_n^k(\lambda_n^k)^2|}{\lambda_n^j\lambda_n^k}\right) = \infty.
			\end{align}
		\end{itemize}
		Finally, we may assume for each $j$ that either $t_n^j \equiv 0$ or $|t_n^j| \to \infty$.
	\end{theorem}
	
	\begin{proof}
		We employ an induction argument to complete the proof by extracting one bubble at a time. 
		
		Initially, we set $W_n^0 := f_n$. Suppose that for some $J \geq 0$, we have a decomposition satisfying \eqref{profile-2} and \eqref{profile-3}. Passing to a subsequence if needed, define  
		\begin{align*}
			A_J := \lim\limits_{n \to \infty} \left\|W_n^J\right\|_{\dot{H}_D^{s_c}(\Omega)} \quad \text{and} \quad 
			\epsilon_J := \lim\limits_{n \to \infty} \left\|e^{it\Delta_{\Omega}}W_n^J\right\|_{L_{t,x}^{q_0}(\mathbb{R} \times \Omega)}.
		\end{align*}
		
		If $\epsilon_J = 0$, the induction terminates, and we set $J^* = J$. Otherwise, we apply the inverse Strichartz inequality (see Proposition \ref{inverse-strichartz}) to $W_n^J$. After passing to a subsequence, we obtain $\{\phi_n^{J+1}\} \subseteq \dot{H}_D^{s_c}(\Omega)$, $\{\lambda_n^{J+1}\} \subseteq 2^{\mathbb{Z}}$, and $\{x_n^{J+1}\} \subseteq \Omega$, which correspond to one of the four cases described in the theorem. The parameters provided by Proposition \ref{inverse-strichartz} are renamed as follows:
		\[
		\lambda_n^{J+1} := N_n^{-1} \quad \text{and} \quad t_n^{J+1} := -N_n^2 t_n.
		\]
		
		The profile $\tilde{\phi}^{J+1}$ is defined as a weak limit:
		\begin{align*}
			\tilde{\phi}^{J+1} = w\lim_{n \to \infty}(G_n^{J+1})^{-1}\left[e^{-it_n^{J+1}(\lambda_n^{J+1})^2\Delta_\Omega}W_n^J\right] = w\lim_{n \to \infty} e^{-it_n^{J+1}\Delta_{\Omega_n^{J+1}}}\left[\left(G_n^{J+1}\right)^{-1}W_n^J\right],
		\end{align*}
		where $G_n^{J+1}$ is defined in the theorem.
		In Cases 2, 3, and 4, we set $\phi^{J+1} := \tilde{\phi}^{J+1}$. For Case 1, we define:
		\[
		\phi^{J+1}(x) := G_\infty^{J+1}\tilde{\phi}^{J+1}(x) := \left(\lambda_\infty^{J+1}\right)^{s_c-\frac{3}{2}} \tilde{\phi}^{J+1}\left(\frac{x - x_\infty^{J+1}}{\lambda_\infty^{J+1}}\right).
		\]
		Finally, $\phi_n^{J+1}$ is constructed as stated in the theorem.

		For Case 1, $\phi_n^{J+1}$ can be expressed as
		\[
		\phi_n^{J+1} = e^{it_n^{J+1}(\lambda_n^{J+1})^2\Delta_{\Omega}}\tilde{\phi}^{J+1} = G_\infty^{J+1}e^{it_n^{J+1}\Delta_{\Omega_{\infty}^{J+1}}}\tilde{\phi}^{J+1},
		\]
		where $\Omega_\infty^{J+1} := \left(\lambda_\infty^{J+1}\right)^{-1}\left(\Omega - \left\{x_\infty^{J+1}\right\}\right)$. 
		
		In all four cases, we observe that
		\begin{align}\label{weakly-con-profile}
			\lim\limits_{n \to \infty} \left\| e^{-it_n^{J+1}\Delta_{\Omega_n^{J+1}}}\left(G_n^{J+1}\right)^{-1}\phi_n^{J+1} - \tilde{\phi}^{J+1} \right\|_{\dot{H}^{s_c}(\mathbb{R}^3)} = 0;
		\end{align}
		see also \eqref{eq5.13v65} and \eqref{eq5.19v65} for Cases 2 and 3.
		
		Next, define $W_n^{J+1} := W_n^J - \phi_n^{J+1}$. By \eqref{weakly-con-profile} and the construction of $\tilde{\phi}^{J+1}$ in each case, we have
		\[
		e^{-it_n^{J+1}\Delta_{\Omega_n^{J+1}}}\left(G_n^{J+1}\right)^{-1}W_n^{J+1} \rightharpoonup 0 \quad \text{in } \dot{H}^{s_c}(\mathbb{R}^3) \quad \text{as } n \to \infty,
		\]
		which establishes \eqref{profile-3} at the level $J+1$.
		
		Moreover, from \eqref{inverse-2}, we deduce
		\[
		\lim\limits_{n \to \infty} \left(\left\|W_n^J\right\|_{\dot{H}_D^{s_c}(\Omega)}^2 - \left\|\phi_n^{J+1}\right\|_{\dot{H}_D^{s_c}(\Omega)}^2 - \left\|W_n^{J+1}\right\|_{\dot{H}_D^{s_c}(\Omega)}^2\right) = 0.
		\]
		This, combined with the inductive hypothesis, verifies \eqref{profile-2} at the level $J+1$.

		From Proposition \ref{inverse-strichartz}, passing to a further subsequence, we obtain
		\begin{align}\label{eq5.31v65}
			\begin{split}
				A_{J+1}^2 = \lim\limits_{n \to \infty}\left\|W_n^{J+1}  \right\|_{\dot{H}_D^{s_c}(\Omega)}^2\leqslant A_J^2
				\left(1-C\left(\frac{\epsilon_J}{A_J}\right)^\frac{15 }{s_c(2s_c+2)}   \right) \le A_J^2, \\
				\epsilon_{J+1}^{q_0}=\lim\limits_{n \to\infty}  \left\|e^{it\Delta_\Omega}W_n^{J+1}\right\|_{L_{t,x}^{q_0}( \R\times\Omega)}^{q_0}
				\le \epsilon_J^{\frac{10}{3-2s_c}}
				\left( 1-C\left( \frac{\epsilon_J}{A_J}  \right)^\frac{75}{s_c(2s_c+2)(3-2s_c)}\right).
			\end{split}
		\end{align}

		If $\epsilon_{J+1} = 0$, we terminate the process and set $J^* = J+1$; in this case, \eqref{profile-1} holds automatically. If $\epsilon_{J+1} > 0$, we proceed with the induction. Should the process continue indefinitely, we set $J^* = \infty$. In this scenario, \eqref{eq5.31v65} ensures that $\epsilon_J \xrightarrow{J \to \infty} 0$, which establishes (\ref{profile-1}).
		
		Next, we confirm the asymptotic orthogonality condition \eqref{profile-4} by contradiction. Suppose \eqref{profile-4} does not hold for some pair $(j, k)$. Without loss of generality, assume $j < k$ and that \eqref{profile-4} is valid for all pairs $(j, l)$ where $j < l < k$.
		
		Passing to a subsequence, we let
		\begin{equation}
			\frac{\lambda_n^j}{ \lambda_n^k} \to \lambda_0 \in (0, \infty), \quad \frac{x_n^j - x_n^k}{ \sqrt{\lambda_n^j \lambda_n^k} } \to x_0, \quad\text{and}\quad \frac{t_n^j(\lambda_n^j)^2-t_n^k(\lambda_n^k)^2}{\lambda_n^j\lambda_n^k}\to t_0\qtq{as}n\to\infty.\label{condition-profile} 
		\end{equation}
		From the inductive relation
		\begin{align*}
			W_n^{k-1}= W_n^j-\sum\limits_{l = j+1}^{k - 1} \phi_n^l
		\end{align*}
		and the definition of $\tilde{\phi}^k$, we obtain
		\begin{align*}
			\tilde{\phi}^k&=w\lim_{n\to\infty}  e^{-it_n^k\Delta_{\Omega_{n}^{k}}}\left[\left(G_n^k \right)^{-1} W_n^{k-1}\right]\\&=  w\lim_{n\to\infty}e^{-it_n^k\Delta_{\Omega_{n}^{k}}}\left[\left(G_n^k \right)^{-1} W_n^{j}\right] -  \sum\limits_{l = j+1}^{k-1}  w\lim_{n\to\infty}
			e^{-it_n^k\Delta_{\Omega_{n}^{k}}}\left[\left(G_n^k \right)^{-1} \phi_n^l\right]\\&=:A_1+A_2.
		\end{align*}
		Next, we claim that the weak limits in $A_1$ and $A_2$ are zero, which would be a contradiction to  $\tilde{\phi}^k\neq0$. 
		
		Rewriting $A_1$ as follows:
		\begin{align*}
			e^{-it_n^k\Delta_{\Omega_n^k}}\left[\left(G_n^k\right)^{-1}W_n^j\right]
			&=e^{-it_n^k\Delta_{\Omega_n^k}}\left(G_n^k\right)^{-1}G_n^je^{it_n^j\Delta_{\Omega_n^j}}\left[e^{-it_n^j\Delta_{\Omega_n^j}}\left(G_n^j\right)^{-1}W_n^j\right]\\
			&=\left(G_n^k\right)^{-1}G_n^je^{i\big(t_n^j-t_n^k\tfrac{(\lambda_n^k)^2}{(\lambda_n^j)^2}\big)\Delta_{{\Omega_n^j}}}\left[e^{-it_n^j\Delta_{\Omega_n^j}}\left(G_n^j\right)^{-1}W_n^j\right].
		\end{align*}
		
		Note that by \eqref{condition-profile}, we have 
		\begin{align}
			t_n^j - t_n^k \frac{(\lambda_n^k)^2}{(\lambda_n^j)^2} = \frac{t_n^j (\lambda_n^j)^2 - t_n^k (\lambda_n^k)^2}{\lambda_n^j \lambda_n^k} \cdot \frac{\lambda_n^k}{\lambda_n^j} \to \frac{t_0}{\lambda_0}.\label{E11131}
		\end{align}
		Using this, along with (\ref{profile-3}), Lemma \ref{L:compact}, and the fact that the adjoints of the unitary operators $(G_n^k)^{-1}G_n^{j}$ converge strongly, we deduce that $A_1 = 0.$
		
		To complete the proof of \eqref{profile-4}, it remains to verify that $A_2 = 0$. For all $j < l < k$, we express
		\begin{align*}
			e^{-it_n^k{\Delta_{\Omega_n^k}}}\left[\left(G_n^k\right)^{-1}\phi_n^l\right]
			= \left(G_n^k\right)^{-1}G_n^j e^{i\big(t_n^j - t_n^k \tfrac{(\lambda_n^k)^2}{(\lambda_n^j)^2}\big)\Delta_{\Omega_n^j}}\left[e^{-it_n^j\Delta_{\Omega_n^j}}(G_n^j)^{-1}\phi_n^l\right].
		\end{align*}
		
		By (\ref{E11131}) and Lemma \ref{L:compact}, it suffices to show
		\begin{align*}
			e^{-it_n^j\Delta_{\Omega_n^j}}\left[\left(G_n^j\right)^{-1}\phi_n^l\right] \rightharpoonup 0 \quad \text{weakly in } \dot{H}^{s_c}(\mathbb{R}^3).
		\end{align*}
		
		By density, this reduces to proving the following:
		for all $\phi \in C_0^\infty  \left( \widetilde{\lim} \,  \Omega_n^l \right)$,
		\begin{align}\label{eq5.35v65}
			I_n : = e^{-it_n^j\Delta_{\Omega_n^j}}(G_n^j)^{-1}G_n^le^{it_n^l\Delta_{\Omega_n^l}}\phi\rightharpoonup 0 \qtq{weakly in} \dot H^{s_c}(\R^3)\qtq{as}n\to\infty.
		\end{align}
		Depending on which cases $j$ and $l$ fall into, we can rewrite $I_n$ as follows:
		\begin{itemize}
			\item Case (a): If both $j$ and $l$ conform to Case 1, 2, or 3, then
			\begin{align*}
				I_n=\bigg(\frac{\lambda_n^j}{\lambda_n^l}\bigg)^{\frac{3}{2}-s_c}\bigg[e^{i\big(t_n^l-t_n^j\big(\frac{\lambda_n^j}
					{\lambda_n^l}\big)^2\big)\Delta_{\Omega_n^l}}\phi\bigg]\bigg(\frac{\lambda_n^j x+x_n^j- x_n^l}{\lambda_n^l}\bigg).
			\end{align*}
		\end{itemize}
		\begin{itemize}
			\item Case (b): If $j$ conforms to Case 1, 2, or 3 and $l$ to Case 4, then
			\begin{align*}
				I_n=\bigg(\frac{\lambda_n^j}{\lambda_n^l}\bigg)^{\frac{3}{2}-s_c}\bigg[e^{i\big(t_n^l-t_n^j\big(\frac{\lambda_n^j}
					{\lambda_n^l}\big)^2\big) \Delta_{\Omega_n^l}}\phi\bigg]\bigg(\frac{(R_n^l)^{-1}(\lambda_n^j x+x_n^j-(x_n^l)^*)}{\lambda_n^l}\bigg).
			\end{align*}
		\end{itemize}
		\begin{itemize}
			\item Case (c): If $j$ conforms to Case 4 and $l$ to Case 1, 2, or 3, then
			\begin{align*}
				I_n=\bigg(\frac{\lambda_n^j}{\lambda_n^l}\bigg)^{\frac{3}{2}-s_c}\bigg[e^{i\big(t_n^l-t_n^j\big(\frac{\lambda_n^j}
					{\lambda_n^l}\big)^2\big) \Delta_{\Omega_n^l}}\phi\bigg]\bigg(\frac{R_n^j\lambda_n^j x+(x_n^j)^*-x_n^l}{\lambda_n^l}\bigg).
			\end{align*}
		\end{itemize}
		\begin{itemize}
			\item Case (d): If both $j$ and $l$ conform to Case 4, then
			\begin{align*}
				I_n=\bigg(\frac{\lambda_n^j}{\lambda_n^l}\bigg)^{\frac{3}{2}-s_c}\bigg[e^{i\big(t_n^l-t_n^j\big(\frac{\lambda_n^j}
					{\lambda_n^l}\big)^2\big) \Delta_{\Omega_n^l}}\phi\bigg]\bigg(\frac{(R_n^l)^{-1}(R_n^j\lambda_n^j x+(x_n^j)^*-(x_n^l)^*)}{\lambda_n^l}\bigg).
			\end{align*}
		\end{itemize}
		
		We first prove \eqref{eq5.35v65} in the case where the scaling parameters are not comparable, i.e.,
		\begin{align}\label{A2}
			\lim\limits_{n \to \infty} \left( \frac{\lambda_n^j}{\lambda_n^l} + \frac{\lambda_n^l}{\lambda_n^j} \right) = \infty.
		\end{align}
		
		In this scenario, we handle all four cases simultaneously. Using the Cauchy-Schwarz inequality and \eqref{A2}, for any $\psi \in C_c^\infty(\mathbb{R}^3)$, we have
		\begin{align*}
			\left| \langle I_n, \psi \rangle_{\dot{H}^{s_c}(\mathbb{R}^3)} \right| 
			&\lesssim \min \left( \|(-\Delta)^{s_c} I_n \|_{L^2(\mathbb{R}^3)} \|\psi \|_{L^2(\mathbb{R}^3)}, \|I_n \|_{L^2(\mathbb{R}^3)} \|(-\Delta)^{s_c} \psi \|_{L^2(\mathbb{R}^3)} \right) \\
			&\lesssim \min \left( \left(\frac{\lambda_n^j}{\lambda_n^l}\right)^{s_c} \|(-\Delta)^{s_c} \phi \|_{L^2(\mathbb{R}^3)} \|\psi \|_{L^2(\mathbb{R}^3)}, \left(\frac{\lambda_n^l}{\lambda_n^j}\right)^{s_c} \|\phi \|_{L^2(\mathbb{R}^3)} \|(-\Delta)^{s_c} \psi \|_{L^2(\mathbb{R}^3)} \right),
		\end{align*}
		which tends to zero as $n \to \infty$. 		Therefore, in this case, $A_2 = 0$, leading to the desired contradiction.
		
		Now, we may assume
		\begin{align*}
			\lim_{n \to \infty} \frac{\lambda_n^j}{\lambda_n^l} = \lambda_0 \in (0, \infty).
		\end{align*}
		Proceeding as in the previous case, we further assume that the time parameters diverge, i.e.,
		\begin{align}\label{A3}
			\lim_{n \to \infty} \frac{|t_n^j (\lambda_n^j)^2 - t_n^l (\lambda_n^l)^2|}{\lambda_n^j \lambda_n^l} = \infty.
		\end{align}
		Under this assumption, we have
		\begin{align*}
			\left| t_n^l - t_n^j \frac{(\lambda_n^j)^2}{(\lambda_n^l)^2} \right| = \frac{|t_n^l (\lambda_n^l)^2 - t_n^j (\lambda_n^j)^2|}{\lambda_n^j \lambda_n^l} \cdot \frac{\lambda_n^j}{\lambda_n^l} \to \infty
		\end{align*}
		as $n \to \infty$.
		
		First, we address Case (a). By \eqref{A3} and Lemma \ref{weak-convergence}, we obtain
		\begin{align*}
			\lambda_0^{\frac{3}{2}-s_c}\left(e^{i\big(t_n^l - t_n^j\big(\frac{\lambda_n^j}{\lambda_n^l}\big)^2\big)\Delta_{\Omega_n^l}}\phi\right)(\lambda_0 x + (\lambda_n^l)^{-1}(x_n^j - x_n^l)) \rightharpoonup 0 \quad \text{weakly in } \dot{H}^{s_c}(\mathbb{R}^3),
		\end{align*}
		which implies \eqref{eq5.35v65}.
		
		For Cases (b), (c), and (d), the proof proceeds similarly since $\operatorname{SO}(3)$ is a compact group. Indeed, by passing to a subsequence, we may assume that $R_n^j \to R_0$ and $R_n^l \to R_1$, placing us in a situation analogous to Case (a).
		
		Finally, consider the case where
		\begin{equation}
			\frac{\lambda_n^j}{\lambda_n^l} \to \lambda_0, \quad \frac{t_n^l(\lambda_n^l)^2 - t_n^j(\lambda_n^j)^2}{\lambda_n^j\lambda_n^l} \to t_0, \quad \text{but} \quad \frac{|x_n^j - x_n^l|^2}{\lambda_n^j\lambda_n^l} \to \infty.
		\end{equation}
		In this case, we also have $t_n^l - t_n^j \frac{(\lambda_n^j)^2}{(\lambda_n^l)^2} \to \lambda_0 t_0$. Thus, for Case (a), it suffices to show that
		\begin{equation}
			\lambda_0^{\frac{3}{2}-s_c} e^{it_0 \lambda_0 \Delta_{\Omega_n^l}}\phi(\lambda_0 x + y_n) \rightharpoonup 0 \quad \text{weakly in } \dot{H}^{s_c}(\mathbb{R}^3), \label{E1181}
		\end{equation}
		where
		\begin{align*}
			y_n := \frac{x_n^j - x_n^l}{\lambda_n^l} = \frac{x_n^j - x_n^l}{(\lambda_n^l\lambda_n^j)^{\frac{1}{2}}} \cdot \sqrt{\frac{\lambda_n^j}{\lambda_n^l}} \to \infty \quad \text{as } n \to \infty.
		\end{align*}
		The desired weak convergence \eqref{E1181} follows from Lemma \ref{weak-convergence}.
		
		In Case (b), since $\operatorname{SO}(3)$ is compact, the argument is similar if we can establish
		\begin{equation}
			\frac{|x_n^j - (x_n^l)^*|}{\lambda_n^l} \to \infty \quad \text{as } n \to \infty. \label{E1182}
		\end{equation}
		In fact, this follows from the triangle inequality:
		\begin{align*}
			\frac{|x_n^j - (x_n^l)^*|}{\lambda_n^l} \geq \frac{|x_n^j - x_n^l|}{\lambda_n^l} - \frac{|x_n^l - (x_n^l)^*|}{\lambda_n^l} \geq \frac{|x_n^j - x_n^l|}{\lambda_n^l} - 2d_\infty^l \to \infty.
		\end{align*}
		Case (c) is symmetric to Case (b), so the result for Case (c) follows immediately.

		Now, we handle  case (d). For sufficiently large $n$, we have
		\begin{align*}
			\frac{|(x_n^j)^*-(x_n^l)^*|}{\lambda_n^l}&\geq\frac{|x_n^j-x_n^l|}{\lambda_n^l}-\frac{|x_n^j-(x_n^j)^*|}{\lambda_n^l}-\frac{|x_n^l-(x_n^l)^*|}{\lambda_n^l}\\
			&\geq\frac{|x_n^j-x_n^l|}{\sqrt{\lambda_n^l\lambda_n^j}}\cdot\sqrt{\frac{\lambda_n^j}{\lambda_n^l}}-\frac{d(x_n^j)\lambda_n^j}{\lambda_n^j\lambda_n^l}-\frac{d(x_n^l)}{\lambda_n^l} \notag\\
			&\ge \frac{1}{2}\sqrt{\lambda_0}\frac{|x_n^j-x_n^l|}{\sqrt{\lambda_n^l\lambda_n^j}}-2\lambda_0d_\infty ^j-2d_\infty ^l \rightarrow\infty  \quad\text{as }\quad n\rightarrow\infty .\notag
		\end{align*}
		The desired weak convergence follows again from Lemma  \ref{weak-convergence}. 
	\end{proof}
	\section{Embedding of nonlinear profiles}\label{S4}
	In Section \ref{S5}, we will utilize the linear profile decomposition established in the previous section to prove Theorem \ref{TReduction}. The key challenge lies in deriving a Palais-Smale condition for minimizing sequences of blowup solutions to (\ref{NLS}). This task primarily involves proving a nonlinear profile decomposition for solutions to NLS$_\Omega$. A critical aspect of this process is addressing the scenario where the nonlinear profiles correspond to solutions of the $\dot H^{s_c}$-critical equation in \emph{distinct} limiting geometries. 
	To handle this, we embed these nonlinear profiles, associated with different limiting geometries, back into $\Omega$, following the approach in \cite{KillipVisanZhang2016a}. As nonlinear solutions in the limiting geometries possess global spacetime bounds, we infer that the solutions to NLS$_\Omega$ corresponding to Cases 2, 3, and 4 in Theorem \ref{linear-profile} inherit these spacetime bounds. These solutions to NLS$_{\Omega}$ will reappear as nonlinear profiles in Proposition \ref{Pps}. 
	This section presents three theorems: Theorems \ref{Tembbedding1}, \ref{Tembedding2}, and \ref{Embed3}, which correspond to Cases 2, 3, and 4 of Theorem \ref{linear-profile}, respectively.
	
	As in the previous section, we denote $\Theta:\R^3\to[0,1]$ the smooth function such that
	\begin{align*}
		\Theta(x)=\begin{cases}
			0,&|x|\leq\frac{1}{4},\\
			1,&|x|\geq\frac{1}{2}.
		\end{cases}
	\end{align*}
	
	Our first result in this section consider the scenario when the rescaled obstacles   $\Omega_n^{c}$ are shrinking to a point (i.e. Case 2 in Theorem \ref{linear-profile}).
	\begin{theorem}[Embedding nonlinear profiles for shrinking obstacles]\label{Tembbedding1}
		Let $\{\lambda_n\}\subset2^{\Bbb Z}$ be such that $\lambda_n\to\infty$. Let $\{t_n\}\subset\R$ be such that $t_n\equiv0$ or $|t_n|\to\infty$. Suppose that $\{x_n\}\subset\Omega$ satisfies
		$-\lambda_n^{-1}x_n\to x_\infty\in\R^3$. Let $\phi\in\dot{H}^{s_c}(\R^3)$ and
		\begin{align*}
			\phi_n(x):=\lambda_n^{s_c-\frac{3}{2}}e^{it_n\lambda_n^2\Delta_\Omega}\left[(\chi_n\phi)\left(\frac{x-x_n}{\lambda_n}\right)\right],
		\end{align*}
		where $\chi_n(x)=\chi(\lambda_n x+x_n)$ with  $\chi (x)=\Theta (\frac{d(x)}{\text{diam }\Omega^c})$. Then for $n$ sufficiently large, there exists a global solution $v_n$ to (\ref{NLS}) with initial data $v_n(0)=\phi_n$ such that
		\begin{align*}
			\|v_n\|_{L_{t,x}^{\frac{5\alpha }{2}}(\R\times\Omega)}\lesssim1,
		\end{align*}
		with the implicit constant depending only on $\|\phi\|_{\dot{H}^{s_c}}$. Moreover, for any $\varepsilon>0$, there exists $N_\varepsilon\in\N$ and $\psi_\varepsilon\in C_0^\infty(\R\times\R^3)$ such that for all  $n\ge N_\varepsilon $
		\begin{align}
			\big\|(-\Delta _\Omega)^{\frac{s_c}{2}}[v_n(t-\lambda_n^2t_n,x+x_n)-\lambda_n^{s_c-\frac{3}{2}}\psi_\varepsilon(\lambda_n^{-2}t,\lambda_n^{-1}x)]\big\|_{ L_t^{\frac{5\alpha }{2}}L_x^{\frac{30\alpha }{15\alpha -8}}(\R\times\R^3)}<\varepsilon.\label{approximate-1}
		\end{align}
	\end{theorem}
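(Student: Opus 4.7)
My plan is to construct an approximate solution $\tilde v_n$ to NLS$_\Omega$ with initial data asymptotically close to $\phi_n$ in the sense of the stability theorem (Theorem \ref{TStability}), then apply that theorem to produce a genuine solution $v_n$ with the claimed bounds. The approximate solution is built from the nonlinear profile in the limiting geometry $\mathbb{R}^3$, truncated by the $n$-independent cutoff $\chi$ so as to be supported in $\Omega$; this mirrors the strategy of Killip-Visan-Zhang \cite{KillipVisanZhang2016a} in the energy-critical setting, now adapted to fractional regularity through the equivalence Theorem \ref{TEquivalence} together with the fractional product and chain rules (Lemmas \ref{LFractional product rule} and \ref{LChainrule}).

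The limiting nonlinear profile $W:\mathbb{R}\times\mathbb{R}^3\to\mathbb{C}$ is chosen as the solution of NLS$_{\mathbb{R}^3}$ with $W(0)=\phi$ when $t_n\equiv 0$, and otherwise as the solution produced by the appropriate wave operator so that $\|W(t)-e^{it\Delta_{\mathbb{R}^3}}\phi\|_{\dot H^{s_c}(\mathbb{R}^3)}\to 0$ as $t\to\pm\infty$ when $t_n\to\pm\infty$. By the assumed Conjecture \ref{CNLS0}, $W$ is global and $\|W\|_{L_{t,x}^{5\alpha/2}(\mathbb{R}\times\mathbb{R}^3)}\leq C(\|\phi\|_{\dot H^{s_c}})$, with similar bounds on all admissible $|\nabla|^{s_c}$-Strichartz norms. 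I then define
\[
\tilde v_n(t,x):=\lambda_n^{s_c-\tfrac32}\,\chi(x)\,W\!\left(\lambda_n^{-2}t+t_n,\,\lambda_n^{-1}(x-x_n)\right),
\]
using the identity $\chi_n(\lambda_n^{-1}(x-x_n))=\chi(x)$, so that $\tilde v_n$ is automatically supported in $\Omega$.

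To apply Theorem \ref{TStability}, I would verify two pieces of smallness. First, the equation error $e_n:=(i\partial_t+\Delta_\Omega)\tilde v_n-|\tilde v_n|^\alpha\tilde v_n$ decomposes, after rescaling, into a commutator piece $2\nabla\chi_n\cdot\nabla W+(\Delta\chi_n)W$ and a cutoff-mismatch piece $(\chi_n-\chi_n^{\alpha+1})|W|^\alpha W$, both supported in the shrinking set of diameter $\lambda_n^{-1}\mathrm{diam}(\Omega^c)\to 0$ around the rescaled obstacle (the main term $\chi_n((i\partial_s+\Delta)W-|W|^\alpha W)$ vanishes since $W$ solves NLS$_{\mathbb{R}^3}$). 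Approximating $W$ in Strichartz norm by $\psi\in C_c^\infty(\mathbb{R}\times(\mathbb{R}^3\setminus\{x_\infty\}))$ whose support is disjoint from the rescaled obstacle for large $n$, these error terms with $\psi$ in place of $W$ vanish for large $n$; the discrepancy from replacing $W$ by $\psi$ is controlled by the Strichartz distance together with a volume-shrinking argument using H\"older, Bernstein, and Theorem \ref{TEquivalence}, so $\|e_n\|_{\dot N^{s_c}(\mathbb{R}\times\Omega)}\to 0$. Second, for the initial-data mismatch I reduce
\[
\bigl\|(-\Delta_\Omega)^{s_c/2}e^{it\Delta_\Omega}(\phi_n-\tilde v_n(0))\bigr\|_{L_t^{5\alpha/2}L_x^{30\alpha/(15\alpha-8)}(\mathbb{R}\times\Omega)}\to 0
\]
by rescaling and splitting the difference into (i) the propagator mismatch $e^{it_n\Delta_{\Omega_n}}(\chi_n\phi)-\chi_n e^{it_n\Delta_{\mathbb{R}^3}}\phi$, controlled by Theorem \ref{convergence-flow} together with the analogue of \eqref{eq5.13v65} providing $(1-\chi_n)\phi\to 0$ in $\dot H^{s_c}(\mathbb{R}^3)$; and (ii) the scattering error $\chi_n(e^{it_n\Delta_{\mathbb{R}^3}}\phi-W(t_n))$, which is zero for $t_n\equiv 0$ and vanishes as $|t_n|\to\infty$ by the wave-operator construction.

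With these two pieces in place and $\|\tilde v_n\|_{L_{t,x}^{5\alpha/2}}\lesssim\|W\|_{L_{t,x}^{5\alpha/2}}$, Theorem \ref{TStability} produces the global solution $v_n$ with $v_n(0)=\phi_n$, the bound $\|v_n\|_{L_{t,x}^{5\alpha/2}(\mathbb{R}\times\Omega)}\lesssim 1$, and $v_n\approx\tilde v_n$ in $L_t^{5\alpha/2}\dot H^{s_c,30\alpha/(15\alpha-8)}_D$; the claim \eqref{approximate-1} then follows by choosing $\psi_\varepsilon\in C_c^\infty(\mathbb{R}\times(\mathbb{R}^3\setminus\{x_\infty\}))$ within $\varepsilon$ of $W$ in the same Strichartz norm and undoing the rescaling, since for large $n$ the cutoff $\chi_n$ acts trivially on the rescaled support of $\psi_\varepsilon$. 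The main obstacle is the fractional-derivative accounting in the equation error: one cannot simply use a classical Leibniz rule, and must pass between $\Omega$ and $\mathbb{R}^3$ via Theorem \ref{TEquivalence} (respecting its range of validity) and apply the fractional product and chain rules before combining with the volume-shrinking estimate. A second delicate point arises when $|t_n|\to\infty$, where the propagator mismatch (i) must be handled uniformly in the diverging shift time, which is achieved by leveraging the full space-time smallness in Theorem \ref{convergence-flow} rather than any pointwise-in-time bound.
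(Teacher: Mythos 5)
Your strategy follows the paper's general architecture (approximate solution via the limiting $\mathbb{R}^3$ profile, apply Theorem \ref{TStability}), but the construction of the approximate solution omits two ingredients that the paper uses precisely because without them the equation error does not tend to zero. The first is a frequency cutoff at a slowly growing scale: the paper takes $w_n$ with initial data $\phi_{\le \lambda_n^\theta}$ rather than the full $\phi$, which yields controlled pointwise bounds $\||\nabla|^s w_n\|_{L^\infty_{t,x}}\lesssim \lambda_n^{\theta(s+\frac32-s_c)}$ via persistence of regularity and Gagliardo--Nirenberg. The second is the correction term
\[
z_n(t):=i\int_0^t e^{i(t-\tau)\Delta_{\Omega_n}}\bigl(\Delta_{\Omega_n}\chi_n\bigr)\,w_n\bigl(\tau,-\lambda_n^{-1}x_n\bigr)\,d\tau,
\]
which cancels the leading part of $(\Delta\chi_n)\,w_n$ so that the remaining error $(\Delta\chi_n)\bigl(w_n(t,x)-w_n(t,-\lambda_n^{-1}x_n)\bigr)$ gains a factor $\lambda_n^{-1}$ from the fundamental theorem of calculus on the support of $\Delta\chi_n$. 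Without both, the critical error term is unbounded.

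Your attempt to circumvent this by approximating $W$ with $\psi\in C_c^\infty(\mathbb{R}\times(\mathbb{R}^3\setminus\{x_\infty\}))$ and appealing to disjoint supports fails at the discrepancy term. After rescaling, $\Delta\chi_n=O(\lambda_n^2)$ on a set of measure $O(\lambda_n^{-3})$, so $\|\Delta\chi_n\|_{L^a_x}\sim\lambda_n^{2-3/a}$ and $\||\nabla|^{s_c}\Delta\chi_n\|_{L^a_x}\sim\lambda_n^{2+s_c-3/a}$. To keep the latter bounded you need $a\le 3/(2+s_c)$; but then placing $W-\psi$ in the complementary Lebesgue exponent in any admissible dual Strichartz pair $L_t^{q'}L_x^{r'}$ ($r'\in[6/5,2]$) forces $r'\le 3/(2+s_c)$, which is impossible once $s_c>1/2$, and already degenerate ($b=\infty$) at $s_c=1/2$. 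So the "Strichartz distance plus volume shrinking" control you invoke is not available in the range $s_c\in[\tfrac12,\tfrac32)$ treated here; smallness of $W-\psi$ in Strichartz norms simply does not compensate for the concentration of $\Delta\chi_n$ without the pointwise bounds coming from the frequency truncation and the extra $\lambda_n^{-1}$ gain from the $z_n$ subtraction. Relatedly, the paper truncates the construction to $|t|\le\lambda_n^2 T$ and uses a double limit $n\to\infty$ then $T\to\infty$ because the $z_n$ bound grows linearly in $T$; your global-in-time definition of $\tilde v_n$ sidesteps that but does not resolve the underlying obstruction.
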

	\begin{proof}
		Our proof follows the idea of \cite[Theorem 6.1]{KillipVisanZhang2016a}. For the first step, we will construct the global solution to $\dot{H}^{s_c}$-critical NLS in the limiting geometry of $\Omega_n$.
		
		\textbf{Step 1}: Constructing the global solution to  NLS$_{\mathbb{R} ^3}$.
		
		Let $\theta=\frac{1}{100(\alpha +1)}$.  The construction of the global solution on $\R^3$ depends on the choice of time parameter $t_n$. If $t_n\equiv0$, let $w_n$ and $w_\infty$ be the solutions to  NLS$_{\mathbb{R} ^3}$ with initial data $w_n(0)=\phi_{\le\lambda_n^\theta}$ and $w_\infty(0)=\phi$.  
		
		Otherwise,	if $t_n\to\pm\infty$, let $w_n$ be the solutions to NLS$_{\mathbb{R} ^3}$ such that
		\begin{align*}
			\big\|w_n(t)-e^{it\Delta}\phi_{\le\lambda_n^\theta}\big\|_{\dot{H}^{s_c}(\R^3)}\to0,\qtq{as} t\to\pm\infty.
		\end{align*}
		Similarly, we denote $w_\infty$ by the solution  to  NLS$_{\mathbb{R} ^3}$  such that 
		\begin{equation}	\big\|w_\infty(t)-e^{it\Delta}\phi\big\|_{\dot{H}^{s_c}(\R^3)}\to0,\qtq{as}t\to\pm\infty.\label{E11101}
		\end{equation}
		By \cite{Murphy2014} and assumption made in Theorem \ref{T1}, both $w_n(t)$ and $w_\infty(t)$ are global solutions and satisfy
		\begin{equation}
			\|w_n\|_{ L_t^{\frac{5\alpha }{2}}L_x^{\frac{5\alpha }{2}}(\R\times\R^3)}+\|w_\infty\|_{ L_t^{\frac{5\alpha }{2}}L_x^{\frac{5\alpha }{2}}(\R\times\R^3)}\lesssim1.\label{E11102}
		\end{equation}
		Moreover, by the perturbation theory in \cite{Murphy2014}, 
		\begin{align}
			\lim_{n\to\infty}\big\|w_n(t)-w_\infty(t)\big\|_{ L_t^{\frac{5\alpha }{2}}L_x^{\frac{5\alpha }{2}}(\R\times\R^3)}=0.\label{perturb}
		\end{align}
		From the Bernstein inequality, we have
		\begin{align*}
			\|\phi_{\le \lambda_n^\theta}\|_{\dot{H}^s(\R^3)}\lesssim\lambda_n^{\theta(s-s_c)},\qtq{for any }s\geqslant s_c.
		\end{align*}
		The persistence of regularity yields that
		\begin{align*}
			\big\||\nabla|^{s}w_n\big\|_{\dot S^{s_c}(\R\times\R^3)}\lesssim\lambda_n^{\theta s} \qtq{for any}s\geqslant0,
		\end{align*}
		which together with the   Gagliardo-Nirenberg inequality 
		\[
		\|f\|_{L^\infty(\R^3)}\lesssim \|f\|_{\dot{H}^{s_c}(\R^3)}^\frac{1}{2}\|f\|_{\dot{H}^{3-s_c}(\R^3)}^\frac{1}{2} 
		\]
		implies that 
		\begin{align}\label{key-1}
			\big\||\nabla|^{s}w_n\big\|_{L_{t,x}^\infty(\R\times\R^3)}\lesssim\lambda_n^{\theta(s+\frac{3}{2}-s_c)},\quad\text{for all} \quad s\ge0.
		\end{align}
		Finally, using the structure of the NLS$_{\R^3}$, we have
		\begin{align}\label{key-2}
			\|\partial_tw_n\|_{L_{t,x}^\infty(\R\times\R^3)}\lesssim\|\Delta w_n\|_{L_{t,x}^\infty(\R\times\R^3)}+\|w_n\|_{L_{t,x}^\infty(\R\times\R^3)}^{\alpha+1}\lesssim\lambda_n^{\theta(\frac{7}{2}-s_c)}.
		\end{align}
		
		\textbf{Step 2}. Constructing the approximate solution to (\ref{NLS}).
		
		As discussed in Case 2 of Proposition \ref{inverse-strichartz}, we let $\Omega_n=\lambda_n^{-1}(\Omega-\{x_n\})$. One may want to embed $w_n(t)$ to $\Omega_n$ by taking $\tilde{v}_n(t)=\chi_nw_n(t)$ directly. However, this is not a approximation of (\ref{NLS}). Instead, we take
		\begin{align*}
			z_n(t):=i\int_{0}^{t}e^{i(t-\tau)\Delta_{\Omega_{n}}}(\Delta_{\Omega_{n}}\chi_n)w_n(\tau,-\lambda_n^{-1}x_n)d\tau.
		\end{align*}
		This can allow us to  control the reflected waves near the boundary. Moreover, we have the following properties.
		\begin{lemma}\label{zn}
			For all $T>0$, we have
			\begin{gather}\label{embed-lem-1}
				\limsup_{n\to\infty}\|(-\Delta _\Omega)^{\frac{s_c}{2}}z_n\|_{L_{t}^{\frac{5\alpha }{2} } L_{x}^{\frac{30\alpha }{15\alpha -8}}([-T,T]\times\Omega_{n})}=0,\\
				\big\|(-\Delta_{\Omega_{n}})^\frac{s}{2}z_n\big\|_{L_{t}^\infty L_{x}^2([-T,T]\times\Omega_{n})}\lesssim\lambda_n^{s-\frac{3}{2}+\theta(\frac{7}{2}-s_c)}(T+\lambda_n^{-2\theta})\qtq{for all}0\le s<\frac{3}{2}.\label{embed-lem-2}
			\end{gather}
		\end{lemma}
		\begin{proof}
			Integrating by parts, we write
			\begin{align*}
				z_n(t)&=-\int_{0}^{t}\big(e^{it\Delta_{\Omega_{n}}}\partial_\tau e^{-i\tau\Delta_{\Omega_{n}}}\chi_n\big)w_n(\tau,-\lambda_n^{-1}x_n)d\tau\\
				&=-\chi_nw_n(t,-\lambda_n^{-1}x_n)+e^{it\Delta_{\Omega_{n}}}\big(\chi_nw_n(0,-\lambda_n^{-1}x_n)\big)\\
				&\hspace{3ex}+\int_{0}^{t}\big(e^{i(t-\tau)\Delta_{\Omega_{n}}}\chi_n\big)\partial_\tau w_n(\tau,-\lambda_n^{-1}x_n)d\tau.
			\end{align*}
			By the Strichartz estimate, the equivalence of Sobolev norms, \eqref{key-1} and \eqref{key-2}, we have
			\begin{align*}
				&\big\|(-\Delta_{\Omega_{n}})^\frac{s}{2}z_n\big\|_{L_t^\infty L_x^2([-T,T]\times\Omega_{n})}\notag\\
				&\lesssim\big\|(-\Delta)^\frac{s}{2}\chi_nw_n(t,-\lambda_n^{-1}x_n)\big\|_{L_t^\infty L_x^2([-T,T]\times\Omega_{n})} +\big\|(-\Delta_{\Omega_{n}})^\frac{s}{2}\chi_nw_n(0,-\lambda_n^{-1}x_n)\big\|_{L^2([-T,T]\times\Omega_{n})}\\
				&\hspace{3ex}+\big\|(-\Delta)^\frac{s}{2}\chi_n\partial_tw_n(t,-\lambda_n^{-1}x_n)\big\|_{L_t^1L_x^2([-T,T]\times\Omega_{n})}\\
				&\lesssim\lambda_n^{s-\frac{3}{2}+\theta (\frac{3}{2}-s_c)}+T\lambda_n^{s-\frac32+\theta( \frac{7}{2}-s_c)}.
			\end{align*}
			This proves \eqref{embed-lem-2}.  By a similar argument, we can prove (\ref{embed-lem-1}). This completes the proof of   lemma \ref{zn}.
		\end{proof}
		We are now prepared to construct the approximate solution  
		\begin{align*}
			\tilde{v}_n(t,x) := 
			\begin{cases}
				\lambda_n^{s_c-\frac{3}{2}}(\chi_n w_n + z_n)(\lambda_n^{-2}t, \lambda_n^{-1}(x-x_n)), & |t| \leqslant \lambda_n^2 T, \\
				e^{i(t-\lambda_n^2T)\Delta_{\Omega}} \tilde{v}_n(\lambda_n^2T,x), & t > \lambda_n^2 T, \\
				e^{i(t+\lambda_n^2T)\Delta_{\Omega}} \tilde{v}_n(-\lambda_n^2T,x), & t < -\lambda_n^2 T,
			\end{cases}
		\end{align*}
		where $T > 0$ is a parameter to be determined later.
		
		We first observe that $\tilde{v}_n$ has a finite scattering norm. Indeed, this follows from Lemma \ref{zn}, the Strichartz estimate, and a change of variables:
		\begin{align}
			\|\tilde{v}_n\|_{L_{t,x}^{\frac{5\alpha }{2}}(\R\times\Omega)}&\lesssim\big\|\chi_nw_n+z_n\big\|_{L_{t,x}^{\frac{5\alpha }{2}}([-T,T]\times \Omega)}+\|(\chi_nw_n+z_n)(\pm T)\|_{\dot{H}_D^{s_c}(\Omega_{n})}\notag\\
			&\lesssim\|w_n\|_{L_{t,x}^{\frac{5\alpha }{2}}(\R\times\R^3)}+\|z_n\|_{L_{t,x}^{\frac{5\alpha }{2}}([-T,T]\times \Omega)}+\|\chi_n\|_{L_x^\infty(\Omega_{n})}\big\||\nabla|^{s_c}w_n\big\|_{L_t^\infty L_x^2(\R\times\R^3)}\notag\\
			&\hspace{3ex}	+\big\||\nabla|^{s_c}\chi_n\big\|_{L^{\frac{3}{s_c}}}\|w_n\|_{L_t^\infty L_x^{\frac{6}{3-2s_c}}(\R\times\R^3)}+\big\|(-\Delta_{\Omega_{n}})^\frac{s_c}{2}z_n\big\|_{L_t^\infty L_x^2([-T,T]\times \Omega)}\notag\\
			&\lesssim 1+ \|z_n\|_{L_{t,x}^{\frac{5\alpha }{2}}([-T,T]\times \Omega)}++\big\|(-\Delta_{\Omega_{n}})^\frac{s_c}{2}z_n\big\|_{L_t^\infty L_x^2([-T,T]\times \Omega)}<+\infty . \label{step-2}
		\end{align}
		
		\textbf{Step 3.}   {Asymptotic agreement of the initial data.}
		
		In this step, we aim to show that
		\begin{align}\label{step-3}
			\lim_{T\to\infty} \limsup_{n\to\infty} \big\|e^{it\Delta_{\Omega}}\big(\tilde{v}_n(\lambda_n^2t_n) - \phi_n\big)\big\|_{L_t^{\frac{5\alpha}{2}}\dot{H}_D^{s_c,\frac{30\alpha}{15\alpha-8}}(\mathbb{R}\times\Omega)} = 0.
		\end{align}
		
		We first consider the case when $t_n \equiv 0$. Using H\"older's inequality, the Strichartz estimate, and a change of variables, we obtain
		\begin{align*}
			&\big\|e^{it\Delta_{\Omega}}\big(\tilde{v}_n(0) - \phi_n\big)\big\|_{L_t^{\frac{5\alpha}{2}}\dot{H}_D^{s_c,\frac{30\alpha}{15\alpha-8}}(\mathbb{R}\times\Omega)}   \lesssim \|\tilde{v}_n(0) - \phi_n\|_{\dot{H}_D^{s_c}(\Omega)} 
			\lesssim \|\chi_n \phi_{\le \lambda_n^\theta} - \chi_n \phi\|_{\dot{H}_D^{s_c}(\Omega)} \\
			&\quad \lesssim \big\||\nabla|^{s_c}\chi_n\big\|_{L_x^{\frac{3}{s_c}}(\Omega)} \|\phi_{\le \lambda_n^\theta} - \phi\|_{L_x^{\frac{6}{3-2s_c}}(\Omega)} + \|\chi_n\|_{L_x^\infty(\Omega)} \big\||\nabla|^{s_c}(\phi_{\le \lambda_n^\theta} - \phi)\big\|_{L_x^2(\Omega)} \to 0, \quad \text{as } n \to \infty.
		\end{align*}
		
		Next, we address the case when $|t_n| \to \infty$. By symmetry, it suffices to consider $t_n \to +\infty$, as the case $t_n \to -\infty$ can be treated analogously. Since $T > 0$ is fixed, for sufficiently large $n$, we have $t_n > T$, which implies
		\begin{align*}
			\tilde{v}_n(\lambda_n^2t_n, x) &= e^{i(t_n - T)\lambda_n^2\Delta_{\Omega}} \tilde{v}_n(\lambda_n^2T, x) \\
			&= e^{i(t_n - T)\lambda_n^2\Delta_{\Omega}} \left[\lambda_n^{s_c - \frac{3}{2}} (\chi_n w_n + z_n)\big(T, \frac{x - x_n}{\lambda_n}\big)\right].
		\end{align*}
		Applying a change of variables, H\"older's inequality, and the Strichartz estimate, we obtain
		\begin{align*}
			& \big\|(-\Delta_\Omega)^\frac{s_c}{2}e^{it\Delta_{\Omega}}\left[\tilde{v}_n(\lambda_n^2t_n)-\phi_n\right]\big\|_{L_{t}^{\frac{5\alpha}{2}}L_x^{\frac{30\alpha}{15\alpha-8}}(\R\times\Omega)}\\
			&= \big\|(-\Delta_{\Omega_n})^\frac{s_c}{2}\left[e^{i(t-T)\Delta_{\Omega_{n}}}(\chi_nw_n+z_n)(T)-e^{it\Delta_{\Omega_{n}}}(\chi_n\phi)\right]\big\|_{L_{t}^{\frac{5\alpha}{2}}L_x^{\frac{30\alpha}{15\alpha-8}}(\R\times\Omega_n)}\\
			&\lesssim\big\|(-\Delta_{\Omega_n})^\frac{s_c}2z_n(T)\big\|_{L^2_x}+\big\|(-\Delta_{\Omega_{n}})^\frac{s_c}2\big(\chi_n(w_n-w_\infty)(T)\big)\big\|_{L_x^2}\\
			&\hspace{2ex}+\big\|(-\Delta_{\Omega_{n}})^\frac{s_c}2[e^{i(t-T)\Delta_{\Omega_{n}}}(\chi_nw_\infty)(T)-e^{it\Delta_{\Omega_{n}}}(\chi_n\phi)]\big\|_{L_{t}^{\frac{5\alpha}{2}}L_x^{\frac{30\alpha}{15\alpha-8}}(\R\times\Omega_n)}.
		\end{align*}
		Using \eqref{perturb} and \eqref{embed-lem-2}, we have
		\begin{align*}
			&\big\|(-\Delta_{\Omega_n})^\frac{s_c}2z_n(T)\big\|_{L_x^2}+\big\|(-\Delta_{\Omega_{n}})^\frac{s_c}2\big(\chi_n(w_n-w_\infty)(T)\big)\big\|_{L_x^2}\\	&\lesssim\lambda_n^{s_c-\frac{3}{2}+\theta(\frac{7}{2}-s_c)}(T+\lambda_n^{-2\theta})+\big\|(-\Delta_{\Omega_{n}})^\frac{s_c}2)\chi_n\big\|_{L_x^\frac{3}{s_c}}\|w_n-w_\infty\|_{L_t^\infty L_x^{\frac{6}{3-2s_c}}}\\
			&\hspace{3ex}+\|\chi_n\|_{L^\infty}\big\|(-\Delta_{\Omega_{n}})^\frac{s_c}2(w_n-w_\infty)\|_{L_t^\infty L_x^2}\to0\qtq{as}n\to\infty.
		\end{align*}
		Thus, we are left to verify that 
		\begin{align*}
			\lim_{T\to\infty}\limsup_{n\to\infty}\big\|(-\Delta_{\Omega_{n}})^{\frac{s_c}2}\left[e^{i(t-T)\Delta_{\Omega_{n}}}(\chi_nw_\infty)(T)-e^{it\Delta_{\Omega_{n}}}(\chi_n\phi)\right]\big\|_{L_t^\frac{5\alpha}{2}L_x^{\frac{30\alpha}{15\alpha-8}}(\R\times\Omega_{n})}=0.
		\end{align*}
		By the  triangle inequality and the Strichartz estimate,  
		\begin{align*}
			&\hspace{3ex}	\big\|(-\Delta_{\Omega_{n}})^\frac{s_c}2e^{i(t-T)\Delta_{\Omega_{n}}}\big(\chi_nw_\infty(T)\big)-e^{it\Delta_{\Omega_{n}}}(\chi_n\phi)\big\|_{L_t^\frac{5\alpha}{2}L_x^{\frac{30\alpha}{15\alpha-8}}(\R\times \Omega_n)}\\
			&\lesssim\big\|(-\Delta_{\Omega_{n}})^\frac{s_c}2\big(\chi_nw_\infty(T)\big)-\chi_n(-\Delta)^\frac{s_c}{2}w_\infty(T)\big\|_{L_x^2}\\
			&\hspace{3ex}+\big\|[e^{i(t-T)\Delta_{\Omega_{n}}}-e^{i(t-T)\Delta}][\chi_n(-\Delta)^\frac{s_c}2w_\infty(T)]\big\|_{L_t^\frac{5\alpha}{2}L_x^{\frac{30\alpha}{15\alpha-8}}(\R\times\Omega_{n})}\\
			&\hspace{3ex}+\big\|e^{-iT\Delta}[\chi_n(-\Delta)^\frac{s_c}{2}w_\infty(T)]-\chi_n(-\Delta)^\frac{s_c}{2}\phi\big\|_{L_x^2}\\
			&\hspace{3ex}+\big\| [e^{it\Delta _{\Omega_n}}-e^{it\Delta }][\chi_n(-\Delta)^\frac{s_c}{2}\phi]\big\|_{L_t^{\frac{5\alpha}{2}}L_x^{\frac{30\alpha}{15\alpha-8}}(\R\times\Omega_{n})}\\
			&\hspace{3ex}+\big\|(-\Delta_{\Omega_{n}})^\frac{s_c}2(\chi_n\phi)-\chi_n(-\Delta)^\frac{s_c}{2}\phi\big\|_{L_x^2}\\
			&\stackrel{\triangle}{=}I_1+I_2+I_3+I_4+I_5.
		\end{align*}
		The fact that $I_2$ and $I_4$ converge to zero as $n \to \infty$ follows directly from Theorem \ref{convergence-flow} and the density of $C_c^\infty$ functions supported in $\mathbb{R}^3$ minus a point within $L^2_x$. 
		
		Next, we estimate $I_1$, $I_3$, and $I_5$. Using the triangle inequality, Proposition \ref{P1}, and the monotone convergence theorem, for any $f \in \dot{H}^{s_c}(\mathbb{R}^3)$, we obtain
		\begin{align}
			&\hspace{2ex} \big\|\big(-\Delta_{\Omega_{n}}\big)^\frac{s_c}{2}(\chi_n f) - \chi_n (-\Delta)^\frac{s_c}{2} f \big\|_{L^2_x} \notag \\
			&\lesssim \big\|(1 - \chi_n)(-\Delta)^\frac{s_c}{2}f\big\|_{L^2_x} + \big\|(-\Delta)^\frac{s_c}{2}\big((1 - \chi_n)f\big)\big\|_{L^2_x} \notag \\
			&\hspace{3ex} + \big\|(-\Delta_{\Omega_{n}})^\frac{s_c}{2}(\chi_n f) - (-\Delta)^\frac{s_c}{2}(\chi_n f)\big\|_{L^2_x} \to 0 \quad \text{as } n \to \infty. \notag
		\end{align}
		This completes the proof for $I_5$, and thus for $I_1$ as well.
		
		Finally, for the term $I_3$, we apply (\ref{E11101}) along with the monotone convergence theorem to find
		\begin{align*}
			I_3 &\lesssim \big\|(1 - \chi_n)(-\Delta)^\frac{s_c}{2}w_\infty(T)\big\|_{L^2_x} + \big\|(1 - \chi_n)(-\Delta)^\frac{s_c}{2}\big\|_{L^2_x} \\
			&\hspace{3ex} + \big\|e^{-iT\Delta}(-\Delta)^\frac{s_c}{2}w_\infty(T) - (-\Delta)^\frac{s_c}{2}\phi\big\|_{L^2_x} \to 0,
		\end{align*}
		first taking $n \to \infty$, and then $T \to \infty$.
		
		\textbf{Step 4}. 	We demonstrate that $\tilde{v}_n$ serves as an approximate solution to \eqref{NLS} in the sense that 
		\begin{align*}
			i\partial_t\tilde{v}_n + \Delta_{\Omega}\tilde{v}_n = |\tilde{v}_n|^{\alpha}\tilde{v}_n + e_n,
		\end{align*}
		where $e_n$ satisfies the smallness condition
		\begin{equation}
			\lim_{T \to \infty} \limsup_{n \to \infty} \big\|e_n\big\|_{\dot{N}^{s_c}(\mathbb{R} \times \Omega)} = 0. \label{E1110x1}
		\end{equation}
		
		First, consider the case of a large time scale $t > \lambda_n^2 T$. By symmetry, the case $t < -\lambda_n^2 T$ can be handled similarly. Using the equivalence of Sobolev spaces, Strichartz estimates, and H\"older's inequality, we obtain
		\begin{align*}
			&\big\|(-\Delta _\Omega)^{\frac{s_c}{2}}e_n\big\|_{ L_t^{\frac{5\alpha }{2}}L_x^{\frac{30\alpha }{27\alpha -8}}(\{t>\lambda_n^2 T\}\times\Omega)}\lesssim\big\|(-\Delta_{\Omega})^\frac{s_c}{2}(|\tilde{v}_n|^{\alpha}\tilde{v}_n)\big\|_{ L_t^{\frac{5\alpha }{2}}L_x^{\frac{30\alpha }{27\alpha -8}}(\{t>\lambda_n^2 T\}\times\Omega)}\\
			&\lesssim\big\|(-\Delta_{\Omega})^\frac{s_c}{2}\tilde{v}_n\big\|_{L_t^{\frac{5\alpha }{2}}L_x^{ \frac{30\alpha }{15\alpha -8}}(\{t>\lambda_n^2T\}\times\Omega)}\|\tilde{v}_n\|_{L_{t,x}^{\frac{5\alpha}{2}}(\{t>\lambda_n^2T\}\times\Omega)}^\alpha\\
			&\lesssim\big\|(-\Delta_{\Omega})^\frac{s_c}{2}[\chi_nw_n(T)+z_n(T)]\big\|_{L_x^2}\|\tilde{v}_n\|_{L_{t,x}^{\frac{5\alpha}{2}}(\{t>\lambda_n^2T\}\times\Omega)}^\alpha\\
			&\lesssim\big(1+\lambda_n^{s_c-\frac{3}{2}+\theta(\frac{7}{2}-s_c)}(T+\lambda_n^{-2\theta})\big)\|\tilde{v}_n\|_{L_{t,x}^{\frac{5\alpha}{2}}(\{t>\lambda_n^2T\}\times\Omega)}^\alpha.
		\end{align*} 
		Therefore, to establish (\ref{E1110x1}),  it suffices to prove that
		\begin{align}\label{convergence-6.1}
			\lim_{T\to\infty}\limsup_{n\to\infty}\big\|e^{i(t-\lambda_n^2T)\Delta_{\Omega}}\tilde{v}_n(\lambda_n^2T)\big\|_{L_{t,x}^\frac{5\alpha}{2}(\{t>\lambda_n^2T\}\times\Omega)}=0.
		\end{align}
		We now prove (\ref{convergence-6.1}).  By  the spacetime bounds (\ref{E11102}), the global solution   $w_\infty $ scatters. Let  $\phi_+$ denote the forward asymptotic state, that is,
		\begin{align}\label{scattering}
			\big\|w_\infty-e^{it\Delta}\phi_+\big\|_{\dot{H}^{s_c}(\R^3)}\to0,\qtq{as}t\to\pm\infty.
		\end{align}
		It then follows from Strichartz estimate, H\"older's inequality and change of variables that 
		\begin{align*}
			& \big\|e^{i(t-\lambda_n^2T)\Delta_{\Omega}}\tilde{v}_n(\lambda_n^2T)\big\|_{L_{t,x}^\frac{5\alpha}{2}(\{t>\lambda_n^2T\}\times\Omega)} \lesssim\big\|e^{it\Delta_{\Omega_n}}(\chi_nw_n(T)+z_n(T))\big\|_{L_{t,x}^\frac{5\alpha}{2}([0,\infty)\times\Omega_n)}\\
			&\lesssim \big\|(-\Delta_{\Omega_n})^{\frac{s_c}2}z_n(T)\big\|_{L_x^2}+\big\|(-\Delta_{\Omega_n})^{\frac{s_c}2}[\chi_n(w_n(T)-w_\infty(T))]\big\|_{L_x^2}\\
			&\quad+\big\|(-\Delta_{\Omega_n})^{\frac{s_c}2}[\chi_n(w_{\infty}(T)-e^{iT\Delta}w_+)]\big\|_{L_x^2}+\big\|e^{it\Delta_{\Omega_n}}[\chi_ne^{iT\Delta}w_+]\big\|_{L_{t,x}^{\frac{5\alpha}{2}}([0,\infty)\times\Omega_n)}\\
			&\lesssim \lambda_n^{s_c-\frac{3}2+\theta(\frac72-s_c)}(T+\lambda_n^{-2\theta})+\big\|w_n(T)-w_\infty(T)\big\|_{\dot H^{s_c}}+\big\|w_\infty(T)-e^{iT\Delta}w_+\big\|_{\dot H^{s_c}}\\
			&\quad+\big\|[e^{it\Delta_{\Omega_n}}-e^{it\Delta}][\chi_ne^{iT\Delta}w_+]\big\|_{L_{t,x}^{\frac{5\alpha}{2}}([0,\infty)\times\R^3)} +\big\|(-\Delta)^{\frac{s_c}2} [(1-\chi_n)e^{iT\Delta}w_+]\big\|_{L_x^2}\\
			&\quad+\big\|e^{it\Delta}w_+\big\|_{L_{t,x}^{\frac{5\alpha}{2}}((T,\infty)\times\R^3)},
		\end{align*}
		which converges to zero by first letting  $n\rightarrow\infty $
		and then $T\to\infty$  by (\ref{embed-lem-2}),  \eqref{scattering},  Theorem \ref{convergence-flow},  and the monotone convergence theorem. 
		
		Now, we consider the case that $|t_n|\leq \lambda_n^2T$. For these values of time, by the direct calculus we have
		\begin{align*}
			e_n(t,x)&=[(i\partial_t+\Delta_\Omega )\tilde v_n- |\tilde v_n|^\alpha\tilde v_n](t,x)\\
			&=-\lambda_n^{s_c-\frac72}[\Delta\chi_n](\lambda_n^{-1}(x-x_n))w_n(\lambda_n^{-2}t,-\lambda_n^{-1}x_n)+\lambda_n^{s_c-\frac72}[\Delta\chi_n w_n](\lambda_n^{-2}t,\lambda_n^{-1}(x-x_n))\\
			&\quad+2\lambda_n^{s_c-\frac72}(\nabla\chi_n\cdot\nabla w_n)(\lambda_n^{-2}t, \lambda_n^{-1}(x-x_n))\\
			&\quad+\lambda_n^{s_c-\frac72}[\chi_n|w_n|^\alpha w_n-|\chi_nw_n+z_n|^\alpha(\chi_nw_n+z_n)](\lambda_n^{-2}t,\lambda_n^{-1}(x-x_n)).
		\end{align*}
		By a change of variables and the equivalence of Sobolev norms Theorem \ref{TEquivalence}, we obtain
		\begin{align*}
			\big\|(-\Delta_{\Omega})^\frac{s_c}2e_n\big\|_{  \dot N^{s_c}(\R\times\Omega)}\notag &\lesssim\big\|(-\Delta)^\frac{s_c}2[\Delta\chi_n(w_n(t,x)-w_n(t,\lambda_n^{-1}x_n))]\big\|_{L_t^{2}L_x^{\frac{6}{5}}([-T,T]\times\Omega_{n})}\\
			&\hspace{3ex}+\big\|(-\Delta)^\frac{s_c}{2}\big(\nabla\chi_n\nabla w_n\big)\big\|_{L_t^{2}L_x^{\frac{6}{5}}([-T,T]\times\Omega_{n})}\\
			&\hspace{3ex}+\big\|(-\Delta)^\frac{s_c}{2}\big[(\chi_n-\chi_n^{\alpha+1})|w_n|^{\alpha}w_n\|_{L_t^{2}L_x^{\frac{6}{5}}([-T,T]\times\Omega_{n})}\\
			&\hspace{3ex}+ \|(-\Delta )^{s_c} [|\chi_n w_n+z_n|^{\alpha }(\chi_n w_n z_n)-|\chi_n w_n|^{\alpha }\chi_n w_n]\|_{L_t^{\frac{5\alpha }{2}}L_x^{\frac{30\alpha }{15\alpha -8}}([-T,T]\times \Omega_n)}  \notag\\
			&\stackrel{\triangle}{=}J_1+J_2+J_3+J_4.
		\end{align*}
		Using H\"older, the fundamental theorem of calculus, and 
		\eqref{key-1}, we estimate
		\begin{align*}
			J_1&\lesssim T^{\frac{1}{2}}\big\|(-\Delta)^\frac{s_c}{2}(w_n(t,x)-w_n(t,-\lambda_n^{-1}x_n))\big\|_{L_{t,x}^\infty}\|\Delta \chi_n\|_{L^\frac{6}{5}}\\
			&\hspace{3ex}+T^\frac{1}{2}\|w_n-w_n(t,-\lambda_n^{-1}x_n)\|_{L_{t,x}^\infty(\mathbb{R} \times \text{supp}\Delta \chi_n)}\big\|(-\Delta)^{\frac{s_c}{2}}(\Delta\chi_n)\big\|_{L_x^\frac{6}{5}}\\
			&\lesssim  T^{\frac{1}{2}}\lambda_n^{-\frac{1}{2}+\frac{3}{2}\theta }+T^{\frac{1}{2}}\lambda_n^{-1+\theta (\frac{5}{2}-s_c)}\lambda_n^{s_c-\frac{1}{2}}\rightarrow0\quad\text{as}\quad n\rightarrow\infty .
		\end{align*}
		By a similar argument, we can show that  $J_2\rightarrow0$ as  $n\rightarrow\infty $ and we omit the details.

		Next,  we turn our attention to $J_3$.  
		By Lemma \ref{LFractional product rule},  H\"older's inequality and (\ref{key-1}), we have
		\begin{align*}
			J_3&\lesssim\big\||\nabla|^{s_c}\chi_n\big\|_{L_x^{\frac{6}{5}}}\|w_n\|_{L_t^\infty L_x^{\infty }}^{\alpha+1} 
			+\big\|\chi_n-\chi_n^{\alpha+1}\big\|_{L_x^{\frac{6}{5}}}\|w_n\|_{L_t^\infty L_x^{\infty}}^\alpha\big\||\nabla|^{s_c}w_n\big\|_{L_t^\infty L_x^{\infty}}\\
			&\lesssim\lambda_n^ {s_c-\frac{5}{2}+\theta (\alpha +1)(\frac{3}{2}-s_c)}+\lambda_n^{-\frac{5}{2}+\theta \alpha (\frac{3}{2}-s_c)+\frac{3}{2}\theta }\rightarrow0\quad\text{as} \quad n\rightarrow\infty .\notag
		\end{align*}
		
		Finally, we consider  $J_4$.  By Lemma \ref{Lnonlinearestimate}, 
		\begin{align}
			J_4&\lesssim  \left(\|\chi_n w_n\|^{\alpha -1}_{L_t^{\frac{5\alpha }{2}}L_x^{\frac{5\alpha }{2}}([-T,T]\times \Omega_n)}+ \|z_n\|_{L_t^{\frac{5\alpha }{2}}L_x^{\frac{5\alpha }{2}}([-T,T]\times \Omega_n)}^{\alpha -1} \right)\notag\\
			&\qquad\times \left(\||\nabla |^{s_c}(\chi_n w_n)\|_{L_t^{\frac{5\alpha }{2}}L_x^{\frac{30\alpha }{15\alpha -8}}([-T,T]\times \Omega_n) }+ \||\nabla |^{s_c}z_n\|_{L_t^{\frac{5\alpha }{2}}L_x^{\frac{30\alpha }{15\alpha -8}}([-T,T]\times \Omega_n) }\right)^2.\label{E1110x2}
		\end{align}
		Using the fractional product rule and (\ref{E11102}), we have 
		\begin{align}
			&\||\nabla |^{s_c}(\chi_n w_n)\|_{L_t^{\frac{5\alpha }{2}}L_x^{\frac{30\alpha }{15\alpha -8}}([-T,T]\times \Omega_n) } \lesssim \||\nabla |^{s_c}\chi_n\|_{L_x^{\frac{30\alpha }{15\alpha -8}}} \|w_n\|_{L^\infty _tL^\infty _x}+ \|\chi_n\|_{L_x^{\frac{30\alpha }{15\alpha -8}}} \||\nabla |^{s_c}w_n\|    _{L^\infty _tL^\infty _x}\notag\\
			&\lesssim  T^{\frac{2}{5\alpha }}\lambda_n^{s_c-\frac{15\alpha -8}{30\alpha }\times 3+\theta (\frac{3}{2}-s_c)}+T^{\frac{2}{5\alpha }}\lambda_n^{-\frac{15\alpha -8}{30\alpha }\times 3+\frac{3}{2}\theta }= T^{\frac{2}{5\alpha }}\lambda_n^{\frac{3(2s_c-3)}{10}+\theta (\frac{3}{2}-s_c)}+T^{\frac{2}{5\alpha }}\lambda_n^{-\frac{3}{2}+\frac{4}{5\alpha }+\frac{3}{2}\theta },\notag
		\end{align}
		which converges to  $0$ as  $n\rightarrow\infty $.   This together with (\ref{E11102}), Lemma \ref{zn} and (\ref{E1110x2}) gives  $J_4\rightarrow0$ as  $n\rightarrow\infty $. This completes the proof of (\ref{E1110x1}).

		\textbf{Step 5.}  Constructing  $v_n$ and approximation by   $C_c^{\infty }$ functions. 
		
		By (\ref{step-2}), \eqref{step-3}, and applying the stability Theorem \ref{TStability}, we conclude that for sufficiently large $n$ and $T$, there exists a global solution $v_n$ to \eqref{NLS} with initial data $v_n(0) = \phi_n$. Moreover, this solution has a finite scattering norm and satisfies 
		\begin{align}\label{approximate-2}
			\lim_{T \to \infty} \limsup_{n \to \infty} \big\|v_n(t - \lambda_n^2 t_n) - \tilde{v}_n(t)\big\|_{L_t^{\frac{5\alpha}{2}} L_x^{\frac{5\alpha}{2}}(\mathbb{R} \times \Omega)} = 0.
		\end{align}
		Thus, to prove Theorem \ref{Tembbedding1}, it suffices to establish the approximation \eqref{approximate-1}.
		
		This result follows from a standard argument; see, for example, \cite{KillipVisan2013,KillipVisanZhang2016a}. Here, we provide only a brief outline of the proof. First, by a density argument, we select $\psi_\varepsilon \in C_0^\infty(\mathbb{R} \times \mathbb{R}^3)$ such that
		\begin{equation}
			\|(-\Delta_\Omega)^{\frac{s_c}{2}}(w_\infty - \psi_\varepsilon)\|_{L_t^{\frac{5\alpha}{2}} L_x^{\frac{30\alpha}{15\alpha - 8}}(\mathbb{R} \times \mathbb{R}^3)} < \varepsilon. \label{E1110w1}
		\end{equation}
		Then, employing a change of variables and the triangle inequality, we derive
		\begin{align}
			&\hspace{3ex} \big\|(-\Delta _\Omega)^{\frac{s_c}{2}}[v_n(t - \lambda_n^2 t_n, x + x_n) - \lambda_n^{s_c - \frac{3}{2}} \psi_\varepsilon(\lambda_n^{-2}t, \lambda_n^{-1}x)]\big\|_{L_t^{\frac{5\alpha }{2}}L_x^{\frac{30\alpha }{15\alpha -8}}(\mathbb{R} \times \mathbb{R}^3)} \notag\\
			&\lesssim \big\|(-\Delta _\Omega)^{\frac{s_c}{2}}(w_\infty - \psi_\varepsilon)\big\|_{\dot{X}^{s_c}(\mathbb{R} \times \mathbb{R}^3)} + \big\|v_n(t - \lambda_n^2 t_n) - \tilde{v}_n(t)\big\|_{L_t^{\frac{5\alpha }{2}}L_x^{\frac{30\alpha }{15\alpha -8}}(\mathbb{R} \times \mathbb{R}^3)}  \label{E11132}\\
			&\hspace{3ex} + \big\|(-\Delta _\Omega)^{\frac{s_c}{2}}[\tilde{v}_n(t, x) - \lambda_n^{s_c - \frac{3}{2}} w_\infty(\lambda_n^{-2}t, \lambda_n^{-1}(x - x_n))]\big\|_{L_t^{\frac{5\alpha }{2}}L_x^{\frac{30\alpha }{15\alpha -8}}(\mathbb{R} \times \mathbb{R}^3)}. \label{E11133}
		\end{align}
		Clearly, by \eqref{approximate-2} and (\ref{E1110w1}), we have $(\ref{E11132}) \lesssim \varepsilon$. For (\ref{E11133}), note that by (\ref{perturb}), for sufficiently large $n$, $w_n$ approximates $w_\infty$ and $\chi_n(x) \rightarrow 1$. As $\widetilde{v}_n$ is constructed through $w_n$, $\chi_n$, and $z_n$,, we can use Lemma \ref{zn}, the triangle inequality, the Strichartz estimate, and Theorem \ref{convergence-flow} to show that for sufficiently large $n$, (\ref{E11133}) is also  small, which yields (\ref{approximate-1}).
	\end{proof}
	Next, we concerns the scenario when the rescaled obstacles $\Omega_n^c$
	(where $\Omega_n = \lambda_n^{- 1}  \left( \Omega - \left\{ x_n \right\} \right)$) are retreating to infinity, which corresponds to Case 3 of Theorem \ref{linear-profile}.
	\begin{theorem}[Embedding of nonlinear profiles for retreating obstacles]\label{Tembedding2}
		Let $\{t_n\}\subset\R$ be such that $t_n\equiv0$ or $|t_n|\to+\infty$. Let $\{x_n\}\subset\Omega$ and $\{\lambda_n\}\subset2^{\Bbb Z}$ satisfy that $\frac{d(x_n)}{\lambda_n}\to\infty$. Suppose that $\phi\in\dot{H}^{s_c}(\R^3)$ and  
		\begin{align*}
			\phi_n(x)=\lambda_n^{s_c-\frac{3}{2}}e^{i\lambda_n^2t_n\DeltaO}\left[(\chi_n\phi)\left(\frac{x-x_n}{\lambda_n}\right)\right]
		\end{align*} 
		with $\cn(x)=1-\Theta(\lambda_n|x|/d(x_n))$. Then for sufficiently large $n$, there exists a global solution $v_n$ to $\eqref{NLS}$ with initial data $v_n(0)=\pn$, which satisfies
		\begin{equation}
			\|v_n\|_{L_{t,x}^{\frac{5\alpha}{2}}(\R\times\Omega)}\lesssim_{\|\phi\|_{\Hsc}}1.\label{E11145}
		\end{equation}
		Furthermore, for every $\varepsilon>0$, there exist $N_\varepsilon>0$ and $\psie\in C_0^\infty(\R\times\R^3)$ such that for $n\geq N_\varepsilon$, we get
		\begin{align}\label{Embed-2}
			\norm (-\Delta _\Omega)^{\frac{s_c}{2}}[v_n(t-\lamn^2t_n,x+x_n)-\lamn^{s_c-\frac{3}{2}}\psie(\lamn^{-2}t,\lamn^{-1}x)]\norm_{ L_t^{\frac{5\alpha }{2}}L_x^{\frac{30\alpha }{15\alpha -8}}(\R\times\R^3)}<\varepsilon.
		\end{align}
	\end{theorem}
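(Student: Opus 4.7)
The plan is to follow the strategy of Theorem \ref{Tembbedding1}, with a key structural simplification: in Case~3 the obstacle recedes to infinity in the rescaled frame, so no boundary-correction term analogous to $z_n$ is required. Indeed, $\chi_n((x-x_n)/\lambda_n)$ is supported in the set $\{|x-x_n|\le d(x_n)/2\}$, which lies entirely inside $\Omega$; hence $\Delta_\Omega$ acts on $\chi_n w_n$ (after pull-back) just as $\Delta$ does on $\R^3$, and reflected waves from $\partial\Omega$ play no role on the relevant time scales.

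First I would construct the underlying $\R^3$-solution exactly as in Step~1 of Theorem~\ref{Tembbedding1}. Let $\theta=\frac{1}{100(\alpha+1)}$. Define $w_\infty$ via $w_\infty(0)=\phi$ when $t_n\equiv 0$, or via the wave operator when $|t_n|\to\infty$; by Conjecture~\ref{CNLS0} (assumed in Theorem~\ref{T1}), $w_\infty$ is global with $\|w_\infty\|_{L_{t,x}^{5\alpha/2}(\R\times\R^3)}\lesssim 1$. Define $w_n$ analogously from the truncated datum $\phi_{\le\lambda_n^\theta}$; the perturbation theory of \cite{Murphy2014} gives $w_n\to w_\infty$ in the scattering norm and the persistence-of-regularity bounds $\||\nabla|^s w_n\|_{L_{t,x}^\infty}\lesssim\lambda_n^{\theta(s+3/2-s_c)}$, $\|\partial_t w_n\|_{L_{t,x}^\infty}\lesssim\lambda_n^{\theta(7/2-s_c)}$, mirroring \eqref{key-1}--\eqref{key-2}. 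Set
\[
\widetilde{v}_n(t,x):=\lambda_n^{s_c-3/2}\,(\chi_n w_n)\bigl(\lambda_n^{-2}t,\,\lambda_n^{-1}(x-x_n)\bigr)\quad\text{for }|t|\le\lambda_n^2 T,
\]
and extend by linear propagation for $|t|>\lambda_n^2 T$. The asymptotic agreement of initial data,
\[
\bigl\|(-\Delta_\Omega)^{s_c/2}e^{it\Delta_\Omega}\bigl(\widetilde{v}_n(\lambda_n^2 t_n)-\phi_n\bigr)\bigr\|_{L_t^{5\alpha/2}L_x^{30\alpha/(15\alpha-8)}(\R\times\Omega)}\to 0,
\]
follows line by line from Step~3 of Theorem~\ref{Tembbedding1} via Proposition~\ref{P1}, Theorem~\ref{convergence-flow}, and scattering of $w_\infty$.

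The main analytical step is to show that the error $e_n=(i\partial_t+\Delta_\Omega)\widetilde{v}_n-|\widetilde{v}_n|^\alpha\widetilde{v}_n$ is negligible in $\dot{N}^{s_c}(\R\times\Omega)$. After change of variables, the nontrivial contribution on $\{|t|\le\lambda_n^2 T\}$ reduces to
\[
2\nabla\chi_n\cdot\nabla w_n+(\Delta\chi_n)w_n+(\chi_n-\chi_n^{\alpha+1})|w_n|^\alpha w_n,
\]
and the derivatives of $\chi_n$ are supported in the annulus $|y|\sim d(x_n)/\lambda_n\to\infty$ in rescaled coordinates. Here lies the main obstacle: these commutator terms probe $w_n$ at spatial infinity, and naive pointwise bounds on $w_n$ are insufficient. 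The plan is to approximate $w_\infty$ by a compactly supported profile $\psi_\varepsilon\in C_c^\infty(\R\times\R^3)$ in the scattering norm; since $\chi_n\equiv 1$ on any fixed compact set once $n$ is large, the product with $\psi_\varepsilon$ is unaffected, while the complementary piece carries arbitrarily small $L_{t,x}^{5\alpha/2}$ norm. Combining this with persistence of regularity for $w_n-w_\infty$, the fractional product rule (Lemma~\ref{LFractional product rule}), Theorem~\ref{TEquivalence}, and H\"older to absorb powers of $w_n$ from $L_{t,x}^\infty$, yields $\lim_{T\to\infty}\limsup_{n\to\infty}\|e_n\|_{\dot{N}^{s_c}(\R\times\Omega)}=0$. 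Stability (Theorem~\ref{TStability}) then upgrades $\widetilde{v}_n$ to a true solution $v_n$ of \eqref{NLS} with $v_n(0)=\phi_n$, yielding \eqref{E11145} and $\|v_n(\cdot-\lambda_n^2 t_n)-\widetilde{v}_n\|_{L_{t,x}^{5\alpha/2}}\to 0$. Finally, \eqref{Embed-2} follows by density: choose $\psi_\varepsilon$ with $\|(-\Delta)^{s_c/2}(w_\infty-\psi_\varepsilon)\|_{L_t^{5\alpha/2}L_x^{30\alpha/(15\alpha-8)}}<\varepsilon$, change variables, and combine the stability estimate with Theorem~\ref{convergence-flow} to transport back from $\Omega_n$ to $\Omega$.
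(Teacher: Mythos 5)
Your overall structure matches the paper's proof: same five-step template, same key observation that no boundary-correction term $z_n$ is needed (the rescaled obstacle retreats, so $\chi_n$ is supported inside $\Omega_n$), and the same density/stability argument at the end. However, there is a concrete error at the very start that propagates through everything.

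You truncate the initial data at frequency $\lambda_n^\theta$, importing the formula verbatim from Theorem~\ref{Tembbedding1}. But in that theorem (Case 2) the hypothesis is $\lambda_n\to\infty$, which is what makes $\phi_{\le\lambda_n^\theta}\to\phi$. In the present theorem (Case 3) the only large parameter is $d(x_n)/\lambda_n\to\infty$; the sequence $\lambda_n$ itself may be bounded or even tend to zero. If $\lambda_n$ stays bounded, $\phi_{\le\lambda_n^\theta}$ does not converge to $\phi$ at all, and if $\lambda_n\to 0$ it converges to $0$; either way Step~3 (asymptotic agreement of initial data) fails and the claimed bound $\|w_n-w_\infty\|\to 0$ is false. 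The paper truncates at frequency $\big(d(x_n)/\lambda_n\big)^\theta$, i.e.\ $w_n(0)=P_{\le d(x_n)^{\theta}\lambda_n^{-\theta}}\phi$, which is the correct large parameter here. Consequently your persistence-of-regularity bounds $\||\nabla|^s w_n\|_{L_{t,x}^\infty}\lesssim\lambda_n^{\theta(s+3/2-s_c)}$ and $\|\partial_t w_n\|_{L_{t,x}^\infty}\lesssim\lambda_n^{\theta(7/2-s_c)}$ are also stated with the wrong quantity; they should read $\lesssim\big(d(x_n)/\lambda_n\big)^{\theta(\cdots)}$, and it is precisely these positive powers of $d(x_n)/\lambda_n$ that the decaying cutoff derivatives $|\nabla^k\chi_n|\lesssim(\lambda_n/d(x_n))^k$ must absorb in Step~4. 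With the corrected truncation parameter, your argument for Step~4 (compactly-supported approximation of $w_\infty$ to kill the commutator terms on the annulus $|y|\sim d(x_n)/\lambda_n$, plus the regularity gain from the truncation) is sound and consistent with the paper's.
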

	\begin{proof}
		Similar to the proof of Theorem \ref{Tembbedding1}, we also divide the proof of Theorem \ref{Tembedding2} into five steps. For the sake of simpleness, we denote $-\Delta_{\R^3}=-\Delta$.
		
		\textbf{Step 1}.  Constructing the global solution to NLS$_{\mathbb{R}^3}$.
		
		Let $\theta = \frac{1}{100(\alpha + 1)}$. Following the proof of Theorem \ref{Tembbedding1}, if $t_n \equiv 0$, we define $w_n$ and $w_\infty$ as solutions to NLS$_{\mathbb{R}^3}$ with initial data $w_n(0) = P_{\leq d(x_n)^{\theta} \lambda_n^{-\theta}} \phi$ and $w_\infty(0) = \phi$. If $t_n \to \pm \infty$, we let $w_n$ and $w_\infty$ be solutions to NLS$_{\mathbb{R}^3}$ such that
		\begin{equation}
			\begin{cases}
				\|w_n(t) - e^{it\Delta} P_{\leq d(x_n)^{\theta} \lambda_n^{-\theta}} \phi\|_{\dot{H}_D^{s_c}(\mathbb{R}^3)} \to 0,\\
				\|w_\infty(t) - e^{it\Delta} \phi\|_{\dot{H}_D^{s_c}(\mathbb{R}^3)} \to 0.
			\end{cases}\notag
		\end{equation}
		By the assumptions in Theorem \ref{T1}, we deduce that $w_n$ and $w_\infty$ are global solutions with uniformly bounded Strichartz norms. Moreover, using arguments similar to those in the proof of Theorem \ref{Tembbedding1} and invoking Theorem \ref{TStability}, we establish that $w_n$ and $w_\infty$ satisfy the following properties:
		\begin{equation}
			\begin{cases}
				\|w_n\|_{L_t^{\frac{5\alpha }{2}}L_x^{\frac{5\alpha }{2}}(\R\times\R^3)}+\|w_\infty\|_{L_t^{\frac{5\alpha }{2}}L_x^{\frac{5\alpha }{2}}(\R\times\R^3)}\lesssim1,\\
				\||\nabla |^{s_c}(w_n-w_\infty)\|_{L_t^{\frac{5\alpha }{2}}L_x^{\frac{30\alpha }{15\alpha -8}}(\R\times\R^3)}\to0\qtq{as}t\to\pm\infty,\\
				\norm|\nabla|^{s}w_n\norm_{L_t^{\frac{5\alpha }{2}}L_x^{\frac{5\alpha }{2}}(\R\times\R^3)}\lesssim\(\frac{d(x_n)}{\lamn}\)^{\theta s},\qtq{for all }s\geq0.
			\end{cases}\label{E11141}
		\end{equation}
		\textbf{Step 2.} Constructing the approximate solution to \eqref{NLS}.
		
		Fix $T>0$ to be chosen later. We define
		\begin{align*}
			\tilde{v}_n(t,x)\stackrel{\triangle}{=}\begin{cases}
				\lamn^{s_c-\frac{3}{2}}\big(\cn w_n\big)(\lamn^{-2}t,\lamn^{-1}(x-x_n)), & |t|\leq\lamn^2T,\\
				e^{i(t-\lamn^2T)\DeltaO}\tilde{v}_n(\lamn^2T,x), &t>\lamn^2T,\\
				e^{i(t+\lamn^2T)\DeltaO}\tilde{v}_n(-\lamn^2T,x), &t<-\lamn^2T.
			\end{cases}
		\end{align*}
		Similar to (\ref{step-2}), using (\ref{E11141}), it is easy to see that 
		$\tilde{v}_n$  has finite scattering norm.

		\textbf{Step 3.} Agreement of the initial data:
		\begin{align}\label{step-3-embed2}
			\lim_{T\to\infty}\limsup_{n\to\infty}\norm e^{it\DeltaO}\big(\tilde{v}_n(\lambda_n^2 t_n)-\pn\big)\norm_{L_t^{\frac{5\alpha }{2}}L_x^{\frac{30\alpha }{15\alpha -8}}(\R\times\Omega)}=0.
		\end{align}
		By the same argument as used in the proof of Step 3 in Theorem \ref{Tembbedding1}, we can prove (\ref{step-3-embed2}) in the cases of $t_n \equiv 0$ and $|t_n| \rightarrow \infty$ by applying a change of variables, the Strichartz estimate, and using (\ref{E11141}).

		\textbf{Step 4.} Proving that $\tilde{v}_n$ is the approximate solution to \eqref{NLS} in the sense  that 
		\begin{align}\label{step4-embed2}
			\lim_{T\to\infty}\limsup_{n\to\infty}\norm (i\partial_t+\DeltaO)\tilde{v}_n-|\tilde{v}_n|^\alpha\tilde{v}_n\norm_{\dot N^{s_c}(\R\times\Omega)}=0.
		\end{align}
		
		Similar to \eqref{convergence-6.1}, it sufficies to prove
		\begin{align}\label{convergence-6.2}
			\lim_{T\to\infty}\limsup_{n\to\infty}\norm e^{i(t-\lamn^2T)\DeltaO}\vn(\lamn^2 T)\norm_{L_{t,x}^{\frac{5\alpha}{2}}(\{t>\lamn^2T\}\times\Omega)}=0.
		\end{align}
		Let $w_+$ be the asymptotic state of $w_\infty$. 
		Then by Strichartz estimates and the change of variables, we get
		\begin{align*}
			&\hspace{3ex}\norm e^{i(t-\lamn^2T)\DeltaO}\vn(\lamn^2T)\norm_{L_{t,x}^\frac{5\alpha}{2}(\{t>\lamn^2T\}\times\Omega)} =\norm e^{it\DeltaOn}(\cn w_n(T))\norm_{L_{t,x}^{\frac{5\alpha}{2}}((0,\infty)\times\Omega)}\\
			&\lesssim\norm e^{it\Delta_{\Omega_n}}[\chi_ne^{iT\Delta}w_+]\norm_{L_{t,x}^{\frac{5\alpha}{2}}((0,\infty)\times\Omega_n)}+\norm\cn[w_\infty(T)-e^{iT\Delta}w_+]\norm_{\dot H^{s_c}(\R^3)} +\norm \cn[w_\infty (T)-w_n(T)]\norm_{\Hsc(\R^3)}\\
			&\lesssim\norm\big(e^{it\Delta_{\Omega_n}}-e^{it\Delta}\big)[\cn e^{iT\Delta}w_+]\norm_{L_{t,x}^{\frac{5\alpha}{2}}((0,\infty)\times\R^3)}+\norm(1-\cn)e^{iT\Delta}w_+\norm_{\Hsc(\R^3)}\\
			&\quad +\norm e^{it\Delta}w_+\norm_{L_{t,x}^{\frac{5\alpha}{2}}((T,\infty)\times\R^3)}+\|w_\infty(T) -e^{iT\Delta}w_+\|_{\Hsc(\R^3)}+\|w_\infty(T)-w_n(T)\|_{\Hsc(\R^3)},
		\end{align*}
		which converges to zero by first letting $n\to\infty$ and then $T\to\infty $ in view of Theorem \ref{convergence-flow}, \eqref{E11141} and the monotone convergence theorem.
		
		Finally, we consider the intermediate time scale $|t|\leq \lamn^2T$.  We compute
		\begin{align*}
			[(i\partial_t+\Delta_\Omega )\tilde v_n- |\tilde v_n|^\alpha\tilde v_n](t,x)
			&=\lambda_n^{s_c-\frac72}[\Delta\chi_n w_n](\lambda_n^{-2}t,\lambda_n^{-1}(x-x_n))\\
			&\quad+2\lambda_n^{s_c-\frac72}(\nabla\chi_n\cdot\nabla w_n)(\lambda_n^{-2}t, \lambda_n^{-1}(x-x_n))\\
			&\quad+\lambda_n^{s_c-\frac72}[(\chi_n-\chi_n^{\alpha+1})|w_n|^\alpha w_n](\lambda_n^{-2}t,\lambda_n^{-1}(x-x_n)).
		\end{align*}
		Note that the cut-off function $\chi_n\sim1_{|x|\sim\frac{d(x_n)}{\lamn}}$  and $\frac{d(x_n)}{\lamn}\to\infty$ as $n\to\infty$. Therefore, we can modified the proof in step 4 of Theorem \ref{Tembedding2} with minor change to obtain (\ref{step4-embed2}).

		\textbf{Step 5.} Constructing  $v_n$ and approximation by   $C_c^{\infty }$ functions.

		By \eqref{step-3-embed2}, \eqref{step4-embed2} and invoking the stability Theorem \ref{TStability}, for sufficiently large $n$ we   obtain a global solution  $v_n$ to \eqref{NLS} with initial data $v_n(0)=\pn$. Moreover, it satisfies
		\begin{equation}
			\|v_n\|_{L_{t,x}^\frac{5\alpha}{2}(\R\times\Omega)}\lesssim1,\quad\text{and}\quad 
			\lim_{T\to\infty}\limsup_{n\to\infty}\norm v_n(t-\lamn^2t_n)-\vn(t)\norm_{\dot H_D^{s_c}(\Omega)}=0.\notag
		\end{equation}
		Finially, by the same argument as that used to derive (\ref{approximate-1}), we can obtain 	 the convergence \eqref{Embed-2} and omit the details.
		This completes the proof of Theorem \ref{Tembedding2}.
	\end{proof}

	At last, we treat the case that the obstacle expands to fill the half-space, i.e. Case 4 in Theorem \ref{linear-profile}.
	\begin{theorem}[Embedding the nonlinear profiles: the half-space case]\label{Embed3}
		Let $\{t_n\}\subset\R$ be such that $t_n\equiv0$ and $|t_n|\to\infty$. Let $\{\lamn\}\subset2^{\Bbb Z}$ and $\{x_n\}\subset\Omega$ be such that
		\begin{align*}
			\lamn\to0,\qtq{and}\frac{d(x_n)}{\lamn}\to d_\infty>0.
		\end{align*}
		Let $x_n^*\in \partial \Omega$ be such that $|x_n-x_n^*|=d(x_n)$ and  $R_n\in \operatorname{SO}(3)$ be such that
		$R_ne_3=\frac{x_n-x_n^*}{|x_n-x_n^*|}$. Finally, let $\phi\in\dot{H}_D^{s_c}(\mathbb{H})$, we define
		\begin{align*}
			\pn(x)=\lamn^{s_c-\frac{3}{2}}e^{i\lamn^2t_n\DeltaO}\phi\(\frac{R_n^{-1}(x_n-x_n^*)}{\lamn}\).
		\end{align*}
		Then for $n$ sufficiently large, there exists a global solution $v_n$ to \eqref{NLS} with initial data $v_n(0)=\pn$, which   also satisfies
		\begin{align*}
			\|v_n\|_{L_{t,x}^\frac{5\alpha}{2}(\RO)}\lesssim1.
		\end{align*} 
		Furthermore, for every $\varepsilon>0$, there exists  $N_\varepsilon\in\N$ and $\psie\in C_0^\infty(\R\times\mathbb{H})$ so that for every $n\geq N_\varepsilon$, we have
		\begin{align}\label{approximate-embed3}
			\norm (-\Delta _\Omega)^{\frac{s_c}{2}}[v_n(t-\lamn^2t_n,R_nx+x_n^*)-\lamn^{s_c-\frac{3}{2}}\psie(\lamn^{-2}t,\lamn^{-1}x)]\norm_{L_t^{\frac{5\alpha }{2}}L_x^{\frac{30\alpha }{15\alpha -8}}(\RRT)}<\varepsilon.
		\end{align}
	\end{theorem}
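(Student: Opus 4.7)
The plan is to execute the same five-step scheme employed in Theorems \ref{Tembbedding1}--\ref{Tembedding2}, with the Euclidean limiting geometry replaced by the half-space $\mathbb{H}$. The first step is to construct a global solution $w$ to NLS on $\mathbb{H}$ whose initial data is $\phi$ when $t_n\equiv 0$, or which satisfies $\|w(t)-e^{it\Delta_\mathbb{H}}\phi\|_{\dot H^{s_c}_D(\mathbb{H})}\to 0$ as $t\to\pm\infty$ when $|t_n|\to\infty$, together with uniform scattering-norm bounds. This is accomplished by odd reflection of $\phi$ across $\partial\mathbb{H}$: the vanishing-trace condition encoded in $\dot H^{s_c}_D(\mathbb{H})$ yields an odd extension $\tilde\phi\in\dot H^{s_c}(\mathbb{R}^3)$ to which Conjecture \ref{CNLS0} applies; since the nonlinearity $|u|^\alpha u$ preserves odd symmetry, uniqueness forces the Euclidean solution to remain odd, and its restriction to $\mathbb{H}$ is the desired $w$. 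A frequency-truncated companion $w_n$ enjoying higher-regularity estimates analogous to \eqref{key-1}--\eqref{key-2} is built in parallel.

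Next one constructs the approximate solution
\begin{equation*}
\tilde v_n(t,x):=\lambda_n^{s_c-3/2}\,w_n\bigl(\lambda_n^{-2}t,\,\lambda_n^{-1}R_n^{-1}(x-x_n^*)\bigr)\quad\text{for }|t|\leq\lambda_n^2T,
\end{equation*}
extended by the free propagator $e^{i(t\mp\lambda_n^2T)\Delta_\Omega}$ for $\pm t>\lambda_n^2T$. The strict convexity of $\Omega^c$ gives the tangent-halfspace inclusion $\mathbb{H}_n\subset\Omega$ (already exploited in Case~4 of Proposition \ref{inverse-strichartz}), which after the change of variables $y=\lambda_n^{-1}R_n^{-1}(x-x_n^*)$ becomes $\mathbb{H}\subset\Omega_n$; hence $\tilde v_n$ is naturally supported in $\Omega$ and respects the Dirichlet condition. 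The analogue of the initial-data agreement \eqref{step-3} reduces, in the rescaled variable $y$, to comparing $e^{it\Delta_{\Omega_n}}$ with $e^{it\Delta_\mathbb{H}}$ on a fixed function of $\dot H^{s_c}_D(\mathbb{H})$, which is handled by Theorem \ref{convergence-flow}, Proposition \ref{P1}, and dominated convergence exactly as in Step~3 of Theorem \ref{Tembbedding1}. The error $e_n:=(i\partial_t+\Delta_\Omega)\tilde v_n-|\tilde v_n|^\alpha\tilde v_n$ on the intermediate regime $|t|\leq\lambda_n^2T$ comes entirely from the mismatch $\Delta_{\Omega_n}-\Delta_\mathbb{H}$ acting on $w_n$ and is absorbed in $\dot N^{s_c}$ by Proposition \ref{convergence-domain}; the tail regime is handled via scattering of $w$ and Theorem \ref{convergence-flow} exactly as in \eqref{convergence-6.1}.

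With initial-data agreement and smallness of $e_n$ in hand, the stability theorem (Theorem \ref{TStability}) promotes $\tilde v_n$ to a genuine solution $v_n$ of \eqref{NLS} with $v_n(0)=\phi_n$ and the bound \eqref{E11145}. The approximation \eqref{approximate-embed3} then follows by choosing $\psi_\varepsilon\in C_c^\infty(\mathbb{R}\times\mathbb{H})$ with $\|(-\Delta_\Omega)^{s_c/2}(w-\psi_\varepsilon)\|_{L^{5\alpha/2}_t L^{30\alpha/(15\alpha-8)}_x}<\varepsilon$ and running the triangle-inequality argument that closed Theorem \ref{Tembbedding1}.

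The principal new difficulty, relative to the Euclidean-limit cases, is producing a global, scattering NLS$_\mathbb{H}$ solution from Conjecture \ref{CNLS0}. The odd-reflection argument is automatic for $0<s_c<1/2$, but for $1/2\leq s_c<3/2$ it relies crucially on the vanishing-trace content of $\dot H^{s_c}_D(\mathbb{H})$ to ensure that the extended datum remains in $\dot H^{s_c}(\mathbb{R}^3)$, and on the symmetry-preservation of the nonlinearity to invoke uniqueness. A secondary technical nuisance is estimating the error $(\Delta_{\Omega_n}-\Delta_\mathbb{H})w_n$ in the dual Strichartz space; here the extra regularity of the truncated profile $w_n$ plays the same role as in Step~4 of Theorem \ref{Tembbedding1}, allowing one to convert a quantitative domain-convergence rate furnished by Proposition \ref{convergence-domain} into a small negative power of $\lambda_n$.
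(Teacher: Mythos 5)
There is a genuine gap in Step~2 and Step~4 of your construction, and the missing idea is precisely the boundary-straightening diffeomorphism that the paper introduces. You propose to take the half-space profile $w_n$, extend it by zero to $\Omega_n = \lambda_n^{-1} R_n^{-1}(\Omega - \{x_n^*\})$ using the inclusion $\mathbb{H}\subset\Omega_n$, and argue that the resulting $\tilde v_n$ ``respects the Dirichlet condition.'' It is true that the extension vanishes on $\partial\Omega_n$ and belongs to $\dot H^{s_c}_D(\Omega_n)$. However, it is \emph{not} a smooth approximate solution. The Dirichlet solution $w_n$ on $\mathbb{H}$ vanishes on $\partial\mathbb{H}$ but its normal derivative does not; its extension by zero therefore has a jump in $\partial_{x_3}w_n$ across $\partial\mathbb{H}\cap\Omega_n$, so that the distributional Laplacian $\Delta_{\Omega_n}\tilde v_n$ acquires a surface measure supported on $\partial\mathbb{H}\cap\Omega_n$. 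This singular layer appears in the error $e_n=(i\partial_t+\Delta_{\Omega_n})\tilde v_n-|\tilde v_n|^\alpha\tilde v_n$, and it does not belong to any dual Strichartz space $\dot N^{s_c}$, let alone become small. The quantity you want to call ``the mismatch $\Delta_{\Omega_n}-\Delta_{\mathbb{H}}$ acting on $w_n$'' is in fact this singular distribution, and Proposition~\ref{convergence-domain} gives no estimate for it: that proposition is a qualitative statement about convergence of $\Theta(-\Delta_{\Omega_n})\delta_y$ to $\Theta(-\Delta_{\Omega_\infty})\delta_y$ in $\dot H^{-s_c}$, useful for weak-limit arguments in the profile decomposition, but it supplies no quantitative bound in a spacetime norm.

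The paper resolves this by defining the boundary-straightening map $\Psi_n(x)=(x^\perp, x_3+\psi_n(x^\perp))$, where $x^\perp\mapsto(x^\perp,-\psi_n(x^\perp))$ parametrizes $\partial\Omega_n$ near the origin and $\psi_n$ satisfies $|\partial^\alpha\psi_n|\lesssim \lambda_n^{|\alpha|-1}$ (small because the rescaled obstacle is nearly flat), together with a cutoff $\chi_n(x)=1-\Theta(x/L_n)$, $L_n=\lambda_n^{-2\theta}$. Setting $\tilde v_n$ as the rescale of $(\chi_n U_n)\circ\Psi_n$ makes the function vanish smoothly on $\partial\Omega_n$ (because $\Psi_n$ carries $\partial\Omega_n$ onto $\partial\mathbb{H}$ where $U_n$ vanishes), eliminating the singular boundary layer entirely; the error in the NLS equation then consists of explicit terms carrying $\partial\psi_n$, $\partial^2\psi_n$, or derivatives of $\chi_n$, all of which are polynomially small in $\lambda_n$ and are estimated directly, not via any domain-convergence proposition. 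Your Step~1 (odd reflection to produce the half-space solution) matches the paper's remark and is fine, and your Steps~3 and~5 would go through once Steps~2 and~4 are repaired, but without the diffeomorphism the central error estimate fails.
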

	\begin{proof}
		Again, we divide the proof   of this theorem  into five main steps.
		
		\textbf{Step 1}.  Construction of the global solution to NLS$_{\mathbb{R}^3}$.
		
		Let $\theta \ll 1$. When $t_n \equiv 0$, define $U_n$ and $U_\infty$ as solutions to NLS$_{\mathbb{H}}$ with initial data $U_n(0) = \phi_{\lambda_n^{-\theta}}$ and $U_\infty(0) = \phi$. If $|t_n| \to +\infty$, we set $U_n$ and $U_\infty$ to be solutions to NLS$_{\mathbb{R}^3}$ satisfying
		\begin{equation}
			\|U_n(t) - e^{it\Delta_{\mathbb{H}}} \phi_{\leq \lambda_n^{-\theta}}\|_{\dot{H}_D^{s_c}(\mathbb{H})} \to 0
			\quad \text{and} \quad
			\|U_\infty(t) - e^{it\Delta_{\mathbb{H}}} \phi\|_{\dot{H}_D^{s_c}(\mathbb{H})} \to 0, \quad \text{as} \quad t \to \pm\infty. \label{m12}
		\end{equation}
		In all cases, the assumption in Theorem \ref{T1} ensures that
		\begin{align*}
			\|U_n\|_{L_t^{\frac{5\alpha}{2}}L_x^{\frac{5\alpha}{2}}(\mathbb{R} \times \mathbb{H})} + \|U_\infty\|_{L_t^{\frac{5\alpha}{2}}L_x^{\frac{5\alpha}{2}}(\mathbb{R} \times \mathbb{H})} \lesssim 1.
		\end{align*}
		Moreover, the solution to NLS$_{\mathbb{H}}$ can be extended to a solution of NLS$_{\mathbb{R}^3}$ by reflecting across the boundary $\partial\mathbb{H}$. Using similar arguments as in the proofs of the previous embedding theorems, along with the stability theorem and persistence of regularity, we obtain
		\begin{equation}
			\begin{cases}
				\lim_{n\to\infty}\|U_n-U_\infty\|_{L_t^{\frac{5\alpha }{2}}L_x^{\frac{5\alpha }{2}}(\R\times\mathbb{H})}=0,\\
				\norm(-\Delta_{\mathbb{H}})^\frac{s}{2}U_n\norm_{L_t^\infty L_x^2(\R\times\mathbb{H})}\lesssim\lamn^{\theta(s-1)}.
			\end{cases}\label{difference-half}
		\end{equation}

		\textbf{Step 2}. Construction of the approximate solution to \eqref{NLS}.
		
		Let $\Omega_n := \lambda_n^{-1} R_n^{-1} (\Omega - \{x_n^*\})$, and let $T > 0$ be chosen later. On the intermediate time scale $|t| < \lambda_n^2 T$, we embed $U_n$ into a corresponding neighborhood in $\mathbb{H}$ by employing a boundary-straightening diffeomorphism $\Psi_n$ of size $L_n := \lambda_n^{-2\theta}$ in a neighborhood of zero in $\Omega_n$.
		To achieve this, we define a smooth function $\psi_n$ on the set $|x^\perp| \leq L_n$ such that $x^\perp \mapsto (x^\perp, -\psi_n(x^\perp))$ parametrizes $\partial\Omega_n$. Here, we write $x \in \mathbb{R}^3$ as $x = (x^\perp, x_3)$. 
		By our choice of $R_n$, the unit normal to $\partial\Omega_n$ at zero is $e_3$. Moreover, the curvatures of $\partial\Omega_n$ are $O(\lambda_n)$. Thus, $\psi_n$ satisfies the following properties:
		\begin{align}\label{psin}
			\begin{cases}
				\psi_n(0) = 0, \quad \nabla\psi_n(0) = 0, \quad |\nabla\psi_n(x^\perp)| \lesssim \lambda_n^{1-2\theta}, \\
				|\partial^{\alpha}\psi_n(x^\perp)| \lesssim \lambda_n^{|\alpha| - 1} \quad \text{for all } |\alpha| \geq 2.
			\end{cases}
		\end{align}
		We then define the map $\Psi_n: \Omega_n \cap \{|x^\perp| \leq L_n\} \to \mathbb{H}$ and a cutoff $\chi_n: \mathbb{R}^3 \to [0,1]$ as follows:
		\begin{align*}
			\Psi_n(x) := (x^\perp, x_3 + \psi_n(x^\perp)) \quad \text{and} \quad \chi_n(x) := 1 - \Theta\bigl(\tfrac{x}{L_n}\bigr).
		\end{align*}
		On the domain of $\Psi_n$, which contains $\operatorname{supp} \chi_n$, we have:
		\begin{align}\label{detpsin}
			|\det(\partial \Psi_n)| \sim 1 \quad \text{and} \quad |\partial\Psi_n| \lesssim 1.
		\end{align}

		Now, we are in position to define the approximate solution. Let $\tilde U_n:=\chi_nU_n$ and define
		\begin{align*}
			\tilde v_n(t,x):=\begin{cases} \lamn^{s_c-\frac32}[\tilde
				U_n(\lamn^{-2}t)\circ\Psi_n](\lambda_n^{-1}R_n^{-1}(x-x_n^*)), &|t|\le \lamn^2 T, \\
				e^{i(t-\lamn^2 T)\Delta_\Omega}\vn(\lambda_n^2 T,x), &t>\lamn^2 T,\\
				e^{i(t+\lamn^2 T)\Delta_\Omega}\vn(-\lambda_n^2T,x), &t<-\lamn^2 T .
			\end{cases}
		\end{align*}
		We first prove that $\tilde v_n$ has finite scattering size.  Indeed, by the Strichartz inequality, a change of variables, and \eqref{detpsin},
		\begin{align}\label{tildevn4}
			\|\tilde v_n\|_{L_{t,x}^{\frac{5\alpha}{2}}(\R\times\Omega)}
			&\lesssim \|\widetilde{U}_n\circ\Psi_n\|_{L_{t,x}^{\frac{5\alpha}{2}}(\R\times\On)}+\|\tilde U_n(\pm T)\circ\Psi_n\|_{\dot H_D^{s_c}(\On)}\notag\\
			&\lesssim \|\tilde U_n\|_{L_{t,x}^{\frac{5\alpha}{2}}(\R\times\mathbb{H})} + \|\tilde U_n(\pm T)\|_{\dot H^{s_c}_D(\mathbb{H})}\lesssim 1.
		\end{align}
		
		\textbf{Step 3}. Asymptotic agreement with the initial data:
		\begin{align}\label{step3-embed3}
			\lim_{T\to\infty}\limsup_{n\to \infty}\|(-\Delta_\Omega)^{\frac{s_c}2}e^{it\Delta_\Omega}[\tilde v_n(\lambda_n^2 t_n)-\phi_n]\|_{\isca(\R\times\Omega)}=0.
		\end{align}
		
		First, we consider the case that $t_n\equiv0$.  By Strichartz and a change of variables,
		\begin{align*}
			&\hspace{3ex}\norm  (-\DeltaO)^{\frac {s_c}2} e^{it\Delta_\Omega}(\vn(0)-\phi_n)\norm_{\isca(\R\times\Omega)} \lesssim \norm(\chi_n\phi_{\le \lambda_n^{-\theta}})\circ\Psi_n-\phi\|_{\dot H^{s_c}_D(\On)}\\
			&\lesssim \norm(-\Delta)^\frac{s_c}{2}\big((\chi_n\phi_{>\lambda_n^{-\theta}})\circ\Psi_n\big)\|_{L^2_x}+\|(-\Delta)^\frac{s_c}{2}[(\chi_n\phi)\circ\Psi_n-\chi_n\phi]\norm_{L^2_x}+\norm(-\Delta)^\frac{s_c}{2}\big((1-\chi_n)\phi\big)\norm_{L^2_x}.
		\end{align*}
		As $\lambda_n \to 0$, we have $\| \phi_{>\lambda_n^{-\theta}} \|_{\dot{H}^{s_c}} \to 0$ as $n \to \infty$. Thus, using \eqref{detpsin}, the first term converges to $0$. For the second term, since $\Psi_n(x) \to x$ in $C^1$, approximating $\phi$ by functions in $C_0^\infty(\mathbb{H})$, we conclude that the second term also converges to $0$. Finally, the last term approaches $0$ by the dominated convergence theorem and the fact that $L_n = \lambda_n^{-2\theta} \to \infty$.
		
		It remains to prove \eqref{step3-embed3} when $t_n \to +\infty$. The case $t_n \to -\infty$ can be handled similarly. 
		
		Since $T > 0$ is fixed, for sufficiently large $n$, we have $t_n > T$, so that
		\begin{align*}
			\tilde{v}_n(\lambda_n^2 t_n, x) &= e^{i(t_n - T)\lambda_n^2\Delta_\Omega}[\lambda_n^{s_c - \frac{3}{2}}(\tilde{U}_n(T) \circ \Psi_n)(\lambda_n^{-1}R_n^{-1}(x - x_n^*))].
		\end{align*}
		
		A change of variables then yields that
		\begin{align}
			&\hspace{3ex}\norm(-\Delta_\Omega)^{\frac{s_c}2} e^{it\DeltaO}(\vn(\lamn^2 t_n)-\phi_n)\norm_{\isca(\R\times\Omega)}\notag\\
			&\lesssim \norm(-\Delta_{\On})^{\frac {s_c}2}(\tilde U_n(T)\circ\Psi_n-U_\infty(T))\norm_{L^2_x}\label{nn13}\\
			&\quad+\norm(-\Delta_{\Omega_n})^{\frac {s_c}2}\big(e^{i(t-T)\Delta_{\Omega_n}}U_\infty(T)-e^{it\Delta_{\Omega_n}}\phi\big)\|_{\isca(\R\times\Omega_n)}.\label{nn12}
		\end{align}
		By the  triangle inequality,
		\begin{align}
			\eqref{nn13}
			&\lesssim\norm(-\Delta_{\Omega_n})^{\frac{s_c}2}\big((\chi_nU_\infty(T))\circ\Psi_n-U_\infty(T)\big)\|_{L^2_x} +\norm(-\Delta_{\Omega_n})^{\frac {s_c}2}\big((\chi_n(U_n(T)-U_\infty(T)))\circ\Psi_n\big)\|_{L^2_x},\notag
		\end{align}
		which converges to zero as $n\to \infty$ by  the fact that  $\Psi_n(x)\to x$ in $C^1$ and  (\ref{difference-half}). For the second term,   by the Strichartz estimate, Proposition \ref{P1}, 
		Theorem~\ref{convergence-flow}, and \eqref{m12}, we see that
		\begin{align*}
			\eqref{nn12}
			&\lesssim \norm e^{i(t-T)\Delta_{\Omega_n}}(-\Delta_{\mathbb{H}})^{\frac{s_c}2}U_\infty(T)-e^{it\Delta_{\Omega_n}}(-\Delta_{\mathbb{H}})^{\frac{s_c}2}\phi\norm_{\isca(\R\times\Omega_n)}\\
			&\quad +\norm\big((-\Delta_{\Omega_n})^{\frac {s_c}2}-(-\Delta_{\mathbb{H}})^{\frac{s_c}2}\big)U_\infty(T)\|_{L^2_x}+\norm\big((-\Delta_{\Omega_n})^{\frac {s_c}2}-(-\Delta_{\mathbb{H}})^{\frac {s_c}2}\big)\phi\|_{L^2_x}\\
			&\lesssim\norm\big(e^{i(t-T)\Delta_{\Omega_n}}-e^{i(t-T)\Delta_{\mathbb{H}}}\big)(-\Delta_{\mathbb{H}})^{\frac {s_c}2}U_\infty(T)\|_{\isca(\R\times\Omega_n)}\\
			&\quad+\norm\big(e^{it\Delta_{\Omega_n}}-e^{it\Delta_{\mathbb{H}}}\big)(-\Delta_{\mathbb{H}})^   {\frac{s_c}2}\phi\|_{\isca(\R\times\Omega_n)}\\
			&\quad+\norm e^{-iT\Delta_{\mathbb{H}}}U_\infty(T)-\phi\|_{\dot H^{s_c}_D(\mathbb{H})}+o(1),
		\end{align*}
		and that this converges to zero by first taking $n\to \infty$ and then $T\to \infty$.
		
		\textbf{Step 4}. Proving that $\vn$ is approximate solution to \eqref{NLS} in the following sense
		\begin{align}
			\label{nn14}
			\lim_{T\to\infty}\limsup_{n\to\infty}\norm(i\partial_t+\Delta_\Omega)\tilde v_n-|\tilde v_n|^\alpha\tilde v_n\norm_{\dot N^{s_c}(\R\times\Omega)}=0.
		\end{align}
		We first control the contribution of $|t|\ge \lambda_n^2T$. 
		By the same argument as that used in step 4 of Theorem \ref{Tembbedding1},    this reduces to proving
		\begin{align}\label{nn15}
			\lim_{T\to\infty}\limsup_{n\to\infty}\|e^{i(t-\lambda_n^2T)\Delta_{\Omega}}\tilde v_n(\lambda_n^2 T)\|_{\scaa(\{t>\lamn^2T\}\times\Omega)}=0.
		\end{align}

		Let $U_+$ denote the scattering state of $U_\infty$ in the forward-time direction. By the Strichartz estimate, Theorem \ref{convergence-flow}, and the monotone convergence theorem, we obtain
		\begin{align*}
			&  \norm e^{i(t-\lambda_n^2 T)\Delta_{\Omega}}\tilde{v}_n(\lambda_n^2T)\norm_{\scaa((\lambda_n^2 T, \infty) \times \Omega)} = \norm e^{i(t-T)\Delta_{\Omega_n}}(\tilde{U}_n(T) \circ \Psi_n)\|_{\scaa((T, \infty) \times \Omega_n)} \\
			&\lesssim \norm\big(e^{i(t-T)\Delta_{\Omega_n}} - e^{i(t-T)\Delta_{\mathbb{H}}}\big)(e^{iT\Delta_{\mathbb{H}}}U_+)\|_{\scaa((0, \infty) \times \Omega_n)} + \|e^{it\Delta_{\mathbb{H}}}U_+\|_{L_{t,x}^{\frac{5\alpha}{2}}((T, \infty) \times \mathbb{H})} + o(1),
		\end{align*}
		and this converges to zero by Theorem \ref{convergence-flow} and the monotone convergence theorem, by first taking $n \to \infty$ and then $T \to \infty$.
		
		Next, we consider the middle time interval $\{|t| \leq \lambda_n^2T\}$. By direct computation, we have
		\begin{align*}
			\Delta(\widetilde{U}_n \circ \Psi_n) &= (\partial_k\widetilde{U}_n \circ \Psi_n)\Delta\Psi_n^k + (\partial_{kl}\widetilde{U}_n \circ \Psi_n)\partial_j\Psi_n^l \partial_j\Psi_n^k,
		\end{align*}
		where $\Psi_n^k$ denotes the $k$th component of $\Psi_n$, and repeated indices are summed. Recall that $\Psi_n(x) = x + (0, \psi_n(\xp))$, hence we have
		\begin{align*}
			&\Delta\Psi_n^k=O(\partial^2\psi_n), \quad \partial_j\Psi_n^l=\delta_{jl}+O(\partial\psi_n), \\
			&\partial_j\Psi_n^l\partial_j\Psi_n^k=\delta_{jl}\delta_{jk}+O(\partial\psi_n)+O((\partial\psi_n)^2),
		\end{align*}
		where we use $O$ to denote a collection of similar terms.   Therefore,
		\begin{align*}
			(\partial_k\widetilde{U}_n\circ\Psi_n)\Delta\Psi_n^k&=O\bigl((\partial\widetilde{U}_n\circ\Psi_n)(\partial^2\psi_n)\bigr),\\
			(\partial_{kl}\widetilde{U}_n\circ\Psi_n)\partial_j\Psi_n^l\partial_j\Psi_n^k
			&=\Delta\widetilde{U}_n\circ\Psi_n+O\bigl(\bigl(\partial^2\widetilde{U}_n\circ\Psi_n\bigr)\bigl(\partial\psi_n+(\partial\psi_n)^2\bigr)\bigr)
		\end{align*}
		and so
		\begin{align*}
			(i\partial_t+\Delta_{\Omega_n})(\widetilde{U}_n\circ \Psi_n)-(|\widetilde{U}_n|^\alpha\widetilde{U}_n)\circ\Psi_n 
			&=[(i\partial_t+\Delta_{\mathbb{H}})\widetilde{U}_n-|\widetilde{U}_n|^4\widetilde{U}_n]\circ \Psi_n \\
			&\quad+O\bigl((\partial\widetilde{U}_n\circ\Psi_n)(\partial^2\psi_n)\bigr)+O\bigl(\bigl(\partial^2\widetilde{U}_n\circ\Psi_n\bigr)\bigl(\partial\psi_n+(\partial\psi_n)^2\bigr)\bigr).
		\end{align*}
		By a change of variables and \eqref{detpsin}, we get
		\begin{align}
			&\hspace{3ex}\norm(-\Delta_\Omega)^{\frac {s_c}2}\big((i\partial_t+\Delta_\Omega)\vn-|\tilde v_n|^\alpha\vn\big)\norm_{L_t^1L_x^2(\{|t|\le \lambda_n^2T\}\times\Omega)}\notag\\
			&=\norm(-\Delta_{\Omega_n})^{\frac{s_c}2}\big((i\partial_t+\Delta_{\Omega_n})(\tilde U_n\circ\Psi_n)-(|\widetilde{U}_n|^\alpha\tilde U_n)\circ \Psi_n\big)\norm_{L_t^1L_x^2(\{|t|\le \lambda_n^2T\}\times\Omega_n)}\notag\\
			&\lesssim \norm(-\Delta_{\Omega_n})^{\frac{s_c}2}\big(((i\partial_t+\Delta_{\mathbb{H}})\tilde U_n-|\tilde U_n|^\alpha\tilde U_n)\circ\Psi_n\big)\norm_{L_t^1L_x^2([-T,T]\times\Omega_n)}\notag\\
			&\quad+\norm(-\Delta_{\Omega_n})^{\frac {s_c}2}\big((\partial\tilde U_n\circ \Psi_n)\partial^2\psi_n)\norm_{L_t^1L_x^2([-T,T]\times\Omega_n)}\notag\\
			&\quad+\big\|(-\Delta_{\Omega_n})^{\frac {s_c}2}\big((\partial^2\tilde U_n\circ\Psi_n)\big(\partial\psi_n+(\partial\psi_n)^2\big)\big)\big\|_{L_t^1L_x^2([-T,T]\times\Omega_n)}\notag\\
			&\lesssim \|(-\Delta)^\frac{s_c}{2}\big((i\partial_t+\Delta_{\mathbb{H}})\tilde U_n -|\tilde U_n|^\alpha\tilde U_n\big)\|_{L_t^1L_x^2([-T,T]\times\mathbb{H})}\label{nn18}\\
			&\quad+\norm(-\Delta)^\frac{s_c}{2}\big((\partial \tilde U_n\circ\Psi_n)\partial^2\psi_n\big)\norm_{L_t^1L_x^2([-T,T]\times\Omega_n)}\label{nn16}\\
			&\quad+\big\|(-\Delta)^\frac{s_c}{2}\big((\partial^2 \tilde U_n\circ \Psi_n)\big(\partial\psi_n+(\partial\psi_n)^2\big)\big)\big\|_{L_t^1L_x^2([-T,T]\times\Omega_n)}\label{nn17}.
		\end{align}
		By direct computation, 	
		\begin{align}
			(i\partial_t+\Delta_{\mathbb{H}})\tilde U_n-|\tilde U_n|^\alpha\tilde U_n=(\chi_n-\chi_n^{\alpha+1})|U_n|^4U_n+2\nabla\chi_n\cdot\nabla w_n+\Delta\chi_n w_n.\label{E11143}
		\end{align}
		For fixed  $T>0$, using fractional product rule, \eqref{difference-half}, \eqref{psin},  \eqref{detpsin} and  $\lambda_n\rightarrow0$,  it is easy to see that (\ref{nn16}), (\ref{nn17}) and the  $\dot N^{s_c}(\mathbb{R} \times \mathbb{H} )$ norm of the last two terms in  (\ref{E11143}) converges to $ 0  $ as  $n\rightarrow\infty $.  
		
		Therefore, the proof of (\ref{nn14}) reduces to show that the  $\dot N^{s_c}(\mathbb{R} \times \mathbb{H} )$ norm of the first term in  (\ref{E11143}) converges to $ 0  $ as  $n\rightarrow\infty $.   To this end, we estimate 
		\begin{align*}
			& \|(\chi_n-\chi_n^{\alpha +1})|U_n|^{\alpha +1}U_n\|_{\dot N^{s_c}([-T,T]\times \mathbb{H} )} \notag\\
			&\lesssim \|(\chi_n-\chi_n^{\alpha +1})|U_n|^{\alpha +1}|\nabla |^{s_c}U_n\|_{L_t^{\frac{5\alpha }{2(\alpha +1)}}L_x^{\frac{30\alpha }{27\alpha -8}}([-T,T]\times \mathbb{H} )}   + \||U_n|^{\alpha +1}|\nabla |^{s_c}\chi_n\|_{L_t^{\frac{5\alpha }{2(\alpha +1)}}L_x^{\frac{30\alpha }{27\alpha -8}}([-T,T]\times \mathbb{H} )} \notag \\
			&\lesssim   \|U_n1_{|x|\sim L_n}\|_{L_{t,x}^{\frac{5\alpha }{2}}}^\alpha  \||\nabla |^{s_c}U_n\|_{L_t^{\frac{5}{2}}L_x^{\frac{30\alpha }{15\alpha -8}}}+ \|U_n1_{|x|\sim L_n}\|_{L_{t,x}^{\frac{5\alpha }{2}}}^\alpha  \||\nabla |^{s_c}U_n\|_{L_t^{\frac{5\alpha }{2}}L_x^{\frac{30\alpha }{15\alpha -8}}} \||\nabla |^{s_c}\chi_n\|_{L_x^{\frac{3}{s_c}}}     \\
			&\lesssim\|1_{|x|\sim L_n}U_\infty\|_{\scaa}^\alpha+\|U_\infty-U_n\|^\alpha _{L_{t,x}^\frac{5\alpha}{2}}\to0\quad\text{as}\quad n\rightarrow\infty .
		\end{align*}
		This completes the proof of (\ref{nn14}).

		\textbf{Step 5}. Constructing $v_n$ and approximating by   compactly supported functions. 
		
		Similar to Theorem \ref{Tembbedding1} and Theorem \ref{Tembedding2}, using (\ref{tildevn4}), (\ref{step3-embed3}),  (\ref{nn14})  and the stability theorem \ref{TStability}, for $ n $  large enough we obtain a global solution  $v_n$ to (\ref{NLS}) with initial data  $v_n(0)=\phi_n$, which satisfies (\ref{E11145}). Moreover, the similar argument used in Theorem \ref{Tembbedding1} and Theorem \ref{Tembedding2} also gives (\ref{Embed-2}) and we  omit the details. 
	\end{proof}
	
	\section{Reduction to Almost Periodic Solutions}\label{S5}
	The goal of this section is to establish Theorem \ref{TReduction}. The proof relys on demonstrating a Palais-Smale condition (Proposition \ref{Pps}) for minimizing sequences of blowup solutions to \eqref{NLS}, which leads to the conclusion that the failure of Theorem \ref{T1} would imply the existence of minimal counterexamples possessing the properties outlined in Theorem \ref{TReduction}.  
	We adopt the framework described in \cite[Section 3]{KillipVisan2010AJM}. This general methodology has become standard in related contexts; see, for instance, \cite{KenigMerle2006,KenigMerle2010,KillipVisan2013,TaoVisanZhang2008FM} for analogous results in different settings. Consequently, we will highlight the main steps, providing detailed discussions only when specific challenges arise in our scenario.
	
	Throughout this section, we use the notation 
	\begin{equation}
		S_I(u) := \int_I \int_{\Omega} |u(t, x)|^{\frac{5\alpha}{2}} \, dx \, dt.  
	\end{equation}  
	
	Assume Theorem \ref{T1} fails for some $s_c \in [\frac{1}{2}, \frac{3}{2})$. We define the function $L: [0, \infty) \to [0, \infty)$ as
	\[
	L(E) := \sup\{S_I(u) : u : I \times \Omega \to \mathbb{C} \text{ solving } \eqref{NLS} \text{ with } \sup_{t \in I} \|u(t)\|^2_{\dot{H}^{s_c}_D(\Omega)} \leq E\}.
	\]
	It is noteworthy that $L$ is non-decreasing, and Theorem \ref{TLWP} provides the bound
	\begin{equation}
		L(E) \lesssim E^{\frac{5\alpha}{4}} \quad \text{for sufficiently small } E.\label{E10252}
	\end{equation}
	This implies the existence of a unique critical value $E_c \in (0, \infty]$ such that $L(E) < \infty$ for $E < E_c$ and $L(E) = \infty$ for $E > E_c$. The failure of Theorem \ref{T1} implies $0 < E_c < \infty$.
	
	A pivotal component of the proof of Theorem \ref{TReduction} is verifying a Palais-Smale condition. Once the following proposition is established, the derivation of Theorem \ref{TReduction} proceeds along standard lines (see \cite{KillipVisan2010AJM}).

	\begin{proposition}[Palais--Smale condition modulo symmetries]\label{Pps}
		Let $u_n : I_n \times \Omega \to \mathbb{C}$ be a sequence of solutions to (\ref{NLS}) such that
		\[
		\limsup_{n \to \infty} \sup_{t \in I_n} \|u_n(t)\|_{\dot{H}_D^{s_c}(\Omega)}^2 = E_c,
		\]
		and suppose $t_n \in I_n$ are such that
		\begin{equation}
			\lim_{n \to \infty} S_{[t_n, \sup I_n]}(u_n) = \lim_{n \to \infty} S_{[\inf I_n, t_n]}(u_n) = \infty.  \label{4.2}
		\end{equation}
		Then the sequence $u_n(t_n)$ has a subsequence that converges strongly in $\dot{H}_D^{s_c}(\Omega)$.
	\end{proposition}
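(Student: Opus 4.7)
The plan is to invoke the concentration--compactness machinery built up in Sections \ref{S3} and \ref{S4}. By time translation invariance of \eqref{NLS} we may assume $t_n \equiv 0$. The sequence $\{u_n(0)\}$ is bounded in $\dot{H}_D^{s_c}(\Omega)$ by $E_c^{1/2}+o(1)$, so Theorem \ref{linear-profile} provides, after passing to a subsequence, a profile decomposition $u_n(0) = \sum_{j=1}^J \phi_n^j + W_n^J$ with the four case behaviors for each profile, asymptotic orthogonality \eqref{profile-4}, the $\dot H_D^{s_c}$-decoupling \eqref{profile-2}, and the error smallness \eqref{profile-1} as $J\to J^*$; we additionally arrange $t_n^j\equiv 0$ or $|t_n^j|\to\infty$ for every $j$.

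Next I would associate to each linear profile a nonlinear profile $v_n^j$ solving \eqref{NLS}. In Case 1 with $t_n^j \equiv 0$, set $v_n^j$ to be the maximal-lifespan solution with datum $\phi^j$; when $|t_n^j|\to\infty$, build $v_n^j$ via the wave-operator version of Theorem \ref{TLWP} so that $\|v_n^j(t)-e^{it\Delta_\Omega}\phi^j\|_{\dot H_D^{s_c}}\to 0$ as $t\to\pm\infty$. In Cases 2, 3, 4, Theorems \ref{Tembbedding1}, \ref{Tembedding2}, \ref{Embed3}, whose validity rests on Conjecture \ref{CNLS0} via the construction of the limit-geometry solution, supply a global solution $v_n^j$ with $\|v_n^j\|_{L_{t,x}^{5\alpha/2}(\R\times\Omega)}\lesssim 1$ together with a $C_c^\infty$ approximant $\psi_\varepsilon^j$ controlling $v_n^j$ in $(-\Delta_\Omega)^{s_c/2}L_t^{5\alpha/2}L_x^{30\alpha/(15\alpha-8)}$ up to error $\varepsilon$ once $n$ is large.

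The dichotomy is now standard. Assume first that $\sup_j \lim_n \|\phi_n^j\|_{\dot H_D^{s_c}}^2 < E_c$. By the decoupling \eqref{profile-2}, each $v_n^j$ has $\dot H_D^{s_c}$ norm strictly below $E_c$, and the definition of $E_c$ together with Theorem \ref{TStability} yields a uniform bound on its $(-\Delta_\Omega)^{s_c/2}L_t^{5\alpha/2}L_x^{30\alpha/(15\alpha-8)}$ norm. Setting
\[
\tilde u_n^J(t,x) := \sum_{j=1}^J v_n^j\bigl(t - (\lambda_n^j)^2 t_n^j, x\bigr) + e^{it\Delta_\Omega}W_n^J,
\]
one verifies, using the $\psi_\varepsilon^j$-approximations and the asymptotic orthogonality \eqref{profile-4} to decouple the nonlinear profiles in all relevant Strichartz spaces and in $\dot H_D^{s_c}$, that $\tilde u_n^J$ has initial data converging to $u_n(0)$, bounded scattering size, and
\[
\lim_{J\to J^*}\limsup_{n\to\infty}\bigl\|(i\partial_t+\Delta_\Omega)\tilde u_n^J-|\tilde u_n^J|^\alpha \tilde u_n^J\bigr\|_{\dot N^{s_c}(\R\times\Omega)} = 0.
\]
Theorem \ref{TStability} then grants $u_n$ a uniformly bounded scattering size in both time directions, contradicting \eqref{4.2}. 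Hence exactly one profile is non-trivial, $\lim_n\|\phi_n^1\|_{\dot H_D^{s_c}}^2 = E_c$, and by decoupling $\|W_n^1\|_{\dot H_D^{s_c}(\Omega)}\to 0$. Finally, if this lone profile belongs to Case 2, 3 or 4, or to Case 1 with $|t_n^1|\to\infty$, the embedding theorems again furnish a global $v_n^1$ with finite scattering norm, and Theorem \ref{TStability} contradicts \eqref{4.2}. We are therefore reduced to Case 1 with $t_n^1\equiv 0$, $\lambda_n^1\equiv 1$, $x_n^1 \equiv x_\infty$, which yields $u_n(0)\to \phi^1$ strongly in $\dot H_D^{s_c}(\Omega)$.

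The principal obstacle is proving the decoupling of the nonlinear profiles in spaces that carry the fractional derivative $(-\Delta_\Omega)^{s_c/2}$: Keraani's pointwise decoupling \cite{Keraani2001} presupposes integer derivatives and is unavailable here. I would follow the strategies of \cite{KillipVisan2010, Murphy2014} by introducing a Strichartz square function equivalent to $(-\Delta_\Omega)^{s_c/2}$ through Lemma \ref{LSquare function estimate} and the equivalence of Sobolev spaces in Theorem \ref{TEquivalence}, thereby transferring the decoupling question to a framework where Keraani's pointwise arguments apply after suitable Littlewood-Paley truncation; the local-in-space obstructions caused by the obstacle are absorbed by Corollary \ref{CLocalsmoothing}. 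Combined with the divergence of the parameters in \eqref{profile-4}, this produces the required decoupling and closes the argument.
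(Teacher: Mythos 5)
Your proposal follows essentially the same route as the paper: reduce to $t_n\equiv 0$, apply Theorem~\ref{linear-profile}, build nonlinear profiles via Theorem~\ref{TLWP} (Case 1) and the embedding Theorems~\ref{Tembbedding1}--\ref{Embed3} (Cases 2--4), assemble an approximate solution, verify the decoupling in $\dot N^{s_c}$ using the \cite{KillipVisan2010,Murphy2014} fractional-derivative square-function strategy together with Theorem~\ref{TEquivalence} and Corollary~\ref{CLocalsmoothing}, and close the argument with Theorem~\ref{TStability}. The dichotomy you present (either all profile masses stay strictly below $E_c$ and you derive a contradiction, or exactly one profile carries all of $E_c$) is equivalent to the paper's bad-profile argument.

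One step is, however, not correctly handled. After reducing to a single profile, you dispose of ``Case 1 with $|t_n^1|\to\infty$'' by asserting that ``the embedding theorems again furnish a global $v_n^1$ with finite scattering norm.'' The embedding theorems \ref{Tembbedding1}--\ref{Embed3} apply only to Cases 2--4, where the geometry degenerates; Case 1 is not covered by them. Moreover, for a single Case-1 profile with critical mass $E_c$, the wave-operator solution $v^1$ produced by Theorem~\ref{TLWP} is \emph{not} known to be global nor to have finite full-time scattering norm, so you cannot invoke Theorem~\ref{TStability} with $v^1$ as the approximate solution in both time directions. The correct mechanism (as in the paper) is different: when $t_n^1\to+\infty$, the Strichartz estimate together with the monotone convergence theorem shows that $S_{\geq 0}\bigl(e^{it\Delta_\Omega}u_n(0)\bigr)\to 0$ as $n\to\infty$, since the linear flow of $\phi^1$ has been pushed entirely into the far future; the small-data local theory then forces $S_{\geq 0}(u_n)\to 0$, contradicting \eqref{4.2} in the forward direction only. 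The backward direction is handled symmetrically if $t_n^1\to-\infty$. With this replacement, your argument is complete; as a minor remark, the time translation in your formula for $\tilde u_n^J$ should carry the sign $+\,(\lambda_n^j)^2 t_n^j$, matching $v_n^j(t)=v^j\bigl(t+t_n^j(\lambda_n^j)^2\bigr)$, so that $\tilde u_n^J(0)\approx u_n(0)$.
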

	We now outline the proof of this proposition, following the argument presented in \cite{KillipVisan2010AJM}. As in that framework, the key components are the linear profile decomposition (Theorem \ref{linear-profile} in our setting) and the stability result (Theorem \ref{TStability}).
	
	To begin, we translate the sequence so that each $t_n = 0$, and apply the linear profile decomposition (Theorem \ref{linear-profile}) to express
	\begin{equation}
		u_n(0) = \sum_{j=1}^J \phi_n^j + w_n^J, \label{E10251}
	\end{equation}
	with the properties specified in Theorem \ref{linear-profile}.
	
	Next, we proceed to construct the nonlinear profiles. For $j$ conforming to Case 1, we invoke Theorem \ref{TLWP} and define $v^j : I^j \times \mathbb{R}^d \to \mathbb{C}$ as the maximal-lifespan solution to \eqref{NLS} satisfying
	\[
	\begin{cases}
		v^j(0) := \phi^j & \text{if } t_n^j \equiv 0, \\
		v^j \text{ scatters to } \phi^j \text{ as } t \to \pm \infty & \text{if } t_n^j \to \pm \infty.
	\end{cases}
	\]
	We then define the nonlinear profiles $v_n^j(t,x) := v^j(t + t_n^j (\lambda_n^j)^2, x)$. By construction, $v_n^j$ is also a solution to \eqref{NLS} on the time interval $I_n^j := I^j - \{t_n^j (\lambda_n^j)^2\}$. For sufficiently large $n$, we have $0 \in I_n^j$ and 
	\begin{equation}
		\lim_{n \to \infty} \|v_n^j(0) - \phi_n^j\|_{\dot{H}^{s_c}_D(\Omega)} = 0. \notag
	\end{equation}
	
	For $j$ conforming to Cases 2, 3, or 4, we utilize the nonlinear embedding theorems from the previous section to construct the nonlinear profiles. Specifically, let $v_n^j$ be the global solutions to \eqref{NLS} constructed in Theorems \ref{Tembbedding1}, \ref{Tembedding2}, or \ref{Embed3}, as applicable.
	
	The $\dot{H}^{s_c}_D(\Omega)$ decoupling of the profiles $\phi^j$ in \eqref{profile-2}, along with the definition of $E_c$, ensures that for sufficiently large $j$, the profiles $v_n^j$ are global and scatter. Specifically, for $j \ge J_0$, the profiles fall within the small-data regime. 
	
	To complete the argument, we aim to show that there exists some $1 \leq j_0 < J_0$ such that
	\begin{equation}
		\limsup_{n \to \infty} S_{[0, \sup I^{j_0}_n)}(v_n^{j_0}) = \infty. \label{E10261}
	\end{equation}

	When a 'bad' nonlinear profile similar to (\ref{E10261}) emerges, it can be shown that such a profile is unique. This conclusion follows by adapting the approach in \cite[Lemma 3.3]{KillipVisan2010AJM}, demonstrating that $\dot{H}^{s_c}_D(\Omega)$ decoupling holds over time. Utilizing the 'critical' nature of $E_c$, we can exclude the existence of multiple profiles. Consequently, the decomposition (\ref{E10251}) has a single profile (i.e., $J^* = 1$), allowing us to express
	\begin{equation}
		u_n(0) = \phi_n + w_n \quad \text{with} \quad \lim_{n \to \infty} \|w_n\|_{\dot{H}^1_D(\Omega)} = 0. \label{7.7}
	\end{equation}
	If $\phi_n$ belongs to Cases 2, 3, or 4, then by Theorems \ref{Tembbedding1}, \ref{Tembedding2}, or \ref{Embed3}, there exist global solutions $v_n$ to (\ref{NLS}) with initial data $v_n(0) = \phi_n$ that satisfy a uniform space-time bound. Using Theorem \ref{TStability}, this bound extends to $u_n$ for sufficiently large $n$, leading to a contradiction with (\ref{4.2}). Thus, $\phi_n$ must align with Case 1, and (\ref{7.7}) simplifies to
	\begin{equation}
		u_n(0) = e^{it_n \lambda_n^2 \Delta_\Omega} \phi + w_n \quad \text{with} \quad \lim_{n \to \infty} \|w_n\|_{\dot{H}^{s_c}_D(\Omega)} = 0\notag
	\end{equation}
	where $t_n \equiv 0$ or $t_n \to \pm \infty$. If $t_n \equiv 0$, the desired compactness follows. Therefore, it remains to rule out the case where $t_n \to \pm \infty$.
	
	Assume $t_n \to \infty$ (the case $t_n \to -\infty$ is analogous). Here, the Strichartz inequality combined with the monotone convergence theorem gives
	\[
	S_{\geq 0}\left(e^{it\Delta_\Omega} u_n(0)\right) = S_{\geq 0}\left(e^{i(t + t_n \lambda_n^2) \Delta_\Omega} \phi + e^{it \Delta_\Omega} w_n\right) \longrightarrow 0 \quad \text{as} \quad n \to \infty.
	\]
	By small data theory, this result implies $S_{\geq 0}(u_n) \to 0$, contradicting (\ref{4.2}).
	
	To establish the existence of at least one bad profile, suppose, for contradiction, that no such profiles exist. In this case, the inequality
	\begin{equation}
		\sum_{j \geq 1} S_{[0,\infty)}(v_n^j) \lesssim_ {E_c} 1. \label{E10253}
	\end{equation}
	holds. For sufficiently large $n$, the solution lies within the small-data regime. Applying small-data local well-posedness, we obtain $S_{[0,\infty)}(v_n^j) \lesssim \|v_n^j\|_{\dot{H}^{s_c}_D(\Omega)}$, and the decoupling property (\ref{profile-2}) ensures that the tail is bounded by $E_c$.

	Next, we use \eqref{E10253} and the stability result (Theorem \ref{TStability}) to constrain the scattering size of $u_n$, contradicting \eqref{4.2}.
	To proceed, we define the approximations
	\begin{equation}
		u_n^J(t) = \sum_{j=1}^{J} v_n^j(t) + e^{it\Delta} w_n^J.
	\end{equation}
	By the construction of $v_n^j$, it is easy to verify that 
	\begin{equation}
		\limsup_{n \to \infty} \| u_n(0) - u_n^J(0) \|_{\dot{H}^{s_c}_D(\Omega)} = 0. \label{4.6}
	\end{equation}
	Furthermore, we claim:
	\begin{equation}
		\lim_{J \to \infty} \limsup_{n \to \infty} S_{[0,\infty)}(u_n^J) \lesssim_ {E_c} 1. \label{E10254}
	\end{equation}
	To justify \eqref{E10254}, observe that by \eqref{profile-1} and \eqref{E10253}, it suffices to prove
	\begin{equation}
		\lim_{J \to \infty} \limsup_{n \to \infty} \left| S_{[0,\infty)} \left( \sum_{j=1}^{J} v_n^j \right) - \sum_{j=1}^{J} S_{[0,\infty)}(v_n^j) \right| = 0. \label{4.8}
	\end{equation}
	Note that 
	\[
	\left|\left| \sum_{j=1}^{J} v_n^j \right|^{\frac{5\alpha }{2}} - \sum_{j=1}^{J} \left| v_n^j \right|^{\frac{5\alpha }{2}} \right|\lesssim_J \sum_{j \neq k} \left| v_n^j \right|^{\frac{5\alpha }{2}-1} \left| v_n^k \right|.
	\]
	It follows from  H\"older's inequality that 
	\begin{equation}
		\text{LHS} \eqref{4.8} \lesssim_J \sum_{j \neq k} \left\| v_n^j \right\|^{\frac{5\alpha }{2}-2}_{L_t^{\frac{5\alpha }{2}} L_x^{\frac{5\alpha }{2}} ([0,\infty) \times \Omega)} \left\| v_n^j v_n^k \right\|_{L_t^{\frac{5\alpha }{4}} L_x^{\frac{5\alpha }{4}} ([0,\infty) \times \Omega)}.
		\label{E1026s1}
	\end{equation}
	
	Following Keraani's argument \cite[Lemma 2.7]{Keraani2001}, with $j \neq k$, we can first use (\ref{approximate-1}), (\ref{Embed-2}) and (\ref{approximate-embed3}) to approximate $v^j$ and $v^k$ by compactly supported functions in $\mathbb{R} \times \mathbb{R}^3$, then using the asymptotic orthogonality \eqref{profile-4} to demonstrate
	\begin{equation}
		\limsup_{n \to \infty} \left(\|v_n^j v_n^k\|_{L_t^{\frac{5\alpha }{4}} L_x^{\frac{5\alpha }{4}} ([0,\infty) \times \Omega)}+ \|v_n^j(-\Delta _\Omega)^{\frac{s_c}{2}}v_n^k\|_{L_t^{\frac{5\alpha }{4}}L_x^{\frac{15\alpha }{15\alpha -8}}([0,\infty )\times \Omega)} \right) = 0.\label{E11161}
	\end{equation}
	Combining this with  \eqref{E1026s1}, we see that \eqref{4.8} (and hence \eqref{E10254}) is valid.
	
	With \eqref{4.6} and \eqref{E10254} in place, proving that $u_n^J$ asymptotically solves (\ref{NLS}) reduces to showing:
	\begin{equation}
		\lim_{J \to \infty} \limsup_{n \to \infty} \| (i \partial_t + \Delta) u_n^J - |u_n^J|^\alpha  u_n^J\|_{\dot N^{s_c}([0,\infty)\times \Omega)} = 0.\label{E11221}
	\end{equation}
	Once this is  established, we can apply the stability Theorem \ref{TStability}  to bound the scattering size of $u_n$, contradicting (\ref{4.2}).  This completes the proof of proposition \ref{Pps}.

	It sufficies to prove (\ref{E11221}), 	which  relys   on demonstrating:
	\begin{lemma}[Decoupling of nonlinear profiles]\label{LDecoupling of nonlinear profiles}Let  $F(u)=|u|^{\alpha }u$. Then 
		\begin{equation}
			\lim_{J \to \infty} \limsup_{n \to \infty} \|  F ( \sum_{j=1}^{J} v_n^j  ) - \sum_{j=1}^{J} F(v_n^j)   \|_{\dot N^{s_c}([0,\infty)\times \Omega)} = 0,\label{E11151}
		\end{equation}
		\begin{equation}
			\lim_{J \to \infty} \limsup_{n \to \infty} \|   F(u_n^J - e^{it \Delta} w_n^J) - F(u_n^J)   \|_{\dot N^{s_c}([0,\infty)\times \Omega)} = 0.\label{E11152}
		\end{equation}
	\end{lemma}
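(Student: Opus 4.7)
The plan is to reduce both decoupling statements to the Euclidean setting via Theorem \ref{TEquivalence}, and then to implement the Strichartz square function strategy of Killip--Visan \cite{KillipVisan2010} and Murphy \cite{Murphy2014} so that the fractional derivative $(-\Delta_\Omega)^{s_c/2}$ is effectively replaced by pointwise-compatible quantities. Once in that framework, Keraani's original pointwise decoupling argument can be run essentially verbatim.

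First, I would establish \eqref{E11151}. Writing $F(u)=|u|^\alpha u$, one has the algebraic bound
\begin{equation}
\Bigl|F\Bigl(\sum_{j=1}^J v_n^j\Bigr)-\sum_{j=1}^J F(v_n^j)\Bigr|\lesssim_J \sum_{j\neq k}|v_n^j|^{\alpha}|v_n^k|,\notag
\end{equation}
which, combined with Lemma~\ref{Lnonlinearestimate} and an induction on $J$, reduces the bound on the $\dot N^{s_c}$ norm to controlling, for each fixed pair $j\neq k$, cross-type expressions of the form
\begin{equation}
\||v_n^j|^{\alpha-1}v_n^j\,(-\Delta_\Omega)^{s_c/2}v_n^k\|_{L_t^{5\alpha/(2(\alpha+1))}L_x^{30\alpha/(27\alpha-8)}}\quad\text{and}\quad \||v_n^j|^{\alpha}v_n^k\,\text{(product rule tails)}.\notag
\end{equation}
After transferring $(-\Delta_\Omega)^{s_c/2}$ to $|\nabla|^{s_c}$ via Theorem~\ref{TEquivalence} (legitimate since $s_c<3/2$) and using Lemma~\ref{LFractional product rule}, the problem becomes the control of space-time integrals of the form $\|\,|v_n^j|^{\alpha-1-\sigma}|v_n^k|^{1+\sigma}\,\|$ with the $|\nabla|^{s_c}$ distributed across one factor. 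The crucial step is to invoke the approximation statements \eqref{approximate-1}, \eqref{Embed-2} and \eqref{approximate-embed3}, together with the analogue for Case~1, to replace each $v_n^j$ by a rescaled compactly supported function $\lambda_n^{s_c-3/2}\psi_\varepsilon^j((\lambda_n^j)^{-2}t,(\lambda_n^j)^{-1}(\cdot-x_n^j))$ up to an error of size $\varepsilon$ in $L_t^{5\alpha/2}\dot H_D^{s_c,30\alpha/(15\alpha-8)}$. A rescaling then converts the mixed norm into an integral over $\R\times\R^3$ where the asymptotic orthogonality \eqref{profile-4} forces either the scales, spatial centers, or time centers of $\psi_\varepsilon^j$ and $\psi_\varepsilon^k$ to diverge. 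In each of these alternatives a direct change of variables shows the cross term vanishes as $n\to\infty$; letting $\varepsilon\to 0$ closes the argument, exactly as in \eqref{E11161}.

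The main obstacle is the fractional derivative on the cross terms when $s_c>1$, because the elementary pointwise bound $||F(a+b)-F(a)|\lesssim (|a|^\alpha+|b|^\alpha)|b|$ no longer controls $|\nabla|^{s_c}$ of the difference. To handle this I would follow Killip--Visan and Murphy: introduce a Strichartz square function $\mathcal S f$ with $\|\mathcal S f\|_{L_t^q L_x^r}\sim \||\nabla|^{s_c}f\|_{L_t^q L_x^r}$ on the relevant admissible pairs, so that all relevant $\dot N^{s_c}$ norms are equivalent to $L^p$-norms of $\mathcal S$; the square function is built from Littlewood--Paley projections and admits Keraani-type pointwise manipulations. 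Applying the square function on one factor and Sobolev embedding on the other, and then using Corollary~\ref{CLocalsmoothing} to gain local-in-space smoothing when needed to absorb the derivative cost, reduces all cross terms to products that vanish under the asymptotic orthogonality of the parameters as above.

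Finally, for \eqref{E11152}, I would write $u_n^J-e^{it\Delta_\Omega}w_n^J=\sum_{j=1}^J v_n^j$ and apply Lemma~\ref{Lnonlinearestimate} with $u=u_n^J-e^{it\Delta_\Omega}w_n^J$ and $v=e^{it\Delta_\Omega}w_n^J$:
\begin{equation}
\|F(u+v)-F(u)\|_{\dot N^{s_c}}\lesssim \bigl(\|u\|_{L_{t,x}^{5\alpha/2}}^{\alpha-1}+\|v\|_{L_{t,x}^{5\alpha/2}}^{\alpha-1}\bigr)\bigl(\|(-\Delta_\Omega)^{s_c/2}u\|+\|(-\Delta_\Omega)^{s_c/2}v\|\bigr)^2.\notag
\end{equation}
By \eqref{E10254} together with the Strichartz control on $u_n^J$, the first factor stays bounded, while \eqref{profile-1} ensures that $\|e^{it\Delta_\Omega}w_n^J\|_{L_{t,x}^{5\alpha/2}}\to 0$ as $J\to J^*$ and $n\to\infty$. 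An interpolation between this space-time decay and the uniform $\dot H^{s_c}_D$-boundedness of $w_n^J$ (via \eqref{profile-2}) then drives the whole expression to zero in the iterated limit, yielding \eqref{E11152}.
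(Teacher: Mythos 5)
Your treatment of \eqref{E11151} is essentially the paper's: induction down to two summands, the algebraic splitting $F(f+g)-F(f)-F(g)=(|f+g|^\alpha-|f|^\alpha)f+(|f+g|^\alpha-|g|^\alpha)g$, then Theorem~\ref{TEquivalence} plus the Littlewood--Paley square function machinery of Murphy and Killip--Visan to reduce to integrands in which some $v_n^j$ is paired against $v_n^k$, which vanish by the orthogonality statement \eqref{E11161} (itself proved via compact-support approximation and \eqref{profile-4}). The passing mention of Lemma~\ref{Lnonlinearestimate} as the reduction tool is imprecise --- that lemma does not isolate cross terms and would retain non-vanishing diagonal pieces --- but you correct course and the substance matches the paper.

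Your treatment of \eqref{E11152}, however, has a genuine gap. Applying Lemma~\ref{Lnonlinearestimate} with $u=\sum_{j\le J}v_n^j$ and $v=e^{it\Delta_\Omega}w_n^J$ (or, more sharply, the asymmetric bound underlying it) produces, besides the harmless term $\|e^{it\Delta_\Omega}w_n^J\|_{L_{t,x}^{5\alpha/2}}^\alpha\,\|(-\Delta_\Omega)^{s_c/2}u\|$ which vanishes by \eqref{profile-1}, the term
\begin{equation*}
\|u_n^J\|_{L_{t,x}^{5\alpha/2}}^{\alpha}\,\|(-\Delta_\Omega)^{s_c/2}e^{it\Delta_\Omega}w_n^J\|_{L_t^{5\alpha/2}L_x^{30\alpha/(15\alpha-8)}},
\end{equation*}
and the second factor is merely bounded by $\|w_n^J\|_{\dot H^{s_c}_D(\Omega)}$ via Strichartz; \eqref{profile-2} bounds this above but gives no decay in $J$ or $n$. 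The interpolation you propose between the $L^{5\alpha/2}_{t,x}$ decay of $e^{it\Delta_\Omega}w_n^J$ and the $\dot H^{s_c}_D$ bound on $w_n^J$ only kills strictly fewer than $s_c$ derivatives, so it cannot reach the term above. This is precisely the obstruction the paper is built around: one must first pass to the square-function level and replace the $v_n^j$ by rescaled compactly supported functions via \eqref{approximate-1}, \eqref{Embed-2}, \eqref{approximate-embed3}, so that the contribution of $|\nabla|^{s_c}e^{it\Delta_\Omega}w_n^J$ needs to be estimated only on a bounded spacetime region, and there Corollary~\ref{CLocalsmoothing} (the local smoothing estimate) trades the full $s_c$ derivatives for a small positive power of $\|e^{it\Delta_\Omega}w_n^J\|_{L_{t,x}^{5\alpha/2}}$, which vanishes by \eqref{profile-1}. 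Without this step \eqref{E11152} does not close.
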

	In the energy-critical setting, i.e., $s_c = 1$, one can instead use the pointwise estimate
	\begin{equation}
		\left| \nabla \left( F\left( \sum_{j=1}^J v_n^j \right) - \sum_{j=1}^J F(v_n^j) \right) \right| \lesssim_J \sum_{j \neq k} |\nabla v_n^j| |v_n^k|^\alpha \label{E11153}
	\end{equation}
	and (\ref{E11161}) to prove (\ref{E11151}) and (\ref{E11152}); the key is to exhibit terms that all contain some $v_n^j$ paired against some $v_n^k$ for $j \neq k$. In the case $s_c = 0$, there are also pointwise estimates similar to (\ref{E11153}). However, when $s_c \neq 0, 1$, a new difficulty arises as the nonlocal operator $|\nabla|^{s_c}$ does not respect pointwise estimates in the spirit of (\ref{E11153}). 
	
	To address this issue, in the subcritical case ($s_c < 1$), Murphy \cite{Murphy2014} employs the Littlewood-Paley square function estimates, which hold for all $s > 0$ and $1 < r < \infty$:
	\begin{equation}
		\|(\sum N^{2s}|f_N(x)|^{2})^{1/2}\|_{L_x^r(\mathbb{R}^d)} \sim \|(\sum N^{2s}|f_{>N}(x)|^{2})^{1/2}\|_{L_x^r(\mathbb{R}^d)} \sim \||\nabla|^{s}f\|_{L_x^r(\mathbb{R}^d)}, \label{Eequvilat}
	\end{equation}
	to work at the level of individual frequencies. By utilizing maximal function and vector maximal function estimates, he adapts the standard arguments to this context.
	In the supercritical case ($s_c > 1$), Killip and Visan \cite{KillipVisan2010} employed the following equivalence (see, e.g., \cite{Strichartz1967JMM}):
	\begin{equation}
		\||\nabla|^{s}f\|_{L_x^q} \sim \|\mathcal{D}_s(f)\|_{L_x^q}, 
	\end{equation}
	where the operator $\mathcal{D}_s$ is defined as
	\[
	\mathcal{D}_s(f)(x) := \left( \int_0^\infty \left| \int_{|y| < 1} \frac{|f(x + ry) - f(x)|}{r^{1 + 2s}} \, dy \right|^2 dr \right)^{1/2},
	\]
	which behaves like $|\nabla|^s$ under symmetries. They then used the following pointwise inequality:
	\[
	\mathcal{D}_s\big(w \cdot [F'(u + v) - F'(u)]\big) \lesssim \mathcal{D}_s(w)|v|^\alpha  +  M(|w|)M(|v|)  \big[\mathcal{D}_s (u + v) + \mathcal{D}_s(u)\big],
	\]
	where $M$ denotes the Hardy-Littlewood maximal function. By combining this inequality with various permutations of the techniques discussed above, they adapted the standard arguments to this context.
	
	In this paper, we follow the arguments in \cite{Murphy2014,KillipVisan2010} and sketch the  proof of Lemma \ref{LDecoupling of nonlinear profiles}.
	\begin{proof}[\textbf{Proof of (\ref{E11151})}]
		By induction, it suffices to treat the case of two summands. To simplify notation, we write $f = v_n^j$ and $g = v_n^k$ for some $j \neq k$, and are left to show
		\begin{equation}
			\| |f+g|^\alpha (f+g) - |f|^\alpha f - |g|^\alpha g \|_{\dot N^{s_c}([0, \infty) \times \Omega)} \to 0 \quad \text{as } n \to \infty. \notag
		\end{equation}
		We first rewrite
		\[
		|f+g|^\alpha(f+g) - |f|^\alpha f - |g|^\alpha g 
		= \big( |f+g|^\alpha- |f|^\alpha \big)f + \big( |f+g|^\alpha - |g|^\alpha \big)g.
		\]
		By symmetry, it suffices to treat
		\begin{equation}
			\| \big( |f+g|^\alpha - |f|^\alpha \big)f \|_{\dot N^{s_c}([0, \infty) \times \Omega)}. \label{E11173}
		\end{equation}
		We then utilize Theorem \ref{TEquivalence} and the Littlewood-Paley square function estimates (\ref{Eequvilat}) to reduce (\ref{E11173}) to handling
		\begin{equation}
			\left\| \left( \sum_N \big||\nabla|^{s_c} P_N \big( \big(|f+g|^\alpha  - |f|^\alpha  \big)f \big)\big|^2 \right)^{\frac{1}{2}} \right\|_{L_t^{\frac{5\alpha}{2(\alpha +1)}} L_x^{\frac{30\alpha}{27\alpha - 8}}}. \label{E11177}
		\end{equation}
		Then the key step is to perform a decomposition such that all resulting terms to estimate have $f$ paired against $g$ inside a single integrand. For such terms, the asymptotic orthogonality (\ref{E11161}) can be used. 
		Following the arguments in \cite{Murphy2014}, we decompose (\ref{E11177}) into terms such that each term contains pairings of $f$ and $g$. For instance, one of the terms is
		\begin{equation}
			\|(\sum_N |N^{s_c}f_{>N}M(g|f|^{\alpha-1})|^2)^{1/2}\|_{L_t^{\frac{5\alpha}{2(\alpha +1)}} L_x^{\frac{30\alpha}{27\alpha - 8}}}. \label{E11178}
		\end{equation}
		Using H\"older's inequality and maximal function estimates, this term can be controlled as
		\begin{equation}
			\|(\sum_N |N^{s_c}f_{>N}|^2)^{1/2}\|_{L_t^{\frac{5\alpha }{2}}L_x^{\frac{30\alpha }{15\alpha -8}}} \||g||f|^{\alpha -1}\|_{L_{t,x}^{\frac{d+2}{2}}}. \notag
		\end{equation}
		By (\ref{Eequvilat}), the first term is bounded by $\||\nabla|^{s_c}v_n^j\|_{L_{t,x}^{\frac{2(d+2)}{d}}}$, which is further bounded by the construction of $v_n^j$. The second term vanishes as $n \to \infty$ due to the asymptotic orthogonality of parameters (\ref{E11161}). The other terms similar to (\ref{E11178}) can be handled similarly, thereby completing the proof of (\ref{E11151}).
	\end{proof}
	
	\begin{proof}[\textbf{Proof of (\ref{E11152})}]
		For this term, we will rely on (\ref{profile-1}) rather than (\ref{E11161}). The reasoning closely resembles the proof of (\ref{E11151}). Using the same approach as in the proof of (\ref{E11161}), we derive terms that involve either $e^{it\Delta}w_n^J$ or $|\nabla|^{s_c}e^{it\Delta}w_n^J$. 
		The terms where $e^{it\Delta}w_n^J$ appears without derivatives are relatively simple to address, as we can directly apply (\ref{profile-1}). On the other hand, the terms containing $|\nabla|^{s_c} e^{it\Delta} w_n^J$ demand a more detailed analysis. Specifically, we first use the local smoothing estimate from Corollary \ref{CLocalsmoothing}, followed by an application of (\ref{profile-1}) to demonstrate that these terms vanish as $n \to \infty$.
	\end{proof}

	We now apply the Palais-Smale condition in Proposition \ref{Pps} to prove Theorem \ref{TReduction}. 
	\begin{proof}[\textbf{Proof of Theorem \ref{TReduction}.}]
		Assume Theorem \ref{T1} is false. Using a standard argument (see, e.g., \cite[Theorem 5.2]{KillipVisan2013}), we can apply the Palais-Smale condition to construct a minimal counterexample $u:I \times \Omega \to \mathbb{C}$ satisfying  
		\begin{equation}
			S_{\ge0}(u) = S_{\le 0}(u) = \infty, \label{E11171}
		\end{equation}
		with its orbit $\{u(t): t \in I\}$ being precompact in $\dot{H}^{s_c}_D(\Omega)$. Additionally, since the modulation parameter $N(t) \equiv 1$ is compact, it follows that the maximal lifespan interval is $I = \mathbb{R}$ (see, e.g., \cite[Corollary 5.19]{KillipVisan2013}).  
		
		Next, we establish the lower bound in (\ref{E}) by contradiction. Suppose there exist sequences $R_n \to \infty$ and $\{t_n\} \subset \mathbb{R}$ such that  
		\[
		\int_{\Omega \cap \{|x| \leq R_n\}} |u(t_n, x)|^{\frac{3}{2}\alpha} \, dx \to 0.
		\]
		Passing to a subsequence, we obtain $u(t_n) \to \phi$ in $\dot{H}^{s_c}_D(\Omega)$ for some non-zero $\phi \in \dot{H}^{s_c}_D(\Omega)$. If $\phi$ were zero, the solution $u$ would have a $\dot{H}^{s_c}_D(\Omega)$ norm below the small data threshold, contradicting (\ref{E11171}). By Sobolev embedding, $u(t_n) \to \phi$ in $L^{\frac{3}{2}\alpha}$, and since $R_n \to \infty$,
		\begin{equation}
			\int_\Omega |\phi(x)|^{\frac{3}{2}\alpha} \, dx 
			= \lim_{n \to \infty} \int_{\Omega \cap \{|x| \leq R_n\}} |\phi(x)|^{\frac{3}{2}\alpha} \, dx 
			= \lim_{n \to \infty} \int_{\Omega \cap \{|x| \leq R_n\}} |u(t_n, x)|^{\frac{3}{2}\alpha} \, dx = 0.\notag
		\end{equation}
		This contradicts the fact that $\phi \neq 0$, thus completing the proof of Theorem \ref{TReduction}.
	\end{proof}

	\section{The cases $1<s_c<\frac{3}{2}$ and  $s_c=\frac{1}{2}$.}\label{S6}
	In this section, we rule out  the existence of almost periodic solutions as in   Theorem \ref{TReduction} in the cases  $1<s_c<3/2$ and  $s_c=\frac{1}{2}$.  The proof is based on    a space-localized Morawetz estimate as in the work of Bourgain \cite{Bourgain1999} on the radial energy-critical NLS.  See also \cite{Grillakis2000,Tao2005}.
	
	\begin{lemma}[Morawetz inequality]\label{L1091}
		Let  $1<s_c<\frac{3}{2}$  and let $u$ be a solution to  (\ref{NLS}) on the time interval $I$. Then for any $A \geq 1$ with $A |I|^{1/2} \geq \text{diam}(\Omega^c)$ we have
		\begin{equation}
			\int_I \int_{|x| \leq A |I|^{1/2}, x \in \Omega}  \frac{|u(t,x)|^{\alpha +2}}{|x|}\, dx \, dt \lesssim  (A|I|^{\frac{1}{2}})^{2s_c-1}\{ \|u\|_{L_t^{\infty }\dot H^{s_c}_D(I\times \Omega)} ^2+\|u\|_{L_t^{\infty }\dot H^{s_c}_D(I\times \Omega)} ^{\alpha +2}\}.\label{E1092}
		\end{equation}
	\end{lemma}
	\begin{proof}
		Let $\phi(x)$ be a smooth, radial bump function such that $\phi(x) = 1$ for $|x| \leq 1$ and $\phi(x) = 0$ for $|x| > 2$. We set $R \geq \text{diam}(\Omega^c)$ and denote $a(x) := |x| \phi\left(\frac{x}{R}\right)$. Then, for $|x| \leq R$ we have
		\begin{equation}
			\partial_j \partial_k a(x) \text{ is positive definite}, \quad \nabla a(x) = \frac{x}{|x|}, \quad \text{and} \quad \Delta \Delta a(x) < 0.
			\label{E1094}
		\end{equation}
		For $|x| > R$, we have the following rough bounds:
		\begin{equation}
			|\partial_k a(x)| \lesssim 1, \quad |\partial_j \partial_k a(x)| \lesssim \frac{1}{R}, \quad \text{and} \quad |\Delta \Delta a(x)| \lesssim \frac{1}{R^3}.\label{E1095}
		\end{equation}
		By the direct calculus, we have the following  identity
		\begin{equation}
			2\partial_t \text{Im}(\bar{u} \partial_j u) = - 4 \partial_k \text{Re}(\partial_k u \partial_j \bar{u}) +   \partial_j \Delta (|u|^2) - \frac{2\alpha }{\alpha +2} \partial_j (|u|^{\alpha +2}).\label{E1096}
		\end{equation}
		Multiplying  by $\partial_j a$ in both sides and integrating over $\Omega$, we obtain
		\begin{align}
			&2\partial_t \text{Im} \int_{\Omega} \bar{u} \partial_j u \partial_j a \, dx \notag\\
			&= -4 \text{Re} \int_{\Omega} \partial_k (\partial_k u \partial_j \bar{u}) \partial_j a \, dx+  \int_{\Omega} \partial_j \Delta (|u|^2) \partial_j a \, dx- \frac{2\alpha }{\alpha +2} \int_{\Omega} \partial_j (|u|^{\alpha +2}) \partial_j a \, dx.\label{E1091}
		\end{align}

		Now, we give the  upper bound of the  LHS of \eqref{E1091} which follows immediately from H\"older and the Sobolev embedding: 
		\begin{equation}
			2\left| \text{Im} \int_{\Omega} \bar{u} \partial_j u \partial_j a \, dx \right|\lesssim  \|u\|_{L_x^{\frac{6}{3-2s_c}}(\Omega)} \|\nabla u\|_{L_x^{\frac{6}{5-2s_c}}(\Omega)} \|\nabla a\|_{L_x^{\frac{3}{2s_c-1}}(\Omega)}\lesssim R^{2s_c-1} \|u\|^2_{\dot H_D^{s_c}(\Omega)} .\label{E1093}   
		\end{equation}
		
		Next, we find a lower bound on RHS of (\ref{E1091}).  By using the Gauss theorem, we 	get
		\begin{align*}
			-4 \text{Re} \int_{\Omega} \partial_k (\partial_k u \partial_j \bar{u}) \partial_j a \, dx
			&=4 \text{Re} \int_{\partial \Omega} \partial_k u \partial_{j}a\partial_j \bar{u} \vec{n}_k \, d\sigma(x) +4 \text{Re} \int_{\Omega} \partial_k u \partial_j \bar{u} \partial_k\partial_j a \, dx, 
		\end{align*}
		where $\vec{n}$ denotes the outer normal vector to $\Omega^c$. We write
		$\partial_j \bar{u}\vec{n}_j = \nabla \bar{u} \cdot \vec{n} = \bar{u}_n$ and  $\partial _jan_j=\nabla a\cdot \vec{n}=a_n$. 
		Moreover, from the Dirichlet boundary condition, the tangential derivative of $u$ vanishes on the boundary:
		\[
		\nabla u = (\nabla u \cdot \vec{n}) \vec{n} = u_n \vec{n}, \quad \text{and} \quad \partial_j \overline{u}_j\partial_j a = u_n a_n.
		\]
		Combining the analysis above and   (\ref{E1094}),  we obtain
		\begin{align}
			-4 \text{Re} \int_{\Omega} \partial_k (\partial_k u \partial_j \bar{u}) \partial_j a \, dx
			&\geq 4 \int_{\partial \Omega} a_n |u_n|^2 \, d\sigma(x) + 4 \int_{|x| \geq R} \partial_k u \partial_j \bar{u} \partial_k\partial_j a \, dx \\
			&\ge 4 \int_{\partial \Omega} a_n |u_n|^2 \, d\sigma(x) - \|\Delta a\|_{L_x^{\frac{3}{2(s_c-1)}}( \{x:|x|>R\})} \|\nabla u\|^2_{L_x^{\frac{6}{5-2s_c}}(\Omega)}\\  
			&\geq 4 \int_{\partial \Omega} a_n |u_n|^2 \, d\sigma(x) -  CR^{2s_c-3} \|u\|^2_{\dot H_D^{s_c}(\Omega)}.\label{E10111}
		\end{align}
		The second term on RHS of (\ref{E1091}) can be estimated by a similar argument:
		\begin{align}
			\int_{\Omega} \partial_j \Delta (|u|^2) \partial_j a \, dx 
			&=  \int_{\Omega} \partial_j ( \Delta (|u|^2) \partial_j a) dx 
			-  \int_{\Omega} \Delta (|u|^2) \Delta a \, dx\notag \\
			&= - \int_{\partial \Omega} \Delta (|u|^2) \partial_j a \vec{n}_j\, d\sigma(x) 
			-  \int_{\Omega} |u|^2 \Delta \Delta a \, dx \notag\\
			&= -2\int_{\partial \Omega} |\nabla u|^2 a_n \, d\sigma(x) 
			-   \int_{ |x|\le R} |u|^{2}\Delta ^2a\, dx  -\int _{|x|\ge R}|u|^{2}\Delta ^2a\, dx\notag\\
			&\geq -2 \int_{\partial \Omega} |u_n|^2 a_n \, d\sigma(x) 
			-   \|u\|_{L_x^{\frac{6}{3-2s_c}}(   \Omega)}^2 \|\Delta ^2a\|_{L_x^{\frac{3}{2s_c}}(  \{x:|x|>R\})}\notag\\
			&\ge   -2 \int_{\partial \Omega} |u_n|^2 a_n \, d\sigma(x) -CR^{2s_c-3} \|u\|_{\dot H_D^{s_c}(\Omega)}^2.\label{E10112}
		\end{align}

		Finally, it  remains to estimate the third term on RHS of (\ref{E1091}). By using (\ref{E1094})  and (\ref{E1095}),
		\begin{align}
			-&\frac{2\alpha }{\alpha +2} \int_{\Omega} \partial_j (|u|^{\alpha +2}) \partial_j a \, dx 
			= \frac{2\alpha }{\alpha +2} \int_{\Omega} |u|^{\alpha +2} \Delta a \, dx \notag\\
			&= \frac{4\alpha }{\alpha +2} \int_{|x| \leq R} \frac{|u|^{\alpha +2}}{|x|} \, dx -  \frac{4\alpha }{\alpha +2} \int _{\Omega \cap \{x:|x|>R\}}\Delta a |u|^{\alpha +2}dx\notag\\
			&\ge  \frac{4\alpha }{\alpha +2} \int_{|x| \leq R} \frac{|u|^{\alpha +2}}{|x|} \, dx - \|\Delta a\|_{L_x^{\frac{3}{2(s_c-1)}}( \{x:|x|>R\})} \| u\|_{L_x^{\frac{6}{3-2s_c}}(\Omega)}^{\alpha +2}\notag\\
			&\ge  \frac{4\alpha }{\alpha +2} \int_{|x| \leq R} \frac{|u|^{\alpha +2}}{|x|} \, dx -CR^{2s_c-3} \|u\|_{\dot H_D^{s_c}(\Omega)}^{\alpha +2}.\notag
		\end{align}
		Putting these together and using the fact that $a_n \geq 0$ on $\partial \Omega$, we have
		\begin{equation}
			\quad \text{LHS(\ref{E1091})} \gtrsim \int_{|x| \leq R} \frac{|u|^{\alpha +2}}{|x|} \, dx - R^{2s_c-3}  ( \|u\|_{\dot H_D^{s_c}(\Omega)}^2+ \|u\|_{\dot H_D^{s_c}(\Omega)}^{\alpha +2}  ).\label{E1097}
		\end{equation}
		
		Integrating (\ref{E1091}) over $I$ and using the upper bound for the LHD of  (\ref{E1091}) and the lower bound for the RHS of (\ref{E1091}), we finally deduce
		\[
		\int_I \int_{|x| \leq R, x \in \Omega} \frac{|u|^{\alpha +2}}{|x|} \, dx \, dt \lesssim R^{2s_c-1} \|u\|_{L_t^{\infty }\dot H^{s_c}_D(I\times \Omega)} ^2+  R^{2s_c-3}|I|\left\{\|u\|_{L_t^{\infty }\dot H^{s_c}_D(I\times \Omega)} ^2+\|u\|_{L_t^{\infty }\dot H^{s_c}_D(I\times \Omega)} ^{\alpha +2} \right\}.
		\]
		Taking $R = A |I|^{1/2}$ yields (\ref{E1092}). This completes the proof of the lemma.
	\end{proof}
	In the proof of Lemma \ref{L1091}, by taking $R \rightarrow +\infty$ and using the same argument as in \cite[Lemma 2.3]{CKSTT}  	to control the upper bound of the Morawetz action, we can obtain the following non-spatially localized Lin-Strauss Morawetz inequality.
	\begin{lemma}[Morawetz inequality]\label{L10911}
		Let  $s_c=\frac{1}{2}$  and let $u$ be a solution to  (\ref{NLS}) on the time interval $I$. Then  we have
		\begin{equation}
			\int_I \int_{ \Omega}  \frac{|u(t,x)|^{\alpha +2}}{|x|}\, dx \, dt  \lesssim   \|u\|_{L^\infty _t\dot H^{\frac{1}{2}}_D(\Omega)}^2 .\label{E109}
		\end{equation}
	\end{lemma}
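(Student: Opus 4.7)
The plan is to redo the Morawetz computation of Lemma \ref{L1091} with the unbounded weight $a(x)=|x|$ in place of the truncated $|x|\phi(x/R)$, which formally corresponds to letting $R\to\infty$. In dimension three one has $\nabla a=x/|x|$, the Hessian $\partial_j\partial_k a=|x|^{-1}(\delta_{jk}-x_jx_k/|x|^2)$ is positive semidefinite, $\Delta a=2/|x|>0$, and $\Delta^2 a=-8\pi\delta_0$ in $\R^3$. Since $0\in\Omega^c$, the distribution $\Delta^2 a$ vanishes on $\Omega$, so the bi-Laplacian error term from the analogue of \eqref{E10111} disappears outright---no $R^{2s_c-3}$ tail correction is needed. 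Convexity of $\Omega^c$ together with $0\in\Omega^c$ still furnishes $a_n=(x/|x|)\cdot\vec n\ge 0$ on $\partial\Omega$ via a supporting-hyperplane argument, so after combining the two boundary contributions exactly as in the proof of Lemma \ref{L1091} one is left with the nonnegative quantity $2\int_{\partial\Omega}a_n|u_n|^2\,d\sigma$, which can be discarded.

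Feeding $a=|x|$ into the Morawetz identity \eqref{E1091} and dropping every nonnegative term on the right-hand side then yields the pointwise-in-time inequality
\begin{equation}\label{EMorSketch2}
	\int_\Omega \frac{|u(t,x)|^{\alpha+2}}{|x|}\,dx \;\lesssim\; \partial_t\,\mathrm{Im}\int_\Omega \bar u(t,x)\,\nabla u(t,x)\cdot\frac{x}{|x|}\,dx.
\end{equation}
Integrating \eqref{EMorSketch2} over $I$ and applying the fundamental theorem of calculus on the right, the claim \eqref{E109} reduces to the uniform-in-time bound
\begin{equation}\label{EMorSketch1}
	\Big|\,\mathrm{Im}\int_\Omega \bar u\,\nabla u\cdot\tfrac{x}{|x|}\,dx\,\Big|\;\lesssim\; \|u(t)\|_{\dot{H}^{1/2}_D(\Omega)}^2,
\end{equation}
which is the classical CKSTT-type estimate on the Morawetz action at the critical regularity $s_c=\tfrac{1}{2}$.

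The main obstacle is precisely \eqref{EMorSketch1}: the H\"older argument \eqref{E1093} used $\|\nabla a\|_{L^{3/(2s_c-1)}_x}$, which is infinite when $s_c=\tfrac{1}{2}$ and $a=|x|$, so crude H\"older no longer suffices and one must exploit genuine cancellation at the critical regularity. Following \cite[Lemma 2.3]{CKSTT}, I would interpret \eqref{EMorSketch1} as the $\dot{H}^{1/2}_D$--$\dot{H}^{-1/2}_D$ duality pairing $\langle u,\,\hat r\cdot\nabla u\rangle$, so it suffices to prove the operator bound
$$\|\hat r\cdot\nabla u\|_{\dot{H}^{-1/2}_D(\Omega)}\;\lesssim\;\|u\|_{\dot{H}^{1/2}_D(\Omega)}.$$
I would verify this by pairing against a test function $v\in C_c^\infty(\Omega)$, moving one derivative off $u$ by integration by parts to produce expressions of the form $\bigl(u/|x|^{1/2}\bigr)\bigl(v/|x|^{1/2}\bigr)$ together with milder pieces, and then controlling them via the fractional Hardy inequality of Section \ref{S2} combined with the Sobolev-norm equivalence of Theorem \ref{TEquivalence} (which lets one transfer the standard Euclidean Hardy bound into $\Omega$). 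Plugging \eqref{EMorSketch1} back into \eqref{EMorSketch2} and integrating over $I$ then completes the proof of \eqref{E109}.
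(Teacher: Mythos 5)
Your proposal matches the paper's proof of Lemma \ref{L10911}, which is precisely to take $R\to\infty$ in Lemma \ref{L1091} (using $0\in\Omega^c$ so that $\Delta^2|x|=-8\pi\delta_0$ does not see $\Omega$, and convexity so that $a_n\ge0$ on $\partial\Omega$) and then bound the Morawetz action uniformly in time by $\|u\|_{\dot H^{1/2}_D(\Omega)}^2$ via the argument of \cite[Lemma 2.3]{CKSTT}. One small imprecision worth flagging: for $a=|x|$ and $s_c=\tfrac12$ one has $\|\nabla a\|_{L^\infty}=1<\infty$, so that factor is not what fails; the H\"older route \eqref{E1093} actually breaks because $\|\nabla u\|_{L^{3/2}_x}$ cannot be controlled by $\|u\|_{\dot H^{1/2}_D}$ (one cannot gain half a derivative), which is precisely why the CKSTT commutator-type estimate is required.
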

	We now use Lemma \ref{L1091} and Lemma \ref{L10911} to prove the following. 
	\begin{theorem}\label{T1091}
		There are no almost periodic solutions   $u$ to (\ref{NLS}) as in Theorem  \ref{TReduction} with  $1<s_c<3/2$ or  $s_c=\frac{1}{2}$. 
	\end{theorem}
	
	\begin{proof}
		By contradiction, we suppose that   there exists a minimal  blow-up solution $u$ whose the orbit is precompact in Hilbert space $\dot{H}_D^{s_c}(\Omega)$ and  satisfies
		(\ref{E}).   
		We first claim that 
		\begin{equation}
			\int _{\Omega\cap \{x:|x|\le R\}}|u(t,x)|^{\alpha +2}dx\gtrsim _R1, \qquad \text{uniformly for } t\in \mathbb{R}. \label{E12221}
		\end{equation}
		Indeed, by (\ref{E}) and H?lder's inequality, it suffices to show that 
		\begin{equation}
			\int _{\Omega\cap \{x:|x|\le R\}}|u(t,x)|^{2}dx\gtrsim 1, \qquad \text{uniformly for } t\in \mathbb{R}. \label{E12222}
		\end{equation}
		Applying H?lder's inequality, Sobolev embedding, Bernstein and (\ref{E10101}), we obtain
		\begin{align}
			&\left|\int _{\Omega\cap \{x:|x|\le R\}}|u(t,x)|^2-|P^{\Omega}_{<C(\eta)}u(t,x)|^2dx\right|\notag\\
			&\lesssim R^{s_c} \|P^{\Omega}_{>C(\eta)}u(t,x)\|_{L_x^{2}(\Omega)}  \|u(t,x)\| _{L_x^{\frac{6}{3-2s_c}}(\Omega)} 
			\lesssim \eta R^{s_c} \|u\|_{L_t^{\infty }\dot H^{s_c}_D(I\times\Omega)}. \label{E1222x1} 
		\end{align}
		On the other hand, using H?lder's inequality and Sobolev embedding again, we have
		\begin{align}
			&\left|\int _{\Omega\cap \{x:|x|\le R\}} |u(t,x)|^{\frac{3\alpha }{2}}-|P^{\Omega}_{<C(\eta)}u(t,x)|^{\frac{3\alpha }{2}}dx\right|\notag\\
			&\lesssim \|P^{\Omega}_{>C(\eta)}u\|_{L_t^{\infty }L_x^{\frac{6}{3-2s_c}}(I\times \Omega)} \|u\|_{L_t^{\infty }L_x^{\frac{6}{3-2s_c}}(I\times \Omega)}^{\frac{3\alpha }{2}-1} \lesssim \eta \|u\|_{L_t^{\infty }\dot H^{s_c}_D(I\times \Omega)}^{\frac{3\alpha }{2}-1} . \notag  
		\end{align}
		Combining the  above  inequality with (\ref{E}), and further applying H?lder's inequality and Bernstein's estimate, we deduce
		\begin{align}
			1\lesssim& \int _{\Omega\cap \{x:|x|\le R\}}|P^{\Omega}_{<C(\eta)}u|^{\frac{3\alpha }{2}}dx\notag\\
			& \lesssim \|P^{\Omega}_{<C(\eta)}u\| _{L_x^{\infty }(\Omega)}^{\frac{3\alpha }{2}-2}\int _{\Omega\cap \{x:|x|\le R\}}|P^{\Omega}_{<C(\eta)}u|^{2}dx \lesssim \int _{\Omega\cap \{x:|x|\le R\}}|P^{\Omega}_{<C(\eta)}u|^{2}dx. \notag
		\end{align}
		By combining the   above  inequality with (\ref{E1222x1}) and choosing $\eta > 0$ sufficiently small, we establish (\ref{E12222}), thereby completing the proof of (\ref{E12221}).
		
		We now proceed to prove Theorem \ref{T1091}.
		Integrating  (\ref{E12221}) over  $I$ with length $|I| \geq 1$, we obtain
		\[
		|I| \lesssim _R \int_I  \int_{\Omega \cap \{|x| \leq R\}} \frac{|u(t, x)|^{\alpha +2}}{|x|} \, dx \, dt \lesssim_ R \int_I  \int_{\Omega \cap \{|x| \leq R |I|^{1/2} \}} \frac{|u(t, x)|^{\alpha +2}}{|x|} \, dx \, dt.
		\]
		On the other hand, for $R |I|^{1/2} \geq 1$, the Morawetz inequality (Lemma \ref{L1091}  and Lemma \ref{L10911}) yields that
		\[
		|I|\lesssim _R\int_I  \int_{\Omega \cap \{|x| \leq R |I|^{1/2} \}} \frac{|u(t, x)|^{\alpha +2}}{|x|} \, dx \, dt \lesssim |I| ^{s_c-\frac{1}{2}},
		\]
		with the implicit constant depending only on $ R $ and $   \|u\|_{L_t^{\infty }\dot H^{s_c}_D(I\times \Omega)} $.
		Choosing $I$ sufficiently large  depending  on $R$ and $\|u\|_{L_t^{\infty }\dot H^{s_c}_D(I\times \Omega)} $, we get a contradiction, which completes the proof of Theorem  \ref{T1091}.
	\end{proof}
	\section{The case $\frac{1}{2}<s_c<1$.}\label{S1/2-1}
	In this section, we rule out the almost periodic solutions in the case $1/2<s_c<1$. The key tool employed is a long-time Strichartz estimate, which will be established in subsection \ref{s1/2-1,1}. In subsection \ref{s1/2-1,2}, we derive a frequency-localized Lin-Strauss Morawetz inequality, and in subsection \ref{s1/2-1,3}, we use it to exclude almost periodic solutions.
	
	\subsection{Long-time Strichartz estimates}\label{s1/2-1,1}
	
	In this subsection, we prove a long-time Strichartz estimate tailored to the Lin-Strauss Morawetz inequality. This type of estimate, initially developed by Dodson \cite{Dodson2012}, has demonstrated its effectiveness in excluding minimal counterexamples. For references, see \cite{KillipVisan2012,Visan2012} for the energy-critical case, \cite{Murphy2014,Murphy2015,Yu2021} for the inter-critical regime, and \cite{Dodson2017,LuZheng2017,MiaoMurphyZheng2014,Murphy2015} for the super-critical setting. 
	In this work, we introduce for the first time a long-time Strichartz estimate applicable to the exterior domain Schr\"odinger equation. This estimate plays a pivotal role in subsection \ref{s1/2-1,2}, where it is used to handle the error terms arising from frequency projection in the Lin-Strauss Morawetz inequality.

	Throughout this subsection \ref{S1/2-1}, we use the following notation: 
	\begin{equation}
		A_I(N):= \|(-\Delta _\Omega)^{\frac{s_c}{2}}P^\Omega _{\le N}u\|_{L^2_tL_x^6(I\times \Omega)}.\notag 
	\end{equation}
	The main result of this subsection is the following.
	\begin{proposition}[Long time Strichartz estimate]\label{PLT2}
		Let $u :\mathbb{R} \times \Omega\to \mathbb{C}$ be an almost periodic solution  to (\ref{NLS}) with $1/2 < s_c < 1$.  Then for any $N > 0$, we have
		\begin{equation}
			A_I(N) \lesssim_u 1 + N^{s_c-1/2} |I|^{1/2}. \label{E10106}
		\end{equation}
		Moreover, for any $\varepsilon > 0$, there exists $N_0 = N_0(\varepsilon) > 0$ so that for any $N \leq N_0$,
		\begin{equation}
			A_I(N) \lesssim_u \varepsilon (1 + N^{s_c - 1/2} |I|^{1/2}).  \label{E10107}
		\end{equation}
	\end{proposition}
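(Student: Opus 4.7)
I would follow the long-time Strichartz framework of Dodson \cite{Dodson2012}, as adapted to the inter-critical regime by Murphy \cite{Murphy2014,Murphy2015}, and lift it to the exterior domain setting. Fix a small parameter $\eta = \eta(u) > 0$ and partition $I$ into subintervals $J_1, \ldots, J_K$ on each of which $\|u\|_{L_{t,x}^{5\alpha/2}(J_k \times \Omega)} \sim \eta$; by Lemma \ref{Lspace-time bound} together with Sobolev embedding and the almost periodicity of $u$, one has $K \lesssim_u 1 + |I|$. On each $J_k$, the endpoint Strichartz inequality of Proposition \ref{PStrichartz} applied to $(-\Delta_\Omega)^{s_c/2} P^\Omega_{\le N} u$ gives
\begin{equation*}
A_{J_k}(N) \lesssim \|P^\Omega_{\le N} u(t_k)\|_{\dot H^{s_c}_D(\Omega)} + \|(-\Delta_\Omega)^{s_c/2} P^\Omega_{\le N}(|u|^{\alpha} u)\|_{L_t^2 L_x^{6/5}(J_k \times \Omega)},
\end{equation*}
with the first term uniformly $\lesssim 1$ by the a priori $\dot H^{s_c}_D$ bound.

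\textbf{Paraproduct decomposition of the Duhamel term.} The core step is to control the Duhamel contribution above through a paraproduct expansion that exploits the outer frequency cutoff $P^\Omega_{\le N}$. Writing $u = u_{\le N/\eta} + u_{>N/\eta}$ and expanding $|u|^{\alpha} u$ via Lemmas \ref{LFractional product rule} and \ref{LChainrule}, one separates the contributions into: (i) a \emph{low--low} piece, estimated by $\eta^{\alpha} A_{J_k}(N)$ via H\"older's inequality and the smallness of $\|u\|_{L_{t,x}^{5\alpha/2}(J_k \times \Omega)}$; and (ii) pieces with at least one high-frequency factor $P^\Omega_M u$ at scale $M > N/\eta$, for which the equivalence of Sobolev spaces (Theorem \ref{TEquivalence}) combined with Bernstein's inequality (Lemma \ref{LBernstein estimates}) yields a decay $(N/M)^{\delta}$ for some $\delta > 0$. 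The resulting dyadic sum in $M$ converges absolutely and is bounded uniformly in $|I|$ by the a priori $L_t^\infty \dot H^{s_c}_D$ estimate.

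\textbf{Bootstrap and closure.} Absorbing the low--low piece for $\eta$ sufficiently small, squaring the Strichartz estimate on each $J_k$, and summing over $k = 1, \ldots, K$ yields a recursion of the form
\begin{equation*}
A_I(N)^2 \lesssim_u K + \sum_{N' \le N \text{ dyadic}} c_{N'/N}\, A_I(N')^2,
\end{equation*}
with $c_{N'/N}$ decaying as a positive power of $N'/N$. Since $K \lesssim_u 1 + |I|$, an induction on dyadic $N$, whose base case at a characteristic frequency $N_* = N_*(u)$ is supplied by the trivial Bernstein and Sobolev bounds together with almost periodicity, then produces $A_I(N)^2 \lesssim_u 1 + N^{2s_c-1} |I|$, i.e. (\ref{E10106}). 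The refinement (\ref{E10107}) follows by invoking the low-frequency smallness (\ref{E10101}): for $N \le N_0(\varepsilon) \le c(\varepsilon)$ one has $\|P^\Omega_{\le N} u(t_k)\|_{\dot H^{s_c}_D(\Omega)} < \varepsilon$ uniformly in $t_k$, and threading this $\varepsilon$ factor through the bootstrap in place of the $O(1)$ data bound gives the claimed $\varepsilon$-smallness.

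\textbf{Main obstacle.} The principal technical difficulty is that the Dirichlet Littlewood--Paley projections $P^\Omega_{\le N}$ do not admit sharp frequency-localization identities, so Euclidean paraproduct tricks such as the vanishing (up to commutators) of $P^{\mathbb{R}^3}_{\le N}(f_{>2N}\, g_{>2N})$ are unavailable on $\Omega$. I would circumvent this by transferring all fractional product and chain rule manipulations back to $\mathbb{R}^3$ via Theorem \ref{TEquivalence} and then applying the Euclidean calculus of Lemmas \ref{LFractional product rule} and \ref{LChainrule}; the restriction $s_c \in (1/2, 1)$ is precisely what keeps these tools valid at the dual Strichartz exponent $(q',r') = (2, 6/5)$ relevant to the nonlinearity. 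A secondary obstacle is the bookkeeping required to ensure that every subterm of the paraproduct expansion lands in a space controlled by either $A_{J_k}(N)$, a strictly lower-frequency version $A_I(N')$ (to make the induction well founded), or the uniform $\dot H^{s_c}_D$ bound inherited from almost periodicity.
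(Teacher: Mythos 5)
Your overall architecture (decompose $u$ at frequency $N/\eta$, gain smallness from the low-low piece, obtain decay $(N/M)^\delta$ from pieces carrying a high frequency $M > N/\eta$, induct on dyadic $N$) is the right skeleton, but there are two substantive errors. First and most importantly, your claim that ``the resulting dyadic sum in $M$ converges absolutely and is bounded uniformly in $|I|$ by the a priori $L_t^\infty \dot H^{s_c}_D$ estimate'' is false. The high-frequency pieces are controlled, after Bernstein, by $\sum_{M>N/\eta}(N/M)^{s_c}A_I(M)$, and $A_I(M)$ grows like $M^{s_c-1/2}|I|^{1/2}$ — it is \emph{not} bounded uniformly in $|I|$. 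The entire point of the induction is to feed the inductive hypothesis $A_I(M) \lesssim_u 1 + M^{s_c-1/2}|I|^{1/2}$ into this tail; the geometric decay $(N/M)^{s_c}$ then produces a factor $\eta^{s_c}$ in front of the constant term and a factor $\eta^{1/2}$ in front of $N^{s_c-1/2}|I|^{1/2}$, which is what lets the recursion close after choosing $\eta$ small and $C_u$ large. If the tail really were $O(1)$ you would have the result in one step with no induction, which is not what happens. Relatedly, your written recursion $A_I(N)^2 \lesssim_u K + \sum_{N'\le N}c_{N'/N}A_I(N')^2$ has the wrong frequency direction: the decomposition produces higher frequencies $M>N/\eta$, the induction must run from high $N$ down to low $N$, and the base case is the entire range $N>1$ (supplied by Lemma \ref{Lspace-time bound}, since $N^{s_c-1/2}\ge 1$ there), not a single characteristic $N_*(u)$.

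A secondary point is that the paper does \emph{not} partition $I$ into subintervals of small Strichartz norm. It applies the Strichartz estimate directly on all of $I$, and obtains the smallness of the low-low piece not from local-in-time smallness of $\|u\|_{L_{t,x}^{5\alpha/2}(J_k)}$ but from the almost-periodicity estimate (\ref{E10101}): one further splits $u_{\le N/\eta}$ at frequency $c(\eta)$, and $\|(-\Delta_\Omega)^{s_c/2}P^\Omega_{\le c(\eta)}u\|_{L_t^\infty L_x^2}<\eta$ gives the gain, while the middle frequencies $c(\eta)<\cdot\le N/\eta$ are absorbed into the $C_\eta N^{s_c-1/2}|I|^{1/2}$ term via Bernstein and Lemma \ref{Lspace-time bound}. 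Your subinterval route could in principle be made to work, but you would need to track the $\eta$-dependence of the number of intervals $K$ (which scales like $\eta^{-5\alpha/2}(1+|I|)$, not just $1+|I|$), and you still cannot avoid the inductive use of the long-time bound on the high-frequency tail.
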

	We prove Proposition  \ref{PLT2} by induction. The inductive step relies on the following.
	\begin{lemma}\label{LLT2}
		Let  $\eta>0$,  $u$  and  $I$ be as above. For  any  $N>0$, we have 
		\begin{equation}
			\|(-\Delta _\Omega)^{\frac{s_c}{2}}P^\Omega_{\le N}F(u)\|_{L^2_tL^{6/5}_x(I\times \Omega)}\lesssim _u C_\eta  \|u_{\le N/\eta}\|_{L^\infty _t\dot H^{s_c}_{D}(\Omega)}N^{s_c-1/2}|I|^{1/2}+\sum _{M>N/\eta}\left(\frac{N}{M}\right)^{s_c}A_I(M).\notag  
		\end{equation}
	\end{lemma}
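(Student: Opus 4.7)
The strategy is to decompose the nonlinearity $F(u)=|u|^{\alpha}u$ according to the frequency profile of its arguments. Write $u=u_{\le N/\eta}+u_{>N/\eta}$ and correspondingly
\[
F(u)=F(u_{\le N/\eta})+\bigl[F(u)-F(u_{\le N/\eta})\bigr].
\]
We then estimate the $L^{2}_{t}L^{6/5}_{x}$-norm of $(-\Delta_{\Omega})^{s_{c}/2}P^{\Omega}_{\le N}$ applied to each summand separately, the first contributing the $C_{\eta}\|u_{\le N/\eta}\|_{L^{\infty}_{t}\dot H^{s_{c}}_{D}}N^{s_{c}-1/2}|I|^{1/2}$ term, and the second producing the sum indexed by $M>N/\eta$.

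For the low-frequency contribution $P^{\Omega}_{\le N}F(u_{\le N/\eta})$, first pull out a factor of $|I|^{1/2}$ via H\"older in time, reducing to an $L^{\infty}_{t}L^{6/5}_{x}$ spatial estimate at fixed time. Invoking Theorem~\ref{TEquivalence} to pass to the Euclidean fractional derivative and applying the fractional chain rule of Lemma~\ref{LChainrule} with $p_{1}=3$ and $p_{2}=2$, we bound
\[
\|(-\Delta_{\Omega})^{s_{c}/2}F(u_{\le N/\eta})\|_{L^{6/5}_{x}} \lesssim \|u_{\le N/\eta}\|_{L^{3\alpha}_{x}}^{\alpha}\|u_{\le N/\eta}\|_{\dot H^{s_{c}}_{D}}.
\]
Bernstein's inequality and Sobolev embedding on the frequency-truncated factor $u_{\le N/\eta}$ then produce the requisite powers of $N$ and $\eta$, while $\alpha$ factors of $\|u\|_{L^{\infty}_{t}\dot H^{s_{c}}_{D}}$ are absorbed into $C_{\eta}$ by the almost periodicity of $u$. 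The delicate point is to obtain exactly $N^{s_{c}-1/2}$ rather than the naive $N^{s_c}$ one gets from applying Bernstein directly to $(-\Delta_{\Omega})^{s_c/2}P^{\Omega}_{\le N}$; this will require interpolating between the $L^{2}_{x}$ bound on $u_{\le N/\eta}$ supplied by Bernstein and an $L^{p}_{x}$ bound supplied by Sobolev embedding, with the $|I|^{1/2}$ factor coming from placing one of the factors in $L^{2}_{t}$.

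For the high-frequency remainder, the pointwise bound
\[
|F(u)-F(u_{\le N/\eta})| \lesssim \bigl(|u|^{\alpha}+|u_{\le N/\eta}|^{\alpha}\bigr)|u_{>N/\eta}|
\]
permits us to write the contribution as a sum over dyadic $M>N/\eta$ of terms each containing a single factor $u_{M}$. For each such term the key gain $(N/M)^{s_{c}}$ is extracted from the almost-orthogonality relation $P^{\Omega}_{\le N}P^{\Omega}_{M}=0$ (valid once $M$ exceeds a small multiple of $N$), combined with Bernstein's inequality comparing the scales $N$ and $M$. The $L^{2}_{t}L^{6}_{x}$-norm of the surviving factor carrying $s_{c}$ derivatives at frequency $\lesssim M$ produces $A_{I}(M)$, while the remaining $\alpha$ factors of $u$ are controlled uniformly in time by almost periodicity.

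The main technical obstacle will be the bookkeeping required to distribute the fractional derivative $(-\Delta_{\Omega})^{s_{c}/2}$ across the $\alpha+1$ factors of $u$ at different frequency scales while simultaneously respecting the range of exponents imposed by Theorem~\ref{TEquivalence} and Lemma~\ref{LChainrule}. The equivalence of Sobolev spaces is essential here, since it allows us to transfer the Euclidean Leibniz and chain rules to the Dirichlet setting, and its index restrictions are ultimately responsible for the range $\tfrac{1}{2}<s_{c}<1$ in the hypothesis of Proposition~\ref{PLT2}. A secondary subtlety is to verify that the Littlewood-Paley almost-orthogonality used in the high-frequency analysis holds up to admissible error terms in the Dirichlet functional calculus, which follows from the disjoint supports of the cutoff functions $\phi_{N}$ and $\psi_{M}$ in the spectral parameter.
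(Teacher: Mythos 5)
Your overall decomposition $F(u)=F(u_{\le N/\eta})+[F(u)-F(u_{\le N/\eta})]$ matches the paper, but the mechanisms you describe for both pieces are off in ways that matter.

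The clearest problem is in your treatment of the high-frequency remainder. You attribute the gain $(N/M)^{s_c}$ to ``the almost-orthogonality relation $P^\Omega_{\le N}P^\Omega_M=0$.'' That relation is irrelevant here: $P^\Omega_{\le N}$ is applied to the \emph{nonlinear} expression $(|u|^\alpha+|u_{\le N/\eta}|^\alpha)u_M$, not composed with $P^\Omega_M$ acting on the same function, and a product of functions with prescribed Littlewood--Paley supports has spectral content at all scales. If the projectors really annihilated each other the term would vanish entirely rather than give a polynomial decay. The actual source of the gain in the paper is elementary Bernstein applied twice: $(-\Delta_\Omega)^{s_c/2}P^\Omega_{\le N}$ costs $N^{s_c}$ on the whole expression, and $\|u_M\|_{L^2_tL^6_x}\lesssim M^{-s_c}\|(-\Delta_\Omega)^{s_c/2}u_M\|_{L^2_tL^6_x}\le M^{-s_c}A_I(M)$. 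No spectral-support considerations are used or needed.

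For $F(u_{\le N/\eta})$ the paper does not H\"older in time to a fixed-time estimate; instead it applies the fractional chain rule to get $\|(-\Delta_\Omega)^{s_c/2}u_{\le N/\eta}\|_{L^\infty_tL^2_x}^\alpha\,\|(-\Delta_\Omega)^{s_c/2}u_{\le N/\eta}\|_{L^2_tL^6_x}$ and then performs a \emph{second} frequency split on the $L^\infty_tL^2_x$ prefactor at the compactness scale $c(\eta)$. The low part is bounded by $\eta$ via almost periodicity \eqref{E10101}, producing the $\eta^{s_c}A_I(N/\eta)$ contribution (which is what makes the subsequent induction close); the high part, after observing $c(\eta)<N/\eta$ forces $1\lesssim_\eta N^{s_c-1/2}$ (this is where $s_c>1/2$ enters), together with Lemma~\ref{Lspace-time bound} bounding $A_I(N/\eta)$ by $(1+|I|)^{1/2}$, gives the $C_\eta N^{s_c-1/2}|I|^{1/2}$ term. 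Your proposal misses the extra split at $c(\eta)$ and the role of \eqref{E10101}, and it misattributes the factor $N^{s_c-1/2}$ to a Bernstein--Sobolev interpolation; it is in fact a ``soft'' gain obtained by the observation $1\lesssim_\eta N^{s_c-1/2}$ together with the constraint $c(\eta)<N/\eta$. You also describe the time factor both as coming from H\"older in time and from placing a factor in $L^2_t$, which are inconsistent with each other; in the paper it is Lemma~\ref{Lspace-time bound} that supplies the $|I|^{1/2}$.
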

	\begin{proof}
		We fix $0 < \eta < 1$ and decompose the nonlinearity as follows:
		\[
		F(u) = F(u_{\leq N/\eta}) + \left[F(u) - F(u_{\leq N/\eta}) \right].
		\]
		Using the fractional chain rule (\ref{E12133}), H\"older and Sobolev embedding, we estimate
		\begin{align}
			&		\|(-\Delta _\Omega)^{\frac{s_c}{2}}P^\Omega _{\le N}F(u_{\leq N/\eta})\|_{L^2_t L^{6/5}_x ( I\times \Omega)}\notag
			\\
			&\lesssim \|   (-\Delta _\Omega)^{\frac{s_c}{2}}u_{\le N/\eta}\|^{\alpha }_{L^\infty_t L^2_x ( I\times \Omega)} \|(-\Delta _\Omega)^{\frac{s_c}{2}} u_{\leq N/\eta} \|_{L^2_t L^6_x (I\times \Omega)} \notag\\
			& \lesssim \|   (-\Delta _\Omega)^{\frac{s_c}{2}}P^\Omega _{\le c(\eta)}u_{\le N/\eta}\|^{\alpha }_{L^\infty_t L^2_x ( I\times \Omega)} \|(-\Delta _\Omega)^{\frac{s_c}{2}} u_{\leq N/\eta} \|_{L^2_t L^6_x (I\times \Omega)} \label{E1011x3}\\
			& \quad + \|(-\Delta _\Omega)^{\frac{s_c}{2}} P^\Omega _{>c(\eta)} u_{\leq N/\eta}\|^{\alpha }_{L^\infty_t L^2_x (I\times \Omega)} \|(-\Delta _\Omega)^{\frac{s_c}{2}} u_{\leq N/\eta} \|_{L^2_t L^6_x (I\times \Omega)}.\label{E1011x4}
		\end{align}
		For the first term, we use (\ref{E10101}) to estimate
		\begin{align}
			(\ref{E1011x3})& \lesssim \eta ^\alpha   \|(-\Delta _\Omega)^{\frac{s_c}{2}} u_{\leq N/\eta} \|_{L^2_t L^6_x (I\times \Omega)}\lesssim \eta ^{s_c}A_I(N/\eta). \label{E10102}
		\end{align}
		For the next term, we note that we only need to consider the case $c(\eta)  < N/\eta$, in which case we have $1 \lesssim_\eta   N^{s_c - 1/2}$. Then by Bernstein and  Lemma \ref{Lspace-time bound},   we have 
		\begin{align}
			(\ref{E1011x4})
			& \lesssim_u C_\eta N^{s_c - 1/2} \|(-\Delta _\Omega)^{\frac{s_c}{2}} u_{\leq N/\eta} \|_{L^\infty _t L^2_x (I\times \Omega)}  \lesssim_u C_\eta N^{s_c - 1/2}  \|u_{\leq N/\eta}\|_{L^\infty_t \dot{H}^{s_c}_D (I\times \Omega)}.\label{E10103}
		\end{align}
		Combining  (\ref{E10102}) and  (\ref{E10103}), we obtain 
		\begin{align}
			\|(-\Delta _\Omega)^{\frac{s_c}{2}}P^\Omega _{\le N} F(u_{\leq N/\eta}) \|_{L^2_t L^{6/5}_x(I\times \Omega)} & \lesssim_u \eta^{s_c} A_I(N/\eta) + C_\eta \|u_{\leq N/\eta}\|_{L^\infty_t \dot{H}^{s_c}_D (I\times \Omega)}  N^{s_c-1/2}|I|^{\frac{1}{2}}.\label{E10161}
		\end{align}
		
		Next, we use Bernstein, H\"older and  Sobolev embedding to estimate
		\begin{align*}
			&\|(-\Delta _\Omega)^{\frac{s_c}{2}}P^\Omega _{\le N}\left( F(u) - F(u_{\leq N/\eta}) \right) \|_{L^2_t L^{6/5}_x(I\times \Omega)} \notag\\
			&\lesssim N^{s_c} \|u\|^{\alpha }_{L^\infty_t  L_x^{3\alpha /2} (I\times \Omega)} \sum_{M > N/\eta} \|u_M\|_{L^2_t L^6_x(I\times \Omega)} \\
			& \lesssim  N^{s_c} \|(-\Delta _\Omega)^{\frac{s_c}{2}}u\|_{L^\infty _tL^2_x(I\times \Omega)} ^{\alpha } \sum _{M>N/\eta}M^{-s_c} \|(-\Delta _\Omega)^{\frac{s_c}{2}}u_M\|_{L^2_tL^6_x(I\times \Omega)}\notag\\
			& \lesssim_u \sum_{M > N/\eta} \left( \frac{N}{M} \right)^{s_c} A_I(M),
		\end{align*}
		which together with (\ref{E10161}) yields the desired estimate in  Lemma \ref{LLT2}. 
	\end{proof}
	
	We now turn to the proof of Proposition \ref{PLT2}. 
	\begin{proof}[\textbf{Proof of Proposition \ref{PLT2}}]
		We use induction to establish the result. For the base case, consider $N > 1$. By Lemma \ref{Lspace-time bound}, we have
		\begin{equation}
			A_I(N) \lesssim_u 1 + |I|^{1/2} \leq C_u \left[ 1 + N^{s_c-1/2} |I|^{1/2} \right]. \label{E10104}
		\end{equation}
		This inequality remains valid if $C_u$ is replaced with any larger constant.
		
		Next, assume that (\ref{E10104}) holds for frequencies $\geq 2N$. We will apply Lemma \ref{LLT2} to verify that it holds at frequency $N$. 
		
		Using Strichartz estimates and Lemma \ref{LLT2}, we get
		\begin{align}
			A_I(N) &\leq \tilde{C_u} \left[ \inf_{t \in I} \| u_{\leq N}(t) \|_{\dot H^{s_c}_D(\Omega)} + C_\eta \| u_{\leq N/\eta} \|_{L_t^\infty \dot{H}_D^{s_c}(I \times \Omega)} N^{s_c-1/2} |I|^{1/2} \right. \notag\\
			&\qquad + \left. \sum_{M \geq N/\eta} \left( \frac{N}{M} \right)^{s_c} A_I(M) \right] \notag\\
			&\leq \tilde{C_u} \left[ 1 + C_\eta N^{s_c-1/2} |I|^{1/2} + \sum_{M \geq N/\eta} \left( \frac{N}{M} \right)^{s_c} A_I(M) \right]. \label{E10105}
		\end{align}
		
		By the inductive hypothesis, we have
		\begin{align}
			A_I(N) &\leq \tilde{C_u} \left[ 1 + C_\eta N^{s_c-1/2} |I|^{1/2} + \sum_{M \geq N/\eta} \left( \frac{N}{M} \right)^{s_c} \big(C_u + C_u M^{s_c-1/2} |I|^{1/2}\big) \right] \notag\\
			&\leq \tilde{C_u} \left[ 1 + C_\eta N^{s_c-1/2} |I|^{1/2} \right] + C_u \tilde{C_u} \left[ \eta^{s_c} + \eta^{1/2} N^{s_c-1/2} |I|^{1/2} \right]. \notag
		\end{align}
		
		Choosing $\eta$ sufficiently small depending on $\tilde{C_u}$, we deduce
		\[
		A_I(N) \leq \tilde{C_u} (1 + C_\eta N^{s_c-1/2} |I|^{1/2}) + \frac{1}{2} C_u (1 + N^{s_c-1/2} |I|^{1/2}).
		\]
		
		Finally, by taking $C_u$ large enough to ensure $C_u \geq 2(1 + C_\eta)\tilde{C_u}$, we conclude
		\[
		A_I(N) \leq C_u (1 + N^{s_c-1/2} |I|^{1/2}).
		\]
		This completes the proof of (\ref{E10106}) via induction.
		
		With (\ref{E10106}) established, we proceed to prove (\ref{E10107}) by building on (\ref{E10105}).  In fact, for any $\eta > 0$ sufficiently small, the almost periodicity condition implies
		\[
		\lim_{N \to 0} \|u_{\leq N/\eta}\|_{L^\infty_t \dot H^{s_c}_D(I \times \Omega)} = 0.
		\]
		This completes the proof of Proposition \ref{PLT2}.
	\end{proof}
	
	\subsection{A Frequency-Localized Lin-Strauss Morawetz Inequality}\label{s1/2-1,2}
	\begin{proposition}[Frequency-localized Morawetz]\label{PMorawetz}
		Let $u : \mathbb{R} \times \Omega \to \mathbb{C}$ be an almost periodic solution to (\ref{NLS}) with $1/2 < s_c < 1$. For any $\eta > 0$, there exists $N_0 = N_0(\eta) \in (0,1)$ such that for $N < N_0$ and $I \subset \mathbb{R}$, the following estimate holds:
		\[
		\int_{I } \int_\Omega \frac{|u_{>N}(t,x)|^{\alpha + 2}}{|x|} \, dx \, dt \lesssim_u \eta \left( N^{1 - 2s_c} + |I| \right).
		\]
	\end{proposition}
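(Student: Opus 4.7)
The plan is to apply the classical Lin-Strauss Morawetz identity, with weight $a(x)=|x|$, to the high-frequency piece $v:=u_{>N}=P^\Omega_{>N}u$. Unlike $u$ itself, $v$ only satisfies a \emph{forced} Dirichlet Schr\"odinger equation
\[
	i\partial_t v+\Delta_\Omega v=F(v)+\mathcal{E},\qquad \mathcal{E}:=P^\Omega_{>N}F(u)-F(u_{>N}),
\]
with $v|_{\partial\Omega}=0$, so we will pick up an extra error generated by $\mathcal{E}$. Writing $M(t):=2\operatorname{Im}\int_\Omega\bar v\,\nabla v\cdot\tfrac{x}{|x|}\,dx$ and running the same integration-by-parts as in the proof of Lemma~\ref{L1091} (with $R=\infty$), convexity of $\Omega^c$ and the placement $0\in\Omega^c$ give the boundary contribution $\int_{\partial\Omega}a_n|\partial_n v|^2\,d\sigma$ a good sign, the Hessian term is nonnegative, and $\Delta^2 a$ produces an admissible distribution on $|x|$. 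After integration in $t$ one obtains the basic inequality
\[
	\int_I\int_\Omega\frac{|v|^{\alpha+2}}{|x|}\,dx\,dt\lesssim \sup_{t\in I}|M(t)|+\Big|\int_I\!\!\int_\Omega\operatorname{Re}\big(\mathcal{E}\cdot(2\nabla\bar v\cdot\nabla a+\bar v\Delta a)\big)\,dx\,dt\Big|.
\]

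Next I would bound the Morawetz action pointwise in $t$. By Cauchy--Schwarz and the equivalence of Sobolev norms (Theorem~\ref{TEquivalence}) we have $|M(t)|\lesssim \|v\|_{\dot H^{1/2}_D(\Omega)}^2$. Since $v=u_{>N}$ and $s_c>\tfrac12$, the square function estimate (Lemma~\ref{LSquare function estimate}) together with Bernstein yields
\[
	\|u_{>N}\|_{\dot H^{1/2}_D(\Omega)}^2\sim\sum_{M>N}M\|P^\Omega_M u\|_{L^2}^2=\sum_{M>N}M^{1-2s_c}\cdot M^{2s_c}\|P^\Omega_M u\|_{L^2}^2\lesssim N^{1-2s_c}\|u\|_{L^\infty_t\dot H^{s_c}_D}^2\lesssim_u N^{1-2s_c}.
\]
This is the source of the $N^{1-2s_c}$ term on the right-hand side of the proposition.

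The heart of the argument, and the main technical obstacle, will be to estimate the forcing error
\[
	\mathrm{Err}:=\int_I\!\!\int_\Omega\operatorname{Re}\big(\mathcal{E}\cdot(2\nabla\bar v\cdot\nabla a+\bar v\Delta a)\big)\,dx\,dt
\]
by $\eta(N^{1-2s_c}+|I|)$. I would split the commutator as
\[
	\mathcal{E}=P^\Omega_{>N}\big[F(u)-F(u_{>N})\big]-P^\Omega_{\le N}F(u_{>N}).
\]
The first piece always carries at least one factor of $u_{\le N}$; choosing $N\le N_0(\eta)$ small, almost periodicity (\ref{E10101}) makes $\|u_{\le N/\eta}\|_{L^\infty_t\dot H^{s_c}_D}$ arbitrarily small, which produces the desired $\eta$ gain. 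The second piece is a low-frequency projection of a purely high-frequency nonlinearity, so H\"older, Bernstein, and the fractional chain rule (\ref{E12133}) trade a power of $N/M$ for summable off-diagonal decay. In both pieces I plan to pair $\nabla v$ against $\mathcal{E}$ in $L^2_tL^6_x\times L^2_tL^{6/5}_x$, so that the long-time Strichartz estimate of Proposition~\ref{PLT2}, in the refined form $A_I(M)\lesssim_u\varepsilon(1+M^{s_c-1/2}|I|^{1/2})$ valid for all $M\le N_0$, supplies exactly $\lesssim\varepsilon^2(1+N^{2s_c-1}|I|)$ after squaring; rescaling in the parameter $\varepsilon$ finally yields the required bound $\eta(N^{1-2s_c}+|I|)$.

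The delicate balancing act will be to organize the paraproduct decomposition of $\mathcal{E}$ so that each resulting piece admits a legitimate H\"older splitting landing in the space $L^2_tL^{6/5}_x$ of $\mathcal{E}$ and $L^2_tL^6_x$ of $\nabla v$ (which is where Proposition~\ref{PLT2} lives), without ever needing an $\dot H^{s_c}$ norm of $u$ that almost periodicity cannot make small. In particular, for the $\bar v\,\Delta a$ term one must use the pointwise bound $|\Delta a(x)|\lesssim|x|^{-1}$ and absorb the resulting weighted $L^2$ factor of $v$ via Hardy's inequality together with the high-frequency localization of $v$. Once this bookkeeping is carried out, integrating in $t$ and combining with the uniform bound on $M(t)$ closes the estimate and establishes the proposition.
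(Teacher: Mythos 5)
Your overall architecture is the same as the paper's --- truncate to $u_{>N}$, run the Lin--Strauss Morawetz identity with weight $a(x)=|x|$, observe that the boundary and Hessian terms have favorable signs because $\Omega^c$ is convex and $0\in\Omega^c$, and then control the commutator error with the long-time Strichartz estimate. But there are two genuine gaps in how you propose to carry out the two quantitative estimates.

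\textbf{The Morawetz action.} Your bound
\[
|M(t)|\lesssim\|u_{>N}\|_{\dot H^{1/2}_D(\Omega)}^2\lesssim N^{1-2s_c}\|u\|_{L^\infty_t\dot H^{s_c}_D}^2\lesssim_u N^{1-2s_c}
\]
is correct but it only yields $N^{1-2s_c}$, not $\eta N^{1-2s_c}$. The proposition demands the factor of $\eta$ on \emph{both} terms, and you cannot recover it from the error estimate afterwards: $N^{1-2s_c}+\eta|I|$ is not $\lesssim\eta(N^{1-2s_c}+|I|)$. The missing ingredient is that $u$ is almost periodic, so by (\ref{E10101}) there is a frequency $c(\eta)$ below which the $\dot H^{s_c}_D$ norm is at most $\eta$. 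Splitting $u_{>N}=u_{N<\cdot\le c(\eta)}+u_{>c(\eta)}$, you get
\[
\|u_{>N}\|_{\dot H^{1/2}_D}^2\lesssim N^{1-2s_c}\|(-\Delta_\Omega)^{s_c/2}u_{<c(\eta)}\|_{L^2}^2+c(\eta)^{1-2s_c}\|u\|_{\dot H^{s_c}_D}^2\lesssim_u\eta^2 N^{1-2s_c}+c(\eta)^{1-2s_c},
\]
and choosing $N_0(\eta)$ so that $c(\eta)^{1-2s_c}\le\eta N^{1-2s_c}$ for $N<N_0$ closes it. This is exactly what Lemma~\ref{LHLC}(ii) packages, and the paper cites it at this point.

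\textbf{The error term.} Your plan to pair ``$\nabla v$ against $\mathcal E$ in $L^2_tL^6_x\times L^2_tL^{6/5}_x$'' does not close, because $\nabla u_{>N}$ is not controlled in $L^2_tL^6_x$ when $s_c<1$: for dyadic $M>N$ one has $\|\nabla P^\Omega_M u\|_{L^2_tL^6_x}\sim M^{1-s_c}\|(-\Delta_\Omega)^{s_c/2}P^\Omega_M u\|_{L^2_tL^6_x}\lesssim_u M^{1-s_c}(1+M^{s_c-1/2}|I|^{1/2})$, and the sum over $M>N$ diverges. (Both the long-time Strichartz bound $A_I$ and the Strichartz norm $(-\Delta_\Omega)^{s_c/2}u\in L^2_tL^6_x$ put a \emph{negative} number of derivatives below $1$ on $u$; there is no room for a full gradient at high frequency.) The paper circumvents this by never letting $\nabla$ land on $u_{>N}$: it uses the algebraic identity $\{F(u),u\}_p=-\tfrac{\alpha}{\alpha+2}\nabla(|u|^{\alpha+2})$ to write
\[
\{P^\Omega_{>N}F(u),u_{>N}\}_p
=-\tfrac{\alpha}{\alpha+2}\nabla\bigl(|u|^{\alpha+2}-|u_{\le N}|^{\alpha+2}\bigr)
-\{F(u)-F(u_{\le N}),u_{\le N}\}_p
-\{P^\Omega_{\le N}F(u),u_{>N}\}_p,
\]
and then integrates by parts in the last bracket so that the surviving derivative lands on $P^\Omega_{\le N}F(u)$ rather than on $u_{>N}$, producing error terms of the schematic form $\|u_{>N}\,\nabla P^\Omega_{\le N}F(u)\|_{L^1_{t,x}}$ and $\|u_{>N}P^\Omega_{\le N}F(u)/|x|\|_{L^1_{t,x}}$, plus terms where the derivative lands on $u_{\le N}$. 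All of these can be estimated in $L^2_tL^6_x\times L^2_tL^{6/5}_x$ with the long-time Strichartz estimate, the Bernstein gain at low frequency, and the $\eta$-smallness from Lemma~\ref{LHLC}. Your framing of the equation for $v$ with a forcing $\mathcal E$ is a legitimate bookkeeping device, but you still need to exploit the total-derivative structure and integrate by parts to avoid $\nabla u_{>N}$; as written the H\"older splitting is unavailable.
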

	
	To establish Proposition \ref{PMorawetz}, we begin by truncating the low frequencies of the solution and focusing on $u_{>N}$ for a given $N > 0$. Since $u_{>N}$ is not an exact solution to (\ref{NLS}), additional error terms introduced by this frequency projection must be estimated. To handle these terms, we need the following lemma.
	
	\begin{lemma}[High and low frequency control]\label{LHLC}
		Let $u$ and $I$ be as above. With  all spacetime norms over $I \times \Omega$, the following hold:
		\begin{itemize}
			\item[(i)] Let $(q,r)$ be an admissible pair. For any $N > 0$ and $0 \le s < 1/2$, we have
			\begin{equation}
				\|(-\Delta_\Omega)^{\frac{s}{2}}u_{>N}\|_{L_t^q L_x^r} \lesssim_u N^{s - s_c}(1 + N^{2s_c - 1} |I|)^{1/q}. \label{E1010x1}
			\end{equation}
			\item[(ii)] For any $\eta > 0$ and $0 < s < s_c$, there exists $N_1 = N_1(\eta)$ such that for $N < N_1$, we have
			\begin{equation}
				\|(-\Delta_\Omega)^{\frac{s}{2}}u_{>N}\|_{L_t^\infty L_x^2} \lesssim_u \eta N^{s - s_c}.\label{E1010x2}
			\end{equation}
			\item[(iii)] For any $\eta > 0$, there exists $N_2 = N_2(\eta)$ such that for $N < N_2$, we have
			\begin{equation}
				\|(-\Delta_\Omega)^{\frac{s_c}{2}} u_{\leq N}\|_{L_t^2 L_x^6} \lesssim_u \eta(1 + N^{2s_c - 1} |I|)^{1/2}.\label{E1010x3}
			\end{equation}
		\end{itemize}
	\end{lemma}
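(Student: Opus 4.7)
\textbf{Proof proposal for Lemma \ref{LHLC}.} My plan is to prove the three estimates in reverse order of difficulty, exploiting Proposition \ref{PLT2} at each stage. First, estimate (iii) is essentially a restatement of the improved long-time bound (\ref{E10107}) in Proposition \ref{PLT2}: since $A_I(N) = \|(-\Delta_\Omega)^{s_c/2} u_{\leq N}\|_{L_t^2 L_x^6}$, (\ref{E10107}) gives $A_I(N) \lesssim_u \eta(1 + N^{s_c-1/2}|I|^{1/2})$ for $N \leq N_0(\eta)$, and the right side is comparable to $\eta(1+N^{2s_c-1}|I|)^{1/2}$. Setting $N_2 := N_0$ completes (iii).

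Next, for (ii), the plan is a dyadic decomposition combined with almost periodicity. Via the square function estimate (Lemma \ref{LSquare function estimate}) and Bernstein,
\begin{equation*}
\|(-\Delta_\Omega)^{s/2} u_{>N}\|_{L_x^2}^2 \sim \sum_{M > N} M^{2(s-s_c)} \|(-\Delta_\Omega)^{s_c/2} u_M\|_{L_x^2}^2.
\end{equation*}
Given $\eta > 0$, almost periodicity (\ref{E10101}) provides $c(\eta)>0$ with $\|(-\Delta_\Omega)^{s_c/2} u_{<c(\eta)}\|_{L^2} < \eta$. Splitting the sum at the threshold $c(\eta)$, the low-frequency portion $N < M < c(\eta)$ contributes at most $N^{2(s-s_c)} \cdot \eta^2$ since $M^{2(s-s_c)}$ is decreasing, while the high-frequency portion $M \geq c(\eta)$ is bounded by $c(\eta)^{2(s-s_c)} \cdot \|(-\Delta_\Omega)^{s_c/2}u\|_{L^2}^2 \lesssim_u c(\eta)^{2(s-s_c)}$. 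Choosing $N_1(\eta) := c(\eta)\,\eta^{1/(s_c - s)}$ ensures that whenever $N < N_1$, the second contribution is also $\lesssim_u \eta^2 N^{2(s-s_c)}$, completing (ii). Note that in the case $s=0$ of (i) at the endpoint $(q,r)=(\infty,2)$, the trivial version of this argument (without tracking $\eta$) gives the Bernstein-type bound $\lesssim_u N^{-s_c}$.

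Finally, estimate (i) at the critical Strichartz pair $(q,r) = (2,6)$ will follow from Duhamel's formula, Strichartz, and the long-time estimate. Fixing $t_0 \in I$ and applying the Strichartz inequality to $P^\Omega_{>N}u$, one gets
\begin{equation*}
\|(-\Delta_\Omega)^{s/2} u_{>N}\|_{L_t^2 L_x^6} \lesssim \|(-\Delta_\Omega)^{s/2} u_{>N}(t_0)\|_{L_x^2} + \|(-\Delta_\Omega)^{s/2} P_{>N}^\Omega F(u)\|_{L_t^2 L_x^{6/5}}.
\end{equation*}
The initial datum is $\lesssim_u N^{s-s_c}$ by the argument above. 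For the forcing term, the Bernstein-type relation $\|(-\Delta_\Omega)^{s/2} P_{>N}^\Omega f\|_{L^{6/5}} \lesssim N^{s-s_c}\|(-\Delta_\Omega)^{s_c/2} P_{>N}^\Omega f\|_{L^{6/5}}$ (which holds for $s<s_c$ by square-function and the decrease of $M^{2(s-s_c)}$) reduces matters to bounding $\|(-\Delta_\Omega)^{s_c/2} F(u)\|_{L_t^2 L_x^{6/5}}$. Following the splitting in Lemma \ref{LLT2}, I would decompose $F(u) = F(u_{\leq N/\eta}) + [F(u) - F(u_{\leq N/\eta})]$; the first piece is estimated via the fractional chain rule (\ref{E12133}) and Proposition \ref{PLT2}, producing a factor of $1+(N/\eta)^{s_c-1/2}|I|^{1/2}$, while the second piece is handled by Lemma \ref{LDerivatives of differences}, almost periodicity of the high-frequency tail, and a further application of (\ref{E10106}). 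The remaining admissible pairs follow by interpolation with the endpoint $(\infty, 2)$.

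The main obstacle is the second piece of the forcing term in (i): the nonlocal operator $(-\Delta_\Omega)^{s_c/2}$ does not commute with the frequency projection or the nonlinearity, and one must carefully balance the loss of $N^{s_c-1/2}|I|^{1/2}$ from Proposition \ref{PLT2} against the smallness $\eta^{s_c}$ gained from the almost periodicity of $u_{>N/\eta}$, mirroring the bootstrap in Lemma \ref{LLT2}. This delicate bookkeeping is what ultimately produces the factor $(1 + N^{2s_c-1}|I|)^{1/q}$ with the correct power and ensures that the two contributions on the right-hand side of (\ref{E1010x1}) are of the same order.
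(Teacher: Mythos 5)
Your proofs of parts (ii) and (iii) are correct and agree with the paper's: (iii) is read off directly from \eqref{E10107}, and your proof of (ii) is the same split at the almost-periodicity threshold $c(\eta)$, with the same choice $N_1(\eta)=c(\eta)\,\eta^{1/(s_c-s)}$.

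For part (i), your route via Duhamel's formula and Strichartz is genuinely different from the paper's, and it has a real gap. After Strichartz you are left with $\|(-\Delta_\Omega)^{s/2}P_{>N}^\Omega F(u)\|_{L^2_t L^{6/5}_x}$, and you propose to mimic Lemma \ref{LLT2}. But Lemma \ref{LLT2} crucially exploits the Bernstein gain $N^{s_c}$ coming from the \emph{low}-frequency projection $P_{\le N}^\Omega$ in its second term; for the \emph{high}-frequency projection $P_{>N}^\Omega$ there is no such gain. Concretely, in the splitting $F(u)=F(u_{\le N/\eta})+[F(u)-F(u_{\le N/\eta})]$, the contribution in which $(-\Delta_\Omega)^{s_c/2}$ falls on the high-frequency factor $u_{>N/\eta}$ produces $\|(-\Delta_\Omega)^{s_c/2}u_{>N/\eta}\|_{L^2_tL^6_x}$, which by Lemma \ref{Lspace-time bound} is only $\lesssim_u (1+|I|)^{1/2}$ --- far larger than the required $N^{s_c-1/2}|I|^{1/2}$ when $N$ is small. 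Your bound would therefore degrade to $N^{s-s_c}(1+|I|)^{1/2}$, which is not \eqref{E1010x1}. Also note your argument nowhere visibly uses the hypothesis $s<1/2$, which is a warning sign: that restriction is what makes the correct proof close.

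The paper's proof of (i) avoids the nonlinearity altogether. It first interpolates between $\|(-\Delta_\Omega)^{s_c/2}u_{\le M}\|_{L^\infty_tL^2_x}\lesssim_u 1$ and $A_I(M)\lesssim_u 1+M^{s_c-1/2}|I|^{1/2}$ from Proposition \ref{PLT2}, giving
\begin{equation*}
\|(-\Delta_\Omega)^{s_c/2}u_{\le M}\|_{L^q_tL^r_x}\lesssim_u (1+M^{2s_c-1}|I|)^{1/q}
\end{equation*}
for every admissible pair and every $M$. It then decomposes $u_{>N}=\sum_{M>N}u_M$, applies Bernstein on each dyadic piece to trade $(-\Delta_\Omega)^{s/2}$ for $M^{s-s_c}(-\Delta_\Omega)^{s_c/2}$, and sums: the series $\sum_{M>N}M^{s-s_c}(1+M^{2s_c-1}|I|)^{1/q}$ converges because $s-s_c+(2s_c-1)/q\le s-\tfrac12<0$, and is comparable to $N^{s-s_c}(1+N^{2s_c-1}|I|)^{1/q}$, which is \eqref{E1010x1}. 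This is both shorter and makes transparent exactly where $s<1/2$ enters. If you want to salvage your Duhamel-based approach, you would essentially have to re-prove a high-frequency analogue of Proposition \ref{PLT2}, which is not available and would not be elementary.
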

	
	\begin{proof}
		For (\ref{E1010x1}), we first apply interpolation, (\ref{Ebound}), and (\ref{E10106}) to get
		\begin{equation}
			\|(-\Delta_\Omega)^{\frac{s_c}{2}}u_{<N}\|_{L_t^q L_x^r} \lesssim \|u\|_{L_t^\infty \dot H^{s_c}_D(\Omega)}^{1-\frac{2}{q}} \|(-\Delta_\Omega)^{\frac{s_c}{2}}u_{<N}\|_{L_t^2 L_x^6}^{\frac{2}{q}} \lesssim (1 + N^{2s_c - 1}|I|)^{1/q}.\notag
		\end{equation}
		Using Bernstein's inequality, we deduce
		\begin{align}
			\|u_{>N}\|_{L_t^q L_x^r} &\lesssim \sum_{M > N} M^{s-s_c} \|(-\Delta_\Omega)^{\frac{s_c}{2}}u_M\|_{L_t^q L_x^r} \notag\\
			&\lesssim_u \sum_{M > N} M^{s-s_c}(1 + M^{2s_c - 1}|I|)^{1/q} \lesssim_u N^{s-s_c}(1 + N^{2s_c - 1}|I|)^{1/q}.\notag
		\end{align}
		
		For (\ref{E1010x2}), we utilize the almost periodicity property (\ref{E10101}) and Bernstein's inequality to obtain
		\begin{align}
			\|(-\Delta_\Omega)^{\frac{s}{2}}u_{>N}\|_{L_t^\infty L_x^2} &\lesssim c(\eta)^{s-s_c} \|(-\Delta_\Omega)^{\frac{s_c}{2}}u_{>c(\eta)}\|_{L_t^\infty L_x^2} + N^{s-s_c} \|(-\Delta_\Omega)^{\frac{s_c}{2}}u_{N \leq \cdot \leq c(\eta)}\|_{L_t^\infty L_x^2} \notag\\
			&\lesssim_u c(\eta)^{s-s_c} + \eta N^{s-s_c}.\notag
		\end{align}
		Choosing $N_1(\eta) = \eta^{1/(s_c - s)}c(\eta)$ yields the desired result (\ref{E1010x2}).
		
		The final inequality (\ref{E1010x3}) directly corresponds to (\ref{E10107}). 
	\end{proof}
	
	We now proceed to prove Proposition \ref{PMorawetz}.

	\begin{proof}[\textbf{Proof of Proposition \ref{PMorawetz}}]
		Throughout the proof, we take the norms over $I \times \Omega$ and thus we omit it for short.
		
		Assume that $0 < \eta \ll 1$ and taking
		\[
		N < \min\{N_1(\eta), \eta^2 N_2(\eta^{2s_c})\},
		\]
		where $N_1$ and $N_2$ are as in Lemma \ref{LHLC}. As a consequence, (\ref{E1010x1}) implies that
		\begin{equation}
			\|u_{>N/\eta^{2}}\|_{L_t^2 L_x^6} \lesssim_u \eta N^{-s_c}(1 + N^{2s_c - 1} |I|)^{1/2}.  \label{E1010x4}
		\end{equation}
		Moreover, using the fact $N/\eta^2 < N_2(\eta^{2s_c})$ we can apply (\ref{E1010x3})  to show 
		\begin{equation}
			\|(-\Delta _\Omega)^{\frac{s_c}{2}} u_{\leq N/\eta^2}\|_{L_t^2 L_x^6} \lesssim_u \eta(1 + N^{2s_c - 1} |I|)^{1/2}.\label{E1010x5}
		\end{equation}
		
		Then we define the Morawetz action by the following
		\[
		\text{Mor}(t) = 2 \text{Im} \int_{\Omega} \frac{x}{|x|} \cdot \nabla u_{>N}(t,x) \overline{u_{>N}(t,x)} dx.
		\]
		Since $(i \partial_t + \Delta_\Omega) u_{>N} = P^{\Omega}_{>N}(F(u))$, we have, by the direct calculus,
		\begin{align}
			\partial_{t}\text{Mor}(t) &=-4\text{Re} \int _\Omega \partial_{k}(\partial_{k}u_{>N}\partial_{j}\overline{u}_{>N})\frac{x_j}{|x|}dx+\int _\Omega\partial_{j}\Delta (|u|^{2})\frac{x_j}{|x|}dx + 2\int _{\Omega}\frac{x}{|x|} \cdot \{P^{\Omega}_{>N}(F(u)), u_{>N}\}_p dx,\notag
		\end{align}
		where the momentum bracket $\{ \cdot , \cdot \}_p$ is defined by $\{ f, g \}_p := \text{Re}( f \nabla \overline{g} - g \nabla \overline{f} )$.  Using the same argument  as that used to derive (\ref{E10111}), (\ref{E10112}), and the fact that  $\partial_{jk}|x|$ is positive definite, we have 
		\begin{equation}
			-4\text{Re} \int _\Omega \partial_{k}(\partial_{k}u_{>N}\partial_{j}\overline{u}_{>N})\frac{x_j}{|x|}dx+\int _\Omega\partial_{j}\Delta (|u|^{2})\frac{x_j}{|x|}dx\ge 2\int _{\partial \Omega}|\nabla u_{>N}\cdot \vec{n}|^2\frac{x}{|x|}\cdot  \vec{n}d\sigma (x)>0,\notag
		\end{equation}
		where  $\vec{n}$ denotes the outer normal to  $\Omega^c$. Hence
		\[
		\partial_t \text{Mor}(t) \ge2 \int_{\Omega} \frac{x}{|x|} \cdot \{P^{\Omega}_{>N}(F(u)), u_{>N}\}_p dx.
		\]
		The fundamental theorem of calculus yields that
		\begin{equation}
			\int_{I \times \Omega} \frac{x}{|x|} \cdot \{P^{\Omega}_{>N}(F(u)), u_{>N}\}_p dx \lesssim \|\text{Mor}(t)\|_{L_t^\infty(I)}.  \label{E1010x6}
		\end{equation}
		By direct computation, one hase $$ \{F(u), u\}_p = - \frac{\alpha }{\alpha +2} \nabla (|u|^{\alpha +2}).$$ Thus, we can write the truncate momentum bracket to the following
		\begin{align}
			&\{P^{\Omega}_{>N}(F(u)), u_{>N}\}_p \notag\\
			&= \{F(u), u\}_p - \{F(u_{\leq N}), u_{\leq N}\}_p
			- \{F(u) - F(u_{\leq N}), u_{\leq N}\}_p - \{P^{\Omega}_{\leq N}(F(u)), u_{>N}\}_p\notag\\
			&= - \frac{\alpha }{\alpha +2} \nabla (|u|^{\alpha +2} - |u_{\leq N}|^{\alpha +2}) - \{F(u) - F(u_{\leq N}), u_{\leq N}\}_p 
			- \{P^{\Omega}_{\leq N}(F(u)), u_{>N}\}_p\notag\\
			&:= I + II + III.\notag
		\end{align}
		Integrating by parts,   $I$  contributes to left-hand side of  $\eqref{E1010x6}$ a multiple of:
		\begin{equation}
			\int_{I }\int _\Omega \frac{|u_{>N}(t,x)|^{\alpha +2}}{|x|} dx dt\label{E1010x7}
		\end{equation}
		and to the right-hand side of  (\ref{E1010x6}) a multiple of:
		\begin{align}
			\left\| \frac{1}{|x|} (u_{\leq N})^{\alpha +1} u_{>N} \right\|_{L_{t,x}^1} +\left\| \frac{1}{|x|} u_{\leq N} (u_{>N})^{\alpha +1} \right\|_{L_{t,x}^1 }:=I_1+I_2.  \label{E1010w2}
		\end{align}
		For the second term $II$, we   use the  divergence theorem to  deduce  that  
		\begin{align}
			II\lesssim \left\| \frac{1}{|x|} u_{\leq N} [F(u) - F(u_{\leq N})] \right\|_{L_{t,x}^1}    + \left\| \nabla u_{\leq N} [F(u) - F(u_{\leq N})] \right\|_{L_{t,x}^1}:=II_1+II_2.\label{E1010x10}
		\end{align}
		Finally, for the last  term $III$, we use  integrating by parts when the derivative acts on $u_{>N}$ to  get that 
		\begin{align}
			III\lesssim 	\left\| \frac{1}{|x|} u_{>N} P^{\Omega}_{\leq N} (F(u)) \right\|_{L_{t,x}^1}  + \left\| u_{>N} \nabla P^{\Omega}_{\leq N} (F(u)) \right\|_{L_{t,x}^1}.\label{E1010x12}
		\end{align}
		Thus, building upon  (\ref{E1010x6}), we conclude that  it remains to show:
		\begin{equation}
			\| \text{Mor} \|_{L_t^\infty (I)} \lesssim_u \eta N^{1 - 2s_c},\label{E1010x13}
		\end{equation}
		and that the error terms (\ref{E1010w2}), (\ref{E1010x10}) and  (\ref{E1010x12}) are  controlled by $\eta (N^{1 - 2s_c} +|I|)$.
		
		First, we begin to  prove  (\ref{E1010x13}). Making use of the  Bernstein estimate,  the Hardy inequality and  (\ref{E1010x2}) to obtain
		\begin{align}
			&	\| \text{Mor} \|_{L_t^\infty(I)} \lesssim \| |\nabla |^{-1/2} \nabla u_{>N} \|_{L_t^\infty L_x^2} \left\| |\nabla |^{1/2} \left( \frac{x}{|x|} u_{>N} \right) \right\|_{L_t^\infty L_x^2}\notag\\
			&\lesssim_u \| |\nabla |^{1/2} u_{>N} \|_{L_t^\infty L_x^2}^2 \lesssim  \|(-\Delta _\Omega)^{\frac{1}{4}}u_{>N}\|_{L^\infty _tL^2_x}^{2} \lesssim_u \eta N^{1 - 2s_c}.\notag
		\end{align}

		We next turn to give the estimate of the error terms (\ref{E1010w2})--(\ref{E1010x12}).   To estimate  $I_1$,  we first note that by interpolation, (\ref{Ebound}) and (\ref{E1010x3}), 
		\begin{equation}
			\|(-\Delta _\Omega)^{\frac{s_c}{2}}u_{\le N}\|^{\alpha +1}_{L_t^{2(\alpha +1)}L_x^{\frac{6(\alpha +1)}{3\alpha +1}}}\lesssim  \|(-\Delta _\Omega)^{\frac{s_c}{2}}u\|_{L_t^\infty L_x^2}^{\alpha } \|(-\Delta _\Omega)^{\frac{s_c}{2}}u_{\le N}\|_{L_t^2L_x^6}\lesssim    \eta (1+N^{2s_c-1}|I|)^{\frac{1}{2}}.\notag
		\end{equation}
		It then follows from H\"older, Hardy's inequality,  Sobolev embedding, Bernstein and (\ref{E1010x1}) that 
		\begin{align}
			&I_1
			\lesssim \left\| |x|^{-\frac{1}{\alpha +1}}u_{\leq N}  \right\|_{L_t^{2(\alpha +1)} L_x^{\frac{6(\alpha +1)}{5}}} ^{\alpha +1}\| u_{>N} \|_{L_t^2 L_x^6} 	 \lesssim \| (-\Delta _\Omega)^{\frac{1}{2(\alpha +1)}} u_{\leq N}  \|_{L_t^{2(\alpha +1)} L_x^{\frac{6(\alpha +1)}{5}}}^{\alpha +1} \| u_{>N} \|_{L_t^2 L_x^6}\notag\\
			&\lesssim   \|(-\Delta _\Omega)^{\frac{3\alpha -2}{4(\alpha +1)}}u_{\le N}\|_{L_t^{2(\alpha +1)}L_x^{\frac{6(\alpha +1)}{3\alpha +1}}}^{\alpha +1} \|u_{>N}\|_{L^2_tL^6_x}    \lesssim  N^{1-s_c} \|(-\Delta _\Omega)^{\frac{s_c}{2}}u_{\le N}\|^{\alpha +1}_{L_t^{2(\alpha +1)}L_x^{\frac{6(\alpha +1)}{3\alpha +1}}} \| u_{>N} \|_{L_t^2 L_x^6}\notag\\
			&  \lesssim_u \eta N^{1 - 2s_c} (1 + N^{2s_c - 1} |I|).\notag
		\end{align}
		For  $I_2$, we divide it into two cases.  If $|u_{\leq N}| \lesssim  |u_{>N}|$, then  it is   the term  $I_1$, which we have already treated.   Thus,   it suffices to consider the case $|u_{\leq N}| \ll |u_{>N}|$, which can  be absorbed   into the left-hand side of (\ref{E1010x6}), if we can show
		\begin{equation}
			\left\| \frac{1}{|x|} |u_{>N}|^{\alpha +2} \right\|_{L_{t,x}^1} < \infty.  \label{E1010w3}
		\end{equation}
		To prove (\ref{E1010w3}), we apply  Hardy's inequality  and  Sobolev embedding to obtain 
		\begin{align}
			\left\| \frac{1}{|x|} |u_{>N}|^{\alpha +2} \right\|_{L_{t,x}^1}
			&\lesssim \left\| |x|^{-\frac{1}{\alpha +2}} u_{>N} \right\|_{L_{t,x}^{\alpha +2}}^{\alpha +2} 
			\lesssim \left\| (-\Delta _\Omega)^{\frac{1}{2(\alpha +1)}}u_{>N} \right\|_{L_{t,x}^{\alpha +2}}^{\alpha +2} \lesssim \left\| (-\Delta _\Omega)^{\frac{3\alpha -2}{4(\alpha +2)}}u_{>N} \right\|_{L_t^{\alpha +2} L_x^{\frac{6(\alpha +2)}{3\alpha +2}}}^{\alpha +2} .\notag
		\end{align}
		Moreover,  by the  Bernstein inequality and Lemma \ref{Lspace-time bound},  we have
		\begin{equation}
			\left\| (-\Delta _\Omega)^{\frac{3\alpha -2}{4(\alpha +2)}}u_{>N} \right\|_{L_t^{\alpha +2} L_x^{\frac{6(\alpha +2)}{3\alpha +2}}}^{\alpha +2} 
			\lesssim_u N^{1-2s_c} \| (-\Delta _\Omega)^{\frac{s_c}{2}} u \|_{L_t^{\alpha +2} L_x^{\frac{6(\alpha +2)}{3\alpha +2}}}^{\alpha +2} 
			\lesssim_u N^{1-2s_c} \left( 1 +|I| \right) < \infty.\notag
		\end{equation}

		Next, we turn to   $II_1$. By triangle inequality, we have
		\[
		II_1
		\lesssim \left\| \frac{1}{|x|} (u_{\leq N})^{\alpha +1} u_{>N} \right\|_{L_{t,x}^1}
		+ \left\| \frac{1}{|x|} u_{\leq N} (u_{>N})^{\alpha +1} \right\|_{L_{t,x}^1},
		\]
		which is exactly    (\ref{E1010w2}). Thus   $II_1$  is done.
		For  $II_2$, we estimate 
		\begin{equation}
			II_2\lesssim   \|\nabla u_{\le N}|u_{>N}|^{\alpha }u_{>N}\|_{L_{t,x}^1}+ \|\nabla u_{\le N}|u_{<N}|^{\alpha }u_{>N}\|_{L_{t,x}^1}.\notag  
		\end{equation}
		For the first term, we choose 	  $\varepsilon >0$   sufficiently small  such that  $s:=\frac{1}{2}-\frac{2-\varepsilon \alpha }{2\alpha }>0$. It then follows from   H\"older's inequality, Sobolev embedding, Bernstein's estimate, Theorem \ref{TEquivalence}, (\ref{E10101}) and  (\ref{E1010x1}) that 
		\begin{align}
			&\|\nabla u_{\le N}|u_{>N}|^{\alpha }u_{>N}\|_{L_{t,x}^1} \lesssim   \|\nabla u_{\le N}\|_{L^\infty _t L_x^{\frac{3}{1+\varepsilon }}} \||u_{>N}|^{\alpha }u_{>N}\|_{L^1_tL_x^{\frac{3}{2-\varepsilon }}}\notag\\
			&\lesssim N^{1-s_c+3(\frac{1}{2}-\frac{1+\varepsilon }{3})} \|(-\Delta _\Omega)^{\frac{s_c}{2}}u_{\le N}\|_{L^\infty _tL^2_x}  \|u_{>N}\|_{L^\infty _tL_x^{\frac{3\alpha }{2}}} ^{\alpha -1}\|u_{>N}\|_{L^2_tL_x^{\frac{6\alpha }{2-\varepsilon \alpha }}}    ^2\notag\\
			&\lesssim \eta   N^{1-s_c+3(\frac{1}{2}-\frac{1+\varepsilon }{3})}  \|(-\Delta _\Omega)^{\frac{s}{2}}u_{>N}\|_{L^2_tL^6_x}^2 \lesssim  \eta N^{1-s_c+3(\frac{1}{2}-\frac{1+\varepsilon }{3})}N^{2(s-s_c)}(1+N^{2s_c-1}|I|)\notag\\ 
			&\lesssim  \eta N^{1-2s_c}(1+N^{2s_c-1}|I|).\notag
		\end{align}

		For the second term, we use H\"older, Sobolev embedding, Bernstein, (\ref{E1010x1}) and (\ref{E1010x3}) to estimate 
		\begin{align}
			&\|\nabla u_{\le N}|u_{<N}|^{\alpha }u_{>N}\|_{L_{t,x}^1}\notag\\
			&\lesssim   \|\nabla u_{\le N}\|_{L^\infty _tL^2_x} \|(-\Delta _\Omega)^{\frac{s_c}{2}}u_{<N}\|_{L^4_tL^3_x}^\alpha  \|u_{>N}\|_{L_t^{\frac{4}{4-\alpha }}L_x^{\frac{6}{\alpha -1}}}\notag\\
			&\lesssim  N^{1-s_c}  \|(-\Delta _\Omega)^{\frac{s_c}{2}}u_{\le N}\|_{L^\infty _tL^2_x}^{\frac{\alpha }{2}+1} \|(-\Delta _\Omega)^{\frac{s_c}{2}}u_{<N}\|_{L^2_tL^6_x}^{\frac{\alpha }{2}} \|u_{>N}\|_{L^2_tL^6_x}^{\frac{4-\alpha }{2}} \|u_{>N}\|_{L^\infty _tL^2_x}^{\frac{\alpha }{2}-1}\notag\\
			&\lesssim N^{1-s_c}\eta ^{\alpha } (1+N^{2s_c-1}|I|)^{\frac{\alpha }{4}}N^{-s_c}(1+N^{2s_c-1}|I|)^{\frac{4-\alpha }{4}}N^{-s_c(\frac{\alpha }{2}-1)}\notag\\
			&\lesssim  \eta N^{1-2s_c}(1+N^{2s_c-1}|I|),\notag      
		\end{align}
		where noting that $\alpha <4$ when  $s_c=\frac{3}{2}-\frac{2}{\alpha }$.		Thus  $II_2$ is acceptable.
		
		Finally, we turn to  $III$. By Hardy's inequality  and Theorem \ref{TEquivalence}, 
		\begin{align}
			(\ref{E1010x12}) &\lesssim \|u_{>N}\|_{L_t^2 L_x^6} \left\| \frac{1}{|x|} P^{\Omega}_{\leq N}(F(u)) \right\|_{L_t^2 L_x^{6/5}} + \|u_{>N}\|_{L_t^2 L_x^6} \|\nabla P^{\Omega}_{\leq N}(F(u))\|_{L_t^2 L_x^{6/5}}\notag\\
			&\lesssim     \|u_{>N}\|_{L_t^2 L_x^6} \|(-\Delta _\Omega)^{\frac{1}{2}} P^{\Omega}_{\leq N}(F(u))\|_{L_t^2 L_x^{6/5}}.\notag
		\end{align}
		Thus, by (\ref{E1010x1}) it remains to prove
		\[
		\| (-\Delta _\Omega)^{\frac{1}{2}}P^{\Omega}_{\leq N}(F(u))\|_{L_t^2 L_x^{6/5}} \lesssim_u \eta N^{1- s_c}(1 + N^{2s_c - 1}|I|)^{1/2}.
		\]
		To this end, we use H\"older's inequality, Bernstein's estimate, the fractional chain rule (\ref{E12133}),  (\ref{Ebound}),  (\ref{E1010x4}), and (\ref{E1010x5}) to estimate
		\begin{align}
			& \|(-\Delta _\Omega)^{\frac{1}{2}}P^{\Omega}_{\leq N}(F(u))\|_{L_t^2 L_x^{6/5}}\notag\\
			&\lesssim N\|F(u) - F(u_{\leq N/\eta^2})\|_{L_t^2 L_x^{6/5}} + N^{1 - s_c} \|(-\Delta _\Omega)^{\frac{s_c}{2}}F(u_{\leq N/\eta^2})\|_{L_t^2 L_x^{6/5}}\notag\\
			&\lesssim    N\|u\|_{L_t^\infty  L_x^{3\alpha /2}}^{\alpha } \|u_{>N/\eta^2}\|_{L_t^2 L_x^6} + N^{1 - s_c} \|(-\Delta _\Omega)^{\frac{s_c}{2}}u\|_{L_t^\infty   L_x^{2}}^\alpha  \|(-\Delta _\Omega)^{\frac{s_c}{2}}u_{\leq N/\eta^2}\|_{L_t^2 L_x^6}\notag\\
			& \lesssim_u \eta N^{1 - s_c}(1 + N^{2s_c - 1} |I|)^{1/2}.
		\end{align}
		This completes the proof of Proposition \ref{PMorawetz}. 
	\end{proof}
	
	\subsection{Exclusion of the Minimal Counterexample}\label{s1/2-1,3}
	
	In this subsection, we rule out the almost periodic solutions to (\ref{NLS}) for the case $1/2 < s_c < 1$. The proof relies on the frequency-localized Lin-Strauss Morawetz inequality derived in Subsection \ref{s1/2-1,2}.
	\begin{theorem}\label{Ts1/2-1}
		There are no almost periodic solutions to (\ref{NLS}) as described in Theorem \ref{TReduction} when $1/2 < s_c < 1$.
	\end{theorem}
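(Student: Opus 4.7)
The plan is to argue by contradiction. Assume there exists an almost periodic solution $u:\mathbb{R}\times\Omega\to\mathbb{C}$ of the type produced by Theorem~\ref{TReduction}, and derive a lower bound on the weighted space-time quantity $\int_I\int_\Omega |u_{>N}|^{\alpha+2}/|x|\,dx\,dt$ that grows linearly in $|I|$. Combined with the upper bound $\lesssim_u \eta(N^{1-2s_c}+|I|)$ from the frequency-localized Morawetz inequality (Proposition~\ref{PMorawetz}), a careful choice of parameters will produce the contradiction.

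First I would use \eqref{E10101} together with Sobolev embedding $\dot H_D^{s_c}(\Omega)\hookrightarrow L^{3\alpha/2}(\Omega)$ (which is the critical embedding, since $3\alpha/2=6/(3-2s_c)$, and is justified via Theorem~\ref{TEquivalence}) to show that, given $\eta>0$ and choosing $N<c(\eta)$, the low-frequency piece obeys $\|u_{\le N}(t)\|_{L^{3\alpha/2}(\Omega)}\le C\eta$ uniformly in $t$. The triangle inequality in $L^{3\alpha/2}(\Omega\cap\{|x|\le R\})$ together with \eqref{E} then yields
\[
\int_{\Omega\cap\{|x|\le R\}}|u_{>N}(t,x)|^{3\alpha/2}\,dx \gtrsim_u 1 \qquad \text{uniformly for }t\in\mathbb{R},
\]
provided $\eta$ is small enough relative to the constant furnished by \eqref{E}.

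Next, since $1/2<s_c<1$ corresponds to $2<\alpha<4$, the exponent $\tfrac{2(\alpha+2)}{3\alpha}$ exceeds $1$ and H\"older's inequality on the bounded set $\Omega\cap\{|x|\le R\}$ gives
\[
\int_{\Omega\cap\{|x|\le R\}}|u_{>N}|^{3\alpha/2}\,dx \le \left(\int_{\Omega\cap\{|x|\le R\}}|u_{>N}|^{\alpha+2}\,dx\right)^{\!\frac{3\alpha}{2(\alpha+2)}} R^{\frac{3(4-\alpha)}{2(\alpha+2)}},
\]
so that $\int_{\Omega\cap\{|x|\le R\}}|u_{>N}|^{\alpha+2}\,dx \gtrsim_u R^{-(4-\alpha)/\alpha}$. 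Dividing by $|x|$ produces $\int_\Omega |u_{>N}|^{\alpha+2}/|x|\,dx \gtrsim_u R^{-4/\alpha}$ uniformly in $t$, and integrating over $I$ yields
\[
\int_I\int_\Omega\frac{|u_{>N}(t,x)|^{\alpha+2}}{|x|}\,dx\,dt \gtrsim_u R^{-4/\alpha}\,|I|.
\]
Comparing with Proposition~\ref{PMorawetz} (taking $N<N_0(\eta)$) and choosing the interval so that $|I|=N^{1-2s_c}$, which is legitimate because $u$ is global in time and $1-2s_c<0$ makes this length arbitrarily large as $N\downarrow 0$, we reduce to $R^{-4/\alpha}\lesssim_u \eta$. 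Fixing $\eta$ small relative to the $u$-dependent constant $R$ supplied by Theorem~\ref{TReduction} then forces the contradiction.

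The genuinely delicate point is the order of parameter selection: $\eta$ must be chosen first (small relative to $R^{-4/\alpha}$), after which $N$ is forced to lie below $\min\{c(\eta),N_0(\eta)\}$ via the almost-periodicity hypothesis and the frequency-localized Morawetz inequality, and this in turn fixes the length $|I|=N^{1-2s_c}$. The essential work has already been carried out in Proposition~\ref{PMorawetz}, whose right-hand side is shaped precisely so as to match, after the trade-off $|I|\sim N^{1-2s_c}$, the linear-in-$|I|$ lower bound produced above; everything else is elementary H\"older/Sobolev interpolation on a bounded ball together with the uniform-in-$t$ smallness of the low frequencies guaranteed by almost periodicity.
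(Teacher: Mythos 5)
Your proposal is correct and follows essentially the same route as the paper: obtain a pointwise-in-time lower bound for $\int_{\Omega\cap\{|x|\le R\}}|u_{>N}|^{\alpha+2}/|x|\,dx$ by first removing the low frequencies (using almost periodicity and the Sobolev embedding $\dot H_D^{s_c}\hookrightarrow L^{3\alpha/2}$) to preserve the concentration property \eqref{E}, then applying H\"older on the ball of radius $R$; integrate in $t$ to get a linear-in-$|I|$ lower bound, and compare with Proposition~\ref{PMorawetz}, choosing $\eta$ small relative to $R^{-4/\alpha}$. The only cosmetic difference is in the final step: the paper absorbs the $\eta|I|$ term to conclude $|I|\lesssim_u N^{1-2s_c}$ uniformly in $I$ and then takes $|I|\to\infty$, whereas you balance the two terms of the Morawetz bound by setting $|I|=N^{1-2s_c}$ to reduce directly to $R^{-4/\alpha}\lesssim_u\eta$; both are valid and equivalent.
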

	\begin{proof}
		Assume, for contradiction, that $u$ is such a solution. Let $\eta > 0$ and $I \subset [0, \infty)$ be a compact time interval. By Proposition \ref{PMorawetz}, for sufficiently small $N$, we have
		\begin{equation}
			\int_{I }\int_\Omega \frac{|u_{>N}(t,x)|^{\alpha + 2}}{|x|} \, dx \, dt \lesssim_u \eta(N^{1-2s_c} + |I|).\label{E1011x1}
		\end{equation}
		Next, we establish a lower bound for the left-hand side of (\ref{E1011x1}). Since the orbit $\{u(t): t \in \mathbb{R}\}$ is precompact in $\dot H^{s_c}_D(\Omega)$ and $\dot H^{s_c}_D(\Omega) \hookrightarrow L^{\frac{3\alpha}{2}}(\Omega)$, for any $\varepsilon > 0$, there exists $c(\varepsilon) > 0$ such that 
		\begin{equation}
			\int_\Omega |u_{<c(\varepsilon)}(t,x)|^{\frac{3\alpha}{2}} dx < \varepsilon \quad \text{uniformly for } t \in \mathbb{R}. \notag
		\end{equation}
		
		Combining this with (\ref{E}), we deduce that for sufficiently small $N$, 
		\begin{equation}
			\int_{\Omega \cap \{|x| \le R\}} |u_{>N}(t,x)|^{\frac{3\alpha}{2}} dx \gtrsim 1 \quad \text{uniformly for } t \in \mathbb{R}. \notag
		\end{equation}
		
		Using H?lder's inequality, we further obtain that for sufficiently small $N$,
		\begin{equation}
			1 \lesssim \int_{\Omega \cap \{|x| \le R\}} |u_{>N}(t,x)|^{\frac{3\alpha}{2}} dx \lesssim \left(\int_{\Omega \cap \{|x| \le R\}} |u_{>N}(t,x)|^{\alpha + 2} dx\right)^{\frac{3\alpha}{2(\alpha + 2)}} R^{\frac{3(4-\alpha)}{2(\alpha + 2)}}. \notag
		\end{equation}
		
		Thus, for sufficiently small $N$, it follows that
		\begin{equation}
			\int_{I }\int_\Omega \frac{|u_{>N}(t,x)|^{\alpha + 2}}{|x|} \, dx \, dt \ge \frac{1}{R} \int_I \int_{\Omega \cap \{x: |x| < R\}} |u_{>N}(t,x)|^{\alpha + 2} dx \, dt \gtrsim R^{-\frac{4}{\alpha}} |I|.\label{E1011x2}
		\end{equation}
		
		Combining (\ref{E1011x1}) and (\ref{E1011x2}), and choosing $\eta = \eta(R)$ sufficiently small, we deduce that $|I| \lesssim_u N^{1-2s_c}$ uniformly in $I$. This leads to a contradiction when $|I|$ is taken to be sufficiently large. The proof of Theorem \ref{Ts1/2-1} is complete.
	\end{proof}
	
	\begin{remark}\label{R128}
		Finally, we discuss the possibility of extending Theorem \ref{T1} to the case $0 < s_c < \frac{1}{2}$. In this case, we also want to employ a frequency-localized Lin-Strauss Morawetz inequality to preclude almost periodic solutions. To achieve this, we truncate the high frequencies of the solution and work with $u_{<N}$ for some $N > 0$. Similar to Lemma \ref{LHLC}, we first establish a long-time Strichartz estimate and then prove (see e.g. \cite[Lemma 5.7]{Murphy2015}):
		\begin{equation}
			\|(-\Delta _\Omega)^{\frac{s}{2}} u_{\le N}\|_{L_t^2 L_x^6(I \times \Omega)} \lesssim N^{s-s_c}(1 + N^{2s_c-1} |I|)^{\frac{1}{2}}, \quad \text{for } \quad s > \frac{1}{2}. \label{E128}
		\end{equation}
		Note that the condition $s > 1/2$ is essential for deriving the estimate in (\ref{E128}) using the long-time Strichartz estimate. This condition ensures the convergence of the frequency summation.
		However, when using the Morawetz inequality to exclude almost periodic solutions, what we actually require is the estimate $\||\nabla|^{s} u_{\le N}\|_{L_t^2 L_x^6(I \times \Omega)}, s > \frac{1}{2}$. Notably, since $s > \frac{1}{2},p=6$ does not satisfy the index relationships in Theorem \ref{TEquivalence}, we cannot derive the required estimate from (\ref{E128}). Therefore, in the case $0 < s_c < \frac{1}{2}$, we still lack the necessary estimates to exclude almost periodic solutions.
	\end{remark}

\end{document}